\documentclass[a4paper,leqno]{amsart}
\usepackage[margin=30truemm]{geometry}
\usepackage{fancyhdr,appendix,graphicx,layout}
\usepackage{amsmath,amsthm,amssymb}
\usepackage[shortlabels]{enumitem}
\setlist[enumerate]{font=\normalfont}
\usepackage{physics}

\usepackage[numbers]{natbib}
\usepackage{bbm}
\usepackage{bm}
\usepackage{tikz}
\usepackage{tikz-cd}
\usetikzlibrary{positioning,arrows.meta,decorations.markings,calc}
\tikzset{
    cells={font=\everymath\expandafter{\the\everymath\displaystyle}},
}
\usepackage{caption}
\usepackage[pagewise]{lineno}

\usepackage{float}

\usepackage{color}
\definecolor{darkgreen}{rgb}{0,0.7,0}

\definecolor{darkblue}{rgb}{0,0,0.7} % darkblue color
\newcommand{\defn}[1]{\textsl{\color{darkblue} #1}}

\usepackage[hyperpageref]{backref}
\usepackage[pagebackref]{hyperref}
\hypersetup{
    colorlinks=true,
    linkcolor=red,  %section,table of contents
    filecolor=blue,  %local file link
    urlcolor=orange, %url
    citecolor=violet   %citation
}
\renewcommand*{\backref}[1]{}  
   \renewcommand*{\backrefalt}[4]{
      \ifcase #1 
         Not cited.
      \or
         Cited on page #2.
      \else
         Cited on pages #2.
      \fi}

\usepackage{cleveref}

\makeatletter
\newcommand\ReDeclareMathOperator[2]{%
    \begingroup \escapechar\m@ne\xdef\@gtempa{{\string#1}}\endgroup
    \expandafter\@ifundefined\@gtempa
        {\@latex@error{Command \string#1 undefined}\@ehc}
        \relax
    \let\@ifdefinable\@rc@ifdefinable
    \DeclareMathOperator#1{#2}}
\makeatother

\crefname{thm}{Theorem}{Theorems}
\crefname{dfn}{Definition}{Definitions}
\crefname{dfnprop}{Definition-Proposition}{Definition-Proposition}
\crefname{prop}{Proposition}{Propositions}
\crefname{lem}{Lemma}{Lemmas}
\crefname{cor}{Corollary}{Corollaries}
\crefname{clm}{Claim}{Claims}
\crefname{ass}{Assumption}{Assumption}
\crefname{cond}{Condition}{Condition}
\crefname{nota}{Notation}{Notation}
\crefname{conj}{Conjecture}{Conjecture}
\crefname{fct}{Fact}{Facts}
\crefname{rmk}{Remark}{Remarks}
\crefname{eg}{Example}{Examples}
\crefname{que}{Question}{Questions}
\crefname{figure}{Figure}{Figures}
\crefname{table}{Table}{Tables}
\crefname{section}{Section}{Sections}
\crefname{subsection}{Subsection}{Subsections}
\crefname{appendix}{Appendix}{Appendices}
\crefname{equation}{}{}

\theoremstyle{plain}
\newtheorem{thm}{Theorem}[section]
\newtheorem{prop}[thm]{Proposition}
\newtheorem{lem}[thm]{Lemma}
\newtheorem{cor}[thm]{Corollary}

\theoremstyle{definition}
\newtheorem{dfn}[thm]{Definition}

\newtheorem{cond}[thm]{Condition}

\newtheorem{conj}[thm]{Conjecture}

\newtheorem{eg}[thm]{Example}

\theoremstyle{remark}

\numberwithin{equation}{section}
\makeatletter
\let\c@equation\c@thm
\makeatother

\newcommand{\A}{\mathcal A}
\newcommand{\D}{\mathcal D}
\newcommand{\K}{\mathcal K}

\newcommand{\T}{\mathcal T}
\newcommand{\U}{\mathcal U}

\newcommand{\RR}{\mathbf{R}}

\newcommand{\ZZ}{\mathbb{Z}}

\DeclareMathOperator{\Add}{Add}

\ReDeclareMathOperator{\top}{top}

\DeclareMathOperator{\id}{id}

\DeclareMathOperator{\proj}{proj}

\let\mod\relax
\DeclareMathOperator{\mod}{mod}

\DeclareMathOperator{\thick}{thick}

\DeclareMathOperator{\per}{per}

\DeclareMathOperator{\Hom}{Hom}
\DeclareMathOperator{\End}{End}

\DeclareMathOperator{\RHom}{\RR Hom}
\DeclareMathOperator{\REnd}{\RR End}

\ReDeclareMathOperator{\l}{\ell}

\newcommand{\vv}{\mathsf{v}}
\newcommand{\ww}{\mathsf{w}}
\newcommand{\xx}{\mathsf{x}}

\DeclareMathOperator{\weight}{wt}

\usepackage[T1]{fontenc}
\usepackage{microtype,needspace}
\AtBeginEnvironment{prop}{\crefalias{thm}{prop}}
\AtBeginEnvironment{lem}{\crefalias{thm}{lem}}
\AtBeginEnvironment{cor}{\crefalias{thm}{cor}}
\AtBeginEnvironment{dfn}{\crefalias{thm}{dfn}}
\AtBeginEnvironment{rmk}{\crefalias{thm}{rmk}}
\AtBeginEnvironment{eg}{\crefalias{thm}{eg}}
\definecolor{Blue}{rgb}{0,0,0.7}

\newcommand{\B}{\mathcal B}
\newcommand{\C}{\mathcal C}
\newcommand{\F}{\mathcal F}

\newcommand{\m}{\mathbbm{m}}
\newcommand{\dG}{\mathbbm{d}}
\DeclareMathOperator{\Ob}{Ob}
\DeclareMathOperator{\Tw}{Tw}

\newcommand{\red}{\mathrm{red}}

\newcommand{\rdeg}[1]{\lVert #1\rVert}
\allowdisplaybreaks[2]
\hypersetup{pdftitle={Lifting A-infinity-structures through coverings with applications to AFBGAs},pdfauthor={Bohan Xing}}

\title[Lifting $A_\infty$-structures through coverings]
{Lifting $A_\infty$-structures through coverings with applications to fractional Brauer graph algebras}
\author{Bohan Xing}
\date{\today}

\newcommand{\Addresses}{{%
  \bigskip
  \footnotesize

  B. Xing, \textsc{School of Mathematical Sciences, Laboratory of Mathematics and Complex Systems, Beijing Normal University, Beijing 100875, P.R. China}\par\nopagebreak
  \textit{E-mail address}: \texttt{bhxing@mail.bnu.edu.cn}
}}

\begin{document}
\begin{abstract}
Under suitable group actions, it is known that $A_\infty$-structures descend to orbit categories. We study the converse problem of lifting an $A_\infty$-structure to a prescribed covering. We show that a compatible group grading determines a canonical lift, uniquely characterized by the strictness of the covering projection. As an application, we construct $A_\infty$-categories associated with admissible fractional Brauer graph algebras by lifting Brauer graph $A_\infty$-categories. Their geometric data augment the surface models of Brauer graph algebras with a Nakayama character encoding the covering. We prove that admissible fractional Brauer graph algebras with equivalent geometric data are derived equivalent. Furthermore, we establish a set of combinatorial derived invariants and prove their completeness in reduced genus zero and in reduced genus at least two when the defining ribbon graph is non-bipartite.
\end{abstract}
\maketitle
\enlargethispage{4pt}
\begingroup
\small
\tableofcontents
\endgroup
%\clearpage

\section{Introduction}

Covering theory in the representation theory of algebras was developed in the early 1980s through the foundational work of Riedtmann \cite{Riedtmann1980a}, Gabriel \cite{Ga1981}, Bongartz and Gabriel \cite{BG1982}, Green \cite{Gre83a}, and Mart{\'i}nez-Villa and de la Pe{\~n}a \cite{MP1983}. For a recent exposition, see the lecture note \cite{LLXC25}. Covering functors provide pull-up and push-down functors between module categories, allowing one to compare the representation theory of the categories involved. 
Two basic constructions arise from group actions and group gradings. An action of a group $G$ on a small linear category $\C$, free on objects, gives a canonical Galois covering $\C\to\C/G$, whereas a $G$-grading on $\C$ gives rise to the smash product covering $\C\#G\to\C$ \cite{CM06}. These constructions lead naturally to questions about the descent of additional structures to orbit categories and their lifting to covering categories. In the homological setting, Keller used dg orbit categories and triangulated hulls to construct triangulated structures on orbit categories under suitable hypotheses \cite{Kel05orbit}. A prominent example is the construction of cluster categories by Buan, Marsh, Reineke, Reiten and Todorov \cite{BMRRT06}. Asashiba extended covering techniques to homotopy categories and derived equivalences \cite{Asa97,Asa2011}. 

At the $A_\infty$ level, orbit constructions have been used in the derived equivalence classification of Brauer graph algebras \cite{OZ22} and in the study of partially wrapped Fukaya categories of orbifold surfaces \cite{BSW24,AP26}. These constructions pass from categories equipped with group actions to orbit or skew-group categories. Our focus is on the complementary problem of lifting a given $A_\infty$-structure on a base category to prescribed covering data. More precisely, we show that a group grading on the morphism spaces determines a canonical lift whenever all structure maps respect the group grading and the strict units have identity weight. The lifted structure is unique once the canonical projection is required to be strict and unit-preserving.

\begin{thm}\textnormal{(see \cref{prop:lift,prop:lift-orbit})}
\label{thm:intro-lifting}
Let $G$ be a group with identity element $e$, and let $\A$ be a small strictly unital $A_\infty$-category over the field $\Bbbk$ with higher multiplications $\mu_\A^n$ $(n\geq1)$. Write $\Ob\A$ for its object set, $\Hom_\A(X,Y)$ for its graded morphism spaces, and $1_X$ for the strict unit at $X$.
Suppose that, for every $X,Y\in\Ob\A$, there is a decomposition
\[
\Hom_\A(X,Y)=\bigoplus_{g\in G}\Hom_\A^g(X,Y),
\qquad
1_X\in\Hom_\A^e(X,X),
\]
where each $\Hom_\A^g(X,Y)$ is a graded subspace and the superscript $g$ denotes its group weight. Assume that, for every $n\geq1$, objects $X_0,\ldots,X_n\in\Ob\A$, group elements $g_1,\ldots,g_n\in G$, and morphisms $a_i\in\Hom_\A^{g_i}(X_{i-1},X_i)$ for $1\leq i\leq n$, one has
\[
\mu_\A^n(a_n,\ldots,a_1)
\in\Hom_\A^{g_1\cdots g_n}(X_0,X_n).
\]

Then there is a strictly unital $A_\infty$-category $\widetilde\A$ with
\[
\Ob\widetilde\A=\Ob\A\times G,
\qquad
\Hom_{\widetilde\A}((X,h),(Y,l))=\Hom_\A^{h^{-1}l}(X,Y),
\]
whose higher multiplications $\mu_{\widetilde\A}^n$ are the restrictions of $\mu_\A^n$. This structure is unique subject to the requirement that the canonical projection $\pi:\widetilde\A\to\A$, given by $(X,h)\mapsto X$ and the inclusions on morphism spaces, be a strict, unit-preserving $A_\infty$-functor.
\end{thm}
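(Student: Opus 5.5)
The plan is to define the structure on $\widetilde\A$ by restriction, check that it is well defined, and then read off the $A_\infty$-relations, strict units and uniqueness from the corresponding facts in $\A$.

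\textbf{Well-definedness.} Take objects $(X_0,h_0),\ldots,(X_n,h_n)$ and morphisms
\[
a_i\in\Hom_{\widetilde\A}((X_{i-1},h_{i-1}),(X_i,h_i))=\Hom_\A^{h_{i-1}^{-1}h_i}(X_{i-1},X_i).
\]
The weights telescope:
\[
(h_0^{-1}h_1)(h_1^{-1}h_2)\cdots(h_{n-1}^{-1}h_n)=h_0^{-1}h_n.
\]
So the grading hypothesis gives
\[
\mu_\A^n(a_n,\ldots,a_1)\in\Hom_\A^{h_0^{-1}h_n}(X_0,X_n)=\Hom_{\widetilde\A}((X_0,h_0),(X_n,h_n)).
\]
Hence we may set $\mu_{\widetilde\A}^n:=\mu_\A^n$ restricted to these subspaces. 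Each such map is multilinear of the correct degree, because every $\Hom_\A^g$ is a graded subspace.

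\textbf{$A_\infty$-relations.} Write the relation for $\widetilde\A$ on a composable string of homogeneous morphisms. Each inner term $\mu_{\widetilde\A}^{s}(a_{r+s},\ldots,a_{r+1})$ is by definition $\mu_\A^s$ of the same inputs. By the telescoping argument applied to the substring, it lies in the hom space between $(X_r,h_r)$ and $(X_{r+s},h_{r+s})$. The outer $\mu_{\widetilde\A}$ is therefore defined on the resulting string and equals $\mu_\A$ there. The signs depend only on degrees, which are unchanged. So the $\widetilde\A$-relation is literally the $\A$-relation evaluated on these inputs, and it vanishes.

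\textbf{Strict units.} Since $1_X\in\Hom_\A^e(X,X)=\Hom_{\widetilde\A}((X,h),(X,h))$, we can take $1_{(X,h)}:=1_X$. The identities $\mu^2(1,a)=a=\mu^2(a,1)$ and the vanishing of $\mu^n$ ($n\neq2$) on inputs containing a unit are inherited from $\A$.

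\textbf{The projection and uniqueness.} The map $\pi$ is given by the inclusions on morphism spaces. With $\pi^1$ the inclusion and $\pi^n=0$ for $n\ge2$, the functor equations reduce to $\pi^1\mu_{\widetilde\A}^n=\mu_\A^n(\pi^1\otimes\cdots\otimes\pi^1)$. This holds by construction, and $\pi^1$ sends units to units.

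For uniqueness, suppose $\mu'$ is any strictly unital $A_\infty$-structure on the same objects and hom spaces for which $\pi$ is strict and unit-preserving. Strictness forces $\pi^1\mu'^n=\mu_\A^n\circ(\pi^1)^{\otimes n}$. Since $\pi^1$ is injective, $\mu'^n$ must be the restriction of $\mu_\A^n$. Unit-preservation forces $1'_{(X,h)}=1_X$.

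\textbf{Main obstacle.} There is no real analytic difficulty; the only point needing care is the bookkeeping in the relations step. One must verify that every intermediate output lands in the correct summand, so that the composite terms in the relation are actually defined in $\widetilde\A$. This is exactly where the group-grading hypothesis, applied to sub-strings, is used.
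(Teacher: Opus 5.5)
Your proposal is correct and follows essentially the paper's route. You define $\mu_{\widetilde\A}^n$ by restriction using the telescoping of weights and set $1_{(X,h)}=1_X$. You then inherit the Stasheff and unit identities along the injective inclusions, which the paper packages as \cref{lem:faithful}, and deduce uniqueness from injectivity of $\pi$ on morphism spaces. One cosmetic slip: in the paper's sign convention the left unit identity reads $\mu^2(1_Y,a)=(-1)^{|a|}a$, not $\mu^2(1_Y,a)=a$. Since you inherit the unit identities from $\A$, this does not affect the argument.
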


This construction is analogous to Sheridan's pullback construction for $A_\infty$-categories with abelian grading data \cite[Section~2.3]{She15}. In the case of graded associative algebras, it is also closely related to the constructions of associated ordinary algebras in \cite{Fus26,FX26}. 

In connection with the orbit constructions in \cite{OZ22,BSW24,AP26}, consider the canonical projection $\A\to\A/G$ associated with a group action free on objects. Let $\mu_\A=(\mu_{\A,n})_{n\geq1}$ be a family of $G$-equivariant higher operations and let $\mu_{\A/G}$ be the corresponding operations defined by the orbit formula \eqref{eq:orbit}, with compatible strict units. These two families determine one another, and the following are equivalent by Corollary~\ref{prop:reflection}.
\begin{enumerate}
\item $\A$ is an $A_\infty$-category with higher operations $\mu_\A$.
\item $\A/G$ is an $A_\infty$-category with higher operations $\mu_{\A/G}$.
\end{enumerate}
Furthermore, using $\mathbb Z$-gradings and their reductions modulo $r$, we extend the trivial extension construction for $A_\infty$-categories in \cite[Definition~2.10]{OZ22} to repetitive categories and $r$-fold trivial extensions for every $r\geq1$; see Subsection~\ref{sec:trivial-lift-comparison}.

We next apply this framework to the study of derived equivalences of finite-dimensional algebras.
Note that geometric models and the $A_\infty$-structures attached to them have become important tools in the representation theory of finite-dimensional algebras, such as gentle algebras, which form a classical class of quadratic monomial special biserial algebras, originating in the study of tilted and iterated tilted algebras of types $A$ and $\widetilde A$ \cite{Ass82,AS87}. Gentle algebras also arise from triangulations of unpunctured marked surfaces \cite[Theorem 2.7]{ABCP10}. A major development was the connection between graded gentle algebras and partially wrapped Fukaya categories of surfaces established by Haiden, Katzarkov and Kontsevich \cite{HKK17} and further developed by Lekili and Polishchuk \cite{LP20}. 

This geometric viewpoint makes a wide range of representation-theoretic structures and invariants accessible through curves, dissections and line fields. Geometric models describe module categories of gentle, skew-gentle and string algebras \cite{BCS21,HZZ23,BCS24}, as well as the derived categories of gentle algebras \cite{OPS18} and objects in the derived categories of skew-gentle algebras \cite{LSV22,Ami23}. They also give descriptions of torsion classes \cite{CD20}, silting objects and their mutations and reductions \cite{APS23,CS23silting,JSW23}, simple-minded collections and algebraic hearts \cite{Cha26hearts}, and tilting-completion \cite{Cha24tilting}, together with criteria for $\tau$-tilting finiteness and silting-discreteness \cite{CJSW25}. Further applications include geometric descriptions of Hochschild cohomology and its algebraic structures \cite{CSSS26,BSSWW26,Opp26b}, and of deformations of the associated Fukaya categories \cite{BSW25deform}. Of particular relevance here, the closure of gentle algebras under derived equivalence, established by Schr\"oer and Zimmermann \cite{SZ03}, admits a geometric proof \cite{APS23}; geometric methods also provide complete derived invariants for gentle algebras \cite{APS23,Opp19} and their graded counterparts \cite{JSW23,Opp25}. Orbifold models also provide geometric criteria for derived equivalences between skew-gentle algebras \cite{AB22} and constructions of algebras derived equivalent to them \cite{BSW24,AP26}, with further developments in preparation \cite{BCKRSWprep}.

Closely related to gentle algebras are Brauer graph algebras, a classical class of symmetric algebras originating in the modular representation theory of finite groups \cite{Dad66,DF78,Don79}. They generalize Brauer tree algebras and, over an algebraically closed field, coincide up to Morita equivalence with symmetric special biserial algebras \cite{Sch15}. Green, Schroll and Snashall developed a covering theory for Brauer graphs and their algebras \cite{GSS14}. The connection with gentle algebras through trivial extensions in the multiplicity-one case \cite{Sch15}, together with covering and orbit constructions, leads to the Brauer graph $A_\infty$-categories of Opper and Zvonareva \cite{OZ22}. Combinatorial and geometric methods have yielded derived and stable equivalence invariants \cite{Ant07st,Ant07,Ant09,AZ19st,AZ22}, as well as closure and classification results for derived equivalence \cite{AZ22,OZ22} and stable equivalence of Morita type \cite{CLLX26,LCLX26}. Further applications concern tilting mutations \cite{Kau98,Sot24a,Sot24b}, tilting-discreteness \cite{AAC18}, torsion classes \cite{CD20}, simple-minded systems in the domestic case \cite{Zha24sms,Zha25sms,Zha26sms}, and Hochschild cohomology \cite{CSS,LX,LWX}. For surveys of Brauer graph algebras and their derived equivalences, see \cite{Sch18,Zvo25}.

Recently, Li and Liu generalized Brauer graph algebras within their theory of fractional Brauer configuration algebras \cite{LLI26,LLII24,LL26}, extending the Brauer configuration algebras of Green and Schroll \cite{GS16,GS17}. The admissible fractional Brauer graph algebras form a class of self-injective special biserial algebras; see \cite[Introduction]{Xin26}. Their relation to Brauer graph algebras is analogous to the passage from symmetric to self-injective Nakayama algebras. Moreover, the repetitive construction for gentle algebras \cite{Rin97} yields $r$-fold trivial extensions, for positive integers $r$, which belong to this class \cite{Xin26}. This connection motivates us to extend the geometric models and $A_\infty$-structures discussed above to admissible fractional Brauer graph algebras.

Every admissible fractional Brauer graph algebra $A$ has a Brauer graph algebra $A_{\mathrm{red}}$ as its reduced form, obtained by taking the orbit under the Nakayama action \cite{LL26,Xin26}. The resulting cyclic covering of the associated linear categories allows us to lift the Brauer graph $A_\infty$-categories of Opper and Zvonareva \cite{OZ22} by applying our lifting theorem. The construction is governed by geometric data $\mathfrak D=(\Sigma,\mathcal P,\eta,\m,r,\chi)$, where $(\Sigma,\mathcal P,\eta,\m)$ records the punctured ribbon surface, line field and multiplicities associated with $A_{\mathrm{red}}$, while $r$ is the order of the Nakayama action, equivalently the number of sheets of the covering, and $\chi:H_1(\Sigma\setminus\mathcal P;\ZZ)\to\ZZ/r\ZZ$ is the Nakayama character recording the monodromy of the corresponding surface covering, with the Nakayama deck transformation identified with $\bar1$; see Definition~\ref{def:datum}. For a graded admissible arc system $\U$, denote the resulting $A_\infty$-category by $\F_{\mathfrak D}(\U)$. As in the gentle and Brauer graph settings, we prove that its Morita equivalence class depends only on the geometric datum, yielding derived equivalences between the corresponding admissible fractional Brauer graph algebras.

\begin{thm}\textnormal{(see Theorem~\ref{thm:geometric} and Corollary~\ref{cor:ordinary})}\label{thm:intro-geometric}
For $i=1,2$, let $\mathfrak D_i=(\Sigma_i,\mathcal P_i,\eta_i,\m_i,r,\chi_i)$ be geometric data, and let $\U_i$ be graded admissible arc systems on $(\Sigma_i,\mathcal P_i,\eta_i)$. Suppose there is an orientation-preserving diffeomorphism $f:(\Sigma_1,\mathcal P_1)\longrightarrow(\Sigma_2,\mathcal P_2)$ satisfying
\[
f^*\eta_2\simeq\eta_1,\qquad
\m_2\circ f|_{\mathcal P_1}=\m_1,\qquad
\chi_2\circ f_*=\chi_1,
\]
where $\simeq$ denotes homotopy of line fields and $f_*$ is the induced map on first integral homology of the punctured surfaces. Then $\F_{\mathfrak D_1}(\U_1)$ and $\F_{\mathfrak D_2}(\U_2)$ are Morita equivalent. If both lifted categories are concentrated in degree zero, their associated algebras $A_1$ and $A_2$ are derived equivalent, that is,
\[
\D^b(\mod\text{-}A_1)\simeq\D^b(\mod\text{-}A_2).
\]
In particular, admissible fractional Brauer graph algebras with equivalent canonical geometric data are derived equivalent.
\end{thm}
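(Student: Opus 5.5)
We reduce everything to the Brauer graph case of Opper and Zvonareva and then transport the equivalence through the lifting of \cref{thm:intro-lifting}. By construction, $\F_{\mathfrak D}(\U)$ is the canonical lift of the Brauer graph $A_\infty$-category $\F_{(\Sigma,\mathcal P,\eta,\m)}(\U)$ of \cite{OZ22}. The lift is taken with respect to the $\ZZ/r\ZZ$-grading on morphism spaces obtained by evaluating $\chi$ on the homology classes of the boundary paths and disc sequences underlying the generators.

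\textbf{Step 1: functoriality of the lift.} First I would prove that the lifting construction is functorial for graded $A_\infty$-functors. Suppose $F:\A\to\B$ is an $A_\infty$-functor whose components $F^n$ send $\Hom^{g_1}\otimes\cdots\otimes\Hom^{g_n}$ into $\Hom^{g_1\cdots g_n}$. Then the same formulas define an $A_\infty$-functor $\widetilde F:\widetilde\A\to\widetilde\B$ with $(X,h)\mapsto(F X,h)$. This also holds after passing to twisted complexes. A group-graded twisted complex lifts to one on the covering, with each summand placed at a sheet $h$ and differential components of weight $h^{-1}l$. Moreover, a graded quasi-equivalence lifts to a quasi-equivalence, since homology respects the decomposition into weights.

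\textbf{Step 2: invariance under change of arc system.} For a fixed datum $\mathfrak D$, I would compare $\F_{\mathfrak D}(\U)$ and $\F_{\mathfrak D}(\U')$. Following \cite{OZ22}, and \cite{HKK17} for the gentle case, any two graded admissible arc systems are connected by a sequence of arc slides and additions or removals of arcs. For each such move, Opper–Zvonareva exhibit the new arc as a twisted complex (a cone) over the old ones, giving a fully faithful functor into $\Tw$ that hits generators. I would check that the morphisms entering these cones are homogeneous for the $\chi$-grading. This is the main obstacle. The weight of a boundary-path morphism must equal $\chi$ of the class of the closed loop built from the arcs and the boundary segment. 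One then verifies that concatenation in the cone corresponds to additivity of homology classes, so that $\chi$ is a homomorphism and the $A_\infty$-products respect weights. This is essentially the compatibility hypothesis of \cref{thm:intro-lifting} for the embedding functor. The weight is only canonical after fixing a base sheet for each arc, that is, a local trivialization of the covering along the arc. I would handle this by allowing a shift of the sheet index per object, which changes nothing up to equivalence. Applying Step 1 then gives a Morita equivalence on the lifted side.

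\textbf{Step 3: transport by $f$.} Given $f$ as in the statement, $f(\U_1)$ is a graded admissible arc system on $(\Sigma_2,\mathcal P_2,\eta_2)$, since $f^*\eta_2\simeq\eta_1$ lets us transport gradings. The conditions $\m_2\circ f=\m_1$ and $\chi_2\circ f_*=\chi_1$ give a strict isomorphism $\F_{\mathfrak D_1}(\U_1)\cong\F_{\mathfrak D_2}(f(\U_1))$ preserving weights. Step 2 then connects this category to $\F_{\mathfrak D_2}(\U_2)$.

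\textbf{Step 4: derived equivalence.} When both lifted categories are concentrated in degree zero, all $\mu^n$ with $n\neq2$ vanish for degree reasons. Each category is therefore an ordinary linear category, namely the algebra $A_i$. Morita equivalence of these categories yields $\per A_1\simeq\per A_2$. For finite-dimensional algebras over a field, this gives $\D^b(\mod\text{-}A_1)\simeq\D^b(\mod\text{-}A_2)$ by Rickard's theorem. For the final clause, I would use the earlier identification of $A$ with the degree-zero lift of the Brauer graph category attached to the Nakayama orbit $A_{\mathrm{red}}$ with its canonical datum.
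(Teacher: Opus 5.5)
Your overall architecture is the paper's: transport along $f$, invariance under elementary moves via polygon twisted complexes lifted to every sheet, then Rickard. However, Step~3 has a genuine gap.

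Transporting $\U_1$, its gradings and its chosen lifts along $f$ gives a strict, weight-preserving isomorphism from $\F_{\mathfrak D_1}(\U_1)$ to the lifted category on $\Sigma_2$ built from the \emph{pushed-forward} line field $(f^{-1})^*\eta_1$. It does not give one to the category built from $\eta_2$, and these two line fields are only homotopic. Transporting gradings through the homotopy preserves the intersection degrees. It does not preserve the winding numbers $\omega_\eta(\gamma)$ of the arcs, because arcs end at punctures and are not closed curves.

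For example, a homotopy can insert a full projective twist of the line field in a thin annulus around a puncture. The result is still of ribbon type, since rotating a line by $\pi$ gives the same line. But this changes by $\pm1$ the winding number of every arc joining that puncture to a different one. The parities of these numbers enter the cycle relations $C_h^{\m(s(h))}=(-1)^{\omega_\eta(e(h))}C_{\iota h}^{\m(s(\iota h))}$ and the sign $\varepsilon$ in \eqref{eq:brauer-complement-operation}. So the identity on path representatives need not even be a well-defined isomorphism of the underlying algebras. Your Step~2 cannot repair this, because it keeps the line field fixed.

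The missing ingredient is that the lifted category is invariant under homotopy of the line field; this is \cref{lem:line}. The paper first uses arc-system invariance to replace $\U$ by the edges of a filling ribbon graph realizing the ribbon-type class. There are no disc sequences there, so downstairs both categories reduce to graded modified Brauer graph algebras with only $\mu^2$. As in \cite[Lemma 7.5]{OZ22}, these are identified by fixing the idempotents and rescaling arrows at each puncture $\vv$ by scalars with $\lambda_\vv^{\m(\vv)}=\pm1$. Such scalars exist because $\Bbbk$ is algebraically closed, an assumption your argument never uses. Rescaling does not change group weights, so the isomorphism restricts to the lifted morphism spaces and gives a strict isomorphism of the lifts.

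Insert this step into your argument: transport to $(f^{-1})^*\eta_1$, replace that line field by $\eta_2$, and only then change the arc system. With that addition your proof goes through.
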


Note that the coverings considered above arise from Nakayama actions. For a finite-dimensional self-injective algebra, every basic tilting complex is invariant up to isomorphism under the Nakayama functor \cite[Theorem A.4]{Aih13}; see also \cite[Theorem 2.1]{AR13}. This compatibility suggests that our lifted geometric models may capture all derived equivalences between ordinary admissible fractional Brauer graph algebras. Motivated by the classification of Brauer graph algebras \cite{OZ22}, we therefore seek a complete set of derived invariants. Combining the invariants established in \cite{Xin26} with the description of exceptional tubes in the stable Auslander--Reiten quiver in \cite[Section 5.1]{LL26}, we obtain the following result.

\begin{prop}\textnormal{(see Proposition~\ref{prop:four-derived-invariants})}\label{prop:intro-four-derived-invariants}
Let $\Lambda_i=\Lambda(\Gamma_i,\dG_i)$, $i=1,2$, be finite-dimensional admissible fractional Brauer graph algebras, with connected defining ribbon graphs. Write $\nu_i$ for the Nakayama action on $\Gamma_i$ and $\Gamma_{i,\mathrm{red}}=\Gamma_i/\langle\nu_i\rangle$ for its reduced Brauer graph. Assume that neither reduced graph is a single loop of multiplicity one or a single edge joining two vertices both of multiplicity two. If $\Lambda_1$ and $\Lambda_2$ are derived equivalent, then the following conditions hold.
\begin{enumerate}[label=\textup{(\arabic*)}]
\item The Nakayama actions have the same order $r$, and the reduced graphs have the same numbers of vertices, edges and faces.
\item The multisets of fractional multiplicities $\dG_i(v)/\operatorname{val}_{\Gamma_i}(v)$, indexed by $v\in V(\Gamma_i)$, agree.
\item The multisets of pairs
\[
\left\{\!\left\{(\ell(F),\weight(F)):F\in F(\Gamma_{i,\mathrm{red}})\right\}\!\right\},\qquad i=1,2,
\]
agree. Here $F(\Gamma_{i,\mathrm{red}})$ denotes the set of faces, $\ell(F)$ is the perimeter of $F$, and $\weight(F)\in\ZZ/r\ZZ$ is the unique residue such that the lifted (Green) walk associated with $F$ in $\Gamma_i$, starting at an edge $e$, reaches $\nu_i^{\weight(F)}e$ after $\ell(F)$ steps. The cyclic groups are identified by $\nu_1\mapsto\nu_2$.
\item Either both $\Gamma_1$ and $\Gamma_2$ are bipartite or neither is.
\end{enumerate}
\end{prop}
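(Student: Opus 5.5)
The plan is to assemble the four conditions from invariants already available: the derived invariants of \cite{Xin26}, the description of exceptional tubes in the stable Auslander--Reiten quiver from \cite[Section~5.1]{LL26}, and standard facts about self-injective algebras. First I would record what derived equivalence gives. Both $\Lambda_i$ are self-injective special biserial, so by Rickard a derived equivalence yields a stable equivalence of Morita type. By \cite[Theorem~A.4]{Aih13} it also intertwines the Nakayama functors up to the induced action on objects. From this I obtain equality of the number of simples (the Grothendieck rank), of the dimension of the centre, of the Cartan determinant / elementary divisors, and of the stable Auslander--Reiten quiver as a translation quiver.

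\textbf{Condition (1).} The order $r$ of the Nakayama action equals the order of the Nakayama permutation on isoclasses of simples. Since basic tilting complexes are $\nu$-stable and the Nakayama functor commutes with derived equivalences, I expect this order to be preserved; this is the invariant of \cite{Xin26}. The number of edges of $\Gamma_{i,\red}$ is then $|E(\Gamma_i)|/r$, because the action is free on edges. For vertices and faces I would use the exceptional tubes. By \cite[Section~5.1]{LL26}, the tubes of rank $>1$ correspond to faces of $\Gamma_{i,\red}$ (Green walks up to the Nakayama action), with rank determined by $(\ell(F),\weight(F))$. Vertices are detected by $\dim Z(\Lambda_i)$, or by \cite{Xin26}, once edges and faces are known. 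An Euler characteristic argument then fixes the genus data.

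\textbf{Conditions (2) and (4).} I would take (2) directly from the invariants of \cite{Xin26}, presumably obtained from the Cartan matrix or the centre by counting the contributions $\dG(v)/\operatorname{val}(v)$ at each vertex. For (4), bipartiteness is tied to the Cartan determinant, or to the parity invariant, in the Brauer graph case \cite{Ant07,AZ22}; I would cite the lifted version in \cite{Xin26}. The excluded small cases are exactly those where the tube description in \cite{LL26} degenerates, for instance where homogeneous and exceptional tubes cannot be distinguished.

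\textbf{Condition (3), the main obstacle.} The tube ranks recover only some function of $(\ell(F),\weight(F))$, such as $\ell(F)\cdot\operatorname{ord}(\weight(F))$ or similar, and not the pair itself. To separate the two components I would also use the Nakayama functor, which a stable equivalence induced from a derived equivalence commutes with. The functor $\nu$ permutes the tubes and acts on each as a power of $\tau$. The twist by which $\nu$ shifts the mouth of the tube attached to $F$ should recover $\weight(F)$ in $\ZZ/r\ZZ$, and the rank together with this twist should then give $\ell(F)$.

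Making this well-defined requires that derived equivalences identify the cyclic groups through $\nu_1\mapsto\nu_2$. That holds because the Nakayama functor is intrinsic: it is the Serre functor up to shift. I expect checking that the twist reading gives exactly $\weight(F)$, and not its negative or a multiple of it, to be the delicate computation.
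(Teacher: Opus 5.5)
Your proposal has a genuine gap in condition (3), and the same gap affects your plan to count faces in (1) using exceptional tubes.

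\textbf{Tube ranks and a Nakayama twist do not determine the pairs.} Tubes of rank greater than one are not in bijection with reduced faces, and the data you extract do not recover $(\ell(F),\weight(F))$.

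On mouth modules, $\tau_\Lambda M_h\cong M_{\nu^{-1}(\rho\iota)^2h}$ and $\mathcal N_\Lambda M_h\cong M_{\nu^{-1}h}$. So $\mathcal N_\Lambda$ preserves the tube through $M_h$ only when $\nu^{-1}h$ lies in the $\langle\nu^{-1}(\rho\iota)^2\rangle$-orbit of $h$. This fails, for instance, if $r=2$, $\ell(F)=4$ and $\weight(F)=\bar0$. In general, then, your ``twist'' is not defined.

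Worse, already for $r=1$, where $\mathcal N_\Lambda$ is the identity on objects, a face of even perimeter $2n$ gives two tubes of rank $n$. Two faces of odd perimeter $n$ give exactly the same. Concretely, take all multiplicities one and compare the Brauer graph algebras of:
\begin{itemize}
\item the theta graph with paths of lengths $3,2,3$;
\item two triangles joined by a path of length two.
\end{itemize}
Both have $7$ vertices, $8$ edges, $3$ faces, odd cycles, and exceptional tube ranks $\{3,3,5,5\}$. Yet their face perimeters are $\{5,5,6\}$ and $\{3,3,10\}$.

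The missing ingredient is the shift $[1]=\Omega_\Lambda^{-1}$, which the stable equivalence also preserves. The projective cover $0\to M_{\rho\iota h}\to\Lambda e(h)\to M_h\to0$ gives $M_h[1]\cong M_{(\rho\iota)^{-1}h}$. Hence the joint orbits of mouth modules under $[1]$ and $\mathcal N_\Lambda$ are exactly the half-edge sets above reduced faces. Then $\ell(F)$ is the least $n>0$ with $M_h[n]$ in the $\mathcal N_\Lambda$-orbit of $M_h$. The weight is fixed by $M_h[\ell(F)]\cong\mathcal N_\Lambda^{\weight(F)}M_h$, which follows from $(\rho\iota)^{-\ell(F)}h=\nu^{-\weight(F)}h$; this also resolves your sign worry.

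\textbf{Three cases are not covered.}

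First, some faces are invisible to tubes. Faces with $(\rho\iota)^2h=\nu h$ are those with $\ell(F)=1$ and $2\weight(F)=\bar1$, or $\ell(F)=2$ and $\weight(F)=\bar1$. Their mouth modules satisfy $\tau_\Lambda M_h\cong M_h$, so they lie in rank-one tubes, which your tube reading cannot separate from homogeneous tubes of band modules. In particular, the face count in (1) cannot come from tubes. Instead:
\begin{itemize}
\item take (1), (2) and (4) from the derived equivalence of the reduced Brauer graph algebras \cite[Theorem 4.1]{Xin26}, combined with the Brauer graph invariants of \cite{AZ22};
\item recover the remaining faces by counting. If there are $n_1$ of perimeter one and $n_2$ of perimeter two, then $n_1+n_2$ is $|F(\Gamma_{\red})|$ minus the number of faces already recovered. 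Likewise $n_1+2n_2$ is $2|E(\Gamma_{\red})|$ minus the sum of their perimeters. The weights of these faces are forced.
\end{itemize}

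Second, in the representation-finite case there are no tubes at all. There $\Gamma_{\red}$ is a Brauer tree with a single face, with $\ell(F)=2|E(\Gamma_{\red})|$ and $\weight(F)=\sum_{v}\m(v)^{-1}$ in $\ZZ/r\ZZ$. These values are determined by (1) and (2).

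Third, the local case needs its own argument: derived equivalent local algebras are isomorphic, and here $r=1$. The two reduced graphs are excluded because they can give isomorphic algebras with different numbers of vertices and faces \cite[Lemma 3.1]{AZ22}. The exclusion is not about the tube description degenerating.
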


We conjecture that these four conditions form a complete derived invariant, extending the classification of Brauer graph algebras in \cite{OZ22}.

\begin{conj}\textnormal{(see Conjecture~\ref{conj:four-condition-all-genus})}\label{conj:intro-four-derived-invariants}
Let $\Lambda_1$ and $\Lambda_2$ satisfy the hypotheses on the algebras and reduced graphs in Proposition~\ref{prop:intro-four-derived-invariants}. Then $\Lambda_1$ and $\Lambda_2$ are derived equivalent if and only if conditions \textup{(1)}--\textup{(4)} of that proposition hold.
\end{conj}

Recall that the genus of the ribbon surface associated with a ribbon graph $\Gamma$ is given by
\[
g(\Gamma)=1-\frac12\bigl(|V(\Gamma)|-|E(\Gamma)|+|F(\Gamma)|\bigr),
\]
where $V(\Gamma)$, $E(\Gamma)$ and $F(\Gamma)$ are its sets of vertices, edges and faces, respectively. For an admissible fractional Brauer graph algebra $\Lambda$, we write $g_{\mathrm{red}}(\Lambda)=g(\Gamma_{\mathrm{red}})$ for its reduced genus. We prove the above conjecture in reduced genus zero and in the non-bipartite case of reduced genus at least two.

\begin{thm}\textnormal{(see Theorem~\ref{thm:four-condition-completeness})}\label{thm:intro-four-condition-completeness}
Let $\Lambda_i=\Lambda(\Gamma_i,\dG_i)$, $i=1,2$, satisfy the hypotheses on the algebras and reduced graphs in Proposition~\ref{prop:intro-four-derived-invariants}. Suppose that, for each $i$, either $g_{\mathrm{red}}(\Lambda_i)=0$, or $g_{\mathrm{red}}(\Lambda_i)\geq2$ and $\Gamma_i$ is non-bipartite. Then $\Lambda_1$ and $\Lambda_2$ are derived equivalent if and only if conditions \textup{(1)}--\textup{(4)} of Proposition~\ref{prop:intro-four-derived-invariants} hold.
\end{thm}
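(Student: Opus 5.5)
The forward implication is Proposition~\ref{prop:intro-four-derived-invariants}, so the plan concerns only the converse. Assuming conditions (1)--(4), I will use Theorem~\ref{thm:intro-geometric}. It then suffices to construct an orientation-preserving diffeomorphism $f:(\Sigma_1,\mathcal P_1)\to(\Sigma_2,\mathcal P_2)$ between the canonical geometric data with
\[
f^*\eta_2\simeq\eta_1,\qquad \m_2\circ f|_{\mathcal P_1}=\m_1,\qquad \chi_2\circ f_*=\chi_1 .
\]
That theorem then gives the derived equivalence, since the canonical lifted categories of ordinary admissible fractional Brauer graph algebras are concentrated in degree zero.

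The first step is to translate (1)--(3) into surface data. By (1), the reduced ribbon surfaces have the same genus, since $V,E,F$ determine $g(\Gamma_{\mathrm{red}})$. They also have the same numbers of punctures (vertices) and boundary components (faces), and the same order $r$. Punctures are matched through the multiplicities. Boundary components are matched through their perimeters, which fix the winding numbers of $\eta$ around them. Here I would use that $\dG(v)/\operatorname{val}(v)$ recovers the reduced multiplicity $\m(v)$ together with how the vertex lifts, so (2) gives a multiplicity-preserving bijection of punctures. Condition (3) gives a bijection of boundary components preserving both the perimeter, i.e.\ the winding number of $\eta$, and the weight, i.e.\ the value of $\chi$ on the boundary loop. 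I also need $\chi$ on the loops around punctures; I expect this is forced by the fractional multiplicity, since the Nakayama rotation at a vertex is read off from $\dG(v)/\operatorname{val}(v)$. So $f$ can first be chosen as any diffeomorphism realizing these bijections.

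The remaining task is to modify $f$ by mapping classes fixing all punctures and boundary components. The modified $f$ must carry $\eta_1$ to $\eta_2$ up to homotopy and $\chi_1$ to $\chi_2$. Both are determined by their values on a symplectic basis $a_1,b_1,\dots,a_g,b_g$, beyond the already matched peripheral data. For line fields I will invoke the Lekili--Polishchuk classification of line fields up to diffeomorphism. In genus $0$ there is no non-peripheral homology, so everything is determined by peripheral data and the genus-zero case is immediate. For $g\ge2$ the remaining line-field invariants are the Arf-type invariant $\tilde A$ and the gcd of the winding numbers. I need to show these are either determined by (1)--(4) or can be absorbed.

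The hard part is the joint action of the mapping class group on pairs $(\eta,\chi)$, with $\chi$ valued in $\ZZ/r\ZZ$. Pairs of winding numbers on the $a_i,b_i$ transform under Dehn twists by a Euclidean-type algorithm. I would normalize the tuple $(w_\eta(a_i),w_\eta(b_i),\chi(a_i),\chi(b_i))$ by transvections, in the style of the Lekili--Polishchuk normal-form argument, enlarged to the $\ZZ\times\ZZ/r\ZZ$-valued functional $(w_\eta,\chi)$. With genus at least two there is a spare handle, which lets the gcd and the parity information be moved. Non-bipartiteness of $\Gamma$ should ensure that the relevant $\ZZ/2$ reduction of $\chi$ together with the Arf invariant is not an independent obstruction. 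This is exactly what condition (4) records in the bipartite case, and why genus one and bipartite higher genus are excluded. I expect this normal-form computation, showing that (1)--(4) fix the orbit of $(\eta,\chi)$, to be the main obstacle.
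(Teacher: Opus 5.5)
Your reduction matches the paper's. Necessity is Proposition~\ref{prop:intro-four-derived-invariants}, and sufficiency comes from Theorem~\ref{thm:intro-geometric} (via Corollary~\ref{cor:ordinary}) once you produce a diffeomorphism of the canonical geometric data. Your genus-zero step is also correct. After removing small discs around the punctures, $H_1$ is generated by peripheral classes. On these, the winding numbers ($0$ at punctures, $-\ell(F)$ on $B_F$) and the characters ($\m(\vv)^{-1}$ and $\weight(F)$) already agree. So the line fields are homotopic and the characters coincide.

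\textbf{The gap: reduced genus at least two.} This case is the real content of the theorem, and you leave it as an expected obstacle. A Euclidean normal form for the $\ZZ\times\ZZ/r\ZZ$-valued tuple $(w_\eta(a_i),w_\eta(b_i),\chi(a_i),\chi(b_i))$ does not follow formally from Lekili--Polishchuk. Normalizing $\eta$ first by their theorem and then fixing $\chi$ only works if you show that the stabilizer of the homotopy class of $\eta$ acts transitively on characters with the prescribed peripheral values. That is exactly what needs proof. Also, for $g\geq2$ the gcd invariant $\tilde A$ plays no role, since it is the genus-one invariant. An Arf invariant appears only for orientable line fields.

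\textbf{The missing idea: split off the integral part.} The paper does this in four steps.
\begin{enumerate}
\item Work with the $\ZZ/4\ZZ$-valued quadratic function $q_\eta=\omega_\eta+2$, whose polar form is $2(x\cdot y)$. Write $H_1(S;\ZZ)=V\oplus R$ with $R$ peripheral, and let $J$ be the common image of $(q_{\eta_i},\chi_i)|_R$.
\item Each puncture contributes a value $(2,a)$ with $a$ a unit, so $K=(\ZZ/4\ZZ\oplus\ZZ/r\ZZ)/J$ is $0$, $\ZZ/2\ZZ$ or $\ZZ/4\ZZ$. The induced functions $V\to K$ are then either the linear forms $q_{\eta_i}\bmod 2$ or the quadratic functions $aq_{\eta_i}-2\chi_i$.
\item Non-bipartiteness enters exactly here. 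It makes $\eta$ non-orientable, so $q_{\eta_i}\bmod2$ is nonzero on $V$. Hence these functions are nonzero, respectively non-even, and the symplectic group of $V$ acts transitively on them. In the bipartite case $aq_\eta-2\chi$ is even on $V$ and carries an Arf invariant not controlled by (1)--(4).
\item The remaining difference on $V$ is a homomorphism into $J$. It is removed by shears $x\mapsto x+B(x)$ with $B:V\to R$, realized by pairs of Dehn twists fixing the boundary. What is left between the integral line fields has the form $4u$ with $u|_R=0$. It is killed by bounding-pair maps as in the proof of \cite[Lemma 1.7]{LP20}; these act trivially on homology and so preserve $\chi$.
\end{enumerate}
Without an argument of this kind, your proposal does not prove the theorem in reduced genus at least two.
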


\medskip
\noindent
\textbf{Organization of the paper.}
In Section~\ref{sec:ainfinity-preliminaries}, we recall the necessary background on $A_\infty$-categories and derived constructions. In Section~\ref{sec:lifting}, we establish the lifting theorem and apply it to repetitive categories and $r$-fold trivial extensions. In Section~\ref{sec:algebra-surface-preliminaries}, we review Brauer graph algebras and their associated $A_\infty$-categories. In Section~\ref{sec:nakayama}, we construct geometric models and lifted $A_\infty$-structures for admissible fractional Brauer graph algebras. In Section~\ref{sec:derived}, we study elementary moves and establish geometric criteria for derived equivalence. In Section~\ref{sec:numerical}, we introduce combinatorial derived invariants and prove their completeness in special cases.

\medskip
\noindent
{\bf Conventions and notation.}
Let $\Bbbk$ be a field. From Section~\ref{sec:algebra-surface-preliminaries} onwards, we assume that $\Bbbk$ is algebraically closed. All categories are $\Bbbk$-linear, and the $A_\infty$-categories used as coefficient categories are small. All tensor products and linear duals are over $\Bbbk$, unless another base is indicated.  All modules are right modules unless a left-module category is explicitly displayed. We write $\mod\text{-}A$ and $\proj\text{-}A$ for finite-dimensional modules and finitely generated projective modules, respectively, and $A\text{-}\mod$ for finite-dimensional left modules. All paths are read from right to left.

All gradings are cohomological unless specified otherwise. For $V=\bigoplus_{j\in\ZZ}V^j$ and $a\in V^j$, put $|a|=j$ and $\rdeg{a}=|a|-1$. A map of degree $d$ sends $V^j$ to $W^{j+d}$. Complexes are cochain complexes, with differential $d^j:V^j\to V^{j+1}$ and cohomology $H^j(V)$. Our shift convention is $V[s]^j=V^{j+s}$ and $d_{V[s]}=(-1)^s d_V$. The graded dual is denoted by $D$, with $(DV)^j=\Hom_\Bbbk(V^{-j},\Bbbk)$.

\medskip
\noindent
\textbf{Use of AI.}
The material in Sections~1--6 was developed primarily by the author, with assistance from ChatGPT in preparing figures and identifying relevant literature in geometry. Conjecture~\ref{conj:four-condition-all-genus} in Section~\ref{sec:numerical} was formulated by the author. GPT-6 Astra suggested that the conjecture could be proved in certain special cases, leading to Theorem~\ref{thm:four-condition-completeness}, and also proposed the equivalent formulation given in Proposition~\ref{prop:four-condition-reduction}. All mathematical arguments were independently verified and finalized by the author. The author takes full responsibility for the accuracy, originality, and integrity of the paper.

\medskip
\noindent
\textbf{Acknowledgements.}
The author would like to express his sincere gratitude to his supervisor, Yuming Liu, for his continued support, encouragement, and many helpful discussions. He is also grateful to Aaron Chan for valuable suggestions and discussions on skew-group algebras, covering theory, and fractional Brauer graph algebras, and to Zhengfang Wang for generously sharing his knowledge of $A_\infty$-categories and gentle algebras. He thanks Pengyun Chen and Nengqun Li for many stimulating discussions. This work is supported by the China Scholarship Council (No.~202506040127).

%%%%%%%%%%%%%%%%%%%%%%%%%%%%%%%%%%%%%%%%%%%%%%%%%%%%%%%%%%%%%%%%%%%%%%%%%%%%%%%%%%%
\section{Preliminaries on categories and derived constructions}
\label{sec:ainfinity-preliminaries}

In this section, we recall the basic notions concerning $A_\infty$-categories and their derived categories that will be used in our covering constructions and derived-equivalence arguments. For general background on $A_\infty$-algebras, categories, and modules, we refer to \cite{Kel01,Kel06,LH03}. A detailed treatment of twisted complexes, exact triangles, and idempotent completion can be found in \cite[Part I]{Sei08}. For derived categories of dg modules, compact generation, and Morita theory, we refer to \cite{Kel94,Kel06dg}.

\subsection{\texorpdfstring{$A_\infty$}{A-infinity}-categories and functors}

We work with strictly unital $A_\infty$-categories and use the sign convention of \cite[Section 2.1]{OZ22}.

\begin{dfn}
\label{dfn:ainfinity}
An \defn{$A_\infty$-category} $\A$ consists of a set of objects $\Ob\A$, a graded $\Bbbk$-vector space $\Hom_\A(X,Y)$ for each pair of objects $X,Y\in\Ob\A$, and graded $\Bbbk$-linear maps
\[
 \mu^n_\A:
 \Hom_\A(X_{n-1},X_n)\otimes\cdots\otimes\Hom_\A(X_0,X_1)
 \longrightarrow \Hom_\A(X_0,X_n)
\]
of degree $2-n$ for every $n\geq1$ and every sequence of objects $X_0,\ldots,X_n$. These maps satisfy the \defn{Stasheff identities}: for every $n\geq1$ and every sequence of homogeneous morphisms $a_i\in\Hom_\A(X_{i-1},X_i)$, $1\leq i\leq n$,
\begin{equation*}
\tag{SI}\label{eq:relations}
\begin{split}
 \sum_{m=1}^{n}\sum_{j=0}^{n-m}
 &(-1)^{\sum_{i=1}^j\rdeg{a_i}}
 \mu_\A^{n-m+1}\bigl(a_n,\ldots,a_{j+m+1},
 \mu_\A^m(a_{j+m},\ldots,a_{j+1}),a_j,\ldots,a_1\bigr)=0.
\end{split}
\end{equation*}
The category $\A$ is \defn{minimal} if $\mu^1_\A=0$. The category $\A$ is \defn{strictly unital} if each object $X$ admits an element $1_X\in\Hom_\A^0(X,X)$, called a \defn{strict unit}, such that
\[
 \mu^1_\A(1_X)=0,\qquad
 \mu^2_\A(a,1_X)=a,\qquad
 \mu^2_\A(1_Y,a)=(-1)^{|a|}a
\]
for every homogeneous $a\in\Hom_\A(X,Y)$, and $\mu^n_\A$ vanishes whenever one of its inputs is a strict unit and $n\geq3$.
\end{dfn}

$A_\infty$-categories generalize differential graded categories. For completeness, recall that a \defn{differential graded category} or \defn{dg category}, has cochain complexes as morphism spaces and degree-zero associative composition satisfying the Leibniz rule
\[
 d(b\circ a)=d(b)\circ a+(-1)^{|b|}b\circ d(a)
\]
for homogeneous composable morphisms $a:X\to Y$ and $b:Y\to Z$.

For an $A_\infty$-category $\A$, put
\[
 d(a)=(-1)^{|a|}\mu^1_\A(a),\qquad
 b\circ a=(-1)^{|a|}\mu^2_\A(b,a)
\]
for homogeneous composable morphisms $a$ and $b$. The case $n=1$ of \eqref{eq:relations} gives $(\mu^1_\A)^2=0$, and hence $d^2=0$. The case $n=2$ gives the Leibniz rule above, while the case $n=3$ expresses associativity up to a homotopy determined by $\mu^3_\A$. In particular, an $A_\infty$-category with $\mu^n_\A=0$ for all $n\geq3$ is a dg category under this sign conversion, and every dg category arises in this way.

The \defn{homotopy category} $H^0(\A)$ has the same objects as $\A$, with morphism spaces
\[
 \Hom_{H^0(\A)}(X,Y)
 =H^0(\Hom_\A(X,Y),\mu^1_\A).
\]
Composition is given by $[b]\circ[a]=[\mu^2_\A(b,a)]$ for degree-zero morphisms $a:X\to Y$ and $b:Y\to Z$ satisfying $\mu^1_\A(a)=\mu^1_\A(b)=0$.

\begin{dfn}
\textnormal{(cf.~\cite[Sections 3.1 and 3.5]{Kel01})}
\label{dfn:associated-algebra}\label{dfn:total-algebra}
An \defn{$A_\infty$-algebra} is a graded $\Bbbk$-vector space $A$ equipped with graded $\Bbbk$-linear maps
\[
 \mu_A^n:A^{\otimes n}\longrightarrow A,\qquad n\geq1,
\]
of degree $2-n$, satisfying the Stasheff identities \eqref{eq:relations}. The algebra $A$ is \defn{strictly unital} if it admits an element $1_A\in A^0$ satisfying the unit conditions in \cref{dfn:ainfinity}.

Let $\A$ be an $A_\infty$-category with a finite nonempty object set. Its \defn{associated $A_\infty$-algebra} is
\[
 A_\A=\bigoplus_{X,Y\in\Ob\A}\Hom_\A(X,Y),
\]
with operations obtained from those of $\A$ by multilinear extension, assigning zero to tensors of noncomposable morphisms. The strict unit of $A_\A$ is $1_{A_\A}=\sum_{X\in\Ob\A}1_X$. We retain the distinguished idempotents $e_X=1_X$ and the decomposition of $A_\A$ into the morphism spaces $\Hom_\A(X,Y)$.
\end{dfn}

In our sign convention, the product associated with $\mu_A^2$ is $ b\cdot a=(-1)^{|a|}\mu_A^2(b,a)$ for homogeneous elements $a,b\in A$.

\begin{dfn}
\textnormal{(cf.~\cite[Definition 2.5]{BSW24})}
\label{dfn:strict-functor}
An \defn{$A_\infty$-functor} $F:\A\to\B$ consists of a map $F:\Ob\A\to\Ob\B$ and graded $\Bbbk$-linear maps
\[
 F^n:  \Hom_\A(X_{n-1},X_n)\otimes\cdots\otimes\Hom_\A(X_0,X_1)
 \longrightarrow \Hom_\B(FX_0,FX_n)
\]
of degree $1-n$ for every $n\geq1$ and every sequence $X_0,\ldots,X_n\in\Ob\A$. For each decomposition $n=i_1+\cdots+i_r$ into positive integers, put $I_0=0$ and $I_t=i_1+\cdots+i_t$ for $1\leq t\leq r$. For homogeneous morphisms $a_i\in\Hom_\A(X_{i-1},X_i)$, require
\[
\begin{aligned}
 &\sum_{m=1}^n\sum_{j=0}^{n-m}
 (-1)^{\sum_{i=1}^j\rdeg{a_i}}
 F^{n-m+1}\bigl(
 a_n,\ldots,a_{j+m+1},
 \mu^m_\A(a_{j+m},\ldots,a_{j+1}),
 a_j,\ldots,a_1\bigr)\\
 &\qquad=
 \sum_{\substack{r\geq1,\ i_1,\ldots,i_r\geq1\\
                  i_1+\cdots+i_r=n}}
 \mu^r_\B\bigl(
 F^{i_r}(a_{I_r},\ldots,a_{I_{r-1}+1}),
 \ldots,F^{i_1}(a_{I_1},\ldots,a_1)\bigr).
\end{aligned}
\]
We also require $F^1(1_X)=1_{FX}$ for every $X\in\Ob\A$, and $F^n$ vanishes whenever one of its inputs is a strict unit and $n\geq2$.
\end{dfn}

Let $F:\A\to\B$ be an $A_\infty$-functor. Its first component induces a functor $H^0(F):H^0(\A)\to H^0(\B)$. We use the following terminology.
\begin{itemize}
\item The functor $F$ is \defn{strict} if $F^n=0$ for all $n\geq2$. In this case, we also write $F$ for $F^1$, and the identities reduce to $F\bigl(\mu^n_\A(a_n,\ldots,a_1)\bigr)=\mu^n_\B(Fa_n,\ldots,Fa_1)$.

\item The functor $F$ is a \defn{quasi-equivalence} if $ F^1:\Hom_\A(X,Y)\rightarrow\Hom_\B(FX,FY)$ is a quasi-isomorphism for every $X,Y\in\Ob\A$, and every object of $H^0(\B)$ is isomorphic to $FX$ for some $X\in\Ob\A$.

\item The functor $F$ is a \defn{full inclusion} if it is strict and identifies $\Ob\A$ with a subset of $\Ob\B$ and $\Hom_\A(X,Y)$ with $\Hom_\B(X,Y)$ for all $X,Y\in\Ob\A$, with all operations restricted from $\B$.
\end{itemize}

Note that two $A_\infty$-categories are called \defn{quasi-equivalent} if they are connected by a zigzag of quasi-equivalences.

\begin{dfn}
\textnormal{(cf.~\cite[Definition 2.6]{BSW24})}
\label{dfn:formality}
Let $H^*(\A)$ denote the $A_\infty$-category with the same objects as $\A$ and morphism spaces
\[
 \Hom_{H^*(\A)}(X,Y)
 =H^*(\Hom_\A(X,Y),\mu^1_\A).
\]
Its operations are given by
\[
 \mu^2_{H^*(\A)}([b],[a])=[\mu^2_\A(b,a)],
 \qquad
 \mu^n_{H^*(\A)}=0\quad(n\ne2),
\]
where $a:X\to Y$ and $b:Y\to Z$ are homogeneous morphisms satisfying $\mu^1_\A(a)=\mu^1_\A(b)=0$.

The category $\A$ is \defn{formal} if it is quasi-equivalent to $H^*(\A)$.
\end{dfn}

%%%%%%%%%%%%%%%%%%%%%%%%%%%%%%%%%%
\subsection{\texorpdfstring{$A_\infty$}{A-infinity}-modules and derived categories}

We introduce right $A_\infty$-modules and their morphism complexes to define the derived category $\D(\A)$. Representable modules will relate this construction to twisted complexes in the next subsection.

\begin{dfn}
\textnormal{(cf.~\cite[Sections 4.2 and 7.4]{Kel01})}
\label{dfn:ainfinity-module}
Let $\A$ be an $A_\infty$-category. A \defn{right $A_\infty$-module} $M$ over $\A$ assigns a graded $\Bbbk$-vector space $M(X)$ to every $X\in\Ob\A$ and graded $\Bbbk$-linear maps
\[
 \mu_M^{d+1}:
 M(X_d)\otimes\Hom_\A(X_{d-1},X_d)\otimes\cdots
 \otimes\Hom_\A(X_0,X_1)
 \longrightarrow M(X_0)
\]
of degree $1-d$, for every $d\geq0$ and every sequence $X_0,\ldots,X_d$ of objects of $\A$. For homogeneous elements $m\in M(X_d)$ and $a_i\in\Hom_\A(X_{i-1},X_i)$, these maps satisfy
\[
\begin{aligned}
 0={}&
 \sum_{j=0}^d
 (-1)^{\sum_{i=1}^j\rdeg{a_i}}
 \mu_M^{j+1}\bigl(
 \mu_M^{d-j+1}(m,a_d,\ldots,a_{j+1}),
 a_j,\ldots,a_1
 \bigr)\\
 &+
 \sum_{r=1}^d\sum_{j=0}^{d-r}
 (-1)^{\sum_{i=1}^j\rdeg{a_i}}
 \mu_M^{d-r+2}\bigl(
 m,a_d,\ldots,a_{j+r+1},
 \mu_\A^r(a_{j+r},\ldots,a_{j+1}),
 a_j,\ldots,a_1
 \bigr).
\end{aligned}
\]
In particular, the case $d=0$ gives $(\mu_M^1)^2=0$. We also require $\mu_M^2(m,1_X)=m$ for every $X\in\Ob\A$ and $m\in M(X)$, and, for $d\geq3$, $\mu_M^d$ vanishes whenever one of its $\A$-morphism inputs is a strict unit.
\end{dfn}

Recall that for $X\in\Ob\A$, the \defn{representable module} $y_X$ is given by $y_X(Y)=\Hom_\A(Y,X)$ for all $Y\in\Ob\A$, with operations
\[
 \mu_{y_X}^{d+1}(b,a_d,\ldots,a_1)
 =
 \mu_\A^{d+1}(b,a_d,\ldots,a_1),
 \qquad d\geq0,
\]
where $b\in\Hom_\A(X_d,X)$ and $a_i\in\Hom_\A(X_{i-1},X_i)$.

The dg category $\mathcal C_\infty(\A)$ has the above right $A_\infty$-modules as objects. For modules $M,N$ and $q\in\ZZ$, an element $\phi\in\Hom_{\mathcal C_\infty(\A)}^q(M,N)$ is a family of graded $\Bbbk$-linear maps
\[
 \phi^{d+1}:
 M(X_d)\otimes\Hom_\A(X_{d-1},X_d)\otimes\cdots
 \otimes\Hom_\A(X_0,X_1)
 \longrightarrow N(X_0),
 \qquad d\geq0,
\]
of degree $q-d$, for all sequences $X_0,\ldots,X_d$. For $d\geq1$, we require $\phi^{d+1}$ to vanish whenever one of its $\A$-morphism inputs is a strict unit. For $d=0$, we write $\phi_X^1:M(X)\longrightarrow N(X)$ for the component at $X$.

For homogeneous $m\in M(X_d)$ and $a_i\in\Hom_\A(X_{i-1},X_i)$, the differential is given by
\[
\begin{aligned}
 &\bigl(d_{\mathcal C_\infty(\A)}\phi\bigr)^{d+1}
       (m,a_d,\ldots,a_1)\\
 &\quad=
 \sum_{j=0}^d(-1)^{q\sum_{i=1}^j\rdeg{a_i}}
 \mu_N^{j+1}\bigl(
 \phi^{d-j+1}(m,a_d,\ldots,a_{j+1}),
 a_j,\ldots,a_1
 \bigr)\\
 &\qquad-
 (-1)^q\sum_{j=0}^d(-1)^{\sum_{i=1}^j\rdeg{a_i}}
 \phi^{j+1}\bigl(
 \mu_M^{d-j+1}(m,a_d,\ldots,a_{j+1}),
 a_j,\ldots,a_1
 \bigr)\\
 &\qquad-
 (-1)^q\sum_{r=1}^d\sum_{j=0}^{d-r}
 (-1)^{\sum_{i=1}^j\rdeg{a_i}}
 \phi^{d-r+2}\bigl(
 m,a_d,\ldots,a_{j+r+1},
 \mu_\A^r(a_{j+r},\ldots,a_{j+1}),
 a_j,\ldots,a_1
 \bigr).
\end{aligned}
\]
For homogeneous morphisms $\phi:M\to N$ and $\psi:N\to P$, with $|\phi|=q$, composition is given by
\[
\begin{aligned}
 &(\psi\circ\phi)^{d+1}(m,a_d,\ldots,a_1)=
 \sum_{j=0}^d(-1)^{q\sum_{i=1}^j\rdeg{a_i}}
 \psi^{j+1}\bigl(
 \phi^{d-j+1}(m,a_d,\ldots,a_{j+1}),
 a_j,\ldots,a_1
 \bigr).
\end{aligned}
\]
The identity of $M$ has components $(1_M)_X^1=1_{M(X)}$ and $(1_M)^{d+1}=0$ for $d\geq1$. These formulas define a dg category; cf.~\cite[Sections 6.3 and 7.4]{Kel01}.

An $A_\infty$-module morphism $\phi:M\to N$ is a degree-zero element satisfying $d_{\mathcal C_\infty(\A)}(\phi)=0$. In particular, its first components satisfy $ \mu_N^1\phi_X^1=\phi_X^1\mu_M^1$. We call $\phi$ a \defn{quasi-isomorphism} if $\phi_X^1:(M(X),\mu_M^1)\rightarrow(N(X),\mu_N^1)$ is a quasi-isomorphism for every $X\in\Ob\A$. Since $\Bbbk$ is a field, every such quasi-isomorphism is a homotopy equivalence; see \cite[Section 4.2]{Kel01}.

\begin{dfn}
\label{dfn:derived-categories}
Let $\A$ be an $A_\infty$-category.
\begin{itemize}
\item The \defn{derived category} of $\A$ is $\D(\A)=H^0\bigl(\mathcal C_\infty(\A)\bigr).$

\item For a collection $\mathcal S$ of objects of a triangulated category $\T$, let $\thick_\T(\mathcal S)$ denote the smallest full triangulated subcategory containing $\mathcal S$ and closed under isomorphisms and direct summands. The \defn{perfect derived category} of $\A$ is
\[
 \per(\A)
 =
 \thick_{\D(\A)}\{y_X:X\in\Ob\A\}.
\]
Its objects are called \defn{perfect}.
\end{itemize}
\end{dfn}

%%%%%%%%%%%%%%%%%%%%%%%%%%%%%%%%%%
\subsection{Twisted complexes and perfect categories}

We recall the definition of twisted complexes, which generalize bounded complexes of finitely generated projective modules to the setting of $A_\infty$-categories; see \cite[Remark 2.6]{OZ22}. The Yoneda construction below relates twisted complexes to the perfect modules defined in \cref{dfn:derived-categories}.

Let $\A$ be an $A_\infty$-category.  We denote by $\Add(\A)$ the $A_\infty$-category obtained from $\A$ by formally adjoining shifts and finite direct sums. Its objects are
\[
 E=\bigoplus_{i=1}^N E_i[s_i],
 \qquad E_i\in\Ob\A,\quad s_i\in\ZZ,
\]
with the empty sum giving the zero object. For $F=\bigoplus_{j=1}^M F_j[t_j]$, set
\[
 \Hom_{\Add(\A)}^q(E,F)
 =
 \bigoplus_{i=1}^N\bigoplus_{j=1}^M
 \Hom_\A^{q+t_j-s_i}(E_i,F_j).
\]
Note that for a morphism $f:E\to F$, we write $f_{ij}$ for its component from $E_i[s_i]$ to $F_j[t_j]$.

For $n\geq1$, let $E^k=\bigoplus_{i\in I_k}E_i^k[s_i^k]$, $0\leq k\leq n$, and let $f_k\in\Hom_{\Add(\A)}(E^{k-1},E^k)$, $1\leq k\leq n$, be homogeneous morphisms. For $i_0\in I_0$ and $i_n\in I_n$, the component of $\mu^n_{\Add(\A)}(f_n,\ldots,f_1)$ from $E_{i_0}^0[s_{i_0}^0]$ to $E_{i_n}^n[s_{i_n}^n]$ is defined by
\[
\begin{aligned}
 &\bigl(\mu^n_{\Add(\A)}(f_n,\ldots,f_1)\bigr)_{i_0i_n}=
 \sum_{\substack{i_1\in I_1,\ldots,i_{n-1}\in I_{n-1}}}
 (-1)^{s_{i_0}^0+\cdots+s_{i_{n-1}}^{n-1}}
 \mu^n_\A\bigl(
 (f_n)_{i_{n-1}i_n},\ldots,(f_1)_{i_0i_1}
 \bigr).
\end{aligned}
\]
The strict unit of $E$ has diagonal entries $1_{E_i}$ and all other entries zero.

\begin{dfn}
\textnormal{(cf.~\cite[Definition 2.5]{OZ22})}
\label{dfn:twisted}
A \defn{twisted complex} over $\A$ is a pair $(E,\delta)$, where $E=\bigoplus_{i=1}^N E_i[s_i]$ is an object of $\Add(\A)$ and $\delta\in\Hom_{\Add(\A)}^1(E,E)$ satisfies
\[
 \delta_{ij}\in\Hom_\A^{1+s_j-s_i}(E_i,E_j),
 \qquad
 \delta_{ij}=0\quad\text{unless }i<j,
\]
together with the \defn{Maurer--Cartan equation}
\begin{equation*}
\tag{MC}\label{eq:mc-prelim}
 \sum_{r\geq1}
 \mu^r_{\Add(\A)}(\delta,\ldots,\delta)=0.
\end{equation*}
The Maurer--Cartan equation generalizes the square-zero condition on a differential to $A_\infty$-categories. Since the matrix $(\delta_{ij})$ is strictly upper triangular, a nonzero contribution to $\mu^r_{\Add(\A)}(\delta,\ldots,\delta)$ requires
\[
 1\leq i_0<i_1<\cdots<i_r\leq N.
\]
Thus all terms with $r\geq N$ vanish, and the sum in \eqref{eq:mc-prelim} is finite.

The \defn{category of twisted complexes} $\Tw\A$ has these pairs as objects and graded morphism spaces
\[
 \Hom_{\Tw\A}\bigl((E,\delta_E),(F,\delta_F)\bigr)
 =
 \Hom_{\Add(\A)}(E,F).
\]
For $n\geq1$ and homogeneous morphisms $f_i:(E_{i-1},\delta_{i-1})\to(E_i,\delta_i)$, $1\leq i\leq n$, its operations are
\begin{equation}\label{eq:twisted-operations}
\begin{aligned}
 &\mu^n_{\Tw\A}(f_n,\ldots,f_1)=
 \sum_{j_0,\ldots,j_n\geq0}
 \mu^{n+j_0+\cdots+j_n}_{\Add(\A)}
 \bigl(
 \delta_n^{\otimes j_n}\otimes f_n
 \otimes\delta_{n-1}^{\otimes j_{n-1}}\otimes\cdots
 \otimes f_1\otimes\delta_0^{\otimes j_0}
 \bigr).
\end{aligned}
\end{equation}
Here $\delta_i^{\otimes j_i}$ denotes $j_i$ tensor factors equal to $\delta_i$, with these factors omitted when $j_i=0$. The strict upper triangularity of each $\delta_i$ again ensures that the sum is finite.
\end{dfn}

\begin{prop}
\textnormal{(cf.~\cite[Section 2.2]{OZ22} and \cite[Section 2.5]{BSW24})}
\label{prop:twisted-standard}
Let $\A$ and $\B$ be $A_\infty$-categories. The homotopy category $H^0(\Tw\A)$ is triangulated. Every strict $A_\infty$-functor $F:\A\to\B$ extends to a strict $A_\infty$-functor $\Tw F:\Tw\A\rightarrow\Tw\B$. For $E=\bigoplus_{i=1}^N E_i[s_i]$, its action on objects is
\[
 \Tw F(E,\delta)
 =
 \left(
 \bigoplus_{i=1}^N F(E_i)[s_i],
 \bigl(F(\delta_{ij})\bigr)_{i,j}
 \right),
\]
and its action on morphisms is $\bigl((\Tw F)^1(f)\bigr)_{ij}=F(f_{ij})$. The functor $H^0(\Tw F):H^0(\Tw\A)\to H^0(\Tw\B)$ is exact. If $F$ is a full inclusion, then so is $\Tw F$.
\end{prop}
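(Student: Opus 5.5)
The plan is to treat the four assertions separately. The triangulated structure is standard, so I would only indicate how it arises. Everything concerning $\Tw F$ follows from one observation: the structure of $\Add(\A)$ and $\Tw\A$ is built from the $\mu^n_\A$ by formulas whose signs depend only on the shift labels $s_i$, and a strict functor commutes with each $\mu^n_\A$.

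\textbf{Triangulated structure.} For $H^0(\Tw\A)$ I would follow \cite[Part I]{Sei08}, in the sign convention of \cite[Section 2.1]{OZ22}.
\begin{enumerate}
\item The shift $[1]$ acts by raising every $s_i$ by one and changing $\delta$ by the sign dictated by the shift convention.
\item For a closed degree-zero morphism $f:(E,\delta_E)\to(F,\delta_F)$, the cone is the twisted complex on $E[1]\oplus F$, ordered with the summands of $E[1]$ before those of $F$, with block differential $\left(\begin{smallmatrix}\delta_E&0\\ f&\delta_F\end{smallmatrix}\right)$ up to signs. This matrix is again strictly upper triangular, and \eqref{eq:mc-prelim} for it reduces to \eqref{eq:mc-prelim} for $\delta_E,\delta_F$ together with $\mu^1_{\Tw\A}(f)=0$.
\item The distinguished triangles are those isomorphic to the standard cone triangles. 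The axioms are checked as in \cite{Sei08}, or obtained by identifying $H^0(\Tw\A)$, via the Yoneda embedding, with a triangulated subcategory of $\D(\A)$ closed under cones and shifts.
\end{enumerate}
I would not reprove this and would cite it.

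\textbf{Extension of $F$.} Define $\Tw F$ on objects and morphisms by the formulas in the statement.
\begin{enumerate}
\item Since $F$ preserves the degree of morphisms, $F(\delta_{ij})\in\Hom_\B^{1+s_j-s_i}(FE_i,FE_j)$, and strict upper triangularity is evidently preserved.
\item Because $F$ is strict, $F\mu^n_\A=\mu^n_\B(F,\ldots,F)$. The component formula for $\mu^n_{\Add(\A)}$ only adds the sign $(-1)^{s^0_{i_0}+\cdots}$, which involves shifts alone. Hence applying $F$ entrywise gives a strict functor $\Add F:\Add(\A)\to\Add(\B)$.
\item Applying $\Add F$ to \eqref{eq:mc-prelim} shows that $(\bigoplus F(E_i)[s_i],F(\delta))$ is a twisted complex.
\item Applying $\Add F$ termwise to \eqref{eq:twisted-operations} gives $(\Tw F)^1\mu^n_{\Tw\A}=\mu^n_{\Tw\B}((\Tw F)^1,\ldots)$. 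So $\Tw F$ is a strict $A_\infty$-functor.
\item Units are preserved because $F(1_X)=1_{FX}$ entrywise.
\end{enumerate}

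\textbf{Exactness.} By construction $\Tw F$ commutes strictly with the shift, and it sends the cone of $f$ to the cone of $(\Tw F)^1(f)$, since it is applied entrywise to the block matrix. It therefore sends standard triangles to standard triangles. Isomorphisms in $H^0$ are preserved by any functor, so $H^0(\Tw F)$ is exact.

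\textbf{Full inclusions.} If $F$ is a full inclusion, then $\Hom_{\Add(\A)}(E,F)=\bigoplus\Hom_\A^{\bullet}(E_i,F_j)$ coincides with the corresponding space in $\Add(\B)$, and the operations in $\Tw\A$ are the restrictions of those in $\Tw\B$. Hence $\Tw F$ is injective on objects and identifies the morphism complexes, so it is a full inclusion.

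The only genuine obstacle is the sign bookkeeping in the cone and shift constructions and in verifying the triangulated axioms. For the functoriality claims this is harmless, because every sign is a function of the shift data, which $\Tw F$ leaves unchanged. For the triangulated structure itself I would rely on \cite{Sei08,OZ22} rather than recheck the signs.
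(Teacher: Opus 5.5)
Your proposal is correct and matches the paper's treatment. The paper gives no separate proof of this proposition and simply defers to \cite[Section 2.2]{OZ22} and \cite[Section 2.5]{BSW24}; your entrywise argument is the standard verification behind those citations, and citing \cite[Part I]{Sei08} for the triangulated structure of $H^0(\Tw\A)$ is consistent with how the paper handles that part. That argument rests on the strictness of $F$ together with the fact that the signs in $\mu^n_{\Add(\A)}$, and in the shift and cone constructions, depend only on the shift labels, which $\Tw F$ leaves unchanged.
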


Let $\T$ be an additive category. Its \defn{idempotent completion} $\T^\natural$ has objects $(E,p)$, where $E\in\Ob\T$ and $p\in\End_\T(E)$ satisfies $p^2=p$, and morphism spaces
\[
 \Hom_{\T^\natural}\bigl((E,p),(F,q)\bigr)
 =
 \{f\in\Hom_\T(E,F):f=qfp\}.
\]
Composition is inherited from $\T$, and the identity of $(E,p)$ is $p$. We call $\T$ \defn{split-closed} if the canonical inclusion $\T\to\T^\natural$, sending $E$ to $(E,1_E)$ and each morphism to itself, is an equivalence. See \cite[Section 2.4]{BSW24}.

We identify each $X\in\Ob\A$ with $(X,0)\in\Ob(\Tw\A)$. Applying the Yoneda functor of $\Tw\A$ and restricting modules along $\A\hookrightarrow\Tw\A$ gives an $A_\infty$-functor
\[
 \mathcal Y:\Tw\A\longrightarrow\mathcal C_\infty(\A),
 \qquad T\longmapsto y_T,
\]
extending $X\mapsto y_X$. Explicitly, for $T=(E,\delta)$ with $E=\bigoplus_{i=1}^N E_i[s_i]$, its underlying graded spaces are
\[
 y_T(X)
 =
 \Hom_{\Tw\A}((X,0),T)
 =
 \bigoplus_{i=1}^N\Hom_\A(X,E_i)[s_i].
\]
For $d\geq0$, its module operations are
\[
\begin{aligned}
 &\mu_{y_T}^{d+1}(b,a_d,\ldots,a_1)=
 \sum_{r\geq0}
 \mu_{\Add(\A)}^{d+r+1}
 \bigl(
 \delta^{\otimes r}\otimes b\otimes
 a_d\otimes\cdots\otimes a_1
 \bigr),
\end{aligned}
\]
where $b\in\Hom_{\Add(\A)}(X_d,E)$ and $a_i\in\Hom_\A(X_{i-1},X_i)$. The sum is finite because $\delta$ is strictly upper triangular.

On homotopy categories, a closed degree-zero morphism $f:T\to T'$ is sent to the class of the module morphism $\phi_f:y_T\to y_{T'}$ with components
\[
 \phi_f^{d+1}(b,a_d,\ldots,a_1)
 =
 (-1)^{|b|+\sum_{i=1}^d\rdeg{a_i}}
 \mu_{\Tw\A}^{d+2}(f,b,a_d,\ldots,a_1),
 \qquad d\geq0.
\]
The induced functor
\[
 H^0(\mathcal Y):H^0(\Tw\A)\longrightarrow\D(\A)
\]
is fully faithful and exact. Its essential image is the smallest full triangulated subcategory of $\D(\A)$ containing all $y_X$ and closed under isomorphisms; see \cite[Sections 7.5--7.6]{Kel01}. It therefore induces an equivalence
\[
 H^0(\Tw\A)^\natural\simeq\per(\A);
\]
see also \cite[Remark 2.15]{BSW24}.

%%%%%%%%%%%%%%%%%%%%%%%%%%%%%%%%%%
\subsection{Compact generators and Morita equivalence}

We recall compact generation and the Morita criteria used in our derived-equivalence constructions.

Let $\T$ be a triangulated category admitting arbitrary direct sums. An object $P\in\T$ is \defn{compact} if the natural map
\[
 \bigoplus_{i\in I}\Hom_\T(P,M_i)
 \longrightarrow
 \Hom_\T\Bigl(P,\bigoplus_{i\in I}M_i\Bigr)
\]
is an isomorphism for every family $(M_i)_{i\in I}$ of objects of $\T$, with $I$ a set. A set $\mathcal S$ of compact objects \defn{compactly generates} $\T$ if the smallest full triangulated subcategory containing $\mathcal S$ and closed under isomorphisms and arbitrary direct sums is $\T$. An object $P$ is a \defn{compact generator} if $\{P\}$ compactly generates $\T$.

For every $X\in\Ob\A$, $M\in\D(\A)$ and $j\in\ZZ$, the Yoneda isomorphism gives
\[
 \Hom_{\D(\A)}(y_X,M[j])
 \cong H^j(M(X),\mu_M^1).
\]
The category $\D(\A)$ admits arbitrary direct sums, computed objectwise, and the representable modules $y_X$ compactly generate it. Moreover,
\[
 \D(\A)^c=\per(\A),
\]
where $\D(\A)^c$ denotes the full subcategory of compact objects; see \cite[Sections 4--5]{Kel94} and \cite[Sections 6--7]{Kel01}.

Morita equivalences are defined using the categories of twisted complexes. Every $A_\infty$-functor $F:\A\to\B$ extends to an $A_\infty$-functor $\Tw F:\Tw\A\to\Tw\B$, and $H^0(\Tw F)$ is exact; see \cite[Part I, Section 3]{Sei08}. This generalizes the assertion for strict functors in \cref{prop:twisted-standard}.

\begin{dfn}\label{dfn:morita}
An $A_\infty$-functor $F:\A\to\B$ is a \defn{Morita equivalence} if the induced functor
\[
 H^0(\Tw F)^\natural:
 H^0(\Tw\A)^\natural
 \longrightarrow
 H^0(\Tw\B)^\natural
\]
is an equivalence. Two $A_\infty$-categories are called \defn{Morita equivalent} if they are connected by a zigzag of Morita equivalences.
\end{dfn}

\begin{prop}
\label{prop:morita-criterion}
Let $F:\A\to\B$ be an $A_\infty$-functor.
\begin{enumerate}
\item Suppose that $F$ is a full inclusion. Then the following are equivalent.
\begin{enumerate}
\item[(a)] $F$ is a Morita equivalence.

\item[(b)] $\thick_{\D(\B)}\{y_{FX}:X\in\Ob\A\}=\per(\B)$.

\item[(c)] For every $Y\in\Ob\B$, there exists $T_Y\in\Tw\A$ such that $(Y,0)$ is a direct summand of $(\Tw F)(T_Y)$ in $H^0(\Tw\B)$.
\end{enumerate}

\item If $F$ is a quasi-equivalence, then $F$ is a Morita equivalence.

\item If $F$ is a Morita equivalence, then there exists an exact equivalence $\widetilde F:\D(\A)\longrightarrow\D(\B)$ restricting to an equivalence $\widetilde F|_{\per(\A)}: \per(\A)\longrightarrow\per(\B)$.
\end{enumerate}
\end{prop}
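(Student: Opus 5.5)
The proof treats the three parts separately. Throughout, the Yoneda identifications $H^0(\Tw\A)^\natural\simeq\per(\A)$ and $H^0(\Tw\B)^\natural\simeq\per(\B)$ from the previous subsection turn statements about $\Tw$ into statements about thick subcategories of derived categories.

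\emph{Part (1).} Let $F$ be a full inclusion. By \cref{prop:twisted-standard}, $\Tw F$ is again a full inclusion. Hence $H^0(\Tw F)$ is fully faithful and exact, and so is $H^0(\Tw F)^\natural$.

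Compose with the Yoneda embedding $H^0(\Tw\B)\to\D(\B)$. The image of $H^0(\Tw\A)$ is then the smallest full triangulated subcategory $\langle y_{FX}\rangle$ of $\D(\B)$ containing the $y_{FX}$ and closed under isomorphisms. To see this, note that $\Tw F(E,\delta)$ is built from the $FE_i$ by the same iterated cones, and the Yoneda functor is exact.

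To handle idempotent completion, I will use the standard fact that direct summands (in $\D(\B)$) of objects of a triangulated subcategory form a triangulated subcategory. This is the Thomason--Neeman lemma; it uses that $\D(\B)$ has countable sums and hence is idempotent complete. With it, the essential image of $H^0(\Tw F)^\natural$ in $\per(\B)$ is exactly $\thick_{\D(\B)}\{y_{FX}\}$.

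Since $H^0(\Tw F)^\natural$ is fully faithful, it is an equivalence if and only if this image is all of $\per(\B)$, which proves (a)$\Leftrightarrow$(b). For (b)$\Rightarrow$(c), the same description of the thick closure shows that each $y_Y$ is a summand of an object of $\langle y_{FX}\rangle$, that is, of $y_{\Tw F(T_Y)}$. Full faithfulness of Yoneda then transports the splitting back to $H^0(\Tw\B)$. For (c)$\Rightarrow$(b), every $y_Y$ lies in the thick closure, and the $y_Y$ generate $\per(\B)$ by definition.

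\emph{Part (2).} Let $F$ be a quasi-equivalence. I first show that $(\Tw F)^1$ is a quasi-isomorphism on all morphism complexes. For twisted complexes $(E,\delta_E)$ and $(E',\delta_{E'})$, the complex $\Hom_{\Tw\A}$ carries a finite filtration by the strict upper triangularity of the differentials. On the associated graded pieces, $(\Tw F)^1$ is a direct sum of shifted copies of $F^1$, which are quasi-isomorphisms; a finite filtration argument then gives the claim.

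For non-strict $F$, the higher components $F^n$ contribute only terms that strictly raise the filtration, so the associated graded map is still $\bigoplus F^1$. Consequently $H^0(\Tw F)$ is fully faithful.

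For essential surjectivity, I argue by induction on the length $N$ of a twisted complex over $\B$. Each such complex is an iterated cone of objects $Y_i[s_i]$, each $Y_i$ is isomorphic in $H^0(\B)$ to some $FX_i$, and the connecting morphisms lift along the fully faithful functor $H^0(\Tw F)$. Since the image of an exact, fully faithful functor is closed under cones of morphisms between its objects, every object of $H^0(\Tw\B)$ lies in the image. Passing to idempotent completions preserves equivalences.

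\emph{Part (3).} This is where I expect the main obstacle: producing a functor on the big derived categories from an arbitrary (non-strict) $A_\infty$-functor. I would use the induction functor $-\otimes^{\mathbf L}_\A{}_F\B$ along the bimodule $\B(-,F-)$, equivalently left adjoint to restriction along $F$, as in Keller's treatment of $A_\infty$-modules \cite{Kel01}. It is exact, commutes with arbitrary sums, and sends $y_X$ to $y_{FX}$ compatibly with $H^0(\Tw F)$ under Yoneda.

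Its restriction to $\per(\A)=\D(\A)^c$ is identified with $H^0(\Tw F)^\natural$, hence is an equivalence onto $\per(\B)$. This uses that $\per(\A)$ is the idempotent completion of the essential image of $H^0(\Tw\A)$, which holds because $\D(\A)$ is idempotent complete.

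Finally, I apply the standard compact-generation lemma \cite{Kel94}. An exact, sum-preserving functor between compactly generated triangulated categories that is fully faithful on a set of compact generators, and whose image compactly generates the target, is an equivalence. The representables $y_{FX}$ compactly generate $\D(\B)$ since $\per(\B)$ is their thick closure, so $\widetilde F$ is an exact equivalence restricting to $\per(\A)\simeq\per(\B)$.
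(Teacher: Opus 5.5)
Your proposal is correct and takes the same route as the paper, which settles each part by citation: (1) via \cite[Lemma 2.18]{BSW24} and the Yoneda equivalences, (2) via \cite[Remark 2.17]{BSW24} and the filtration argument behind \cite[Part I, Lemma 3.25]{Sei08}, and (3) via Keller's induction-plus-compact-generation Morita theory \cite{Kel94,Kel06dg}; you simply write out those arguments. One small correction: the fact that direct summands of objects of a triangulated subcategory form a thick subcategory holds in any triangulated category and does not need countable sums, whereas idempotent completeness of $\D(\B)$ is only needed to realize $H^0(\Tw\B)^\natural$ inside $\D(\B)$, which the Yoneda equivalence $H^0(\Tw\B)^\natural\simeq\per(\B)$ already provides.
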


\begin{proof}
\noindent
(1) This follows from \cite[Lemma 2.18]{BSW24}, under the Yoneda equivalences $H^0(\Tw\A)^\natural\simeq\per(\A)$ and $H^0(\Tw\B)^\natural\simeq\per(\B)$.

\medskip\noindent
(2) This is \cite[Remark 2.17]{BSW24}, which follows from \cite[Part I, Lemma 3.25]{Sei08}.

\medskip\noindent
(3) The existence of $\widetilde F$ and its restriction to perfect objects follow from Morita theory; see \cite[Section 8]{Kel94} and \cite[Sections 3.8 and 4.6]{Kel06dg}.
\end{proof}

For right $A_\infty$-modules $T,M$ over $\A$, we use
\[
 \RHom_\A(T,M)
 =
 \Hom_{\mathcal C_\infty(\A)}(T,M),
 \qquad
 \REnd_\A(T)
 =
 \Hom_{\mathcal C_\infty(\A)}(T,T).
\]
The dg algebra $\REnd_\A(T)$ has the differential and multiplication of $\mathcal C_\infty(\A)$; see \cite[Sections 6.3--6.4 and 7.4]{Kel01}. The right $\REnd_\A(T)$-action on $\RHom_\A(T,M)$ is given by precomposition $h\cdot e=h\circ e$.
If $T$ is a compact generator of $\D(\A)$, Morita theory gives an equivalence
\[
 \RHom_\A(T,-):
 \D(\A)\longrightarrow\D\bigl(\REnd_\A(T)\bigr);
\]
see \cite[Section 8]{Kel94}.

For a finite-dimensional ordinary algebra $A$, there is an equivalence
\[
 \per(A)\simeq\K^b(\proj\text{-}A).
\]
A \defn{tilting complex} over $A$ is an object $T\in\K^b(\proj\text{-}A)$ such that $\thick_{\K^b(\proj\text{-}A)}\{T\}=\K^b(\proj\text{-}A)$ and $\Hom_{\K^b(\proj\text{-}A)}(T,T[j])=0$ for every $j\ne0$.

%%%%%%%%%%%%%%%%%%%%%%%%%%%%%%%%%%%%%%%%%%%%%%%%%%%%%%%%%%%%%%%%%%%%%%%%%%%%%%%%%%%%%%%%%%%%%
\section{Coverings and lifting \texorpdfstring{$A_\infty$}{A-infinity}-structures}
\label{sec:lifting}

\subsection{Strict group actions and orbit categories}

We recall strict group actions and the orbit categories used in our lifting arguments.

Note that a \defn{strict $A_\infty$-automorphism} of $\A$ is a strict $A_\infty$-functor $F:\A\to\A$ whose object map is bijective and whose maps $F^1:\Hom_\A(X,Y)\to\Hom_\A(FX,FY)$ are bijective for all $X,Y\in\Ob\A$.

\begin{dfn}
\textnormal{(cf.~\cite[Definitions 5.4 and 5.6]{OZ22} and \cite[Section 2.6]{BSW24})}
\label{dfn:orbit}
Let $\A$ be an $A_\infty$-category and let $G$ be a group with identity $e$. A \defn{strict action} of $G$ on $\A$ assigns a strict $A_\infty$-automorphism $g:\A\to\A$ to every $g\in G$, such that $e=\id_\A$ and $g\circ h=gh$ for all $g,h\in G$. The action on objects is \defn{free} if $gX=X$ implies $g=e$ for every $g\in G$ and $X\in\Ob\A$.

The \defn{orbit category} $\A/G$ has
\[
 \Ob(\A/G)=\Ob\A,
 \qquad
 \Hom_{\A/G}(X,Y)
 =\bigoplus_{g\in G}\Hom_\A(X,gY).
\]
For $n\geq1$, $g_1,\ldots,g_n\in G$, and homogeneous morphisms $a_i\in\Hom_\A(X_{i-1},g_iX_i)$, define the operations by multilinear extension of
\begin{equation}\label{eq:orbit}
 \mu^n_{\A/G}(a_n,\ldots,a_1)
 =\mu^n_\A\bigl(
 (g_1\cdots g_{n-1})a_n,\ldots,g_1a_2,a_1
 \bigr).
\end{equation}
The output belongs to the summand $\Hom_\A(X_0,(g_1\cdots g_n)X_n)$.
\end{dfn}

For a choice of one representative from each $G$-orbit in $\Ob\A$, let $(\A/G)_{\mathrm{rep}}$ denote the full $A_\infty$-subcategory of $\A/G$ on the chosen objects.

\begin{prop}
\textnormal{(cf.~\cite[Lemma 5.7 and Corollary 5.8]{OZ22} and \cite[Lemmas 2.20--2.21]{BSW24})}
\label{prop:orbit-standard}
Let $G$ act strictly on an $A_\infty$-category $\A$.
\begin{enumerate}
\item The operations \eqref{eq:orbit} make $\A/G$ an $A_\infty$-category. The strict unit at $X$ is $1_X$ in the $e$-summand. The identity on objects and the inclusions
\[
 \Hom_\A(X,Y)\hookrightarrow
 \bigoplus_{g\in G}\Hom_\A(X,gY)
\]
into the $e$-summands define a strict functor $\A\to\A/G$.

\item The action of $G$ extends to a strict action on $\Tw\A$. For $(E,\delta)\in\Tw\A$, with $E=\bigoplus_{i=1}^N E_i[s_i]$, the extension is given by
\[
 g(E,\delta)
 =\left(\bigoplus_{i=1}^N(gE_i)[s_i],g\delta\right),
 \qquad
 (g\delta)_{ij}=g(\delta_{ij}).
\]
For a morphism $f$ in $\Tw\A$, its components transform as $(gf)_{ij}=g(f_{ij})$.

\item The inclusion $(\A/G)_{\mathrm{rep}}\hookrightarrow\A/G$ is a quasi-equivalence and hence a Morita equivalence.
\end{enumerate}
\end{prop}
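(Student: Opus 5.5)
We prove the three parts in order; each is a direct verification from the formula \eqref{eq:orbit}. Throughout we use that every $g\in G$ acts by a strict $A_\infty$-automorphism, so that
\[
g\,\mu^n_\A(a_n,\ldots,a_1)=\mu^n_\A(ga_n,\ldots,ga_1),
\qquad
g(1_X)=1_{gX}.
\]
The second identity holds because a strict functor preserves units, by \cref{dfn:strict-functor}.

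For (1), fix homogeneous $a_i\in\Hom_\A(X_{i-1},g_iX_i)$ and set $h_k=g_1\cdots g_k$, with $h_0=e$. By definition, $\mu^m_{\A/G}(a_{j+m},\ldots,a_{j+1})$ equals $\mu^m_\A\bigl((g_{j+1}\cdots g_{j+m-1})a_{j+m},\ldots,a_{j+1}\bigr)$, and it lies in the summand of weight $g_{j+1}\cdots g_{j+m}$. When this output is inserted into $\mu^{n-m+1}_{\A/G}$, it is translated by $h_j$. Equivariance then gives
\[
h_j\,\mu^m_\A(\ldots)=\mu^m_\A(h_{j+m-1}a_{j+m},\ldots,h_ja_{j+1}).
\]
The other inputs are translated by $h_{k-1}$ as well. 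Hence every term of the Stasheff sum \eqref{eq:relations} for $\A/G$ equals the corresponding term of the Stasheff sum for $\A$ applied to $(h_{n-1}a_n,\ldots,h_1a_2,a_1)$. Group action preserves degree, so the signs $\rdeg{a_i}$ agree. That sum vanishes, which proves the Stasheff identities for $\A/G$. For units, take $1_X$ in the $e$-summand. Then $\mu^2_{\A/G}(a,1_X)=\mu^2_\A(a,1_X)=a$. Also $\mu^2_{\A/G}(1_{Y},a)=\mu^2_\A(g1_Y,a)=\mu^2_\A(1_{gY},a)=(-1)^{|a|}a$ for $a\in\Hom_\A(X,gY)$. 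The vanishing for $n\geq3$ follows the same way, since $h\,1_X=1_{hX}$ is again a strict unit. Strictness of $\A\to\A/G$ is immediate: when all $g_i=e$, formula \eqref{eq:orbit} is just $\mu^n_\A$.

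For (2), define $g$ on $\Tw\A$ entrywise as in the statement. First, $\Add(\A)$ is preserved. The operations $\mu_{\Add(\A)}$ are sums of $\mu_\A$ applied to matrix entries, with signs depending only on the shifts $s_i$. The action fixes these shifts and commutes with $\mu_\A$, so it commutes with $\mu_{\Add(\A)}$. It follows that $g\delta$ is strictly upper triangular and satisfies \eqref{eq:mc-prelim}. By \eqref{eq:twisted-operations}, $g$ also commutes with $\mu_{\Tw\A}$, so it is a strict functor. Bijectivity and the relations $e=\id$ and $g\circ h=gh$ hold entrywise. This is just the functoriality of $\Tw$ for strict functors in \cref{prop:twisted-standard}.

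For (3), the inclusion is a full inclusion, so on morphism spaces it is the identity and hence a quasi-isomorphism. It remains to check essential surjectivity on $H^0$. Given $X$, let $X'$ be the chosen representative of its orbit, with $X=gX'$. Consider $1_X\in\Hom_\A(X,g^{-1}X)=\Hom_\A(X,X')$, placed in the $g^{-1}$-summand of $\Hom_{\A/G}(X,X')$, and $1_{X'}\in\Hom_\A(X',gX)$, placed in the $g$-summand of $\Hom_{\A/G}(X',X)$. By \eqref{eq:orbit} their composites are $\mu^2_\A(g^{-1}1_{X'},1_X)=1_X$ and $\mu^2_\A(g\,1_X,1_{X'})=1_{X'}$, both in the $e$-summands. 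Both elements are $\mu^1$-closed, so $X\cong X'$ in $H^0(\A/G)$. \Cref{prop:morita-criterion}(2) then gives the Morita equivalence.

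The main obstacle is the index bookkeeping in (1): one must check that the nested translations $h_j\cdot(g_{j+1}\cdots g_{j+k-1})=h_{j+k-1}$ match across all $(m,j)$, so that a single translated tuple serves every term of the Stasheff sum.
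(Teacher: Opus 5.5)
Your route is the paper's in all three parts. In (1) you carry out the verification that the paper delegates to \cite[Lemma 5.7]{OZ22}; your bookkeeping $h_j\cdot(g_{j+1}\cdots g_{j+k-1})=h_{j+k-1}$ is correct. Reducing to inputs in single summands is also legitimate, because elements of the direct sum have finitely many nonzero components. Part (2) is the functoriality of $\Tw$, and in (3) you use units in suitable summands as mutually inverse isomorphisms, as the paper does.

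However, in (3) the group elements are inverted, and the computation fails as written. Since $\Hom_{\A/G}(X,X')=\bigoplus_k\Hom_\A(X,kX')$ and $X=gX'$, the unit $1_X\in\Hom_\A(X,X)=\Hom_\A(X,gX')$ lies in the $g$-summand, not the $g^{-1}$-summand. The space $\Hom_\A(X,g^{-1}X)=\Hom_\A(X,X')$ that you wrote does not contain $1_X$ unless $X=X'$. Likewise, $1_{X'}\in\Hom_\A(X',X')=\Hom_\A(X',g^{-1}X)$ lies in the $g^{-1}$-summand of $\Hom_{\A/G}(X',X)$, not in $\Hom_\A(X',gX)=\Hom_\A(X',g^2X')$. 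With your labels, \eqref{eq:orbit} produces $\mu^2_\A(g^{-1}1_{X'},1_X)=\mu^2_\A(1_{g^{-1}X'},1_X)$ and $\mu^2_\A(1_{gX},1_{X'})$. In both, the inputs are not even composable in $\A$ unless $g^2X'=X'$.

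The repair is to swap the weights: give $1_X$ weight $g$ and $1_{X'}$ weight $g^{-1}$. Then \eqref{eq:orbit} gives $\mu^2_{\A/G}(1_{X'},1_X)=\mu^2_\A(g\,1_{X'},1_X)=\mu^2_\A(1_X,1_X)=1_X$. It also gives $\mu^2_{\A/G}(1_X,1_{X'})=\mu^2_\A(g^{-1}1_X,1_{X'})=\mu^2_\A(1_{X'},1_{X'})=1_{X'}$. Both outputs lie in the $e$-summands, since $g\cdot g^{-1}=g^{-1}\cdot g=e$. This is exactly the paper's computation, with its pair $(X,gX)$ renamed $(X',X)$. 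With this correction, the rest of your argument for (3) goes through unchanged and the proof is complete.
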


\begin{proof}
\noindent
(1) The proof of \cite[Lemma 5.7]{OZ22} applies to any strict action. For arbitrary $G$, every morphism has only finitely many nonzero components in the defining direct sum, and each fixed \textup{(SI)} has finitely many summands. Thus all operations and identities are defined by finite sums on each input. The unit identities and the strictness of $\A\to\A/G$ follow from \eqref{eq:orbit}.

\medskip\noindent
(2) This follows from \cref{prop:twisted-standard}, since the extension preserves composition of strict functors.

\medskip\noindent
(3) For $X\in\Ob\A$ and $g\in G$, regard $1_X$ as a degree-zero morphism $X\to gX$ through the summand $\Hom_\A(X,g^{-1}(gX))=\Hom_\A(X,X)$ of $\Hom_{\A/G}(X,gX)$. Similarly, regard $1_{gX}$ as a degree-zero morphism $gX\to X$ through the $g$-summand $\Hom_\A(gX,gX)$ of $\Hom_{\A/G}(gX,X)$. Both morphisms satisfy $\mu_{\A/G}^1(1_X)=0$ and $\mu_{\A/G}^1(1_{gX})=0$, so they represent morphisms in $H^0(\A/G)$. Since the action preserves units, \eqref{eq:orbit} gives
\[
\begin{aligned}
 \mu_{\A/G}^2(1_{gX},1_X)
 &=\mu_\A^2(g^{-1}(1_{gX}),1_X)=1_X,\\
 \mu_{\A/G}^2(1_X,1_{gX})
 &=\mu_\A^2(g(1_X),1_{gX})=1_{gX},
\end{aligned}
\]
where the outputs lie in the $e$-summands. Thus $X$ and $gX$ are isomorphic in $H^0(\A/G)$.

Every object of $\A/G$ is therefore isomorphic in $H^0(\A/G)$ to a chosen orbit representative. The inclusion $(\A/G)_{\mathrm{rep}}\hookrightarrow\A/G$ is the identity on each morphism complex between chosen representatives, and hence is a quasi-equivalence. It is consequently a Morita equivalence by \cref{prop:morita-criterion}.
\end{proof}

%%%%%%%%%%%%%%%%%%%%%%%%%%%%%%%%%%%%%
\subsection{The strict lifting criterion}
\label{sec:strict-lifting}

In the preceding subsection, we constructed the $A_\infty$-structure on an orbit category $\A/G$ from a strict group action on an $A_\infty$-category $\A$.

We now consider the natural converse problem: given an $A_\infty$-category $\B$ and covering data specified by a decomposition of its morphism spaces into group weights, we seek an $A_\infty$-structure on the covering for which the projection to $\B$ is a strict $A_\infty$-functor. We begin with a lemma showing how injective maps compatible with the operations detect \textup{(SI)} and the strict unit identities.

\begin{lem}\label{lem:faithful}
Let $\C$ consist of a set $\Ob\C$, a graded $\Bbbk$-vector space $\Hom_\C(X,Y)$ for each $X,Y\in\Ob\C$, and graded $\Bbbk$-linear maps
\[
 \mu_\C^n:
 \Hom_\C(X_{n-1},X_n)\otimes\cdots\otimes\Hom_\C(X_0,X_1)
 \longrightarrow\Hom_\C(X_0,X_n)
\]
of degree $2-n$, for every $n\geq1$ and every sequence $X_0,\ldots,X_n\in\Ob\C$. Let $\B$ be an $A_\infty$-category. Suppose that a map $F:\Ob\C\to\Ob\B$ and injective degree-zero $\Bbbk$-linear maps
$ F_{X,Y}:\Hom_\C(X,Y)\rightarrow\Hom_\B(FX,FY)$ satisfy
\[
 F_{X_0,X_n}\bigl(\mu_\C^n(a_n,\ldots,a_1)\bigr)
 =\mu_\B^n(Fa_n,\ldots,Fa_1)
\]
for every $n\geq1$ and homogeneous $a_i\in\Hom_\C(X_{i-1},X_i)$, where $Fa_i=F_{X_{i-1},X_i}(a_i)$.  Then the operations $\mu_\C^n$ satisfy \textup{(SI)}.

If, for every $X\in\Ob\C$, there exists an element $1_X\in\Hom_\C^0(X,X)$ such that $F_{X,X}(1_X)=1_{FX}$, then these elements are strict units. In this case, $\C$ is a strictly unital $A_\infty$-category, and $F$ defines a strict $A_\infty$-functor $\C\to\B$.
\end{lem}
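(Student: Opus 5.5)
The argument is a direct transport-of-structure check: every identity we need in $\C$ is sent by the injective maps $F_{X,Y}$ to the corresponding identity in $\B$, which holds there. Injectivity then forces the identity in $\C$.

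\textbf{Stasheff identities.} Fix $n\geq1$, objects $X_0,\ldots,X_n$ and homogeneous $a_i\in\Hom_\C(X_{i-1},X_i)$. Let $S_\C$ be the left-hand side of \textup{(SI)} computed in $\C$; it lies in $\Hom_\C(X_0,X_n)$.

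We apply $F_{X_0,X_n}$ to $S_\C$ term by term. Each summand has the form $\mu_\C^{n-m+1}(a_n,\ldots,\mu_\C^m(a_{j+m},\ldots,a_{j+1}),\ldots,a_1)$. Using the compatibility hypothesis twice, once for the outer operation and once for the inner one, gives
\[
F\bigl(\mu_\C^{n-m+1}(\ldots,\mu_\C^m(\ldots),\ldots)\bigr)=\mu_\B^{n-m+1}\bigl(Fa_n,\ldots,\mu_\B^m(Fa_{j+m},\ldots,Fa_{j+1}),\ldots,Fa_1\bigr).
\]

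The signs need one observation. Since each $F_{X,Y}$ has degree zero, $|Fa_i|=|a_i|$, so $\rdeg{Fa_i}=\rdeg{a_i}$ and the sign $(-1)^{\sum_{i\le j}\rdeg{a_i}}$ is unchanged. Hence $F_{X_0,X_n}(S_\C)$ equals the left-hand side of \textup{(SI)} in $\B$ for the inputs $Fa_n,\ldots,Fa_1$. That expression vanishes because $\B$ is an $A_\infty$-category, and injectivity gives $S_\C=0$. Homogeneity of the inputs is preserved under $F$, so \textup{(SI)} for $\B$ applies. By multilinearity the identities for homogeneous inputs suffice.

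\textbf{Strict units.} Let $a\in\Hom_\C(X,Y)$ be homogeneous. Then
\[
F\bigl(\mu_\C^2(a,1_X)\bigr)=\mu_\B^2(Fa,1_{FX})=Fa,
\]
so $\mu_\C^2(a,1_X)=a$ by injectivity. In the same way $F(\mu_\C^2(1_Y,a))=(-1)^{|Fa|}Fa=F\bigl((-1)^{|a|}a\bigr)$, which gives the second unit identity. Also $F\mu_\C^1(1_X)=\mu_\B^1(1_{FX})=0$, so $\mu_\C^1(1_X)=0$. For $n\geq3$, any $\mu_\C^n$ with a unit among its inputs maps to a $\mu_\B^n$ with a strict unit among its inputs, which is zero, so the original operation is zero. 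Finally, $1_X$ has degree $0$ because $F$ preserves degree and $1_X$ was chosen in $\Hom^0_\C(X,X)$.

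\textbf{The strict functor.} Take $F^1=F_{X,Y}$ and $F^n=0$ for $n\geq2$. For a strict functor the functor identities of \cref{dfn:strict-functor} reduce to $F^1\circ\mu^n_\C=\mu^n_\B\circ(F^1)^{\otimes n}$, which is exactly the hypothesis. The unit condition $F^1(1_X)=1_{FX}$ is assumed. The condition that $F^n$ vanish on unit inputs for $n\geq2$ is automatic because those components are zero.

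There is no real obstacle in this argument. The only points needing care are that the degree-zero condition keeps the Koszul signs unchanged, and that the identity $\mu_\C^2(1_Y,a)=(-1)^{|a|}a$ is pulled back using $|Fa|=|a|$.
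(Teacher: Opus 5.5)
Your proposal is correct and follows essentially the same route as the paper: apply the injective map $F_{X_0,X_n}$ to each Stasheff and unit identity, use compatibility for both the inner and outer operations (signs are unchanged because $F$ has degree zero), and conclude by injectivity. The strict-functor conclusion then reduces directly to the compatibility hypothesis together with $F_{X,X}(1_X)=1_{FX}$.
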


\begin{proof}
Fix $n\geq1$ and homogeneous $a_i\in\Hom_\C(X_{i-1},X_i)$. Apply $F_{X_0,X_n}$ to the left-hand side of \textup{(SI)} for the operations of $\C$ on these inputs. Compatibility with the outer and inner operations replaces each summand by the corresponding summand for $\B$ on $Fa_n,\ldots,Fa_1$. The signs agree because $\rdeg{Fa_i}=\rdeg{a_i}$. The resulting sum vanishes by \textup{(SI)} for $\B$, so injectivity of $F_{X_0,X_n}$ proves \textup{(SI)} for $\C$.

Suppose now that $F_{X,X}(1_X)=1_{FX}$ for every $X$. For homogeneous $a\in\Hom_\C(X,Y)$, compatibility gives
\[
\begin{aligned}
 F_{X,X}\bigl(\mu_\C^1(1_X)\bigr)&=0,\\
 F_{X,Y}\bigl(\mu_\C^2(a,1_X)-a\bigr)&=0,\\
 F_{X,Y}\bigl(\mu_\C^2(1_Y,a)-(-1)^{|a|}a\bigr)&=0.
\end{aligned}
\]
For $n\geq3$, the image of $\mu_\C^n(a_n,\ldots,a_1)$ under $F_{X_0,X_n}$ also vanishes whenever an input is one of the elements $1_X$. Injectivity therefore gives all the strict unit identities. The stated compatibility then makes $F$ a strict $A_\infty$-functor.
\end{proof}

\begin{thm}\label{prop:lift}
Let $G$ be a group with identity $e$, and let $\B$ be an $A_\infty$-category equipped with decompositions
\[
 \Hom_\B(X,Y)=\bigoplus_{g\in G}\Hom_\B^g(X,Y),
 \qquad 1_X\in\Hom_\B^e(X,X),
\]
for all $X,Y\in\Ob\B$. Each $\Hom_\B^g(X,Y)$ is a cohomologically graded subspace; the superscript $g$ specifies its \defn{group weight}. Consider the object set and graded morphism spaces
\[
 \Ob\C=\Ob\B\times G,
 \qquad
 \Hom_\C((X,h),(Y,l))=\Hom_\B^{h^{-1}l}(X,Y).
\]
Define $\pi$ on objects by $\pi(X,h)=X$ and on morphism spaces by the inclusions
\[
 \pi_{(X,h),(Y,l)}:
 \Hom_\B^{h^{-1}l}(X,Y)\hookrightarrow\Hom_\B(X,Y).
\]
Then the following are equivalent.
\begin{enumerate}[label=\textup{(\arabic*)}]
\item There exist operations $\mu_\C^n$ and strict units on the prescribed objects and morphism spaces such that $\C$ is an $A_\infty$-category and the prescribed maps $\pi$ define a strict $A_\infty$-functor $\pi:\C\to\B$.

\item For every $n\geq1$, $X_0,\ldots,X_n\in\Ob\B$, and $g_1,\ldots,g_n\in G$, we have
\begin{equation}\label{eq:homogeneous}
\begin{aligned}
 &\mu_\B^n\bigl(
 \Hom_\B^{g_n}(X_{n-1},X_n)\otimes\cdots
 \otimes\Hom_\B^{g_1}(X_0,X_1)
 \bigr)\subseteq\Hom_\B^{g_1\cdots g_n}(X_0,X_n).
\end{aligned}
\end{equation}
\end{enumerate}
\end{thm}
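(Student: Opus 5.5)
We prove the two implications separately; in both directions the essential input is \cref{lem:faithful}, applied to the projection $\pi$, whose components are injective inclusions.

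For \textup{(2)}$\Rightarrow$\textup{(1)}, we define $\mu_\C^n$ by restriction. Take objects $(X_0,h_0),\ldots,(X_n,h_n)$ and homogeneous $a_i\in\Hom_\C((X_{i-1},h_{i-1}),(X_i,h_i))=\Hom_\B^{g_i}(X_{i-1},X_i)$ with $g_i=h_{i-1}^{-1}h_i$. The product telescopes:
\[
 g_1\cdots g_n=h_0^{-1}h_n.
\]
By \eqref{eq:homogeneous}, the element $\mu_\B^n(a_n,\ldots,a_1)$ lies in $\Hom_\B^{h_0^{-1}h_n}(X_0,X_n)=\Hom_\C((X_0,h_0),(X_n,h_n))$. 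We set $\mu_\C^n(a_n,\ldots,a_1)$ equal to this element and extend multilinearly over the graded components. The degree is $2-n$ because each weight summand is a graded subspace. By construction,
\[
 \pi\bigl(\mu_\C^n(a_n,\ldots,a_1)\bigr)=\mu_\B^n(\pi a_n,\ldots,\pi a_1).
\]
For units, $1_X\in\Hom_\B^e(X,X)=\Hom_\C((X,h),(X,h))$, so we take $1_{(X,h)}=1_X$. Then $\pi(1_{(X,h)})=1_X$, and \cref{lem:faithful} shows that $\C$ is a strictly unital $A_\infty$-category and that $\pi$ is a strict $A_\infty$-functor. The same lemma also gives uniqueness: strictness of $\pi$ forces $\pi\mu_\C^n=\mu_\B^n\circ\pi^{\otimes n}$, and since $\pi$ is injective, $\mu_\C^n$ is determined. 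Unit preservation then determines $1_{(X,h)}$.

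For \textup{(1)}$\Rightarrow$\textup{(2)}, let $g_1,\ldots,g_n\in G$ and $a_i\in\Hom_\B^{g_i}(X_{i-1},X_i)$. Put $h_0=e$ and $h_i=g_1\cdots g_i$. Then $h_{i-1}^{-1}h_i=g_i$, so each $a_i$ is the image under $\pi$ of an element $\tilde a_i\in\Hom_\C((X_{i-1},h_{i-1}),(X_i,h_i))$. Since $\pi$ is strict,
\[
 \mu_\B^n(a_n,\ldots,a_1)=\pi\bigl(\mu_\C^n(\tilde a_n,\ldots,\tilde a_1)\bigr).
\]
The right-hand side lies in the image of $\Hom_\C((X_0,e),(X_n,h_n))$, which is $\Hom_\B^{g_1\cdots g_n}(X_0,X_n)$. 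Both sides of \eqref{eq:homogeneous} are linear in each $a_i$, so this suffices.

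There is no serious obstacle. The only point needing care is the bookkeeping that every tuple of weights $(g_1,\ldots,g_n)$ is realized by some chain of objects in $\C$, which the choice $h_i=g_1\cdots g_i$ ensures. The reverse direction relies on the telescoping identity $g_1\cdots g_n=h_0^{-1}h_n$.
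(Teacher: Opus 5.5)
Your proposal is correct and matches the paper's proof step for step. For \textup{(1)}$\Rightarrow$\textup{(2)} you use the chain $h_0=e$, $h_i=g_1\cdots g_i$ together with strictness of $\pi$, and for \textup{(2)}$\Rightarrow$\textup{(1)} you define the operations by restriction via the telescoping product and then apply \cref{lem:faithful}. Two small differences: despite your opening sentence, the direction \textup{(1)}$\Rightarrow$\textup{(2)} uses only strictness of $\pi$ and not \cref{lem:faithful}, and your uniqueness remark is what the paper records separately as \cref{prop:lift-orbit}\textup{(1)}.
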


\begin{proof}
$\underline{(1)\Rightarrow(2)}$. Fix the resulting $A_\infty$-structure on $\C$. For $n\geq1$, choose homogeneous $a_i\in\Hom_\B^{g_i}(X_{i-1},X_i)$. Set $h_0=e$ and $h_i=g_1\cdots g_i$ for $1\leq i\leq n$. Since $h_{i-1}^{-1}h_i=g_i$, we can regard $a_i$ as an element of $\Hom_\C((X_{i-1},h_{i-1}),(X_i,h_i))$. Strictness of $\pi$ gives
\[
 \mu_\B^n(a_n,\ldots,a_1)
 =\pi_{(X_0,e),(X_n,h_n)}
   \bigl(\mu_\C^n(a_n,\ldots,a_1)\bigr).
\]
The image of $\pi_{(X_0,e),(X_n,h_n)}$ is $\Hom_\B^{g_1\cdots g_n}(X_0,X_n)$. Thus \eqref{eq:homogeneous} follows.

\smallskip\noindent
$\underline{(2)\Rightarrow(1)}$. For objects $(X_0,h_0),\ldots,(X_n,h_n)$, the equality $(h_0^{-1}h_1)(h_1^{-1}h_2)\cdots(h_{n-1}^{-1}h_n) =h_0^{-1}h_n$ and \eqref{eq:homogeneous} allow us to define
\[
 \mu_\C^n(a_n,\ldots,a_1)=\mu_\B^n(a_n,\ldots,a_1)
 \in\Hom_\C((X_0,h_0),(X_n,h_n))
\]
for $a_i\in\Hom_\C((X_{i-1},h_{i-1}),(X_i,h_i))$. These maps have degree $2-n$. The assumption on the units gives $1_X\in\Hom_\B^e(X,X)=\Hom_\C((X,h),(X,h))$, so we may set $1_{(X,h)}=1_X$. The maps $\pi_{(X,h),(Y,l)}$ are injective, send these elements to the units of $\B$, and commute with every restricted operation. By \cref{lem:faithful}, the operations and units make $\C$ an $A_\infty$-category, and $\pi$ is a strict $A_\infty$-functor. This proves \textup{(1)}.
\end{proof}

The lifted structure has the following uniqueness and orbit properties.

\begin{prop}
\label{prop:lift-orbit}
In the setting of \cref{prop:lift}, assume \eqref{eq:homogeneous}.
\begin{enumerate}[label=\textup{(\arabic*)}]
\item The $A_\infty$-structure on $\C$ for which $\pi$ is a strict $A_\infty$-functor is unique. Its operations are restrictions of those of $\B$, and its strict units are $1_{(X,h)}=1_X$.

\item The assignments $t(X,h)=(X,th)$ and
\[
 t:\Hom_\C((X,h),(Y,l))
 \longrightarrow\Hom_\C((X,th),(Y,tl)),
 \qquad a\longmapsto a,
\]
for $t\in G$, define a strict action of $G$ on $\C$ which is free on objects.

\item For the representatives $(X,e)$, $X\in\Ob\B$, there is a strict isomorphism $(\C/G)_{\mathrm{rep}}\cong\B$ sending $(X,e)$ to $X$.
\end{enumerate}
\end{prop}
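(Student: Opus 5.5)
For (1), I would start from strictness and injectivity. Suppose operations $\mu_\C^n$ and strict units make $\C$ an $A_\infty$-category with $\pi$ strict. Then for $a_i\in\Hom_\C((X_{i-1},h_{i-1}),(X_i,h_i))$ we get
\[
\pi\bigl(\mu_\C^n(a_n,\ldots,a_1)\bigr)=\mu_\B^n(\pi a_n,\ldots,\pi a_1).
\]
Since each $\pi_{(X,h),(Y,l)}$ is an inclusion, this forces $\mu_\C^n$ to be the restriction of $\mu_\B^n$.

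For the units, strict functors in \cref{dfn:strict-functor} satisfy $F^1(1_X)=1_{FX}$. This gives $\pi(1_{(X,h)})=1_X$, hence $1_{(X,h)}=1_X$ by injectivity. Even without invoking that clause, strict units are unique: any two strict units $u,u'$ satisfy $u=\mu^2(u',u)=u'$, using that both have degree $0$. Existence of this structure is the direction $(2)\Rightarrow(1)$ of \cref{prop:lift}.

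For (2), I would first check well-definedness. Since $(th)^{-1}(tl)=h^{-1}l$, the identity map sends $\Hom_\C((X,h),(Y,l))=\Hom_\B^{h^{-1}l}(X,Y)$ onto $\Hom_\C((X,th),(Y,tl))$. It is bijective on objects and on morphism spaces. It commutes with all $\mu_\C^n$, because these are restrictions of $\mu_\B^n$ and are evaluated on the same elements. So each $t$ is a strict automorphism, and it preserves units. The identities $e=\id$ and $t\circ s=ts$ are immediate on objects and trivial on morphisms. Freeness holds because $(X,th)=(X,h)$ forces $th=h$, hence $t=e$.

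For (3), I would define $\Phi:(\C/G)_{\mathrm{rep}}\to\B$ by $(X,e)\mapsto X$. On morphisms,
\[
\Hom_{\C/G}((X,e),(Y,e))=\bigoplus_{g}\Hom_\C((X,e),(Y,g))=\bigoplus_g\Hom_\B^{g}(X,Y)=\Hom_\B(X,Y),
\]
and $\Phi$ is the sum of the inclusions. This is a bijection by the weight decomposition, and it sends units to units.

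The main check is compatibility with operations. Take $a_i$ in the $g_i$-summand, so $a_i\in\Hom_\C((X_{i-1},e),(X_i,g_i))$. Formula \eqref{eq:orbit} gives
\[
\mu^n_{\C/G}(a_n,\ldots,a_1)=\mu_\C^n\bigl((g_1\cdots g_{n-1})a_n,\ldots,g_1a_2,a_1\bigr),
\]
where $(g_1\cdots g_{i-1})a_i$ is the same element $a_i$, now regarded as a morphism $(X_{i-1},g_1\cdots g_{i-1})\to(X_i,g_1\cdots g_i)$. These inputs are composable in $\C$, and $\mu_\C^n$ is the restriction of $\mu_\B^n$, so the value is $\mu_\B^n(a_n,\ldots,a_1)$. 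It lies in $\Hom_\C((X_0,e),(X_n,g_1\cdots g_n))$, the summand that $\Phi$ embeds as $\Hom_\B^{g_1\cdots g_n}$. Thus $\Phi$ is strict with $\Phi\circ\mu^n=\mu^n_\B\circ\Phi^{\otimes n}$ on homogeneous-weight inputs, and therefore on all inputs by multilinearity.

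The only real subtlety I expect is bookkeeping the object labels in the orbit formula. After that, the statement reduces to the fact that every operation is literally $\mu_\B$.
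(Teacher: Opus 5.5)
Your proposal is correct and follows essentially the same route as the paper's proof: uniqueness from strictness plus injectivity of the inclusions, the action by identity maps using $(th)^{-1}(tl)=h^{-1}l$, and the orbit formula \eqref{eq:orbit} reducing every operation on representatives to $\mu_\B^n$. The paper also records the trivial check that the objects $(X,e)$ form a system of orbit representatives, since $(X,h)=h(X,e)$ and the action preserves the first coordinate. Your remark that strict units are unique, independently of the unit clause for $\pi$, is a harmless extra.
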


\begin{proof}
\noindent
(1) Strictness of $\pi$ requires $\pi_{(X_0,h_0),(X_n,h_n)} \bigl(\mu_\C^n(a_n,\ldots,a_1)\bigr) =\mu_\B^n(a_n,\ldots,a_1)$. Since each morphism map of $\pi$ is the prescribed inclusion, every $\mu_\C^n$ must be the corresponding restriction of $\mu_\B^n$. Moreover, preservation of units requires $\pi_{(X,h),(X,h)}(1_{(X,h)})=1_X$, while the injectivity gives $1_{(X,h)}=1_X$. Thus the structure constructed in \cref{prop:lift} is unique.

\medskip\noindent
(2) For $t,h,l\in G$, the equality $(th)^{-1}(tl)=h^{-1}l$ identifies the source and target of the stated morphism map with the same graded space $\Hom_\B^{h^{-1}l}(X,Y)$. The assignments satisfy the group laws and preserve the restricted operations and units, so the action is strict. It is free on objects because $th=h$ implies $t=e$.

\medskip\noindent
(3) Every object $(X,h)$ lies in the orbit of $(X,e)$, and these are distinct orbits for distinct $X$. For the chosen representatives, we have
\[
\begin{aligned}
 \Hom_{(\C/G)_{\mathrm{rep}}}((X,e),(Y,e))
 &=\bigoplus_{g\in G}\Hom_\C((X,e),(Y,g))\\
 &=\bigoplus_{g\in G}\Hom_\B^g(X,Y)
 \cong\Hom_\B(X,Y).
\end{aligned}
\]
For $a_i\in\Hom_\B^{g_i}(X_{i-1},X_i)$, regarded as morphisms between representatives in $\C/G$, \eqref{eq:orbit} gives
\[
\begin{aligned}
 \mu_{\C/G}^n(a_n,\ldots,a_1)
 &=\mu_\C^n\bigl(
 (g_1\cdots g_{n-1})a_n,\ldots,g_1a_2,a_1
 \bigr)\\
 &=\mu_\B^n(a_n,\ldots,a_1).
\end{aligned}
\]
The second equality holds because the action changes only the source and target objects, while the operations of $\C$ are restrictions of those of $\B$. The canonical identifications preserve every operation and the strict units, proving the stated strict isomorphism.
\end{proof}

We conclude by showing that operations related by the orbit formula satisfy the Stasheff identities \eqref{eq:relations} simultaneously.

\begin{cor}\label{prop:reflection}
Let $\Ob\A$ be a set, with a graded $\Bbbk$-vector space $\Hom_\A(X,Y)$ for each $X,Y\in\Ob\A$, and $\Bbbk$-linear maps
\[
 \mu_\A^n:
 \Hom_\A(X_{n-1},X_n)\otimes\cdots\otimes\Hom_\A(X_0,X_1)
 \longrightarrow\Hom_\A(X_0,X_n)
\]
of degree $2-n$ for every $n\geq1$ and every sequence $X_0,\ldots,X_n\in\Ob\A$. Suppose that a group $G$ acts on $\Ob\A$ and by degree-zero $\Bbbk$-linear isomorphisms $g:\Hom_\A(X,Y)\longrightarrow\Hom_\A(gX,gY)$ such that
\[
 g\bigl(\mu_\A^n(a_n,\ldots,a_1)\bigr)
 =\mu_\A^n(ga_n,\ldots,ga_1)
\]
for every $g\in G$, $n\geq1$, and composable inputs $a_i\in\Hom_\A(X_{i-1},X_i)$. Give $\A/G$ the objects and graded morphism spaces of \cref{dfn:orbit}, and define its operations by \eqref{eq:orbit}. Then the following are equivalent.
\begin{enumerate}[label=\textup{(\arabic*)}]
\item The operations $\mu_\A^n$ satisfy the Stasheff identities \eqref{eq:relations}.
\item The operations $\mu_{\A/G}^n$ satisfy the Stasheff identities \eqref{eq:relations}.
\end{enumerate}
\end{cor}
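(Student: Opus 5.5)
The plan is to treat both implications through \cref{lem:faithful}, using the grading of $\Hom_{\A/G}(X,Y)=\bigoplus_{g\in G}\Hom_\A(X,gY)$ by its summands. Two remarks come first. The output of \eqref{eq:orbit} on inputs $a_i\in\Hom_\A(X_{i-1},g_iX_i)$ lies in the $g_1\cdots g_n$-summand. The Stasheff identities are multilinear, so for $\A/G$ it suffices to check them on inputs that each lie in a single summand. Note that freeness of the action is not assumed, and no units occur, so only the first half of \cref{lem:faithful} is used.

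\emph{$(2)\Rightarrow(1)$.} Take all inputs in the $e$-summands. Then \eqref{eq:orbit} gives $\mu^n_{\A/G}(a_n,\ldots,a_1)=\mu^n_\A(a_n,\ldots,a_1)$, and this lies in the $e$-summand. So the identity on objects together with the inclusions $\Hom_\A(X,Y)\hookrightarrow\Hom_{\A/G}(X,Y)$ into the $e$-summands are injective degree-zero maps that commute with all operations. Since $\A/G$ satisfies \textup{(SI)} by assumption, \cref{lem:faithful} (applied with $\C=\A$ and $\B=\A/G$) yields \textup{(SI)} for $\mu_\A$. The lemma's proof only uses \textup{(SI)} in $\B$, so it applies in this setting.

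\emph{$(1)\Rightarrow(2)$.} I would mimic the covering construction of \cref{prop:lift}.
\begin{itemize}
\item Define $\C$ with objects $\Ob\A\times G$ and $\Hom_\C((X,h),(Y,l))=\Hom_\A(X,h^{-1}lY)$, the $h^{-1}l$-summand of $\Hom_{\A/G}(X,Y)$.
\item Let $\mu^n_\C$ be the restriction of $\mu^n_{\A/G}$. This is well defined by the first remark, since $(h_0^{-1}h_1)\cdots(h_{n-1}^{-1}h_n)=h_0^{-1}h_n$.
\item Define $\Phi:\C\to\A$ on objects by $(X,h)\mapsto hX$, and on morphisms by $a\mapsto ha\in\Hom_\A(hX,lY)$. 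Each component of $\Phi$ is a linear isomorphism, in particular injective.
\end{itemize}

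The key computation is compatibility. Take $a_i\in\Hom_\A(X_{i-1},g_iX_i)$, with $h_i=h_0g_1\cdots g_i$. By equivariance,
\[
h_0\,\mu^n_\A\bigl((g_1\cdots g_{n-1})a_n,\ldots,g_1a_2,a_1\bigr)=\mu^n_\A(h_{n-1}a_n,\ldots,h_1a_2,h_0a_1),
\]
that is, $\Phi\mu^n_\C=\mu^n_\A(\Phi a_n,\ldots,\Phi a_1)$. Since $\A$ satisfies \textup{(SI)}, \cref{lem:faithful} gives \textup{(SI)} for $\C$.

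Finally, consider an instance of \textup{(SI)} in $\A/G$ with $a_i$ in the $g_i$-summands. Set $h_0=e$ and $h_i=g_1\cdots g_i$. Every inner and outer operation appearing in that identity is, summand by summand, exactly the corresponding operation of $\C$ on the objects $(X_i,h_i)$, and the signs agree because the degrees agree. So the identity in $\A/G$ is literally the identity in $\C$, which proves (2).

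The only delicate step is the equivariance bookkeeping in the displayed compatibility: checking that the accumulated products $g_1\cdots g_{i-1}$ in \eqref{eq:orbit} become exactly $h_{i-1}$ after applying $h_0$. The identification of inner operations in the $\A/G$ identity with operations of $\C$ also requires care, since an inner output lands in the summand indexed by the product of the relevant weights. The rest is formal.
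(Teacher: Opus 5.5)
Your proposal is correct. Your $(2)\Rightarrow(1)$ is exactly the paper's argument.

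For $(1)\Rightarrow(2)$ you take a different route. The paper only remarks that the direct verification behind \cref{prop:orbit-standard} uses nothing beyond $G$-equivariance and \textup{(SI)} for $\mu_\A$. That verification is deferred to the proof of Lemma~5.7 in \cite{OZ22}. It amounts to substituting \eqref{eq:orbit} into both the inner and the outer operations. Equivariance then pushes the translation $g_1\cdots g_j$ inside the inner $\mu^m_\A$. This identifies the $\A/G$-identity on $(a_n,\ldots,a_1)$ with the $\A$-identity on $\bigl((g_1\cdots g_{n-1})a_n,\ldots,g_1a_2,a_1\bigr)$.

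Your argument has three steps:
\begin{enumerate}
\item You pass through the covering $\C$, which is precisely the lift of \cref{prop:lift} for the summand grading of $\A/G$. Via $\Phi$, it is a thickened copy of $\A$.
\item You pull \textup{(SI)} back from $\A$ to $\C$ with \cref{lem:faithful}.
\item You descend \textup{(SI)} to $\A/G$, since each instance of \textup{(SI)} on summand-homogeneous inputs, with $h_0=e$, is literally an instance in $\C$.
\end{enumerate}

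Your route stays entirely within the paper's own machinery. It needs the equivariance bookkeeping only for a single operation rather than inside nested compositions. It also makes explicit a descent principle converse to \cref{prop:lift}: for a grading satisfying \eqref{eq:homogeneous}, the base satisfies \textup{(SI)} if and only if its lift does. The paper's route costs one sentence but leaves the actual computation to \cite{OZ22}.
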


\begin{proof}
$\underline{(1)\Rightarrow(2)}$. Note that the verification of \eqref{eq:relations} in the proof of \cref{prop:orbit-standard} uses only $G$-equivariance and \eqref{eq:relations} for the original operations, so it applies here.

\smallskip\noindent
$\underline{(2)\Rightarrow(1)}$. For $X,Y\in\Ob\A$, let $\iota_{X,Y}:\Hom_\A(X,Y)\hookrightarrow\Hom_{\A/G}(X,Y)$ be the inclusion into the $e$-summand, where $e$ is the identity of $G$. Taking $g_1=\cdots=g_n=e$ in \eqref{eq:orbit} gives
\[
 \mu_{\A/G}^n(\iota a_n,\ldots,\iota a_1)
 =\iota_{X_0,X_n}\bigl(\mu_\A^n(a_n,\ldots,a_1)\bigr)
\]
for homogeneous $a_i\in\Hom_\A(X_{i-1},X_i)$. As in the proof of \cref{lem:faithful}, applying $\iota_{X_0,X_n}$ to the left-hand side of \eqref{eq:relations} for $\A$ gives the left-hand side for $\A/G$ on $\iota a_n,\ldots,\iota a_1$, with the same signs. The latter vanishes by \textup{(2)}, and injectivity of $\iota_{X_0,X_n}$ proves \textup{(1)}.
\end{proof}

%%%%%%%%%%%%%%%%%%%%%%%%%%
\subsection{An example of gentle algebras}
\label{sec:gentle-cover}

We illustrate \cref{prop:lift} with a simple example involving gentle algebras. In the quivers below, a dashed curve joining two consecutive arrows indicates that their composition is zero. Let
\[
Q=\quad
% BEGIN BASE QUIVER
\begin{tikzpicture}[>=stealth,baseline=(current bounding box.center)]
 \node (q1) at (0,0) {$1$};
 \node (q0) at (2,0) {$0$};
 \node (q2) at (1,-1.4) {$2$};
 \draw[->] (q0) -- node[above] {$a$} (q1);
 \draw[->] (q0) to[bend right=50] node[above] {$d$} (q1);
 \draw[->] (q1) -- node[left] {$b$} (q2);
 \draw[->] (q2) -- node[right] {$c$} (q0);
 % The three dashed curves mark ba=0, ac=0, and cb=0.
 \draw[densely dashed] (0.58,0)
 .. controls (0.58,-0.19) and (0.47,-0.39)
 .. (0.3,-0.42);
 \draw[densely dashed] (1.7,-0.42)
 .. controls (1.53,-0.39) and (1.42,-0.19)
 .. (1.42,0);
 \draw[densely dashed] (0.7,-0.98)
 .. controls (0.88,-0.8) and (1.12,-0.8)
 .. (1.3,-0.98);
\end{tikzpicture}
% END BASE QUIVER
\qquad A=\Bbbk Q/(ba,cb,ac).
\]
Give $|c|=1$ and $|a|=|b|=|d|=0$. Let $\A$ have objects $0,1,2$ and morphism spaces $\Hom_\A(i,j)=e_jAe_i$. Put $\mu_\A^1=0$ and $\mu_\A^2(y,x)=(-1)^{|x|}yx$. The nonzero $\mu_\A^3$ on basis morphisms are given by
\[
\begin{aligned}
 \mu_\A^3(c,b,a)&=e_0,&
 \mu_\A^3(a,c,b)&=e_1,&
 \mu_\A^3(b,a,c)&=e_2,\\
 \mu_\A^3(dc,b,a)&=d,&
 \mu_\A^3(bdc,b,a)&=bd,\\
 \mu_\A^3(a,c,bd)&=d,&
 \mu_\A^3(a,c,bdc)&=-dc.
\end{aligned}
\]
Moreover, set $\mu_\A^n=0$ for $n\geq4$. By \cite[Proposition 3.1]{HKK17}, $\A$ is an $A_\infty$-category, with strict units $e_i$.

\smallskip\noindent
\emph{The infinite cyclic covering with $G=\mathbb{Z}$.} Define $\weight:Q_1\to\ZZ$ by
\[
 \weight(a)=\weight(b)=\weight(c)=0,\qquad \weight(d)=1.
\]
Extend $\weight$ to paths by addition, with $\weight(e_i)=0$. The covering quiver $\widetilde Q_{\ZZ}$ is given by:
\[
% BEGIN INFINITE QUIVER
\begin{tikzpicture}[>=stealth,baseline=(current bounding box.center)]
 \node (dotsL) at (-1.8,0) {$\cdots$};
 \node (L1) at (0,0) {$1_{s-1}$};
 \node (L0) at (2,0) {$0_{s-1}$};
 \node (L2) at (1,-1.4) {$2_{s-1}$};
 \node (M1) at (4,0) {$1_s$};
 \node (M0) at (6,0) {$0_s$};
 \node (M2) at (5,-1.4) {$2_s$};
 \node (R1) at (8,0) {$1_{s+1}$};
 \node (R0) at (10,0) {$0_{s+1}$};
 \node (R2) at (9,-1.4) {$2_{s+1}$};
 \node (dotsR) at (11.8,0) {$\cdots$};
 \draw[->] (L0) -- node[above] {$a_{s-1}$} (L1);
 \draw[->] (L1) -- node[left] {$b_{s-1}$} (L2);
 \draw[->] (L2) -- node[right] {$c_{s-1}$} (L0);
 \draw[->] (M0) -- node[above] {$a_s$} (M1);
 \draw[->] (M1) -- node[left] {$b_s$} (M2);
 \draw[->] (M2) -- node[right] {$c_s$} (M0);
 \draw[->] (R0) -- node[above] {$a_{s+1}$} (R1);
 \draw[->] (R1) -- node[left] {$b_{s+1}$} (R2);
 \draw[->] (R2) -- node[right] {$c_{s+1}$} (R0);
 \draw[->] (dotsL) -- node[above] {$d_{s-2}$} (L1);
 \draw[->] (L0) -- node[above] {$d_{s-1}$} (M1);
 \draw[->] (M0) -- node[above] {$d_s$} (R1);
 \draw[->] (R0) -- node[above] {$d_{s+1}$} (dotsR);
 % The three dashed curves mark ba=0, ac=0, and cb=0.
 \draw[densely dashed] (0.58,0)
 .. controls (0.58,-0.19) and (0.47,-0.39)
 .. (0.3,-0.42);
 \draw[densely dashed] (1.7,-0.42)
 .. controls (1.53,-0.39) and (1.42,-0.19)
 .. (1.42,0);
 \draw[densely dashed] (0.7,-0.98)
 .. controls (0.88,-0.8) and (1.12,-0.8)
 .. (1.3,-0.98);
 % The three dashed curves mark ba=0, ac=0, and cb=0.
 \draw[densely dashed] (4.58,0)
 .. controls (4.58,-0.19) and (4.47,-0.39)
 .. (4.3,-0.42);
 \draw[densely dashed] (5.7,-0.42)
 .. controls (5.53,-0.39) and (5.42,-0.19)
 .. (5.42,0);
 \draw[densely dashed] (4.7,-0.98)
 .. controls (4.88,-0.8) and (5.12,-0.8)
 .. (5.3,-0.98);
 % The three dashed curves mark ba=0, ac=0, and cb=0.
 \draw[densely dashed] (8.58,0)
 .. controls (8.58,-0.19) and (8.47,-0.39)
 .. (8.3,-0.42);
 \draw[densely dashed] (9.7,-0.42)
 .. controls (9.53,-0.39) and (9.42,-0.19)
 .. (9.42,0);
 \draw[densely dashed] (8.7,-0.98)
 .. controls (8.88,-0.8) and (9.12,-0.8)
 .. (9.3,-0.98);
\end{tikzpicture}
% END INFINITE QUIVER
\]
with $s\in\ZZ$. Let $\widetilde\A_{\ZZ}$ have these vertices as objects and morphism spaces spanned by paths modulo the relations
\[
 b_sa_s=c_sb_s=a_sc_s=0\qquad(s\in\ZZ),
\]
with degrees inherited from $Q$. The projection on objects and arrows is
\[
 \pi_{\ZZ}:\widetilde\A_{\ZZ}\longrightarrow\A,
 \qquad i_s\longmapsto i,\qquad x_s\longmapsto x
 \quad(i\in \{0,1,2\},\; x\in\{a,b,c,d\}),
\]
and it extends to paths by concatenation. For $t\in\ZZ$, translation sends $i_s$ to $i_{s+t}$ and $x_s$ to $x_{s+t}$.

Both $\mu_\A^2$ and $\mu_\A^3$ preserve weight, so \eqref{eq:homogeneous} holds. By \cref{prop:lift}, $\widetilde\A_{\ZZ}$ admits an $A_\infty$-structure for which $\pi_{\ZZ}$ is strict. Write $\widetilde\mu^n$ for the lifted operations. The nonzero values of $\widetilde\mu^3$ on triples of basis paths are
\[
\begin{aligned}
 \widetilde\mu^3(c_s,b_s,a_s)&=e_{0_s},&
 \widetilde\mu^3(a_s,c_s,b_s)&=e_{1_s},&
 \widetilde\mu^3(b_s,a_s,c_s)&=e_{2_s},\\
 \widetilde\mu^3(d_sc_s,b_s,a_s)&=d_s,&
 \widetilde\mu^3(b_{s+1}d_sc_s,b_s,a_s)&=b_{s+1}d_s,\\
 \widetilde\mu^3(a_{s+1},c_{s+1},b_{s+1}d_s)&=d_s,&
 \widetilde\mu^3(a_{s+1},c_{s+1},b_{s+1}d_sc_s)&=-d_sc_s.
\end{aligned}
\]
Here $s\in\ZZ$. By \cref{prop:lift-orbit}, the lifted structure is unique, translation is a strict action, and $(\widetilde\A_{\ZZ}/\ZZ)_{\mathrm{rep}}\cong\A$.

\smallskip\noindent
\emph{The double covering  with $G=\mathbb{Z}/2\mathbb{Z}$.} Reduce the weights and all subscripts above modulo $2$. The quiver $\widetilde Q_2$ is
\[
% BEGIN DOUBLE QUIVER
\begin{tikzpicture}[>=stealth,baseline=(current bounding box.center)]
 \node (L1) at (0,0) {$1_{\bar0}$};
 \node (L0) at (2,0) {$0_{\bar0}$};
 \node (L2) at (1,-1.4) {$2_{\bar0}$};
 \node (R1) at (5,0) {$1_{\bar1}$};
 \node (R0) at (7,0) {$0_{\bar1}$};
 \node (R2) at (6,-1.4) {$2_{\bar1}$};
 \draw[->] (L0) -- node[above] {$a_{\bar0}$} (L1);
 \draw[->] (L1) -- node[left] {$b_{\bar0}$} (L2);
 \draw[->] (L2) -- node[right] {$c_{\bar0}$} (L0);
 \draw[->] (R0) -- node[above] {$a_{\bar1}$} (R1);
 \draw[->] (R1) -- node[left] {$b_{\bar1}$} (R2);
 \draw[->] (R2) -- node[right] {$c_{\bar1}$} (R0);
 \draw[->] (L0) -- node[above] {$d_{\bar0}$} (R1);
 \draw[->] (R0.north) .. controls +(0,1.5) and +(0,1.5)
 .. node[above] {$d_{\bar1}$} (L1.north);
 % The three dashed curves mark ba=0, ac=0, and cb=0.
 \draw[densely dashed] (0.58,0)
 .. controls (0.58,-0.19) and (0.47,-0.39)
 .. (0.3,-0.42);
 \draw[densely dashed] (1.7,-0.42)
 .. controls (1.53,-0.39) and (1.42,-0.19)
 .. (1.42,0);
 \draw[densely dashed] (0.7,-0.98)
 .. controls (0.88,-0.8) and (1.12,-0.8)
 .. (1.3,-0.98);
 % The three dashed curves mark ba=0, ac=0, and cb=0.
 \draw[densely dashed] (5.58,0)
 .. controls (5.58,-0.19) and (5.47,-0.39)
 .. (5.3,-0.42);
 \draw[densely dashed] (6.7,-0.42)
 .. controls (6.53,-0.39) and (6.42,-0.19)
 .. (6.42,0);
 \draw[densely dashed] (5.7,-0.98)
 .. controls (5.88,-0.8) and (6.12,-0.8)
 .. (6.3,-0.98);
\end{tikzpicture}
% END DOUBLE QUIVER
\]
Put $ \widetilde A_2=\Bbbk\widetilde Q_2/ (b_{\bar s}a_{\bar s},c_{\bar s}b_{\bar s}, a_{\bar s}c_{\bar s}\mid\bar s\in\ZZ/2\ZZ)$, and let $\widetilde\A_2$ be its associated category. The projection
\[
 \pi_2:\widetilde\A_2\longrightarrow\A,
 \qquad i_{\bar s}\longmapsto i,\qquad
 x_{\bar s}\longmapsto x
\]
is strict for the $A_\infty$-structure obtained by reducing the subscripts in the formulas for $\widetilde\mu^3$ modulo $2$. Moreover, \cref{prop:lift-orbit} gives $(\widetilde\A_2/(\ZZ/2\ZZ))_{\mathrm{rep}}\cong\A.$

%%%%%%%%%%%%%%%%%%%%%%%%%%%%%%%%
\subsection{\texorpdfstring{$r$}{r}-fold trivial extensions as lifts}
\label{sec:trivial-lift-comparison}

We apply \cref{prop:lift} to trivial extensions of $A_\infty$-categories. The method is to assign weight $0$ to $\Hom_\A(X,Y)$ and weight $1$ to $D\Hom_\A(Y,X)$, and then apply \cref{prop:lift}. These integer weights give a $\ZZ$-covering of $T(\A)$, and their reductions modulo $r$ give the $r$-fold trivial extension $T_r(\A)$.

\begin{dfn}
\textnormal{(cf.~\cite[Definition 2.10]{OZ22})}
\label{dfn:trivial-extension}
Let $\A$ be an $A_\infty$-category. Its \defn{trivial extension} $T(\A)$ has the same objects as $\A$, with $\Hom_{T(\A)}(X,Y) =\Hom_\A(X,Y)\oplus D\Hom_\A(Y,X)$. For $n\geq1$ and $X_0,\ldots,X_n\in\Ob\A$, the operations are determined by the following rules.
\begin{enumerate}
\item For homogeneous $a_j\in\Hom_\A(X_{j-1},X_j)$, $1\leq j\leq n$, $\mu_{T(\A)}^n(a_n,\ldots,a_1) =\mu_\A^n(a_n,\ldots,a_1).$
\item For $1\leq i\leq n$ and homogeneous $f\in D\Hom_\A(X_i,X_{i-1})$, the operation takes values in $D\Hom_\A(X_n,X_0)$ and is given by
\[
\begin{aligned}
 &\mu_{T(\A)}^n(a_n,\ldots,a_{i+1},f,a_{i-1},\ldots,a_1)(x)\\
 &\qquad=(-1)^\dagger
 f\bigl(\mu_\A^n(a_{i-1},\ldots,a_1,x,a_n,\ldots,a_{i+1})\bigr),
\end{aligned}
\]
for homogeneous $x\in\Hom_\A(X_n,X_0)$, where $\dagger=\rdeg{f} +\sum_{\substack{1\leq j\leq n\\j\ne i}}\rdeg{a_j}.$
\item For homogeneous $b_j\in\Hom_{T(\A)}(X_{j-1},X_j)$, $\mu_{T(\A)}^n(b_n,\ldots,b_1)=0$ whenever at least two of the $b_j$ belong to the respective dual summands $D\Hom_\A(X_j,X_{j-1})$.
\end{enumerate}
The strict unit at $X$ is $(1_X,0)$.
\end{dfn}

For $X,Y\in\Ob\A$, assign group weight $0$ to $\Hom_\A(X,Y)$ and group weight $1$ to $D\Hom_\A(Y,X)$, independently of the cohomological degrees.

\begin{lem}\label{lem:dual-weight}
Let $G=\ZZ$ or $G=\ZZ/r\ZZ$ for some $r\geq1$. The above weights, viewed as elements of $G$, satisfy \eqref{eq:homogeneous}. The units have weight $0$.
\end{lem}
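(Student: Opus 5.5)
We check \eqref{eq:homogeneous} directly from the three rules of \cref{dfn:trivial-extension}. Since the weights are $0$ or $1$ in $G$, written additively, the target weight of $\mu_{T(\A)}^n$ on inputs with weights $g_1,\ldots,g_n$ should be $g_1+\cdots+g_n$, which equals the number of inputs taken from dual summands, read in $G$. By multilinearity it suffices to treat inputs $b_j$ that each lie in a single weight summand, that is, either in $\Hom_\A(X_{j-1},X_j)$ or in $D\Hom_\A(X_j,X_{j-1})$. The cases are then sorted by the number $k$ of dual inputs.

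\emph{Case $k=0$.} All inputs have weight $0$. By rule (1), the output is $\mu_\A^n(a_n,\ldots,a_1)\in\Hom_\A(X_0,X_n)$, which has weight $0=g_1+\cdots+g_n$.

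\emph{Case $k=1$.} Exactly one input $f$ is dual, so the sum of the weights is $1$. By rule (2), the output is a functional on $\Hom_\A(X_n,X_0)$, hence lies in $D\Hom_\A(X_n,X_0)$. This is the weight-$1$ summand of $\Hom_{T(\A)}(X_0,X_n)$.

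\emph{Case $k\geq2$.} By rule (3), the output is $0$, which lies in every weight summand. In particular it lies in the summand of weight $k$, read in $G$.

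One point needs care. For $G=\ZZ/r\ZZ$ with small $r$, for instance $r=1$, where the weights $0$ and $1$ coincide, the decomposition by weight is not the decomposition $\Hom_\A\oplus D\Hom_\A$. Instead, $\Hom^g_{T(\A)}(X,Y)$ is the sum of the summands whose weight reduces to $g$. The argument above still applies after reducing modulo $r$: each output lies in the summand whose integer weight is $\min(k,1)$ in cases $k\le1$, or the output is zero. Since \eqref{eq:homogeneous} is checked on homogeneous generators, it then extends by multilinearity. Finally, the unit $(1_X,0)$ lies in $\Hom_\A(X,X)$, which has weight $0$.

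None of these steps is a real obstacle. The only subtlety is the bookkeeping in the non-injective reduction to $\ZZ/r\ZZ$ (for example $r=1$), and the remark above handles it.
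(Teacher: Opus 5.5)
Your proof is correct and follows the same route as the paper, which simply reads \eqref{eq:homogeneous} off the three rules of \cref{dfn:trivial-extension} and notes that $(1_X,0)$ has weight $0$. Your case split by the number $k$ of dual inputs makes that one-line verification explicit. So does your observation that the $\ZZ/r\ZZ$-grading is the reduction of the $\ZZ$-grading, which is only a genuine issue when $r=1$.
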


\begin{proof}
By \cref{dfn:trivial-extension}, \eqref{eq:homogeneous} holds in both cases. The units $(1_X,0)$ have weight $0$.
\end{proof}

\begin{thm}\label{thm:rfold}
Let $r\geq1$. There is a unique $A_\infty$-structure on the object set $\Ob\A\times\ZZ/r\ZZ$ and graded morphism spaces
\begin{equation}\label{eq:rfold-hom}
 \Hom_{T_r(\A)}((X,\bar i),(Y,\bar j))
 =\begin{cases}
 \Hom_\A(X,Y)\oplus D\Hom_\A(Y,X),&r=1,\\
 \Hom_\A(X,Y),&r\geq2,\ \bar j=\bar i,\\
 D\Hom_\A(Y,X),&r\geq2,\ \bar j=\bar i+\bar1,\\
 0,&\text{otherwise}
 \end{cases}
\end{equation}
for which the map
\[
 \pi_r:T_r(\A)\longrightarrow T(\A),
 \qquad (X,\bar i)\longmapsto X,
\]
given on morphism spaces by the inclusions into $\Hom_{T(\A)}(X,Y)$, is a strict $A_\infty$-functor. The operations are
\[
 \mu_{T_r(\A)}^n(b_n,\ldots,b_1)
 =\mu_{T(\A)}^n(b_n,\ldots,b_1)
\]
for composable morphisms in $T_r(\A)$, and the unit at $(X,\bar i)$ is the copy of $1_X$ in $\Hom_\A(X,X)$.

We call $T_r(\A)$ the \defn{$r$-fold trivial extension} of $\A$. For $r=1$, both summands occur in every morphism space, and we identify $T_1(\A)$ with $T(\A)$ via $(X,\bar0)\mapsto X$.
\end{thm}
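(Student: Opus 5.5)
The plan is to apply \cref{prop:lift} and \cref{prop:lift-orbit} with $\B=T(\A)$ and $G=\ZZ/r\ZZ$. The group weights are those of \cref{lem:dual-weight}: $\Hom_{T(\A)}^{\bar0}(X,Y)=\Hom_\A(X,Y)$ and $\Hom_{T(\A)}^{\bar1}(X,Y)=D\Hom_\A(Y,X)$. When $r\geq2$ these two residues are distinct and all other weight summands are zero. When $r=1$ both summands sit in the single weight $\bar0$.

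\cref{lem:dual-weight} gives both hypotheses of \cref{prop:lift}: condition \eqref{eq:homogeneous} holds, and the units $(1_X,0)$ have weight $\bar0$. The equivalence (2)$\Rightarrow$(1) of \cref{prop:lift} then produces an $A_\infty$-structure on $\Ob T(\A)\times\ZZ/r\ZZ$. Its morphism spaces are
\[
\Hom_{T(\A)}^{-\bar i+\bar j}(X,Y),
\]
and the projection $\pi$ is strict. Its operations are restrictions of $\mu_{T(\A)}^n$, and its strict units are $1_{(X,\bar i)}=(1_X,0)$.

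It remains to identify these morphism spaces with \eqref{eq:rfold-hom}. Since $G$ is abelian, the weight is $\bar j-\bar i$.
\begin{itemize}
\item For $r=1$ the whole space $\Hom_\A(X,Y)\oplus D\Hom_\A(Y,X)$ has weight $\bar0$.
\item For $r\geq2$ the space is $\Hom_\A(X,Y)$ when $\bar j=\bar i$.
\item For $r\geq2$ the space is $D\Hom_\A(Y,X)$ when $\bar j=\bar i+\bar1$.
\item In all remaining cases the space is zero.
\end{itemize}
For $r=2$ these cases are exhaustive and do not overlap, because $\bar1\neq\bar0$. The description of the unit as the copy of $1_X$ in $\Hom_\A(X,X)$ is exactly $1_{(X,\bar i)}=(1_X,0)$. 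Uniqueness, under the requirement that $\pi_r$ be a strict $A_\infty$-functor, follows from \cref{prop:lift-orbit}(1).

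For $r=1$, the object set $\Ob\A\times\{\bar0\}$ and the morphism spaces coincide with those of $T(\A)$, with identical operations. So $(X,\bar0)\mapsto X$ is a strict isomorphism, which justifies the identification $T_1(\A)=T(\A)$.

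The only step needing care is that \cref{dfn:trivial-extension} really respects the weights. This was packaged in \cref{lem:dual-weight} and can be cited as given. If one checks it directly, there are three cases:
\begin{itemize}
\item inputs all of weight $0$ give output in $\Hom_\A$, of weight $0$;
\item exactly one dual input gives output in $D\Hom_\A(X_n,X_0)$, of weight $1$;
\item two or more dual inputs give output $0$, which lies in every weight summand.
\end{itemize}
In each case the output weight equals the sum of the input weights in $\ZZ$, and hence also modulo $r$.

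The main thing to state precisely is the interpretation of ``unique'': the uniqueness is among structures on the prescribed spaces for which $\pi_r$ is strict and unit-preserving, exactly as in \cref{prop:lift-orbit}(1).
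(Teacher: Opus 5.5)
Your proposal is correct and is essentially the paper's proof. \cref{lem:dual-weight} supplies \eqref{eq:homogeneous} and the weight-$\bar0$ units, \cref{prop:lift} gives existence, and \cref{prop:lift-orbit} gives uniqueness together with the formulas for the operations and units. One cosmetic point: the two nonzero cases are disjoint for every $r\geq2$, and for $r\geq3$ the ``otherwise'' case is genuinely needed, so singling out $r=2$ in that remark is unnecessarily restrictive.
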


\begin{proof}
By \cref{lem:dual-weight}, the weights modulo $r$ satisfy \eqref{eq:homogeneous}, with units of weight $\bar0$. The existence of the stated structure follows from \cref{prop:lift}; its uniqueness and the formulas for its operations and units follow from \cref{prop:lift-orbit}.
\end{proof}

\begin{cor}\label{cor:repetitive}
The $\ZZ$-grading in \cref{lem:dual-weight} gives a lift $\widehat\A$ of $T(\A)$, called the \defn{repetitive category} of $\A$. Its objects are $(X,i)\in\Ob\A\times\ZZ$, and its morphism spaces are
\[
 \Hom_{\widehat\A}((X,i),(Y,j))
 =\begin{cases}
 \Hom_\A(X,Y),&j=i,\\
 D\Hom_\A(Y,X),&j=i+1,\\
 0,&\text{otherwise}.
 \end{cases}
\]
Its operations are restrictions of those of $T(\A)$. The map
\[
 \nu:\widehat\A\longrightarrow\widehat\A,
 \qquad (X,i)\longmapsto(X,i+1),
\]
acting as the identity on each copy of $\Hom_\A(X,Y)$ and $D\Hom_\A(Y,X)$, is a strict automorphism. For every $r\geq1$, there is a commutative diagram of strict $A_\infty$-functors
\[
\begin{tikzcd}[column sep=large,row sep=large]
 \widehat\A\arrow[r,"q_r"]\arrow[dr,"\pi_{\ZZ}"']
 &T_r(\A)\arrow[d,"\pi_r"]\\
 &T(\A),
\end{tikzcd}
\]
where $q_r(X,i)=(X,\bar i)$ and $\pi_{\ZZ}(X,i)=X$; both functors act by inclusions on morphism spaces. Moreover, translation $\bar t(X,\bar i)=(X,\bar i+\bar t)$, acting identically on morphism spaces, defines a strict action of $\ZZ/r\ZZ$ on $T_r(\A)$, and there are strict isomorphisms
\[
 (\widehat\A/\langle\nu^r\rangle)_{\mathrm{rep}}
 \cong T_r(\A),
 \qquad
 (T_r(\A)/(\ZZ/r\ZZ))_{\mathrm{rep}}
 \cong T(\A).
\]
\end{cor}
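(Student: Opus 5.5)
The plan is to obtain every assertion of \cref{cor:repetitive} by applying \cref{prop:lift} and \cref{prop:lift-orbit} to suitable gradings of $T(\A)$ and of $T_r(\A)$.

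\textbf{The lift $\widehat\A$.} Take $G=\ZZ$ in \cref{lem:dual-weight}. The weights satisfy \eqref{eq:homogeneous} and the units have weight $0$, so \cref{prop:lift} produces $\widehat\A$. Its objects are $\Ob\A\times\ZZ$, and
\[
\Hom_{\widehat\A}((X,i),(Y,j))=\Hom_{T(\A)}^{j-i}(X,Y).
\]
This space is $\Hom_\A(X,Y)$ for $j-i=0$, is $D\Hom_\A(Y,X)$ for $j-i=1$, and vanishes otherwise, as stated. By \cref{prop:lift-orbit}(1), the operations are the restrictions of those of $T(\A)$ and $\pi_\ZZ$ is strict. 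By \cref{prop:lift-orbit}(2) with $t=1$, $\nu$ is a strict automorphism; indeed it is the generator of the free strict $\ZZ$-action.

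\textbf{The diagram.} Reduction modulo $r$ sends $\Hom_{\widehat\A}((X,i),(Y,j))$ into $\Hom_{T_r(\A)}((X,\bar i),(Y,\bar j))$. This holds because the weight-$(j-i)$ summand of $T(\A)$ lies in the weight-$\overline{j-i}$ summand. For $r=1$ the target is all of $T(\A)$. For $r=2$ the case $j-i=1$ is $\bar j=\bar i+\bar1$, and the case $j-i=0$ is $\bar j=\bar i$. So $q_r$ is well defined and acts by inclusions, and clearly $\pi_r\circ q_r=\pi_\ZZ$.

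To see that $q_r$ is strict, note that operations in all three categories are restrictions of $\mu_{T(\A)}$. Hence
\[
\pi_r\bigl(\mu_{T_r}(q_r b_n,\ldots)\bigr)=\mu_{T(\A)}(b_n,\ldots)=\pi_r\bigl(q_r\mu_{\widehat\A}(b_n,\ldots)\bigr),
\]
and injectivity of $\pi_r$ gives strictness. Units agree since all are copies of $1_X$.

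\textbf{The orbit isomorphisms.} The translation action of $\ZZ/r\ZZ$ on $T_r(\A)$ and the isomorphism $(T_r(\A)/(\ZZ/r\ZZ))_{\mathrm{rep}}\cong T(\A)$ are exactly \cref{prop:lift-orbit}(2),(3), applied to the $\ZZ/r\ZZ$-grading underlying \cref{thm:rfold}.

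For $(\widehat\A/\langle\nu^r\rangle)_{\mathrm{rep}}\cong T_r(\A)$, take representatives $(X,i)$ with $0\le i<r$. Then
\[
\Hom_{\widehat\A/\langle\nu^r\rangle}((X,i),(Y,j))=\bigoplus_{k\in\ZZ}\Hom_{\widehat\A}((X,i),(Y,j+kr)).
\]
This is the sum of the weight-$m$ pieces of $T(\A)$ with $m\equiv j-i \pmod r$. That sum equals the weight-$\overline{j-i}$ piece, which is $\Hom_{T_r(\A)}((X,\bar i),(Y,\bar j))$.

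The orbit operations \eqref{eq:orbit} only move source and target objects. As in the proof of \cref{prop:lift-orbit}(3), they therefore equal $\mu_{T(\A)}$ on the same underlying elements, and so agree with $\mu_{T_r(\A)}$. The representatives biject with $\Ob\A\times\ZZ/r\ZZ$, and units correspond, which gives the strict isomorphism.

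\textbf{Main obstacle.} The only delicate point is the $r=1,2$ bookkeeping. One must check that the summands $\Hom_{\widehat\A}((X,i),(Y,j+kr))$ in the orbit decomposition are nonzero exactly for the weights listed in \eqref{eq:rfold-hom}. For $r=1$, both $k$ giving weight $0$ and weight $1$ contribute, which recovers $\Hom_\A(X,Y)\oplus D\Hom_\A(Y,X)$.
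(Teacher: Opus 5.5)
Your proposal is correct and follows the paper's proof. The paper applies \cref{prop:lift,prop:lift-orbit} with $G=\ZZ$ and $G=\ZZ/r\ZZ$ via \cref{lem:dual-weight}, and checks the strictness of $q_r$, the commutativity, and the orbit isomorphisms directly from the defining formulas and \eqref{eq:orbit}; your injectivity-of-$\pi_r$ argument and explicit orbit computation are exactly those checks. The only slip is that your case analysis for $q_r$ should say ``$r\geq2$'' rather than ``$r=2$''; the argument is identical for every $r\geq2$.
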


\begin{proof}
Apply \cref{prop:lift,prop:lift-orbit} with $G=\ZZ$ and $G=\ZZ/r\ZZ$, using \cref{lem:dual-weight}. The strictness of $q_r$, the commutativity of the diagram, and the two orbit isomorphisms follow directly from the defining formulas and \eqref{eq:orbit}.
\end{proof}

If $\mu_\A^n=0$ for all $n\ne2$, the defining formulas give the same vanishing for $T(\A)$, $T_r(\A)$, and $\widehat\A$. In the finite-dimensional degree-zero case, we recover the classical constructions; see \cite{HW83,Asa97}.

\begin{cor}
\label{cor:rfold-classical}
Suppose that $\A$ has a finite nonempty object set, finite-dimensional morphism spaces concentrated in degree zero, and $\mu_\A^n=0$ for all $n\ne2$. Let $A$ be its associated algebra. Then $\widehat\A$ is the classical repetitive category of $A$. For every $r\geq1$, the algebra associated with $T_r(\A)$ is the classical $r$-fold trivial extension $T_r(A)$.
\end{cor}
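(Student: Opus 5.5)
The plan is to unwind the definitions in the degree-zero case and match them with the classical constructions of Hughes--Waschbüsch \cite{HW83} and Asashiba \cite{Asa97}. Since all morphism spaces of $\A$ are concentrated in degree zero and $\mu_\A^n=0$ for $n\ne2$, the same holds for $D\Hom_\A(Y,X)$ (by $(DV)^j=\Hom_\Bbbk(V^{-j},\Bbbk)$). By the remark preceding the corollary, $T(\A)$, $T_r(\A)$ and $\widehat\A$ also have vanishing $\mu^n$ for $n\neq2$, so they are ordinary $\Bbbk$-linear categories with composition $b\circ a=(-1)^{|a|}\mu^2(b,a)=\mu^2(b,a)$, because all degrees are zero. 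It therefore suffices to identify these compositions with the classical ones.

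The first step is to compute $\mu^2_{T(\A)}$ from \cref{dfn:trivial-extension} with $n=2$. The sign $\dagger=\rdeg{f}+\rdeg{a}=-2$ is even, so the rules give
\[
\mu^2(f,a)(x)=f(\mu^2_\A(a,x)) \qquad\text{and}\qquad \mu^2(a,f)(x)=f(\mu^2_\A(x,a)),
\]
and the product of two dual elements is zero. Under the identification $A_\A=\bigoplus\Hom_\A(X,Y)$ and $b\cdot a=\mu^2(b,a)$, this reads $(f\cdot a)(x)=f(ax)$ and $(a\cdot f)(x)=f(xa)$. These are exactly the left and right $A$-bimodule actions on $DA$, restricted to the idempotent pieces $e_X(DA)e_Y\cong D(e_YAe_X)$. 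Hence the algebra associated with $T(\A)=T_1(\A)$ is $A\ltimes DA=T(A)$.

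The second step treats $\widehat\A$. By \cref{cor:repetitive}, its morphism spaces are $\Hom_\A(X,Y)$ in place and $D\Hom_\A(Y,X)$ from level $i$ to level $i+1$, with operations restricted from $T(\A)$. This is precisely the classical repetitive category: objects $(X,i)$, with $A$ on the diagonal, $DA$ on the off-diagonal $(i,i+1)$ blocks, and composition given by the bimodule actions. All other products vanish, since two dual factors compose to zero, consistent with the fact that morphisms only go up by at most one level.

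The third step treats $T_r(\A)$ for $r\ge2$. The associated algebra is $\bigoplus_{\bar i}A\oplus\bigoplus_{\bar i}DA$, with $DA$ placed from copy $\bar i$ to copy $\bar i+\bar1$ and the same products. This is Asashiba's description of $T_r(A)$ as $\widehat A/\langle\nu^r\rangle$. Alternatively, one can invoke $(\widehat\A/\langle\nu^r\rangle)_{\mathrm{rep}}\cong T_r(\A)$ from \cref{cor:repetitive} and compare with the classical orbit definition.

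The only real obstacle is bookkeeping of conventions: the right-to-left path convention, the correct identification of $e_X(DA)e_Y$, and whether the classical $DA$ block sits at $(i,i+1)$ or $(i+1,i)$. I would fix these by checking a single basis computation against \cite{HW83}, possibly passing to the opposite labelling of levels. The $r=1$ case reduces to $T(A)$ by the identification in \cref{thm:rfold}.
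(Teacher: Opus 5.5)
Your proposal is correct and matches the paper's own brief justification: the paper only remarks that the defining formulas in \cref{dfn:trivial-extension} force $\mu^n=0$ for $n\neq2$ on $T(\A)$, $T_r(\A)$ and $\widehat\A$, and then cites Hughes--Waschb\"usch and Asashiba for the identification with the classical constructions. Your check that $\dagger=-2$ is even, so that the dual summands carry exactly the standard bimodule actions $(a\cdot f)(x)=f(xa)$ and $(f\cdot a)(x)=f(ax)$ on $DA$, with products of two dual elements zero, is the unwinding the paper leaves implicit.
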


The connection between gentle algebras and partially wrapped Fukaya categories of surfaces \cite{HKK17} provides a geometric approach to their $A_\infty$-structures. Building on the identification of trivial extensions of gentle algebras with Brauer graph algebras \cite{Sch15}, Opper and Zvonareva develop an $A_\infty$-analogue of this construction \cite{OZ22}. In \cite{Xin26}, we further realize $r$-fold trivial extensions of gentle algebras as admissible fractional Brauer graph algebras. These connections motivate us to construct $A_\infty$-categories associated with admissible fractional Brauer graph algebras and to study their geometric models.

%%%%%%%%%%%%%%%%%%%%%%%%%%%%%%%%%%%%%%%%%%%%%%%%%%%%%%%%%%%%%%%%%%%%%%%%%%%%%%%%%%%%%%%%%%%%%

\section{Brauer graph algebras and their \texorpdfstring{$A_\infty$}{A-infinity}-categories}
\label{sec:algebra-surface-preliminaries}
\raggedbottom

We recall Brauer graph algebras and the Brauer graph $A_\infty$-categories of \cite{OZ22}. 

\subsection{Brauer graphs and their algebras}

We use the half-edge description of ribbon graphs in \cite[Section 1.1]{OZ22}; see also \cite{Sch18,OPS18}.

\begin{dfn}\label{dfn:ribbon}
A \defn{ribbon graph} is a tuple $\Gamma=(V,H,s,\iota,\rho)$ consisting of the following data.
\begin{itemize}
\item $V$ and $H$ are finite nonempty sets, whose elements are called \defn{vertices} and \defn{half-edges}, respectively.
\item $s:H\to V$ is a surjection. We write $H_{\vv}=s^{-1}(\vv)$ and $\operatorname{val}_\Gamma(\vv)=|H_{\vv}|$.
\item $\iota:H\to H$ is an involution without fixed points. An \defn{edge} is a pair $e(h)=\{h,\iota h\}$, and $E(\Gamma)=H/\langle\iota\rangle$ is the set of edges. The endpoints of $e(h)$ are $s(h)$ and $s(\iota h)$.
\item $\rho:H\to H$ is a permutation whose cycles are exactly the sets $H_{\vv}$, $\vv\in V$. Thus $\rho h$ is the successor of $h$ in the cyclic order at $s(h)$.
\end{itemize}
\end{dfn}

Note that loops and multiple edges are allowed; a loop contributes two half-edges to the valency of its vertex. We write $V(\Gamma)=V$ and $H(\Gamma)=H$, and assume that the underlying graph is connected unless stated otherwise. A morphism of ribbon graphs $\varphi:\Gamma\to\Gamma'$ consists of maps $\varphi_V:V\to V'$ and $\varphi_H:H\to H'$ satisfying
\[
 s'\varphi_H=\varphi_Vs,\qquad
 \iota'\varphi_H=\varphi_H\iota,\qquad
 \rho'\varphi_H=\varphi_H\rho.
\]
The graph is \defn{bipartite} if there is a partition $V=V_0\sqcup V_1$ such that each edge has one endpoint in $V_0$ and one in $V_1$.

\begin{dfn}\label{dfn:brauer-graph}
A \defn{Brauer graph} is a pair $(\Gamma,\m)$, where $\Gamma$ is a ribbon graph and $\m:V(\Gamma)\to\ZZ_{>0}$ is its \defn{multiplicity function}.
\end{dfn}

Let $(\Gamma,\m)$ be a Brauer graph. Following \cite[Section 1.2]{OZ22}, form the quiver $Q_\Gamma$ with vertices $E(\Gamma)$ and one arrow
\[
 a_h:e(h)\longrightarrow e(\rho h)
 \qquad\text{for every }h\in H(\Gamma).
\]
For $h\in H(\Gamma)$ and $t\geq1$, write
\[
 p_{h,t}=a_{\rho^{t-1}h}\cdots a_{\rho h}a_h,
 \qquad p_{h,0}=e_{e(h)}.
\]
Here $e_{e(h)}$ is the trivial path at the quiver vertex $e(h)$. The \defn{special cycle} based at $h$ is $C_h=p_{h,\operatorname{val}_\Gamma(s(h))}$; it starts and ends at $e(h)$.

Let $I_{\Gamma,\m}$ be the ideal of $\Bbbk Q_\Gamma$ generated by the relations
\begin{enumerate}
\item $C_h^{\m(s(h))}=C_{\iota h}^{\m(s(\iota h))}$ for each $h\in H(\Gamma)$;
\item $a_k a_h=0$ for all $e(k)=e(\rho h),\ k\ne\rho h$.
\end{enumerate}
Thus a nonzero product of arrows follows the cyclic order at a single vertex, and the two maximal cycles based at an edge are identified. Set
\[
 B(\Gamma,\m)=\Bbbk Q_\Gamma/I_{\Gamma,\m}.
\]

\begin{dfn}\label{dfn:bga}
A $\Bbbk$-algebra is a \defn{Brauer graph algebra} (abbr. BGA) if it is isomorphic to $B(\Gamma,\m)$ for some Brauer graph $(\Gamma,\m)$.
\end{dfn}

If $\m(s(h))=\operatorname{val}_\Gamma(s(h))=1$, our presentation retains a loop $a_h$, which the cycle relation identifies with $C_{\iota h}^{\m(s(\iota h))}$. Eliminating these redundant loops gives the presentation in \cite[Sections 2.3--2.4]{Sch18}. For the graph consisting of one edge with two distinct endpoints, both of multiplicity one, one loop is retained and the algebra is $\Bbbk[x]/(x^2)$.

We recall the following properties; see \cite[Section 2]{Sch18}.
\begin{itemize}
\item $B(\Gamma,\m)$ is finite-dimensional and symmetric, that is, $B(\Gamma,\m)\cong DB(\Gamma,\m)$ as bimodules. In particular, it is self-injective.
\item $B(\Gamma,\m)$ is special biserial. Its Gabriel quiver has at most two arrows starting and at most two arrows ending at each vertex, and every arrow has at most one nonzero continuation on either side.
\item For $e=e(h)$, the indecomposable projective right module $e_eB(\Gamma,\m)$ has one-dimensional socle, generated by the common class of $C_h^{\m(s(h))}$ and $C_{\iota h}^{\m(s(\iota h))}$.
\end{itemize}

\subsection{Graded surfaces and arc systems}
\label{sec:surface-definitions}

The cyclic orders of a ribbon graph determine an oriented surface. Replacing each vertex by a small disc and each edge by a band gives its \defn{ribbon surface} $\Sigma_\Gamma$, where the bands are attached so that $\rho$ gives the clockwise order of the half-edges; see \cite[Section 2.7]{Sch18} and \cite[Lemma 1.2]{OZ22}. The graph embeds in $\operatorname{int}\Sigma_\Gamma$, and
\[
 \Sigma_\Gamma\setminus\Gamma
 \cong\partial\Sigma_\Gamma\times[0,1).
\]
An embedding with this property is called \defn{filling}.

The \defn{faces} of $\Gamma$ are the cycles of $\rho\iota$; write $F(\Gamma)$ for their set. They correspond to the boundary components of $\Sigma_\Gamma$. The \defn{perimeter} of a face is the length of its cycle, counting repeated edge occurrences separately. If $g(\Sigma_\Gamma)$ is the genus, then
\[
 |V(\Gamma)|-|E(\Gamma)|
 =2-2g(\Sigma_\Gamma)-|F(\Gamma)|.
\]

We follow the geometric constructions of \cite[Sections 3.1--3.3]{OZ22}.

\begin{dfn}\label{dfn:punctured-surface}
A \defn{punctured surface} $(\Sigma,\mathcal P)$ consists of a compact connected oriented smooth surface $\Sigma$ with nonempty boundary and a finite nonempty set $\mathcal P\subset\operatorname{int}\Sigma$ of \defn{punctures}.
\end{dfn}

\begin{dfn}
\textnormal{(cf.~\cite[Definitions 3.2 and 3.4]{OZ22})}
\label{dfn:surface-arc}
Let $(\Sigma,\mathcal P)$ be a punctured surface. An \defn{arc} in this paper is a smooth immersion $\gamma:[0,1]\to\Sigma$ such that
\begin{itemize}
\item $\gamma(0),\gamma(1)\in\mathcal P$ and $\gamma((0,1))\subset\operatorname{int}\Sigma\setminus\mathcal P$;
\item $\gamma$ is injective on $[0,1)$ and on $(0,1]$;
\item if $\gamma(0)=\gamma(1)$, its image does not bound a disc in $\Sigma$ whose interior contains no punctures.
\end{itemize}
The endpoints may coincide. The last condition excludes an empty monogon. Note that a loop surrounding a boundary component is allowed.

A \defn{regular homotopy} between arcs $\gamma_0$ and $\gamma_1$ is a smooth map $H:[0,1]\times[0,1]\to\Sigma$ such that, writing $H_s=H(s,-)$, we have $H_0=\gamma_0$ and $H_1=\gamma_1$, and, for every $s\in[0,1]$, the map $H_s$ is an immersion satisfying
\[
 H_s(0)=\gamma_0(0),\qquad
 H_s(1)=\gamma_0(1),\qquad
 H_s((0,1))\subset
 \operatorname{int}\Sigma\setminus\mathcal P.
\]
Throughout, homotopies between arcs mean regular homotopies relative to their endpoints.
\end{dfn}

\begin{dfn}\label{dfn:surface-arcs}
An \defn{arc system} $\U$ on $(\Sigma,\mathcal P)$ is a finite collection of pairwise nonhomotopic arcs, considered without orientation, whose interiors are disjoint. At each puncture, the incident branches have distinct outward tangent rays, and every puncture is incident to at least one arc. Write $|\U|$ for the union of their images.
\begin{itemize}
\item The arc system $\U$ is \defn{full} if cutting $\Sigma$ along its arcs gives a disjoint union of discs and annuli, with each annulus containing exactly one component of $\partial\Sigma$.
\item A full arc system is \defn{admissible} if, for every $\vv\in \mathcal P$, there is an embedded path
\[
 c_{\vv}:[0,1]\longrightarrow(\Sigma\setminus|\U|)\cup\{\vv\},
 \qquad c_{\vv}(0)=\vv,\quad c_{\vv}(1)\in\partial\Sigma.
\]
Such a path is called a \defn{cutting path} at $\vv$. We choose $c_{\vv}$ with its interior in $\operatorname{int}\Sigma\setminus|\U|$. A \defn{cut} is a collection $(c_{\vv})_{\vv\in \mathcal P}$ of pairwise disjoint cutting paths.
\end{itemize}
\end{dfn}

The arcs of $\U$ form a ribbon graph $\Gamma_\U$ with vertex set $\mathcal P$, edge set $\U$, and half-edges given by endpoint occurrences. Its cyclic orders are the clockwise orders at the punctures.

At a fixed point $\xx\in\Sigma\setminus\mathcal P$, the tangent plane $T_{\xx}\Sigma$ is a two-dimensional real vector space. Its projectivization identifies all nonzero vectors spanning the same line:
\[
 \mathbb P(T_{\xx}\Sigma)
 =\bigl(T_{\xx}\Sigma\setminus\{0\}\bigr)/\sim,
 \qquad
 \vec{v}\sim \vec{w}\ \Longleftrightarrow\
 \vec{w}=\lambda \vec{v}\text{ for some }\lambda\in\mathbb R\setminus\{0\}.
\]
Thus the point represented by $\vec{v}$ is the unoriented line $\mathbb R \vec{v}=\{a \vec{v}:a\in\mathbb R\}$. To visualize the quotient, choose a norm on $T_{\xx}\Sigma$ and first replace $\vec{v}$ by $\vec{v}/\lVert \vec{v}\rVert$. Each line then meets the unit circle in two opposite points. Identifying each opposite pair gives another circle, namely $\mathbb P(T_{\xx}\Sigma)$; see \cref{fig:projectivization}. Note that the norm only helps draw the quotient and is not part of its definition. For the underlying vector-bundle constructions and real projective spaces, see \cite[Sections 2--3]{MS74}.

Projectivizing the tangent bundle performs this construction separately at every point of the surface:
\[
 \mathbb P(T\Sigma)
 =\{(\xx,L):\xx\in\Sigma,\ L\subset T_{\xx}\Sigma
       \text{ is a one-dimensional real subspace}\}.
\]
The projection $(\xx,L)\mapsto \xx$ has the circle $\mathbb P(T_{\xx}\Sigma)$ as its \emph{fibre} over $\xx$. Over a coordinate neighbourhood $U\subset\Sigma$, this family is identified with $U\times S^1$, a three-dimensional space. Over a subinterval of a parametrized arc, the corresponding family of circles can be drawn as a cylinder; see \cref{fig:arc-grading}. A single tangent plane contributes one circle to this family.

\begin{figure}[htbp]
\centering
\begin{tikzpicture}[>=Stealth,scale=.96]
\begin{scope}[shift={(-4.5,0)}]
 \filldraw[fill=gray!6,draw=gray!45] (-1.55,-1.05) rectangle (1.55,1.25);
 \draw[gray!60] (-1.4,0) -- (1.4,0);
 \draw[gray!60] (0,-.9) -- (0,1.1);
 \draw[thick,darkblue] (30:-1.5) -- (30:1.5);
 \draw[->,darkblue,thick] (0,0) -- (30:.7);
 \draw[->,darkblue,thick] (0,0) -- (210:.7);
 \fill[darkblue] (30:.7) circle (1.6pt) node[above left] {$\vec{v}$};
 \fill[darkblue] (30:1.4) circle (1.6pt) node[above] {$2\vec{v}$};
 \fill[darkblue] (210:.7) circle (1.6pt) node[below] {$-\vec{v}$};
 \filldraw[fill=white,draw=black] (0,0) circle (1.6pt);
 \node[below right] at (0,0) {$0$};
 \node at (0,1.63) {$T_{\xx}\Sigma\setminus\{0\}$};
 \node at (0,-1.6) {(a) Nonzero vectors};
\end{scope}
\draw[->,thick] (-2.65,.2) -- (-1.55,.2);
\node[text width=1.6cm,align=center,font=\small] at (-2.1,.8) {normalize};
\begin{scope}
 \draw[gray!65] (0,0) circle (1);
 \draw[dashed,darkblue] (30:1) -- (210:1);
 \fill[darkblue] (30:1) circle (2pt)
   node[above right,inner sep=2pt] {$\dfrac{\vec{v}}{\lVert \vec{v}\rVert}$};
 \fill[darkblue] (210:1) circle (2pt)
   node[below left,inner sep=2pt] {$-\dfrac{\vec{v}}{\lVert \vec{v}\rVert}$};
 \node at (0,1.63) {unit circle};
 \node at (0,-1.6) {(b) Two opposite points};
\end{scope}
\draw[->,thick] (1.65,.2) -- (2.95,.2);
\node[text width=1.9cm,align=center,font=\small] at (2.3,.9) {identify opposite points};
\begin{scope}[shift={(4.5,0)}]
 \draw[gray!65] (0,0) circle (1);
 \fill[darkblue] (60:1) circle (2.5pt)
   node[right,inner sep=3pt] {$\mathbb R \vec{v}$};
 \node at (0,1.63) {$\mathbb P(T_{\xx}\Sigma)$};
 \node at (0,-1.6) {(c) One unoriented line};
\end{scope}
\end{tikzpicture}
\caption{Projectivizing a single tangent plane.}
\label{fig:projectivization}
\end{figure}

\begin{dfn}\label{dfn:line-field}
A \defn{line field} on $(\Sigma,\mathcal P)$ assigns a line $\eta(\xx)\subset T_{\xx}\Sigma$ to every $\xx\in\Sigma\setminus\mathcal P$, continuously in $\xx$. Equivalently, it is a continuous section
\[
 \eta:\Sigma\setminus \mathcal P\longrightarrow
 \mathbb P(T\Sigma)|_{\Sigma\setminus \mathcal P},
\]
meaning that $\eta$ chooses one point on each circle $\mathbb P(T_{\xx}\Sigma)$, with the chosen point varying continuously; see \cref{fig:line-field-fibres}. A homotopy of line fields is a homotopy through sections. The triple $(\Sigma,\mathcal P,\eta)$ is called a \defn{graded punctured surface}. A line field is \defn{orientable} if there is a continuous vector field $\vec{v}$ on $\Sigma\setminus\mathcal P$ such that
\[
 \vec{v}(\xx)\in T_{\xx}\Sigma\setminus\{0\},\qquad
 \eta(\xx)=\mathbb R \vec{v}(\xx)
 \quad(\xx\in\Sigma\setminus\mathcal P).
\]
Thus orientability means that one can continuously choose a direction on every line of $\eta$. Here $\mathbb R \vec{v}(\xx)=\{a \vec{v}(\xx):a\in\mathbb R\}$ denotes the real line spanned by $\vec{v}(\xx)$.
\end{dfn}

\begin{figure}[htbp]
\centering
\begin{tikzpicture}[>=Stealth,scale=.94]
% Unoriented lines in one tangent plane.
\begin{scope}[shift={(-4.8,0)}]
 \filldraw[fill=gray!7,draw=gray!55]
   (-1.45,-.9) -- (.85,-.9) -- (1.45,.9) -- (-.85,.9) -- cycle;
 \draw[gray!70] (-1.05,0) -- (1.05,0);
 \draw[gray!70] (85:-.75) -- (85:.75);
 \draw[thick,darkblue] (24.5:-1.12) -- (24.5:1.12);
 \fill (0,0) circle (1.5pt) node[below right] {$0$};
 \node[darkblue,above] at (.7,.4) {$\eta(\xx)$};
 \node at (0,1.2) {$T_{\xx}\Sigma$};
 \node at (0,-1.75) {(a) Tangent lines};
\end{scope}
% One fibre, viewed as a circle of directions.
\begin{scope}[shift={(-.7,0)}]
 \draw[gray!65] (0,0) circle (.94);
 \fill[gray!75] (0:.94) circle (1.9pt);
 \fill[gray!75] (170:.94) circle (1.9pt);
 \fill[darkblue] (49:.94) circle (2.6pt)
   node[above right,inner sep=2pt] {$\eta(\xx)$};
 \node at (0,1.5) {$\mathbb P(T_{\xx}\Sigma)$};
 \node at (0,-1.75) {(b) The direction circle};
\end{scope}
\draw[->,gray!75] (-3.03,0) -- (-2.25,0);
% A few fibres over a path in a local patch of the surface.
\begin{scope}[shift={(4.25,0)}]
 \filldraw[fill=gray!8,draw=gray!60]
   (-1.8,-1.1) -- (1.45,-1.1) -- (2,-.05) -- (-1.25,-.05) -- cycle;
 \draw[gray!65] (-1.35,-.58) .. controls (-.4,-.47) and (.4,-.7) .. (1.5,-.55);
 \foreach \figx/\angle in {-1.2/110,0/49,1.2/10} {
   \draw[densely dotted,gray!65] (\figx,-.57) -- (\figx,.9);
   \draw[gray!65] (\figx,.9) circle (.5);
   \fill (\figx,-.57) circle (1.4pt);
   \fill[darkblue] (\figx,.9) ++(\angle:.5) circle (2.4pt);
 }
 \draw[darkblue,thick] (-1.37101,1.36985)
   .. controls (-.9,1.43) and (0,1.41) .. (.32803,1.27735)
   .. controls (.7,1.13) and (1.2,.96) .. (1.6924,.98682);
 \node[below] at (0,-.57) {$\xx$};
 \node[darkblue,above] at (.328,1.48) {$\eta(\xx)$};
 \node[right] at (1.87,-.22) {$\Sigma$};
 \node at (0,-1.75) {(c) A continuous choice};
\end{scope}
\end{tikzpicture}
\caption{A line field as a choice of direction at every point. In (a), opposite vectors span the same line; each such line is one point on the circle in (b). The blue line and blue point represent the same choice $\eta(\xx)$. In (c), three fibres over a path in the surface are drawn, with blue points illustrating a continuous choice along that path.}
\label{fig:line-field-fibres}
\end{figure}

The construction of \cite[Example 3.7]{OZ22} equips $\Sigma_\Gamma\setminus V(\Gamma)$ with a line field $\eta_\Gamma$ whose homotopy class is determined by $\Gamma$; see \cref{fig:canonical-line-field}. It proceeds as follows.
\begin{enumerate}[label=\textup{(\arabic*)}]
\item Cut each edge band along an embedded path that joins its two boundary sides and crosses the edge once transversely. The paths are disjoint, and each resulting region contains exactly one vertex.
\item On each region, choose the local line field in \cite[Example 3.7]{OZ22}. Near the vertex it is tangent to circles surrounding the vertex; along the cut intervals it is tangent to those intervals; and along the graph edges it is transverse to the edges.
\item Glue the paired cut intervals. Their tangent lines are identified by the gluing, so the local fields agree there and define the line field $\eta_\Gamma$.
\end{enumerate}
The curves drawn in \cref{fig:canonical-line-field} indicate the line field by their tangent lines. They carry no orientation.

\begin{figure}[htbp]
\centering
\begin{tikzpicture}[>=Stealth,scale=.98]
% A single edge and its ribbon neighbourhood.
\begin{scope}[shift={(-4.6,0)}]
 \fill[gray!8] (-.7,.85) -- (.7,.85)
   arc[start angle=90,end angle=-90,radius=.85]
   -- (-.7,-.85) arc[start angle=270,end angle=90,radius=.85];
 \draw[gray!70] (-.7,.85) -- (.7,.85)
   arc[start angle=90,end angle=-90,radius=.85]
   -- (-.7,-.85) arc[start angle=270,end angle=90,radius=.85];
 \draw[very thick] (-.7,0) -- (.7,0);
 \fill (-.7,0) circle (2pt);
 \fill (.7,0) circle (2pt);
 \draw[densely dashed,thick] (0,-.85) -- (0,.85);
 \node[above] at (0,1.05) {cut across the band};
 \draw[gray!65] (0,1.04) -- (0,.9);
 \node[below] at (0,-1.2) {(a) Cut};
\end{scope}
\draw[->,thick] (-2.72,0) -- (-2.3,0);
% The two cut regions, with matching tangent lines on the cut.
\begin{scope}[shift={(-.05,0)}]
 \foreach \side in {-1,1} {
  \begin{scope}[xscale=\side,xshift=.25cm]
   \fill[gray!8] (0,.85) -- (.7,.85)
     arc[start angle=90,end angle=-90,radius=.85]
     -- (0,-.85) -- cycle;
   \begin{scope}
    \clip (0,.85) -- (.7,.85)
      arc[start angle=90,end angle=-90,radius=.85]
      -- (0,-.85) -- cycle;
    \foreach \ratio in {.16,.3,.46,.63,.8} {
     \pgfmathsetmacro{\centre}{.7*(1+\ratio*\ratio)/(1-\ratio*\ratio)}
     \pgfmathsetmacro{\radius}{1.4*\ratio/(1-\ratio*\ratio)}
     \draw[darkblue,thin] (\centre,0) circle (\radius);
    }
   \end{scope}
   \draw[gray!70] (0,.85) -- (.7,.85)
     arc[start angle=90,end angle=-90,radius=.85]
     -- (0,-.85);
   \draw[darkblue,thick] (0,-.85) -- (0,.85);
   \draw[very thick] (0,0) -- (.7,0);
   \fill (.7,0) circle (2pt);
  \end{scope}
 }
 \node[text width=3.4cm,align=center] at (0,1.36)
   {tangent to the paired cut intervals};
 \draw[gray!65] (-.28,1.03) -- (-.25,.68);
 \draw[gray!65] (.28,1.03) -- (.25,.68);
 \node[below] at (0,-1.2) {(b) Choose local fields};
\end{scope}
\draw[->,thick] (2.15,0) -- (2.57,0);
% Gluing restores the ribbon neighbourhood and the global line field.
\begin{scope}[shift={(4.4,0)}]
 \fill[gray!8] (-.7,.85) -- (.7,.85)
   arc[start angle=90,end angle=-90,radius=.85]
   -- (-.7,-.85) arc[start angle=270,end angle=90,radius=.85];
 \begin{scope}
  \clip (-.7,.85) -- (.7,.85)
    arc[start angle=90,end angle=-90,radius=.85]
    -- (-.7,-.85) arc[start angle=270,end angle=90,radius=.85];
  \foreach \side in {-1,1} {
   \foreach \ratio in {.16,.3,.46,.63,.8} {
    \pgfmathsetmacro{\centre}{\side*.7*(1+\ratio*\ratio)/(1-\ratio*\ratio)}
    \pgfmathsetmacro{\radius}{1.4*\ratio/(1-\ratio*\ratio)}
    \draw[darkblue,thin] (\centre,0) circle (\radius);
   }
  }
  \draw[darkblue,thick] (0,-.85) -- (0,.85);
 \end{scope}
 \draw[gray!70] (-.7,.85) -- (.7,.85)
   arc[start angle=90,end angle=-90,radius=.85]
   -- (-.7,-.85) arc[start angle=270,end angle=90,radius=.85];
 \draw[very thick] (-.7,0) -- (.7,0);
 \fill (-.7,0) circle (2pt);
 \fill (.7,0) circle (2pt);
 \node at (0,1.35) {$\eta_\Gamma$};
 \node[below] at (0,-1.2) {(c) Glue};
\end{scope}
\end{tikzpicture}
\caption{Construction of $\eta_\Gamma$ for a ribbon graph with one edge. The black dots and segment are the vertices and edge of $\Gamma$; the grey boundary is $\partial\Sigma_\Gamma$. Cutting along the dashed path gives two regions. In each region, the blue curves specify the local field by their tangent lines. The fields are tangent to both copies of the cut and agree when those copies are glued.}
\label{fig:canonical-line-field}
\end{figure}

\begin{dfn}\label{dfn:ribbon-type}
A line field $\eta$ on $(\Sigma,\mathcal P)$ is of \defn{ribbon type} if
\begin{itemize}
\item there is a filling embedding $\Gamma\hookrightarrow\Sigma$ of a ribbon graph with vertex set $\mathcal P$ such that $\eta$ is homotopic to $\eta_\Gamma$;
\item near each puncture, there are local coordinates centred at the puncture in which
\[
 \eta(x,y)=\mathbb R(-y,x)\qquad\text{for }(x,y)\ne(0,0).
\]
Equivalently, $\eta$ is tangent to the concentric circles $x^2+y^2=c$, where $c>0$ is sufficiently small.
\end{itemize}
\end{dfn}

For a graded arc, one also records how to rotate the chosen line $\eta(\gamma(t))$ to the tangent line of the arc. Since different paths around the direction circle may have the same endpoints but make different numbers of turns, we use the grading to retain this information.

\begin{dfn}\label{dfn:graded-arc}
For a smooth immersed arc $\gamma:[0,1]\to\Sigma$, let $\dot{\gamma}(t)\in T_{\gamma(t)}\Sigma$ denote its velocity vector at parameter $t$, for $0<t<1$. Since $\gamma$ is an immersion, $\dot{\gamma}(t)\ne0$. A \defn{grading} of $\gamma$ is a homotopy class, relative to its endpoints in the homotopy parameter, of maps
\[
 g_\gamma:[0,1]\times(0,1)\longrightarrow\mathbb P(T\Sigma)
\]
such that $g_\gamma(u,t)\in\mathbb P(T_{\gamma(t)}\Sigma)$ and
\[
 g_\gamma(0,t)=\eta(\gamma(t)),\qquad
 g_\gamma(1,t)=\mathbb R\dot{\gamma}(t).
\]
A \defn{graded arc} is an arc with a grading. For $s\in\ZZ$, the shift $\gamma[s]$ adds $s$ clockwise turns in the projective tangent fibre to its grading; a negative value of $s$ means turns in the opposite direction. An arc system is \defn{graded} if a grading has been chosen for each arc.
\end{dfn}

For each fixed $t$, the map $u\mapsto g_\gamma(u,t)$ is a path in the single circle $\mathbb P(T_{\gamma(t)}\Sigma)$. These paths must vary continuously with $t$. 
The choice of a grading records the homotopy class of this family of paths, with their endpoints fixed. The shift $\gamma[1]$ adds one clockwise turn to every path in this family, as illustrated in \cref{fig:arc-grading}.
For the construction of gradings of arcs on surfaces, see \cite[Sections 2a--2b]{Sei00} and \cite[Definition 3.10]{OZ22}.

\begin{figure}[htbp]
\centering
\begin{tikzpicture}[>=Stealth,scale=.94]
% A local portion of the circle bundle pulled back to the arc.
\begin{scope}[shift={(-3.2,0)}]
 \fill[gray!5] (-2.4,-.9) rectangle (2.4,.9);
 \draw[gray!45] (-2.4,-.9) -- (2.4,-.9);
 \draw[gray!45] (-2.4,.9) -- (2.4,.9);
 \draw[gray!55] (-2.4,0) ellipse[x radius=.28,y radius=.9];
 \draw[gray!55] (2.4,0) ellipse[x radius=.28,y radius=.9];
 \draw[dashed,darkblue] (-2.64,-.45) -- (2.16,-.45);
 \draw[thick,domain=-2.4:2.4,samples=65,variable=\t]
   plot ({\t+.28*cos(64+13.125*\t)},{.9*sin(64+13.125*\t)});
 \foreach \figx/\ang in {-1.6/43,0/64,1.6/85} {
   \draw[gray!45] (\figx,0) ellipse[x radius=.28,y radius=.9];
   \draw[thick,darkblue,->,domain=210:\ang,samples=40,variable=\a]
     plot ({\figx+.28*cos(\a)},{.9*sin(\a)});
   \fill[darkblue] ({\figx+.28*cos(210)},{.9*sin(210)}) circle (1.8pt);
   \fill ({\figx+.28*cos(\ang)},{.9*sin(\ang)}) circle (1.8pt);
   \draw[densely dotted,gray!50] (\figx,-.92) -- (\figx,-1.55);
   \fill (\figx,-1.58) circle (1.3pt);
 }
 \draw[->,gray!80] (-2.35,-1.58) -- (2.3,-1.58);
 \node[right] at (2.3,-1.58) {$\gamma$};
 \node[below] at (0,-1.58) {$\gamma(t)$};
 \node[darkblue,left,inner sep=1pt] at (-2.67,-.45) {$\eta$};
 \node[above] at (1.3,1.03) {$\mathbb R\dot{\gamma}$};
 \node[darkblue,above] at (-1.25,1.2) {$g_\gamma(-,t)$};
 \draw[darkblue,thin] (-.75,1.18) -- (-.22,.31);
 \node at (0,-2.35) {(a) Paths varying along the arc};
\end{scope}
% The same endpoints, but a different homotopy class of path.
\begin{scope}[shift={(4.1,-.02)}]
 \draw[gray!55] (0,0) circle (.85);
 \fill[darkblue] (210:.85) circle (1.9pt);
 \node[darkblue,left,inner sep=1pt] at (-1.36,-.65) {$\eta(\gamma(t))$};
 \fill (70:.85) circle (1.9pt);
 \draw[gray!55] (70:.85) -- (1.17,.99);
 \node[right,inner sep=1pt] at (1.17,.99) {$\mathbb R\dot{\gamma}(t)$};
 \draw[densely dotted,gray!55] (210:.7) -- (210:1.15);
 \draw[densely dotted,gray!55] (70:.7) -- (70:1.30);
 \draw[thick,darkblue,->] (210:.7)
   arc[start angle=210,end angle=70,radius=.7];
 \node[darkblue,fill=white,inner sep=1pt] at (-.76,.35) {$g_\gamma$};
 \draw[thick,->,domain=0:1,samples=130,variable=\u]
   plot ({(1.15+.15*\u)*cos(210-500*\u)},{(1.15+.15*\u)*sin(210-500*\u)});
 \node[fill=white,inner sep=1pt] at (1.22,-.82) {$g_{\gamma[1]}$};
 \node at (0,-2.33) {(b) Adding one clockwise turn};
\end{scope}
\end{tikzpicture}
\caption{A grading specifies a path from the line field to the tangent line in every fibre over the arc. In (a), the cylinder represents a portion of the family of direction circles over $\gamma$; the blue paths vary continuously with $t$. For a fixed $t$, panel (b) compares a grading with its shift by $1$. Both paths have the same endpoints, but the shifted path makes one additional clockwise turn. Radial offsets in (b) separate the paths for visibility.}
\label{fig:arc-grading}
\end{figure}

The same definition applies to an immersed path away from the punctures. We use such paths to define degrees at punctures. For an edge of $\Gamma\subset\Sigma_\Gamma$, transversality to $\eta_\Gamma$ gives a \defn{canonical grading}, that is, in each projective tangent fibre, take the embedded clockwise path from $\eta_\Gamma$ to the tangent line of the edge.

\begin{dfn}\label{dfn:intersection-degree}
Let $\gamma,\delta$ be graded arcs or graded paths. An \defn{oriented intersection} from $\gamma$ to $\delta$ specifies an occurrence on each path of a common intersection. Away from $\mathcal P$, the intersection is required to be transverse. At a puncture, the two outward branches must have distinct tangent rays and determine the sector traversed clockwise from $\gamma$ to $\delta$. We write $\vv\in\gamma\overrightarrow{\cap}\delta$. The \defn{degree} $\deg(\vv)$ is defined as follows; see \cref{fig:intersection-degree,fig:puncture-degree}.
\begin{itemize}
\item If $\vv\notin \mathcal P$, follow $g_\gamma$ at $\vv$, then the embedded clockwise path $\varepsilon$ from the tangent line of $\gamma$ to that of $\delta$, and finally the reverse of $g_\delta$ at $\vv$. This is a loop in $\mathbb P(T_{\vv}\Sigma)$; its class is $\deg(\vv)\in\ZZ$, the number of clockwise turns minus the number of counterclockwise turns of the closed path.
\item If $\vv\in \mathcal P$, choose a short embedded graded path $u$ across the specified sector, meeting $\gamma$ and $\delta$ transversely at its endpoints away from $\mathcal P$. For the resulting oriented intersections $\ww_\gamma\in\gamma\overrightarrow{\cap}u$ and $\ww_\delta\in\delta\overrightarrow{\cap}u$, set
\[
 \deg(\vv)=\deg(\ww_\gamma)-\deg(\ww_\delta).
\]
\end{itemize}
\end{dfn}

The degree at a puncture is independent of the choice of the path $u$ and its grading. In particular, shifting the gradings gives
\[
 \deg_{\gamma[s],\delta[t]}(\vv)
 =\deg_{\gamma,\delta}(\vv)+s-t,
\]
in agreement with the shifts in $\Add(\A)$.

For the gradings drawn in \cref{fig:intersection-degree}\textup{(b)}, the return path reverses the rotation accumulated along $g_\gamma$ and $\varepsilon$, so $\deg(\vv)=0$. In \cref{fig:intersection-degree}\textup{(c)}, the underlying arcs and line field are unchanged, but the grading of $\delta$ is replaced by that of $\delta[-1]$. The resulting closed path makes one clockwise turn, so its degree is $1$. Thus the degree depends on the gradings as well as on the geometric intersection.

\begin{figure}[htbp]
\centering
\begin{tikzpicture}[>=Stealth,scale=.94]
\begin{scope}[shift={(-4.65,0)}]
 \draw[rounded corners,gray!45] (-1.35,-1.05) rectangle (1.35,1.05);
 \draw[thick,darkblue] (20:-1.15) -- (20:1.15)
   node[above left] {$\gamma$};
 \draw[thick] (-25:-1.15) -- (-25:1.15)
   node[right] {$\delta$};
 \fill (0,0) circle (1.7pt) node[below left] {$\vv$};
 \node at (0,-2.1) {(a) Crossing arcs};
\end{scope}
\foreach \figx/\case in {-.15/0,4.6/1} {
\begin{scope}[shift={(\figx,0)}]
 \draw[gray!55] (0,0) circle (1);
 \draw[densely dotted,gray!55] (160:1) -- (160:1.43);
 \draw[densely dotted,gray!55] (40:.79) -- (40:1.21);
 \draw[densely dotted,gray!55] (-50:.79) -- (-50:1.43);
 \draw[thick,darkblue,->] (160:1.21)
   arc[start angle=160,end angle=40,radius=1.21];
 \node[darkblue,fill=white,inner sep=1pt] at (111:1.21) {$g_\gamma$};
 \draw[thick,->] (40:.79)
   arc[start angle=40,end angle=-50,radius=.79];
 \node at (-4:.53) {$\varepsilon$};
 \ifnum\case=0
   \draw[thick,gray!80!black,->] (-50:1.43)
     arc[start angle=-50,end angle=160,radius=1.43];
   \node[fill=white,inner sep=1pt] at (76:1.67) {$\overline{g_\delta}$};
   \node at (0,-2.1) {(b) $\deg_{\gamma,\delta}(\vv)=0$};
 \else
   \draw[thick,gray!80!black,->] (-50:1.43)
     arc[start angle=-50,end angle=-200,radius=1.43];
   \node[fill=white,inner sep=1pt] at (0,-1.63) {$\overline{g_{\delta[-1]}}$};
   \node at (0,-2.1) {(c) $\deg_{\gamma,\delta[-1]}(\vv)=1$};
 \fi
 \fill (160:1) circle (1.6pt);
 \node[left,inner sep=1pt] at (-1.56,.7) {$\eta(\vv)$};
 \fill (40:1) circle (1.6pt);
 \fill (-50:1) circle (1.6pt);
 \draw[gray!55] (40:1) -- (1.47,.71);
 \draw[gray!55] (-50:1) -- (1.34,-.91);
 \node[right,inner sep=1pt] at (1.47,.71) {$\mathbb R\dot{\gamma}$};
 \node[right,inner sep=1pt] at (1.34,-.91) {$\mathbb R\dot{\delta}$};
\end{scope}
}
\end{tikzpicture}
\caption{Degrees $0$ and $1$ at the same geometric intersection. }
\label{fig:intersection-degree}
\end{figure}

\begin{figure}[htbp]
\centering
\begin{tikzpicture}[>=Stealth,scale=1.05]
 \fill[gray!12] (0,0) -- (135:1.9)
   arc[start angle=135,end angle=20,radius=1.9] -- cycle;
 \draw[thick,darkblue,->] (0,0) -- (135:2.25)
   node[above] {$\gamma$};
 \draw[thick,->] (0,0) -- (20:2.25)
   node[right] {$\delta$};
 \draw[thick,->] (135:1.45)
   arc[start angle=135,end angle=20,radius=1.45];
 \node[above] at (78:1.48) {$u$};
 \fill (135:1.45) circle (1.8pt)
   node[below left] {$\ww_\gamma$};
 \fill (20:1.45) circle (1.8pt)
   node[below right] {$\ww_\delta$};
 \fill (0,0) circle (1.8pt) node[below] {$\vv$};
 \node at (.25,-.7) {$\deg(\vv)=\deg(\ww_\gamma)-\deg(\ww_\delta)$};
\end{tikzpicture}
\caption{Computing the degree at a puncture. }
\label{fig:puncture-degree}
\end{figure}

\begin{dfn}\label{dfn:winding-number}
For an oriented immersed closed curve $\gamma:S^1\to\Sigma\setminus \mathcal P$, its \defn{winding number} $\omega_\eta(\gamma)$ is the signed intersection number of the tangent lift $t\mapsto\mathbb R\dot{\gamma}(t)$ with $\eta(\Sigma\setminus \mathcal P)$ in $\mathbb P(T\Sigma)$. Equivalently, after a perturbation making the tangencies transverse, it counts them positively when the tangent line crosses $\eta$ clockwise, and negatively for the reverse crossing. For an arc transverse to $\eta$ near its endpoints, use the same signed count along its interior.
\end{dfn}

The signed intersections occur at parameters where $\mathbb R\dot{\gamma}(t)=\eta(\gamma(t))$. Thus $\omega_\eta(\gamma)$ measures the rotation of the tangent line relative to the line field along the curve. For a clockwise circle in a disc with a horizontal line field, the tangent vector makes one clockwise turn, so its unoriented line makes two turns in the direction circle and $\omega_\eta(\gamma)=2$. For a circle around a puncture with the line field tangent to concentric circles, the tangent line and the line field rotate together. A small constant rotation of the line field gives a homotopy of line fields, so it does not change the winding number. The rotated field has no tangencies with the circle, giving $\omega_\eta(\gamma)=0$. These examples are shown in \cref{fig:winding-number}, where the blue segments in \textup{(b)} depict the rotated line field.

\begin{figure}[htbp]
\centering
\begin{tikzpicture}[>=Stealth,scale=.96]
\begin{scope}[shift={(-3.5,0)}]
 \filldraw[rounded corners,fill=gray!4,draw=gray!40]
   (-2,-1.55) rectangle (2,1.6);
 \foreach \x in {-1.6,-.8,0,.8,1.6}
   \foreach \y in {-1.2,-.6,0,.6,1.2}
     \draw[darkblue!60] (\x-.2,\y) -- (\x+.2,\y);
 \draw[thick] (0,0) circle (1.05);
 \draw[thick,->] (20:1.05) arc[start angle=20,end angle=-35,radius=1.05];
 \fill (0,1.05) circle (1.7pt);
 \fill (0,-1.05) circle (1.7pt);
 \draw[thick] (-.36,1.05) -- (.36,1.05);
 \draw[thick] (-.36,-1.05) -- (.36,-1.05);
 \node[fill=white,inner sep=1pt] at (0,1.38) {$+1$};
 \node[fill=white,inner sep=1pt] at (0,-1.38) {$+1$};
 \node[right,fill=white,inner sep=1pt] at (1.08,.25) {$\gamma$};
 \node at (0,2.02) {horizontal line field};
 \node at (0,-2.05) {(a) $\omega_\eta(\gamma)=2$};
\end{scope}
\begin{scope}[shift={(3.5,0)}]
 \filldraw[rounded corners,fill=gray!4,draw=gray!40]
   (-2,-1.55) rectangle (2,1.6);
 \foreach \r in {.5,1.05,1.43} {
  \foreach \a in {0,45,90,135,180,225,270,315} {
   \draw[darkblue!65] (\a:\r) ++({\a+110}:-.18) -- ++({\a+110}:.36);
  }
 }
 \draw[thick] (0,0) circle (1.05);
 \draw[thick,->] (20:1.05) arc[start angle=20,end angle=-35,radius=1.05];
 \filldraw[fill=white,draw=black] (0,0) circle (2pt);
 \node[below right] at (0,0) {$\vv$};
 \node[right,fill=white,inner sep=1pt] at (1.08,.25) {$\gamma$};
 \node at (0,2.02) {line field around a puncture};
 \node at (0,-2.05) {(b) $\omega_\eta(\gamma)=0$};
\end{scope}
\end{tikzpicture}
\caption{Winding counts rotation of the tangent line relative to the line field. Both curves are traversed clockwise.}
\label{fig:winding-number}
\end{figure}

For a closed curve, the winding number can also be interpreted as the degree of a map between circles; see \cite[Section 2.2]{Hat02} for the notion of degree.

 Note that winding numbers are invariant under regular homotopies.
For the canonical ribbon field,
\[
 \omega_{\eta_\Gamma}(\gamma)=0\quad(\gamma\in E(\Gamma)),
 \qquad \omega_{\eta_\Gamma}(\varepsilon_{\vv})=0,
 \qquad \omega_{\eta_\Gamma}(B)=-\ell(B),
\]
where $\varepsilon_{\vv}$ is a small counterclockwise simple closed curve around $\vv\in V(\Gamma)$, and $B$ is a boundary component with its orientation chosen so that $\Sigma$ lies on its right and corresponding face perimeter $\ell(B)$; see \cite[Example 3.16]{OZ22}. Moreover, $\eta_\Gamma$ is orientable if and only if $\Gamma$ is bipartite; see \cite[Lemma 7.9]{OZ22}.

\begin{dfn}\label{dfn:disc-sequence}
Let $\U$ be a full graded arc system. A \defn{marked disc} is a continuous map from a disc with $n\geq3$ distinguished boundary points to $\Sigma$, which is an orientation-preserving smooth immersion away from those points, maps its interior into $\Sigma\setminus \mathcal P$, and maps its boundary sides to arcs of $\U$ and its distinguished points to punctures. List its sides in counterclockwise order as $\delta_1,\ldots,\delta_n$, and set $\delta_{n+1}=\delta_1$. The corners give oriented intersections $\vv_i\in\delta_i\overrightarrow{\cap}\delta_{i+1}$. The corresponding sequence of corner paths $(a_n,\ldots,a_1)$ is called a \defn{disc sequence}. Here $a_i$ follows the successors at $\vv_i$ from the endpoint branch of $\delta_i$ to that of $\delta_{i+1}$ through the sector of the disc.
\end{dfn}

Thus $a_i$ is a path in $Q_{\Gamma_\U}$ from $\delta_i$ to $\delta_{i+1}$. Degrees are additive along such paths, and the corner degrees satisfy
\[
 |a_i|=\deg(\vv_i),\qquad \sum_{i=1}^n|a_i|=n-2;
\]
see \cite[Lemma 3.19 and Remark 3.23]{OZ22}. An admissible arc system admitting no marked discs is called \defn{formal}.

\subsection{Brauer graph \texorpdfstring{$A_\infty$}{A-infinity}-categories}
\label{sec:brauer-category-definition}

Winding numbers determine signs in the cycle relations of the algebra associated with a graded arc system. We first specify these relations and the complementary paths appearing in the higher operations.

\begin{dfn}\textnormal{(cf. \cite[Definition 1.9]{OZ22})}\label{dfn:modified-bga}
Let $(\Gamma,\m)$ be a Brauer graph and let $\omega:E(\Gamma)\to\ZZ$. The \defn{modified Brauer graph algebra} $B(\Gamma,\m,\omega)$ is the quotient of $\Bbbk Q_\Gamma$ by the zero relations in $I_{\Gamma,\m}$ and
\[
 C_h^{\m(s(h))}
 =(-1)^{\omega(e(h))}C_{\iota h}^{\m(s(\iota h))}
 \qquad(h\in H(\Gamma)).
\]
\end{dfn}

Note that for $\omega=0$, we recover $B(\Gamma,\m)$.

For a nonzero nontrivial path represented by $u=p_{h,t}$, where $1\leq t\leq\m(s(h))\operatorname{val}_\Gamma(s(h))$, define its \defn{complementary path} by
\[
 u^*=p_{\rho^t h,\,\m(s(h))\operatorname{val}_\Gamma(s(h))-t}.
\]
It has the reverse source and target, and
\[
 u^*u=C_h^{\m(s(h))},\qquad
 uu^*=C_{\rho^t h}^{\m(s(h))}.
\]
In particular, the complement of a maximal path is a trivial path.

Now fix a graded punctured surface $(\Sigma,\mathcal P,\eta)$ of ribbon type, a graded admissible arc system $\U$, and a multiplicity function $\m:\mathcal P\to\ZZ_{>0}$. Use the winding numbers of the arcs in the modified algebra $B(\Gamma_\U,\m,\omega_\eta)$, choosing an orientation of each arc to evaluate $\omega_\eta$; the cycle relations are independent of this choice. Assign to a successor arrow the degree of its oriented intersection, and give the loop at a valency-one puncture degree zero. Path degrees are additive, and trivial paths have degree zero. Since $\eta$ is of ribbon type, the special cycles have degree zero. The relations of $B(\Gamma_\U,\m,\omega_\eta)$ are therefore homogeneous and $|u^*|=-|u|$.

\begin{dfn}
\textnormal{(cf.~\cite[Definition 4.4]{OZ22})}
\label{dfn:brauer-category}
For the graded admissible arc system $\U$ fixed above, the \defn{Brauer graph $A_\infty$-category} $\B(\U,\m)$ has objects $X_\gamma$, $\gamma\in\U$, and graded morphism spaces
\[
 \Hom_{\B(\U,\m)}(X_\gamma,X_\delta)
 =e_\delta B(\Gamma_\U,\m,\omega_\eta)e_\gamma.
\]
Its operations are specified on path representatives as follows.
\begin{enumerate}[label=\textup{(\arabic*)}]
\item Set $\mu^1=0$ and
\[
 \mu^2(b,a)=(-1)^{|a|}ba
\]
for homogeneous composable paths $a,b$, using the product of $B(\Gamma_\U,\m,\omega_\eta)$. The unit of $X_\gamma$ is $e_\gamma$.

\item For each disc sequence $(a_n,\ldots,a_1)$, where $a_i:X_{\delta_i}\to X_{\delta_{i+1}}$, set
\begin{equation}\label{eq:brauer-polygon-operations}
\begin{aligned}
 \mu^n(ba_n,a_{n-1},\ldots,a_1)&=b
 &&\text{if }ba_n\ne0,\\
 \mu^n(a_n,\ldots,a_2,a_1b)&=(-1)^{|b|}b
 &&\text{if }a_1b\ne0.
\end{aligned}
\end{equation}
Here $b$ is any composable path, including a trivial path.

\item For the same disc sequence, $2\leq j\leq n$, and a path $b$ with $ba_j\ne0$, set
\begin{equation}\label{eq:brauer-complement-operation}
\begin{aligned}
 &\mu^n\bigl(a_n,\ldots,a_{j+1},
       a_j(ba_j)^*,ba_j,a_{j-1},\ldots,a_2\bigr)
       =(-1)^\varepsilon a_1^*,\\
 &\varepsilon=\sum_{i=1}^{j-1}|a_i|+|ba_j|
                 +\sum_{i=2}^{j}\omega_\eta(\delta_i).
\end{aligned}
\end{equation}
When $j=n$ or $j=2$, the corresponding empty sequence of arguments is omitted.
\end{enumerate}
The formulas apply to every cyclic ordering of a disc sequence. Extend by multilinearity and require all operations of arity at least three containing a unit to vanish.
\end{dfn}

The operations are well-defined; see \cite[Lemma 4.5]{OZ22}. In fact, these higher-operation formulas will all be needed when checking the group weights in \cref{sec:geometric-operations}.

\begin{thm}
\textnormal{(cf.~\cite[Proposition 4.6, Corollary 5.15]{OZ22})}
\label{thm:oz-constructions}
\begin{enumerate}[label=\textup{(\arabic*)}]
\item The operations in \cref{dfn:brauer-category} satisfy \eqref{eq:relations} and define an $A_\infty$-category.
\item Let $(\Gamma,\m)$ be a Brauer graph. On $(\Sigma_\Gamma,V(\Gamma),\eta_\Gamma)$, regard the edges of $\Gamma$ as arcs with their canonical gradings. Then $\B(\Gamma,\m)$ is concentrated in degree zero, has $\mu^n=0$ for $n\ne2$, and its associated algebra is $B(\Gamma,\m)$. Consequently,
\[
 H^0\bigl(\Tw\B(\Gamma,\m)\bigr)
 \simeq\K^b\bigl(\proj\text{-}B(\Gamma,\m)\bigr).
\]
\end{enumerate}
\end{thm}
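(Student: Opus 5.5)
For part (1), I would not verify the Stasheff identities \eqref{eq:relations} directly. The direct route would be a long case analysis over pairs of overlapping disc sequences and complementary paths. Instead I would follow the strategy of \cite{OZ22} and use the covering machinery of Section~\ref{sec:lifting}.

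\begin{enumerate}[label=\textup{(\roman*)}]
\item Choose a cut $(c_{\vv})_{\vv\in\mathcal P}$ for the admissible system $\U$. Cutting along it turns the punctures into boundary stops.
\item Using the weight grading by intersections with the cut, identify the graded surface of $\U$ with an orbit of an auxiliary $A_\infty$-category $\A$. Here $\A$ is built from a graded gentle (partially wrapped Fukaya) $A_\infty$-category in the sense of \cite{HKK17}, whose Stasheff identities are known.
\item Take the trivial extension $T(\A)$ of \cref{dfn:trivial-extension}, together with the $\ZZ$-covering $\widehat\A$ of \cref{cor:repetitive}. To incorporate the multiplicities $\m$, pass to a suitable $\ZZ$-quotient of $\widehat\A$, which winds the special cycles $\m(\vv)$ times.
\item Compare operations. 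The higher products \eqref{eq:brauer-polygon-operations} are the images of the disc operations of the gentle category. The complementary-path operations \eqref{eq:brauer-complement-operation} are the images of the dual-summand operations in \cref{dfn:trivial-extension}(2), because $u^*$ is the dual basis element of $u$ under the symmetrizing form $u^*u=C_h^{\m}$.
\item Match signs. The winding-number signs $\omega_\eta(\delta_i)$ should come from the sign twists $(-1)^{\omega}$ in the cycle relations of \cref{dfn:modified-bga}.
\end{enumerate}

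Once the operations of $\B(\U,\m)$ are identified with the orbit operations \eqref{eq:orbit} of a category known to satisfy \eqref{eq:relations}, \cref{prop:reflection} transfers the identities. Equivalently, if the comparison runs the other way, \cref{lem:faithful} applies via an injective strict comparison map.

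For part (2), take $\U=E(\Gamma)$ on $(\Sigma_\Gamma,V(\Gamma),\eta_\Gamma)$ with canonical gradings. The argument has four steps.

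\begin{enumerate}[label=\textup{(\alph*)}]
\item \emph{No marked discs.} Since $\Sigma_\Gamma\setminus\Gamma\cong\partial\Sigma_\Gamma\times[0,1)$ consists of half-open annuli, $\U$ admits no marked discs. Indeed, an immersed disc with sides on edges of $\Gamma$ would have interior mapping into $\Sigma_\Gamma\setminus\Gamma$. Lifting to the collar, it would produce a compact disc component bounded by graph edges, and no such component exists. Hence $\U$ is formal, and only the operation $\mu^2$ survives.
\item \emph{Degree zero.} Every successor arrow has degree $0$. This is a local computation in the model field of \cite[Example 3.7]{OZ22}: near a vertex $\eta_\Gamma$ is tangent to concentric circles, and both edges carry the canonical transverse grading. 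The clockwise rotation from one edge to the next is therefore cancelled in the loop of \cref{dfn:intersection-degree}.
\item \emph{Untwisted relations.} Since $\omega_{\eta_\Gamma}(\gamma)=0$ for edges, the modified algebra of \cref{dfn:modified-bga} equals $B(\Gamma,\m)$. So the associated algebra of $\B(\Gamma,\m)$ is $B(\Gamma,\m)$, concentrated in degree zero, with $\mu^n=0$ for $n\ne2$.
\item \emph{Twisted complexes.} For an ordinary algebra viewed as a category $\B$ in degree $0$, a twisted complex is a finite sum of shifted indecomposable projectives. The upper-triangular Maurer--Cartan condition reduces to $\delta^2=0$, and $\mu^1_{\Tw}$ is the usual Hom-complex differential. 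Thus $H^0(\Tw\B)\simeq\K^b(\proj\text{-}B(\Gamma,\m))$. Split-closure is automatic, since $\K^b(\proj)$ of a finite-dimensional algebra is Krull--Schmidt.
\end{enumerate}

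The main obstacle is part (1), specifically the sign bookkeeping. One must check that the exponent $\varepsilon$ in \eqref{eq:brauer-complement-operation}, including $\sum_{i=2}^j\omega_\eta(\delta_i)$, agrees with the sign $\dagger$ of the trivial extension after transporting along the orbit and cut identifications. I would first check this for a single disc sequence with $j=2$, then propagate it by cyclic rotation of the disc sequence. I would also verify that $\mu^n$ is well defined on the quotient algebra, as in \cite[Lemma 4.5]{OZ22}, before anything else.
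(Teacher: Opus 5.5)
The paper does not prove this statement itself; it quotes \cite[Proposition 4.6, Corollary 5.15]{OZ22}. There, multiplicity one is handled through the trivial extension of the Haiden--Katzarkov--Kontsevich category of the cut surface, and general multiplicities through covering and orbit constructions. Your steps (i), (ii), (iv), (v) and your part (2) fit that picture, but step (iii) fails.

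By \cref{cor:repetitive,thm:rfold}, the quotients of $\widehat\A$ by the subgroups $\langle\nu^k\rangle$ are the $k$-fold trivial extensions $T_k(\A)$. In $T_k(\A)$ the maximal path starting at $(X,\bar i)$ ends at $(X,\bar i+\bar1)$. These are lifted categories of the kind in \cref{sec:nakayama}, with fractional multiplicity $1/k$ at every puncture and reduced form $\B(\U,1)$; compare \cref{prop:reduction}. Such a quotient therefore unwinds the special cycles rather than winding them $\m(\vv)$ times. It is also uniform over the punctures, whereas $\m$ varies.

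Note moreover that for nonconstant $\m$ the cut-crossing weight is not even a grading of $\B(\U,\m)$. The two sides of $C_h^{\m(s(h))}=\pm C_{\iota h}^{\m(s(\iota h))}$ cross the cut $\m(s(h))$ and $\m(s(\iota h))$ times, respectively. So as written your plan yields \eqref{eq:relations} only for $\B(\U,1)$.

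The multiplicities have to be produced by going up to a branched cover, not down to a quotient. Since $\partial\Sigma\neq\emptyset$, the meridians $[\gamma_{\vv}]$ extend to a basis of $H_1(\Sigma\setminus\mathcal P;\ZZ)$. So, with $N=\operatorname{lcm}_{\vv}\m(\vv)$, there is a (surjective) homomorphism to $\ZZ/N\ZZ$ sending $[\gamma_{\vv}]$ to an element of order $\m(\vv)$. The associated branched cover $\widetilde\Sigma\to\Sigma$ has ramification index $\m(\vv)$ over each $\vv$.

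Lift $\U$, its cut, its gradings and the line field, and give the lifted system $\widetilde\U$ multiplicity one. A vertex over $\vv$ then has valency $\m(\vv)\operatorname{val}(\vv)$. Hence the following all lift compatibly: paths of length at most $\m(\vv)\operatorname{val}(\vv)$ at $\vv$, their complements, the signed cycle relations, winding numbers, corner degrees, and marked discs. The deck group $G$ acts freely on lifted arcs.

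Now run your steps (i)--(v) on $\widetilde\U$ to get \eqref{eq:relations} for $\B(\widetilde\U,1)$, and check that $G$ acts strictly. Via \eqref{eq:orbit}, identify $\B(\U,\m)$ with the full subcategory of $\B(\widetilde\U,1)/G$ on one lift of each arc. Then \cref{prop:orbit-standard} (or \cref{prop:reflection}) transfers the identities to $\B(\U,\m)$.

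A smaller point in (2)(a): a priori the interior of an immersed marked disc need not avoid $\Gamma$. It is cleaner to argue as follows. The boundary of a marked disc is a cyclically reduced closed edge walk in $\Gamma$, since consecutive sides use distinct branches at each corner. Such a walk is not null-homotopic in $\Gamma\simeq\Sigma_\Gamma$, whereas the disc would give a null-homotopy.
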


We next recall how a marked disc expresses any one of its sides as a twisted complex of the other sides. This extends the polygon example in \cite[Section 3.3]{HKK17} to Brauer graph $A_\infty$-categories; see the proof of \cite[Proposition 6.1]{OZ22}.

\begin{prop}\label{prop:brauer-polygon}
Let $(a_n,\ldots,a_1)$ be a disc sequence in $\B(\U,\m)$. Write $a_i:X_{i-1}\to X_i$, with $X_n=X_0$, and set $t_i=\sum_{j=1}^i|a_j|-(i-1)$ with $1\leq i\leq n-1$. There is an isomorphism in $H^0(\Tw\B(\U,\m))$
\[
 X_0\cong
 \left(\bigoplus_{i=1}^{n-1}X_i[t_i],\delta\right),
 \qquad
 \delta_{i,i+1}=a_{i+1},
\]
where all other components of $\delta$ are zero. If a side occurs more than once, its occurrences give separate summands in the displayed direct sum. In particular,
\[
 y_{X_0}\in
 \thick_{\D(\B(\U,\m))}\{y_{X_1},\ldots,y_{X_{n-1}}\}.
\]
\end{prop}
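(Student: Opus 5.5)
The approach is to construct explicit mutually inverse closed degree-zero morphisms in $\Tw\B(\U,\m)$ between $X_0$ and the twisted complex $T=\bigl(\bigoplus_{i=1}^{n-1}X_i[t_i],\delta\bigr)$, with $\delta_{i,i+1}=a_{i+1}$. This follows the polygon argument of \cite[Section 3.3]{HKK17} and the proof of \cite[Proposition 6.1]{OZ22}.

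First I check that $T$ is a twisted complex. The component $a_{i+1}$ has degree $|a_{i+1}|=1+t_{i+1}-t_i$ by the definition of $t_i$, so $\delta$ has degree one, and it is strictly upper triangular. For the Maurer--Cartan equation, the terms $\mu^r_{\Add}(\delta,\ldots,\delta)$ involve only consecutive strings $(a_{j+r},\ldots,a_{j+1})$ with $2\le j+1$ and $j+r\le n-1$. Such a string is a proper consecutive sub-sequence of the disc sequence omitting $a_1$ and $a_n$. For $r=2$, the product $a_{j+2}a_{j+1}$ vanishes because consecutive corners of a marked disc lie at different punctures or turn in different sectors, giving zero relations in $I_{\Gamma,\m}$. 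For $r\ge3$, no operation in \cref{dfn:brauer-category} has such an input: a proper sub-sequence of corners matches neither \eqref{eq:brauer-polygon-operations} nor \eqref{eq:brauer-complement-operation} with its full length $n$, except through a different disc. I would argue the latter cannot occur by the immersion and admissibility argument of \cite[Lemma 4.5]{OZ22}.

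Next I define the morphisms. Let $f:X_0\to T$ have the single component $a_1:X_0\to X_1[t_1]$; this has degree zero since $t_1=|a_1|$. Let $g:T\to X_0$ have the single component $a_n:X_{n-1}[t_{n-1}]\to X_0$. Its degree is $|a_n|-t_{n-1}=|a_n|-\sum_{j<n}|a_j|+(n-2)=0$, using $\sum|a_i|=n-2$. Closedness $\mu^1_{\Tw}(f)=0$ reduces to $\sum_r\mu^{r+1}(\delta^{\otimes r},a_1)=0$. By the same vanishing as above, this is a sum of a zero product $a_2a_1$ and of operations on proper sub-sequences, which vanish. The same argument gives $\mu^1_{\Tw}(g)=0$.

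For the compositions, I compute $\mu^2_{\Tw}(g,f)=\sum\mu^{n}_{\Add}(a_n,\delta,\ldots,\delta,a_1)=\pm\mu^n(a_n,\ldots,a_1)=\pm e_{X_0}$ by \eqref{eq:brauer-polygon-operations} with $b$ trivial; the shift signs $(-1)^{\sum t_i}$ then have to be tracked. The composition $\mu^2_{\Tw}(f,g)$ is an endomorphism of $T$, and I expect it to differ from $1_T$ by a coboundary. Its diagonal entries at $X_i$ should come from $\mu^n(a_i,\ldots,a_1,a_n,\ldots,a_{i+1})=\pm e_{X_i}$, using the cyclic invariance of the disc sequence. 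The off-diagonal error terms are to be cancelled by a homotopy built from the complementary-path operations \eqref{eq:brauer-complement-operation}.

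The main obstacle is this second composition together with the sign bookkeeping. To avoid computing the homotopy explicitly, I would first try to show that $\mu^2(f,g)$ is an idempotent in $H^0$ with $g\circ f=1$, so that $X_0$ is a direct summand of $T$. I would then identify the complement as zero by comparing the class of $T$ with the classes of its summands in $H^0(\Tw)$ using the exact triangles obtained from the filtration by $i$. If this fails, I would fall back on cofinally computing $\mu^2_{\Tw}(f,g)-1_T=\mu^1_{\Tw}(h)$ with $h_{i,j}$ given by the relevant sub-products of the $a$'s.

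The final claim follows immediately: applying the Yoneda embedding $H^0(\mathcal Y)$, which is exact and fully faithful, sends $T$ to an iterated cone of shifts of $y_{X_1},\ldots,y_{X_{n-1}}$. Hence $y_{X_0}$ lies in $\thick_{\D(\B(\U,\m))}\{y_{X_1},\ldots,y_{X_{n-1}}\}$.
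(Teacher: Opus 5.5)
Your $f$ and $g$ are exactly the maps the paper uses; the paper puts the sign $(-1)^{t_1+\cdots+t_{n-1}}$ on the component $a_n$ of $g$. Your treatment of the Maurer--Cartan equation, of $\mu^1_{\Tw}(f)=\mu^1_{\Tw}(g)=0$, and of $\mu^2_{\Tw}(g,f)$ matches the paper's.

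The gap is the other composite. You never prove $[f]\circ[g]=1_T$, and the plan you sketch rests on a wrong expectation. There are no off-diagonal error terms, and no homotopy built from \eqref{eq:brauer-complement-operation} is needed, because $\mu^2_{\Tw}(f,g)=1_T$ holds on the nose. Since $X_0$ carries zero twisting and the strings of $\delta$'s in \eqref{eq:twisted-operations} are forced, the component of $\mu^2_{\Tw}(f,g)$ from $X_i[t_i]$ to $X_k[t_k]$ is, up to sign, $\mu^{n-i+k}(a_k,\ldots,a_1,a_n,\ldots,a_{i+1})$.

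For $k=i$ this is a cyclic rotation of the disc sequence, so it equals $1_{X_i}$ by \eqref{eq:brauer-polygon-operations} with $b$ trivial. For $k<i$ it is an operation on a proper cyclic string of corners. This vanishes for the same reason as in your Maurer--Cartan check, after rotating the disc sequence. In length two it is $\mu^2(a_1,a_n)$, and $a_1a_n=0$ because the two corners meet $X_0$ at its two different ends. For $k>i$ the string runs once around the whole disc and then continues, and no rule of \cref{dfn:brauer-category} is nonzero on it. In particular, \eqref{eq:brauer-complement-operation} would require two consecutive inputs at the same end of an arc, which consecutive corners of a disc never are.

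The sign you postponed is settled by the same computation. The sign produced by $\mu_{\Add}$ is $(-1)^{t_1+\cdots+t_{n-1}}$ in every diagonal entry, since the shifts of the objects passed through are always $0,t_1,\ldots,t_{n-1}$ up to cyclic order. The same sign appears in $\mu^2_{\Tw}(g,f)$, so placing it once on $g$ makes both composites identities.

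Your fallback does not close the gap. Once $[g]\circ[f]=1_{X_0}$, idempotency of $[f]\circ[g]$ is automatic. It only shows that $X_0$ is a direct summand of $T$ in $H^0(\Tw\B(\U,\m))^\natural$. You cannot show the complementary summand vanishes by comparing the class of $T$ with the classes of the pieces of its filtration, since a nonzero object of a triangulated category can have zero class. As written, the isomorphism $X_0\cong T$ is therefore not established. The missing ingredient is the componentwise evaluation of $\mu^2_{\Tw}(f,g)$ above; after that, your final Yoneda argument goes through.
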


\begin{proof}
Write $T$ for the displayed twisted complex. Since $t_{i+1}-t_i=|a_{i+1}|-1$, every twisting component $a_{i+1}:X_i[t_i]\to X_{i+1}[t_{i+1}]$ has degree one. The polygon calculation gives zero for every operation on a proper consecutive sequence of corners of the marked disc. Consequently, $\delta$ satisfies \eqref{eq:mc-prelim}.

Define $f:X_0\to T$ and $g:T\to X_0$ by the following diagrams. The horizontal arrows in the lower rows are the twisting components, and all components of $f$ and $g$ not displayed are zero.
\[
f:\qquad
\begin{tikzcd}[column sep=large,row sep=large]
 X_0\arrow[r]\arrow[d,"a_1"']
 &0\arrow[r]&0\\
 X_1[t_1]\arrow[r,"a_2"]
 &\cdots\arrow[r,"a_{n-1}"]
 &X_{n-1}[t_{n-1}]
\end{tikzcd}
\]
\[
g:\qquad
\begin{tikzcd}[column sep=large,row sep=huge]
 X_0\arrow[r]
 &0\arrow[r]
 &0\\
 X_1[t_1]\arrow[r,"a_2"]
 &\cdots\arrow[r,"a_{n-1}"]
 &X_{n-1}[t_{n-1}]
   \arrow[ull,"{(-1)^{t_1+\cdots+t_{n-1}}a_n}"']
\end{tikzcd}
\]
Since $t_1=|a_1|$ and $t_{n-1}=-|a_n|$, both $f$ and $g$ have degree zero. By \eqref{eq:twisted-operations} and \eqref{eq:brauer-polygon-operations}, $f$ and $g$ satisfy
\[
\begin{aligned}
\mu^1_{\Tw\B(\U,\m)}(f)&=0,
&\mu^1_{\Tw\B(\U,\m)}(g)&=0,\\
\mu^2_{\Tw\B(\U,\m)}(g,f)&=1_{X_0},
&\mu^2_{\Tw\B(\U,\m)}(f,g)&=1_T.
\end{aligned}
\]
Hence $[f]$ and $[g]$ are inverse isomorphisms. Applying the Yoneda functor to $T$ gives the assertion about representable modules.
\end{proof}

\begin{prop}
\textnormal{(cf.~\cite[Proposition 6.1 and Corollary 6.5]{OZ22})}
\label{prop:brauer-arc-independence}
Fix $(\Sigma,\mathcal P,\eta,\m)$ with $\eta$ of ribbon type.
\begin{enumerate}[label=\textup{(\arabic*)}]
\item Let $\U$ be a graded admissible arc system and let $\gamma\in\U$ be such that $\U\setminus\{\gamma\}$ is full. Then the natural full inclusion
\[
 \B(\U\setminus\{\gamma\},\m)
 \hookrightarrow\B(\U,\m)
\]
is a Morita equivalence. In fact, it induces an equivalence on $H^0(\Tw(-))$.
\item For any two graded admissible arc systems $\U$ and $\U'$ on $(\Sigma,\mathcal P,\eta)$, the categories $\B(\U,\m)$ and $\B(\U',\m)$ are Morita equivalent.
\end{enumerate}
\end{prop}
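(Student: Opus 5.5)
For part (1), the plan is to show that $X_\gamma$ lies in the thick closure of the remaining objects, and then apply \cref{prop:morita-criterion}(1). The inclusion $\B(\U\setminus\{\gamma\},\m)\hookrightarrow\B(\U,\m)$ is a full inclusion. The operations in \cref{dfn:brauer-category} are defined by corners and marked discs, and these are compatible with removing an arc. Indeed, the morphism spaces $e_\delta B(\Gamma_\U,\m,\omega_\eta)e_{\delta'}$ for $\delta,\delta'\ne\gamma$ coincide with those of $B(\Gamma_{\U\setminus\{\gamma\}},\m,\omega_\eta)$. A nonzero path between remaining arcs that passes through $\gamma$ at a puncture corresponds to a single successor path in the smaller graph. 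For the same reason, the disc sequences are unchanged once every corner involving $\gamma$ is absorbed into longer corner paths.

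Granted this, criterion (c) asks for every object of $\B(\U,\m)$ to be a summand of an object coming from $\Tw\B(\U\setminus\{\gamma\},\m)$. For objects other than $X_\gamma$ this is automatic. For $X_\gamma$ we use that $\U\setminus\{\gamma\}$ is still full. Cutting along the remaining arcs, $\gamma$ lies in a disc component, or possibly in an annulus component. The key case is when $\gamma$ separates that component into pieces of which at least one is a disc with all sides in $\U\setminus\{\gamma\}$ apart from the side $\gamma$.

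That piece is a marked disc with $\gamma$ as one side, which gives a disc sequence $(a_n,\ldots,a_1)$ with $X_0=X_\gamma$. Then \cref{prop:brauer-polygon} expresses $X_\gamma$, up to isomorphism in $H^0(\Tw\B(\U,\m))$, as a twisted complex built from the other sides. These sides all lie in $\U\setminus\{\gamma\}$, and $\gamma$ occurs in the disc only once, since the piece is embedded on one side of $\gamma$. Hence $(\Tw F)(T)\cong X_\gamma$, and (c) holds. Since we obtain an actual isomorphism rather than just a summand, $H^0(\Tw F)$ is already essentially surjective. Together with full faithfulness of $\Tw F$ for full inclusions (\cref{prop:twisted-standard}), this gives the claimed equivalence on $H^0(\Tw(-))$ without passing to idempotent completions.

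The main obstacle is the topological step. Fullness of $\U\setminus\{\gamma\}$, together with admissibility of $\U$, must guarantee a disc piece adjacent to $\gamma$ that avoids using $\gamma$ twice. The cut path condition also has to be tracked: if $\gamma$ lies in an annulus around a boundary component $B$, cutting along $\gamma$ gives one annulus and one disc, and the disc is the one to use. If instead both pieces are discs, either one works. I expect to argue this by cases on whether the component of $\Sigma\setminus|\U\setminus\{\gamma\}|$ containing $\gamma$ is a disc or an annulus, following the proof of \cite[Proposition 6.1]{OZ22}.

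For part (2), the plan is to connect $\U$ and $\U'$ by a zigzag of graded admissible arc systems, each step adding or removing one arc while keeping fullness. Each step is then a Morita equivalence by (1). The standard approach is to pass through a common refinement. First, isotope $\U'$ to minimal position relative to $\U$. Then show that any graded admissible arc system can be completed to a maximal one, that is, a triangulation by arcs, by adding arcs one at a time; regrading shifts objects only, which does not affect Morita class. Finally, connect any two maximal systems by flips. A flip is the composite of adding the new diagonal and removing the old one, so it is again a zigzag of inclusions covered by (1). The connectivity of the flip graph of triangulations of $(\Sigma,\mathcal P)$ is classical. The remaining detail to check is that admissibility, namely the existence of cuts, persists along the way; this can be arranged by choosing the flips away from a fixed cut.
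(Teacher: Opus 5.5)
Your part (1) is the paper's argument: take a marked disc having $\gamma$ as a side exactly once, apply \cref{prop:brauer-polygon}, and use full faithfulness of $H^0(\Tw F)$ via \cref{prop:twisted-standard}. The inclusion is strict because marked discs of $\U\setminus\{\gamma\}$ are already marked discs of $\U$; they are only immersions and may contain $\gamma$ in their interior.

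Add two small remarks. First, the cuts of $\U$ remain cuts for $\U\setminus\{\gamma\}$, so the smaller category is defined. Second, essential surjectivity needs the essential image to be closed under shifts and cones.

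The topological step you flag is routine. The arc $\gamma$ is a chord of the disc or annulus piece of $\U\setminus\{\gamma\}$ containing it. Such a chord always separates the piece, and at least one side is a disc. That disc has at least three corners, since a bigon would make $\gamma$ homotopic to another arc and empty monogons are excluded by \cref{dfn:surface-arc}.

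The genuine gap is in part (2): your route passes through maximal arc systems, and these are in general not admissible. But $\B(-,\m)$, \cref{thm:oz-constructions} and part (1) are only available for admissible systems, and elementary moves must keep both systems admissible. A cutting path at $\vv$ runs inside an annular piece having $\vv$ as a corner. On the three-punctured disc of \cref{sec:triangle-double-cover}, let $E$ be the number of arcs and $D$ the number of disc pieces. The Euler characteristic gives $E=D+2$, and counting sides gives $2E\geq 3D+k$, where $k\geq 3$ is the number of corners of the annular piece. Hence every admissible system there has at most three arcs. Yet full systems with four arcs exist: add to the triangle a loop at $\vv_0$ encircling $\partial\Sigma$.

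Your remedy of flipping away from a fixed cut presupposes that $\U$ and $\U'$ share a cut, and this also fails. For the triangle $\U$, the cut is unique up to isotopy, and cutting along it leaves a disc whose only arcs are the three sides of $\U$. So the image of $\U$ under a Dehn twist about a curve enclosing $\vv_0$ and $\vv_1$ shares no cut with $\U$. One must change cuts by passing through systems such as two-arc trees, where a puncture meets the annular piece in two sectors. Even for a fixed cut, the relevant maximal systems live on the surface cut open along it; they are not triangulations of $(\Sigma,\mathcal P)$, so classical flip connectivity does not apply directly.

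Connectivity under elementary moves through admissible systems is exactly what the paper takes from \cite[Proposition 6.4]{OZ22}. Without that result, or an equivalent argument, part (2) is not proved.
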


\begin{proof}
\noindent
(1) Removing $\gamma$ preserves the cutting paths, so $\U\setminus\{\gamma\}$ is admissible. The full inclusion sends a successor arrow to the path between the same endpoint branches in $Q_{\Gamma_\U}$. A marked disc containing $\gamma$ provides the twisted complex in \cref{prop:brauer-polygon}, using only arcs of $\U\setminus\{\gamma\}$. Hence \cref{prop:morita-criterion} gives the Morita equivalence. Moreover, the image of the induced functor on $H^0(\Tw(-))$ contains every arc object up to isomorphism. This functor is fully faithful and exact by \cref{prop:twisted-standard}, so its essential image is closed under shifts and cones and contains all twisted complexes. It is therefore an equivalence.

\smallskip\noindent
(2) Apply (1) along the sequence of additions and deletions in \cite[Proposition 6.4]{OZ22}. Changes of grading replace the corresponding objects by shifts and hence also preserve the Morita equivalence class.
\end{proof}
In the next section, we lift the operations of $\B(\U,\m)$ through cyclic coverings of punctured surfaces.

%%%%%%%%%%%%%%%%%%%%%%%%%%%%%%%%%%%%%%%%%%%%%%%%%%%%%%%%%%%%%%%%%%%%%%%%%%%%%%%%%%%%%%%%%%%%%
\section{Geometric models and \texorpdfstring{$A_\infty$}{A-infinity}-structures for admissible fractional Brauer graph algebras}
\label{sec:nakayama}

Note that any admissible fractional Brauer graph algebra has a Brauer graph algebra as its reduced form \cite{LL26}. We realize this reduction by a cyclic covering of punctured surfaces, specified by a Nakayama character introduced below, and lift the operations of the Brauer graph $A_\infty$-category through the covering. The resulting category has a (modified) AFBGA as its associated graded algebra.

\subsection{Admissible fractional Brauer graph algebras and reduced forms}

We retain the ribbon graph notation $\Gamma=(V,H,s,\iota,\rho)$, the quiver $Q_\Gamma$, and the successor paths $p_{h,t}$ from \cref{dfn:ribbon,dfn:bga}. In particular, successor paths are read from right to left.

\begin{dfn}
\label{dfn:afbg}
An \defn{admissible fractional Brauer graph} (abbr. AFBG) is a pair $(\Gamma,\dG)$, where $\Gamma$ is a connected ribbon graph and $\dG:V(\Gamma)\to\ZZ_{>0}$ is a \defn{degree function} such that the permutation
\[
 \nu:H(\Gamma)\longrightarrow H(\Gamma),
 \qquad \nu(h)=\rho^{\dG(s(h))}h
\]
satisfies
\[
 \nu\iota=\iota\nu,
 \qquad
 \nu^j h\ne\iota h
 \quad(h\in H(\Gamma),\ j\in\ZZ).
\]
The \defn{fractional multiplicity} at $\vv$ is $\dG(\vv)/\operatorname{val}_\Gamma(\vv)$. We call $\nu$ the \defn{Nakayama automorphism}; it fixes the vertices of $\Gamma$ and acts on edges by $\nu(e(h))=e(\nu h)$.
\end{dfn}

Since $\nu$ commutes with $\iota$, the paths $p_{h,\dG(s(h))}$ and $p_{\iota h,\dG(s(\iota h))}$ both start at $e(h)$ and end at $\nu(e(h))$.
Let $I_{\Gamma,\dG}$ be the ideal of $\Bbbk Q_\Gamma$ generated by
\begin{enumerate}[label=\textup{(\arabic*)}]
\item $p_{h,\dG(s(h))}=p_{\iota h,\dG(s(\iota h))}$ for $h\in H(\Gamma)$;
\item $a_k a_h=0$ for $e(k)=e(\rho h)$ and $k\ne\rho h$.
\end{enumerate}
Set
\[
 \Lambda(\Gamma,\dG)=\Bbbk Q_\Gamma/I_{\Gamma,\dG}.
\]

\begin{dfn}\label{dfn:afbga}
A $\Bbbk$-algebra is an \defn{admissible fractional Brauer graph algebra} (abbr. AFBGA) if it is isomorphic to $\Lambda(\Gamma,\dG)$ for some AFBG $(\Gamma,\dG)$.
\end{dfn}

An AFBGA is finite-dimensional, self-injective, and special biserial; see \cite{LL26}. Each $p_{h,\dG(s(h))}$ is a maximal nonzero successor path, and redundant arrows may occur when $\dG(\vv)=1$; see \cite[Remark 2.4]{Xin26tt}. If every fractional multiplicity is integral, write $\dG(\vv)=\m(\vv)\operatorname{val}_\Gamma(\vv)$; then $\Lambda(\Gamma,\dG)=B(\Gamma,\m)$.

\begin{prop}
\label{prop:reduction}
Let $(\Gamma,\dG)$ be an AFBG and let $r$ be the order of $\nu$. Write $o(\vv)=|H_{\vv}/\langle\nu\rangle|$ for the number of half-edge orbits at $\vv$.
\begin{enumerate}[label=\textup{(\arabic*)}]
\item The action of $\langle\nu\rangle$ is free on half-edges and on edges.
\item The quotient $\Gamma_{\red}=\Gamma/\langle\nu\rangle$ is a ribbon graph with vertex set $V(\Gamma)$ and half-edge set $H(\Gamma)/\langle\nu\rangle$. For every $\vv\in V(\Gamma)$,
\[
 \operatorname{val}_{\Gamma_{\red}}(\vv)=o(\vv),
 \qquad
 \operatorname{val}_\Gamma(\vv)=r\,o(\vv).
\]
\item The numbers $ \m(\vv)=\frac{\dG(\vv)}{o(\vv)}$
are positive integers coprime to $r$, and
$ \frac{\dG(\vv)}{\operatorname{val}_\Gamma(\vv)}
 =\frac{\m(\vv)}r.$
\end{enumerate}
\end{prop}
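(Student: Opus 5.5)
The argument is purely combinatorial and uses only the two defining properties of $\nu$ in \cref{dfn:afbg}: $\nu\iota=\iota\nu$ and $\nu^jh\ne\iota h$. I would prove the three parts in order, with (2) resting on the structure of $\nu$ at a single vertex and (3) following by counting.

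\textbf{Step 1: freeness, part (1).} Since $\nu$ fixes $s(h)$ and is a power of $\rho$ on $H_{\vv}$, the subgroup $\langle\nu\rangle$ acts on each cycle $H_{\vv}$ of $\rho$ through the cyclic group $\langle\rho|_{H_{\vv}}\rangle$. This group acts freely on $H_{\vv}$, and hence so does its subgroup $\langle\nu|_{H_{\vv}}\rangle$. The order of $\nu|_{H_{\vv}}$ might a priori depend on $\vv$, and I would rule this out as follows.
\begin{itemize}
\item Suppose $\nu^jh=h$. Then $\nu^j\iota h=\iota\nu^jh=\iota h$.
\item Thus $\nu^j$ is trivial on $H_{s(h)}$ if and only if it is trivial on $H_{s(\iota h)}$.
\item By connectedness, $\nu^j$ fixing one half-edge forces $\nu^j=\id$.
\end{itemize}
So $\langle\nu\rangle$, which has order $r$, acts freely on $H(\Gamma)$. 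For edges, suppose $\nu^j e(h)=e(h)$. Then either $\nu^jh=h$, which gives $\nu^j=\id$, or $\nu^jh=\iota h$, which is excluded by hypothesis. Hence the action on edges is free as well.

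\textbf{Step 2: the quotient ribbon graph, part (2).} Set $\bar H=H/\langle\nu\rangle$. Since $\nu$ commutes with $s$, with $\iota$, and with $\rho$ (being a power of $\rho$ on each vertex), these three maps descend to $\bar s$, $\bar\iota$ and $\bar\rho$ on $\bar H$.
\begin{itemize}
\item $\bar\iota$ has no fixed points. This is exactly the condition $\nu^jh\ne\iota h$.
\item The cycles of $\bar\rho$ are the fibres $\bar s^{-1}(\vv)$. Because $\nu|_{H_{\vv}}=\rho^{\dG(\vv)}$ generates a subgroup of the cyclic group $\langle\rho|_{H_{\vv}}\rangle$, the quotient $H_{\vv}/\langle\nu\rangle$ carries a transitive cyclic action of $\bar\rho$.
\item Surjectivity of $\bar s$ is clear.
\end{itemize}
Freeness from Step 1 makes every $\nu$-orbit have size $r$, so $\operatorname{val}_\Gamma(\vv)=r\,o(\vv)$.

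\textbf{Step 3: the multiplicities, part (3).} This is the step I expect to need the most care. On $H_{\vv}$, which is a cycle of length $N=\operatorname{val}_\Gamma(\vv)$, the element $\rho^{\dG(\vv)}$ has order $N/\gcd(N,\dG(\vv))$. By Step 1 this order equals $r$, so
\[
\gcd\bigl(r\,o(\vv),\dG(\vv)\bigr)=o(\vv).
\]
Hence $o(\vv)$ divides $\dG(\vv)$, so $\m(\vv)=\dG(\vv)/o(\vv)$ is a positive integer. Dividing the displayed equation by $o(\vv)$ gives $\gcd(r,\m(\vv))=1$. The identity
\[
\frac{\dG(\vv)}{\operatorname{val}_\Gamma(\vv)}=\frac{\m(\vv)}{r}
\]
then follows from $\operatorname{val}_\Gamma(\vv)=r\,o(\vv)$. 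The main subtlety throughout is the uniform order $r$ at every vertex, which is why Step 1 must be established first.
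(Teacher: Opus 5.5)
Your proof is correct and follows the same outline as the paper's. In particular, your Step~3 is the paper's own orbit count: it shows $o(\vv)=\gcd(\operatorname{val}_\Gamma(\vv),\dG(\vv))$ and hence $\gcd(r,\m(\vv))=1$. The difference is that you prove directly three things the paper cites from \cite{Xin26tt}: the uniform orbit size $r$ (your connectedness argument using $\nu\iota=\iota\nu$), the descent of $s,\iota,\rho$ to the quotient, and the integrality of $\m(\vv)$, which your gcd identity already yields.
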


\begin{proof}
(1) By \cite[Lemma 2.7]{Xin26tt}, every half-edge orbit has $r$ elements. If $\nu^j$ fixes $e(h)$, then $\nu^j h=h$ or $\nu^j h=\iota h$. Admissibility excludes the second possibility, so $r$ divides $j$.

\smallskip\noindent
(2) Apply the quotient construction in \cite[Section 2.4]{Xin26tt}. Admissibility ensures that the induced involution has no fixed points. The valency formulas follow by counting the orbits in $H_{\vv}$.

\smallskip\noindent
(3) Integrality and the fractional multiplicity formula are \cite[Lemma 2.8 and Proposition 2.9]{Xin26tt}. 

Fix $\vv\in V(\Gamma)$ and $h\in H_{\vv}$. Since $\rho$ acts on $H_{\vv}$ as a cycle of length $\operatorname{val}_\Gamma(\vv)$ and $\nu=\rho^{\dG(\vv)}$ on $H_{\vv}$, we have that for any $k\geq1$,
$\nu^k h=h$  if and only if $\operatorname{val}_\Gamma(\vv)\mid k\dG(\vv)$. Hence the smallest positive integer $k$ satisfying this condition is
$\frac{\operatorname{val}_\Gamma(\vv)}
{\gcd\bigl(\operatorname{val}_\Gamma(\vv),\dG(\vv)\bigr)}$.
Thus every $\nu$-orbit in $H_{\vv}$ has this cardinality. Since $H_{\vv}$ contains $\operatorname{val}_\Gamma(\vv)$ half-edges, the number of orbits is
$o(\vv)=\gcd\bigl(\operatorname{val}_\Gamma(\vv),\dG(\vv)\bigr)$.
Together with $\operatorname{val}_\Gamma(\vv)=r\,o(\vv)$ and $\dG(\vv)=\m(\vv)o(\vv)$, this gives $\gcd(r,\m(\vv))=1$.
\end{proof}

\begin{dfn}\label{dfn:reduced-form}
The Brauer graph $(\Gamma_{\red},\m)$ in \cref{prop:reduction} is the \defn{reduced form} of $(\Gamma,\dG)$. The algebra
\[
 \Lambda_{\mathrm{red}}=B(\Gamma_{\red},\m)
\]
is the \defn{reduced form} of $\Lambda(\Gamma,\dG)$.
\end{dfn}

The denominator $r$ is therefore common to all fractional multiplicities when they are written in lowest terms. The covering constructed below realizes $\Gamma\to\Gamma_{\red}$ on surfaces and realizes $\Lambda_{\mathrm{red}}$ as an orbit category of $\Lambda(\Gamma,\dG)$, with the specified idempotents retained.

\subsection{Modified algebras and lifted \texorpdfstring{$A_\infty$}{A-infinity}-structures}
\label{sec:geometric-operations}

As in \cref{dfn:modified-bga}, winding numbers enter through signs in the relations. We first allow these signs in an AFBGA, then obtain the higher operations by applying \cref{prop:lift} to $\B(\U,\m)$.

\begin{dfn}\label{dfn:modified-afbga}
Let $(\Gamma,\dG)$ be an AFBG and let $\omega:E(\Gamma)\to\ZZ$ satisfy $\omega(\nu e)=\omega(e)$ for every edge $e$. The \defn{modified AFBGA} $\Lambda(\Gamma,\dG,\omega)$ is the quotient of $\Bbbk Q_\Gamma$ by the zero relations in $I_{\Gamma,\dG}$ and
\[
 p_{h,\dG(s(h))}
 =(-1)^{\omega(e(h))}p_{\iota h,\dG(s(\iota h))}
 \qquad(h\in H(\Gamma)).
\]
\end{dfn}

Only the parity of $\omega$ affects the algebra. The invariance of $\omega$ ensures that $\nu$ preserves the relations. For $\omega=0$ we recover $\Lambda(\Gamma,\dG)$, and for $r=1$ we recover \cref{dfn:modified-bga}.

We describe cyclic coverings and the homomorphism recording how the lift of a closed curve moves between the points of a fibre. For background on these notions, see \cite[Section 4]{For81}, \cite[Section 1.3]{Hat02}, and \cite[Section 5.1]{OZ22}.

\begin{dfn}
\label{dfn:cyclic-surface-cover}
Let $r\geq1$ and let $(\Sigma,\mathcal P)$ and $(\widetilde\Sigma,\widetilde{\mathcal P})$ be punctured surfaces. A continuous surjection
\[
\pi_\Sigma:(\widetilde\Sigma,\widetilde{\mathcal P})
\longrightarrow(\Sigma,\mathcal P),
\qquad \pi_\Sigma^{-1}(\mathcal P)=\widetilde{\mathcal P},
\]
is an \defn{$r$-sheeted cyclic covering, fully ramified over $\mathcal P$}, if the following conditions hold.
\begin{itemize}
\item Every $\xx\in\Sigma\setminus\mathcal P$ has a neighbourhood $U\subseteq\Sigma\setminus\mathcal P$ whose inverse image is a disjoint union of $r$ open sets, each mapped onto $U$ by an orientation-preserving diffeomorphism. These are the $r$ local sheets; in particular, the fibre $\pi_\Sigma^{-1}(\xx)$ consists of $r$ points.
\item There is an orientation-preserving diffeomorphism $\nu:\widetilde\Sigma\to\widetilde\Sigma$ such that $\pi_\Sigma\nu=\pi_\Sigma$, $\nu^r=\mathrm{id}$, and, for every $\xx\in\Sigma\setminus\mathcal P$ and $\widetilde\xx\in\pi_\Sigma^{-1}(\xx)$,
\[
\pi_\Sigma^{-1}(\xx)
=\{\widetilde\xx,\nu\widetilde\xx,\ldots,\nu^{r-1}\widetilde\xx\},
\]
with these $r$ points distinct. Its powers form the \defn{deck group}, identified with $\ZZ/r\ZZ$ by $\nu^s\leftrightarrow\bar s$. We fix $\nu$ as the \defn{deck generator}.
\item Each $\vv\in\mathcal P$ has a unique preimage $\widetilde\vv$. In orientation-preserving local coordinates centred at these points, the map $\pi_\Sigma$ is $z\mapsto z^r$.
\end{itemize}
\end{dfn}

\begin{dfn}\label{dfn:cover-monodromy}
Fix a covering $\pi_\Sigma$ and its deck generator $\nu$ as above. Choose $\xx\in\Sigma\setminus\mathcal P$ and $\widetilde\xx\in\pi_\Sigma^{-1}(\xx)$.
\begin{itemize}
\item For a loop $\gamma:[0,1]\to\Sigma\setminus\mathcal P$ with $\gamma(0)=\gamma(1)=\xx$, let $\widetilde\gamma:[0,1]\to\widetilde\Sigma\setminus\widetilde{\mathcal P}$ be its unique lift satisfying
\[
\pi_\Sigma\circ\widetilde\gamma=\gamma,
\qquad \widetilde\gamma(0)=\widetilde\xx.
\]
The endpoint has the form $\widetilde\gamma(1)=\nu^j\widetilde\xx$ for a unique $\bar j\in\ZZ/r\ZZ$. We call $\bar j$ the \defn{monodromy} of $\gamma$. Thus the lift is closed precisely when $\bar j=\bar0$.
\item Starting instead at $\nu^s\widetilde\xx$ gives the lift $\nu^s\widetilde\gamma$, which ends at $\nu^{s+j}\widetilde\xx$. Hence monodromy is independent of the chosen point of the fibre.
\item Homotopic loops relative to the basepoint have the same monodromy, and concatenating loops adds their monodromies. The resulting homomorphism from $\pi_1(\Sigma\setminus\mathcal P,\xx)$ to the abelian group $\ZZ/r\ZZ$ therefore factors through $H_1(\Sigma\setminus\mathcal P;\ZZ)$. We write
\[
\chi:H_1(\Sigma\setminus\mathcal P;\ZZ)\longrightarrow\ZZ/r\ZZ,
\qquad \chi([\gamma])=\bar j.
\]
\end{itemize}
\end{dfn}

For $\vv\in\mathcal P$, let $\gamma_{\vv}$ be a small clockwise loop around $\vv$. To regard it as a loop based at $\xx$, join it to $\xx$ by a path. Changing that path conjugates the based loop and hence does not change $\chi([\gamma_{\vv}])$. Full ramification implies that $\chi([\gamma_{\vv}])$ generates $\ZZ/r\ZZ$.
In \cref{fig:cyclic-monodromy}, the covering has three sheets. One clockwise circuit downstairs lifts from $\widetilde\xx$ to $\nu\widetilde\xx$, so $\chi([\gamma])=\bar1$. 

\begin{figure}[htbp]
\centering
\begin{tikzpicture}[>=Stealth,line cap=round,line join=round,font=\small]
\begin{scope}[shift={(-3.45,0)}]
 \filldraw[fill=gray!4,draw=gray!40] (0,0) circle (1.62);
 \draw[gray!60,thick] (0,0) circle (1.12);
 \draw[darkblue,very thick,postaction={decorate},
   decoration={markings,mark=at position .56 with {\arrow{Stealth}}}]
   (0:1.12) arc[start angle=0,end angle=-120,radius=1.12];
 \draw[densely dotted,gray!65] (0,0)--(0:1.12);
 \draw[densely dotted,gray!65] (0,0)--(-120:1.12);
 \draw[gray!60] (0:.38) arc[start angle=0,end angle=-120,radius=.38];
 \node[font=\scriptsize] at (-55:.65) {$120^\circ$};
 \filldraw[fill=white,draw=black] (0,0) circle (2pt);
 \node[above left,inner sep=3pt] at (0,0) {$\widetilde\vv$};
 \fill (0:1.12) circle (2pt) node[above right=2pt] {$\widetilde\xx$};
 \fill (-120:1.12) circle (2pt) node[below left=2pt] {$\nu\widetilde\xx$};
 \fill (120:1.12) circle (2pt) node[above left=2pt] {$\nu^2\widetilde\xx$};
 \node[darkblue,right,inner sep=2pt] at (-44:1.28) {$\widetilde\gamma$};
 \node at (0,2.05) {$\widetilde\Sigma$};
 \node at (0,-2.03) {$\widetilde\gamma(0)=\widetilde\xx,\quad
                        \widetilde\gamma(1)=\nu\widetilde\xx$};
\end{scope}
\draw[->,thick] (-1.2,.18)--(1.2,.18)
 node[midway,above=5pt] {$\pi_\Sigma:z\mapsto z^3$};
\node[align=center,text width=2.5cm] at (0,-.45)
 {all three black points map to $\xx$};
\begin{scope}[shift={(3.45,0)}]
 \filldraw[fill=gray!4,draw=gray!40] (0,0) circle (1.62);
 \draw[darkblue,very thick,postaction={decorate},
   decoration={markings,mark=at position .20 with {\arrow{Stealth}},
                        mark=at position .70 with {\arrow{Stealth}}}]
   (0:1.12) arc[start angle=0,end angle=-360,radius=1.12];
 \filldraw[fill=white,draw=black] (0,0) circle (2pt);
 \node[above left,inner sep=3pt] at (0,0) {$\vv$};
 \fill (0:1.12) circle (2pt) node[above right=2pt] {$\xx$};
 \node[darkblue,right,inner sep=2pt] at (-44:1.28) {$\gamma$};
 \node at (0,2.05) {$\Sigma$};
 \node at (0,-2.03) {$\gamma(0)=\gamma(1)=\xx$};
\end{scope}
\node at (0,-2.73) {$\chi([\gamma])=\bar1\in\ZZ/3\ZZ,
 \qquad \nu(z)=e^{-2\pi\mathrm i/3}z$};
\end{tikzpicture}
\caption{Monodromy in the local $3$-sheeted covering $z\mapsto z^3$.}
\label{fig:cyclic-monodromy}
\end{figure}

\begin{dfn}\label{dfn:nakayama-character}
Let $(\Sigma,\mathcal P)$ be a punctured surface, let $\m:\mathcal P\to\ZZ_{>0}$, and let $r\geq1$ be an integer. A \defn{Nakayama character of order $r$} on $(\Sigma,\mathcal P,\m)$ is a group homomorphism
\[
\chi:H_1(\Sigma\setminus\mathcal P;\ZZ)\longrightarrow\ZZ/r\ZZ
\]
such that $\m(\vv)\chi([\gamma_{\vv}])=\bar1$ for every $\vv\in\mathcal P$, where $\gamma_{\vv}$ is a small clockwise loop around $\vv$.
\end{dfn}

\Needspace{7\baselineskip}
\begin{dfn}\label{def:datum}
A \defn{geometric datum} is a tuple $\mathfrak D=(\Sigma,\mathcal P,\eta,\m,r,\chi)$ with the following properties.
\begin{itemize}
\item $(\Sigma,\mathcal P,\eta)$ is a graded punctured surface of ribbon type.
\item $\m:\mathcal P\to\ZZ_{>0}$ and $r\geq1$ is an integer.
\item $\chi$ is a Nakayama character of order $r$ on $(\Sigma,\mathcal P,\m)$.
\end{itemize}
\end{dfn}

The defining condition of a Nakayama character implies that $\m(\vv)$ is coprime to $r$ and that $\chi([\gamma_{\vv}])$ generates $\ZZ/r\ZZ$ for every $\vv\in\mathcal P$. Since $\mathcal P$ is nonempty, $\chi$ is surjective. The classification of coverings therefore gives a connected cyclic covering of $\Sigma\setminus\mathcal P$ with monodromy $\chi$; see \cite[Theorem 1.38 and Proposition 1.39]{Hat02}. This covering extends over each puncture $\vv$ by adding a single point $\widetilde\vv$ above it, with local model $z\mapsto z^r$; see \cite[Sections 4--5]{For81}. We fix such an extension and denote it by
\[
\pi_\Sigma:(\widetilde\Sigma,\widetilde{\mathcal P})
\longrightarrow(\Sigma,\mathcal P).
\]
For $r=1$, we take the identity covering.

Let $\U$ be a graded admissible arc system on $(\Sigma,\mathcal P,\eta)$. For each $\gamma\in\U$, choose a lift $\widetilde\gamma$ by lifting its interior, extending to the unique preimages of its endpoints, and reparametrizing smoothly near these endpoints. All lifts of $\gamma$ are
$\nu^s\widetilde\gamma$, for all $s\in\ZZ/r\ZZ$,
where $\nu$ is the deck generator corresponding to $\bar1\in\ZZ/r\ZZ$.

To assign a weight to a successor arrow $a:X_\gamma\to X_\delta$, choose a small circle around its puncture, meeting each incident branch once. Follow the circle clockwise from the branch of $\gamma$ specified by $a$ to the successive branch of $\delta$; at a valency-one puncture, take the full clockwise circuit. Lift this path starting on the corresponding branch of $\widetilde\gamma$. Define
\[
\weight(a)=s\in\ZZ/r\ZZ
\quad\Longleftrightarrow\quad
\text{the lifted path ends on }\nu^s\widetilde\delta.
\]
The starting and ending points lie on the inverse image of the small circle.
For example, in \cref{fig:successor-weight}, the chosen lifts are $\widetilde\gamma$ and $\widetilde\delta$. The clockwise path representing $a$ lifts from $\widetilde\gamma$ to $\nu\widetilde\delta$, so $\weight(a)=\bar1\in\ZZ/2\ZZ$.

Extend $\weight$ additively to paths and assign weight $\bar0$ to every trivial path. The cohomological degree and the group weight are separate gradings.

\begin{figure}[htbp]
\centering
\begin{tikzpicture}[>=Stealth,line cap=round,line join=round,font=\small]
\begin{scope}[shift={(-3.5,0)}]
 \fill[gray!4] (0,0) circle (1.65);
 \draw[gray!65,densely dashed] (0,0) circle (.85);
 % Local portions of the chosen lifts and their translates.
 \draw[darkblue,very thick] (0,0)--(30:1.65);
 \draw[orange!80!black,very thick] (0,0)--(150:1.65);
 \draw[darkblue,thick] (0,0)--(210:1.65);
 \draw[orange!80!black,thick] (0,0)--(330:1.65);
 \node[darkblue,above right,inner sep=2pt] at (30:1.65) {$\widetilde\gamma$};
 \node[orange!80!black,above left,inner sep=2pt] at (150:1.65) {$\widetilde\delta$};
 \node[darkblue,below left,inner sep=2pt] at (210:1.65) {$\nu\widetilde\gamma$};
 \node[orange!80!black,below right,inner sep=2pt] at (330:1.65) {$\nu\widetilde\delta$};
 % The lifted clockwise corner, from gamma-tilde to nu delta-tilde.
 \draw[black,very thick,postaction={decorate},
  decoration={markings,mark=at position .55 with {\arrow{Stealth}}}]
  (30:.85) arc[start angle=30,end angle=-30,radius=.85];
 \fill (30:.85) circle (1.8pt);
 \fill (-30:.85) circle (1.8pt);
 \node[right,inner sep=3pt,font=\scriptsize] at (.91,0) {lift of $a$};
 \filldraw[fill=white,draw=black] (0,0) circle (2pt);
 \node[above,inner sep=4pt] at (0,0) {$\widetilde\vv$};
 \node at (0,2.04) {$\widetilde\Sigma$};
 \node at (0,-2.04) {$X_{\widetilde\gamma}\longrightarrow X_{\nu\widetilde\delta}$};
\end{scope}

\draw[->,thick] (-1.15,.25)--(1.15,.25)
 node[midway,above=6pt] {$\pi_\Sigma:z\mapsto z^2$};
\node at (0,-.22) {$\nu(z)=-z$};
\node at (0,-1.12) {$\weight(a)=\bar1$};

\begin{scope}[shift={(3.5,0)}]
 \fill[gray!4] (0,0) circle (1.65);
 \draw[gray!65,densely dashed] (0,0) circle (.85);
 \draw[darkblue,very thick] (0,0)--(60:1.65);
 \draw[orange!80!black,very thick] (0,0)--(-60:1.65);
 \node[darkblue,above right,inner sep=2pt] at (60:1.65) {$\gamma$};
 \node[orange!80!black,below right,inner sep=2pt] at (-60:1.65) {$\delta$};
 % The clockwise successor corner downstairs.
 \draw[black,very thick,postaction={decorate},
  decoration={markings,mark=at position .55 with {\arrow{Stealth}}}]
  (60:.85) arc[start angle=60,end angle=-60,radius=.85];
 \fill (60:.85) circle (1.8pt);
 \fill (-60:.85) circle (1.8pt);
 \node[right,inner sep=4pt] at (.88,0) {$a$};
 \filldraw[fill=white,draw=black] (0,0) circle (2pt);
 \node[left,inner sep=4pt] at (0,0) {$\vv$};
 \node at (0,2.04) {$\Sigma$};
 \node at (0,-2.04) {$X_\gamma\xrightarrow{a}X_\delta$};
\end{scope}
\end{tikzpicture}
\caption{Local lifting of the successor corner $a:X_\gamma\to X_\delta$ under $\pi_\Sigma(z)=z^2$, with $\nu(z)=-z$. }
\label{fig:successor-weight}
\end{figure}

\begin{lem}\label{lem:weights}
The path weights define decompositions into graded subspaces
\[
 \Hom_{\B(\U,\m)}(X_\gamma,X_\delta)
 =\bigoplus_{s\in\ZZ/r\ZZ}
   \Hom_{\B(\U,\m)}^s(X_\gamma,X_\delta)
\]
such that the units have weight $\bar0$ and every operation satisfies \eqref{eq:homogeneous}.
\end{lem}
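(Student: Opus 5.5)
The plan is to show first that the weight function is compatible with the relations of $B(\Gamma_\U,\m,\omega_\eta)$, so that it induces a direct sum decomposition of each morphism space. Then I check \eqref{eq:homogeneous} operation by operation, following \cref{dfn:brauer-category}.

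\textbf{Well-definedness.} The zero relations are monomial, so they are automatically weight-homogeneous. For a cycle relation $C_h^{\m(s(h))}=\pm C_{\iota h}^{\m(s(\iota h))}$, the weight of $C_h$ is the monodromy of the full clockwise circuit around the puncture $\vv=s(h)$. The lifted successor paths concatenate: the lift of each corner starts where the previous one ended, since weights are measured relative to the fixed lifts $\nu^s\widetilde\gamma$ and $\nu$ commutes with $\pi_\Sigma$. Hence $\weight(C_h)=\chi([\gamma_{\vv}])$, and
\[
\weight(C_h^{\m(\vv)})=\m(\vv)\chi([\gamma_\vv])=\bar1
\]
by \cref{dfn:nakayama-character}. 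Both sides of every cycle relation therefore have weight $\bar1$. The valency-one loop is defined as a full circuit, so it also has weight $\chi([\gamma_\vv])$. Consequently the algebra, and each $e_\delta Be_\gamma$, is graded by $\ZZ/r\ZZ$ compatibly with the cohomological degree. Trivial paths, and hence the units, have weight $\bar0$.

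\textbf{The operations $\mu^1$ and $\mu^2$.} Since $\mu^1=0$ and $\mu^2$ is the product up to sign, both are weight-additive by construction.

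\textbf{Key geometric lemma.} For a disc sequence $(a_n,\ldots,a_1)$ one has $\sum_i\weight(a_i)=\bar0$. To prove this, lift the marked disc, which is an immersed disc in $\Sigma\setminus\mathcal P$ away from its corners, to $\widetilde\Sigma$; this is possible because the disc is simply connected. Its sides lift to $\nu^{s_i}\widetilde\delta_i$ for suitable $s_i$, and the corner paths lift to arcs from $\nu^{s_i}\widetilde\delta_i$ to $\nu^{s_{i+1}}\widetilde\delta_{i+1}$ inside small neighbourhoods of the lifted corners. This gives
\[
\weight(a_i)=s_{i+1}-s_i .
\]
The sum telescopes to zero around the closed boundary. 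Equivalently, the boundary loop of the disc, pushed off the punctures, is null-homotopic and has trivial monodromy.

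\textbf{The higher operations.} With the lemma in hand, I treat the two families separately.
\begin{itemize}
\item For \eqref{eq:brauer-polygon-operations}, the input weights sum to $\weight(b)+\sum\weight(a_i)=\weight(b)$, which matches the output $b$.
\item For \eqref{eq:brauer-complement-operation}, I use that $uu^*$ and $u^*u$ are powers of special cycles with $\weight=\bar1$, so $\weight(u^*)=\bar1-\weight(u)$. The inputs then total
\[
\sum_{i\ne1}\weight(a_i)+\weight\bigl(a_j(ba_j)^*\bigr)+\weight(ba_j)-\weight(a_j)
=\sum_{i\ne1}\weight(a_i)+\bar1,
\]
where $\weight(a_j)$ is subtracted because $a_j$ occurs inside both $a_j(ba_j)^*$ and $ba_j$ but only once in the disc sequence. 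The output $a_1^*$ has weight $\bar1-\weight(a_1)$, and the two agree by the lemma.
\item Operations containing units vanish, and multilinear extension preserves homogeneity.
\end{itemize}
All cyclic rotations are covered because the lemma is rotation-invariant.

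\textbf{Main obstacle.} I expect the key lemma to be the hardest step, in particular making precise that weights computed with the small circles at the corners coincide with the sheet changes along the lifted disc. This needs care when a side or a puncture repeats, or at valency-one corners. I would handle it by working in the local model $z\mapsto z^r$ near each corner, and by noting that the lift of the immersed disc determines a consistent choice of sheet on each side occurrence.
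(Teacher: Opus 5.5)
Your proposal is correct and follows essentially the same route as the paper: the cycle relations are weight-homogeneous because $\weight(C_h^{\m(\vv)})=\m(\vv)\chi([\gamma_\vv])=\bar1$, the corner weights of a disc sequence sum to $\bar0$ because the disc boundary has trivial monodromy (lifting the disc is an equivalent way to say this), and the complement operations are handled through $\weight(u^*)=\bar1-\weight(u)$. Your displayed weight count for \eqref{eq:brauer-complement-operation} is right, though the cleaner reason for subtracting $\weight(a_j)$ is that $a_j$ is replaced by the two inputs $a_j(ba_j)^*$ and $ba_j$, whose weights sum to $\weight(a_j)+\bar1$.
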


\begin{proof}
For a half-edge $h$ at $\vv$, one traversal of $C_h$ follows a clockwise meridian, so $\weight(C_h)=\chi([\gamma_{\vv}])$. Since $\chi$ is a Nakayama character, $\weight(C_h^{\m(\vv)})=\bar1$. Thus the two paths in every signed cycle relation have the same weight. Therefore, the quotient algebra is graded by $\ZZ/r\ZZ$. 

If $(a_n,\ldots,a_1)$ is a disc sequence, the corner paths, joined along the truncated sides, bound the marked disc away from the punctures. This loop has trivial monodromy, giving
\[
 \sum_{i=1}^n\weight(a_i)=\bar0.
\]
The paths $ba_n,a_{n-1},\ldots,a_1$ therefore have total weight $\weight(b)$, as do $a_n,\ldots,a_2,a_1b$. These are the output weights of the two operations in \eqref{eq:brauer-polygon-operations}.

Note that for every specified nontrivial path $u$ and its complement, we have
$\weight(u^*)=\bar1-\weight(u)$. 
For \eqref{eq:brauer-complement-operation}, fix $2\leq j\leq n$. The replacement of $a_j$ by the two inputs $a_j(ba_j)^*,ba_j$ increases the total weight by $\bar1$, since
\[
 \weight\bigl(a_j(ba_j)^*\bigr)+\weight(ba_j)
 =\weight(a_j)+\bar1.
\]
The full input sequence in that formula omits $a_1$, so its total weight is
\[
 \sum_{i=2}^n\weight(a_i)+\bar1
 =\bar1-\weight(a_1)
 =\weight(a_1^*).
\]
Scalar signs do not affect weights. The remaining higher operations and $\mu^1$ vanish, proving \eqref{eq:homogeneous} for all arities.
\end{proof}

On the complement of the punctures, the \defn{pulled-back line field} is
\[
 (\pi_\Sigma^*\eta)(\xx)
 =(d\pi_\Sigma|_{\xx})^{-1}\bigl(\eta(\pi_\Sigma(\xx))\bigr),
 \qquad \xx\in\widetilde\Sigma\setminus\widetilde{\mathcal P}.
\]
The derivative is a linear isomorphism of tangent planes there, so it carries each line and each grading path to a unique lift. This is the pullback construction for vector bundles and their sections; see \cite[Section 3]{MS74}.

\begin{thm}\label{thm:construction}
Let $\mathfrak D$ be a geometric datum and let $\U$ be a graded admissible arc system.
\begin{enumerate}[label=\textup{(\arabic*)}]
\item There is a unique $A_\infty$-category $\F_{\mathfrak D}(\U)$ with objects $(X_\gamma,s)$, $\gamma\in\U$, $s\in\ZZ/r\ZZ$, and morphism spaces
\[
 \Hom_{\F_{\mathfrak D}(\U)}
       ((X_\gamma,s),(X_\delta,t))
 =\Hom_{\B(\U,\m)}^{t-s}(X_\gamma,X_\delta)
\]
for which the object map $(X_\gamma,s)\mapsto X_\gamma$ and the inclusions on morphism spaces define a strict functor
\[
 \pi:\F_{\mathfrak D}(\U)\longrightarrow\B(\U,\m).
\]
Its operations are the restrictions of those in \cref{dfn:brauer-category}.
\item Translation $\nu(X_\gamma,s)=(X_\gamma,s+\bar1)$, acting identically on the corresponding morphism spaces, is a strict action. For the representatives $(X_\gamma,\bar0)$,
\[
 \bigl(\F_{\mathfrak D}(\U)/\langle\nu\rangle\bigr)_{\mathrm{rep}}
 \cong\B(\U,\m)
\]
as $A_\infty$-categories.
\item Let $\widetilde\U$ consist of all lifts of the arcs of $\U$. Give it the pulled-back gradings and the line field $\widetilde\eta=\pi_\Sigma^*\eta$. At the unique puncture $\widetilde \vv$ over $\vv$, set
$\dG(\widetilde \vv)=\m(\vv)\operatorname{val}_{\Gamma_\U}(\vv)$.
Then $(\Gamma_{\widetilde\U},\dG)$ is an AFBG, with Nakayama automorphism $\nu$ and fractional multiplicity $\m(\vv)/r$ at $\widetilde \vv$. The associated graded algebra of $\F_{\mathfrak D}(\U)$ is
$\Lambda(\Gamma_{\widetilde\U},\dG,\omega_{\widetilde\eta})$.
\end{enumerate}
\end{thm}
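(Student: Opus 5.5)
Parts (1) and (2) are formal consequences of the lifting machinery of Section~\ref{sec:lifting}. By \cref{lem:weights}, the $\ZZ/r\ZZ$-weights on $\B(\U,\m)$ give decompositions of the morphism spaces into graded subspaces with the units of weight $\bar0$, and every operation satisfies \eqref{eq:homogeneous}. Applying \cref{prop:lift} with $G=\ZZ/r\ZZ$ and written additively, so that $h^{-1}l$ becomes $t-s$, gives the category $\F_{\mathfrak D}(\U)$. Its operations are restricted from $\B(\U,\m)$, and $\pi$ is strict. \cref{prop:lift-orbit}(1) gives uniqueness and identifies the strict units. Part (2) is then \cref{prop:lift-orbit}(2) and (3) applied verbatim with $t=\bar1$, giving the strict free action and the isomorphism $(\F_{\mathfrak D}(\U)/\langle\nu\rangle)_{\mathrm{rep}}\cong\B(\U,\m)$.

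The real content is part (3), which I would treat in four steps.

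\textbf{Step 1: the lifted ribbon graph.} I would show that $\widetilde\U$ is an arc system on $(\widetilde\Sigma,\widetilde{\mathcal P})$. Lifts have disjoint interiors because the arcs of $\U$ do and $\pi_\Sigma$ is a local diffeomorphism off the punctures. The lifts $\nu^s\widetilde\gamma$ for distinct $s$ are distinct by freeness of the deck action, and hence pairwise nonhomotopic. At $\widetilde\vv$ the local model $z\mapsto z^r$ turns the $\operatorname{val}_{\Gamma_\U}(\vv)$ branches downstairs into $r\operatorname{val}_{\Gamma_\U}(\vv)$ branches, ordered clockwise by angle.

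\textbf{Step 2: identifying $\nu$ as the Nakayama automorphism.} The clockwise successor of a branch of $\nu^s\widetilde\gamma$ at $\widetilde\vv$ is the lift of the successor downstairs, and it lies on $\nu^{s+\weight(a)}\widetilde\delta$. Going once around $\vv$ downstairs advances $\rho$ by $\operatorname{val}_{\Gamma_\U}(\vv)$ steps upstairs and multiplies by $\nu^{\chi([\gamma_\vv])}$. Going $\m(\vv)$ times gives $\rho^{\dG(\widetilde\vv)}=\nu^{\m(\vv)\chi([\gamma_\vv])}=\nu$ on half-edges at $\widetilde\vv$, by the Nakayama character condition. So the permutation in \cref{dfn:afbg} is the deck transformation $\nu$. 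It commutes with $\iota$ because $\nu$ is a diffeomorphism mapping arcs to arcs. It satisfies $\nu^jh\ne\iota h$ because the two ends of a lift are distinct branches lying over distinct branches or ends downstairs. The fractional multiplicity is $\m(\vv)\operatorname{val}_{\Gamma_\U}(\vv)/(r\operatorname{val}_{\Gamma_\U}(\vv))=\m(\vv)/r$.

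\textbf{Step 3: matching quivers and degrees.} There is an identification of quivers: $Q_{\Gamma_{\widetilde\U}}$ has vertices $\nu^s\widetilde\gamma\leftrightarrow(X_\gamma,s)$, and its arrows correspond to successor arrows of weight $t-s$. This matches the morphism spaces of $\F_{\mathfrak D}(\U)$ on generators. Degrees agree because gradings and the line field are pulled back, so oriented intersection degrees are computed in isomorphic projective tangent fibres. Winding numbers satisfy $\omega_{\widetilde\eta}(\nu^s\widetilde\gamma)=\omega_\eta(\gamma)$, since $\pi_\Sigma$ is a local diffeomorphism carrying $\widetilde\eta$ to $\eta$; in particular $\omega_{\widetilde\eta}$ is $\nu$-invariant, as \cref{dfn:modified-afbga} requires.

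\textbf{Step 4: matching relations (main obstacle).} I would compare $\bigoplus_{s,t}\Hom^{t-s}_{\B}(X_\gamma,X_\delta)$ with the smash-product/covering of $B(\Gamma_\U,\m,\omega_\eta)$. The zero relations lift to the zero relations of $\Lambda$. The signed relation $C_h^{\m}=\pm C_{\iota h}^{\m}$, read from sheet $s$, becomes $p_{\tilde h,\dG}=(-1)^{\omega}p_{\iota\tilde h,\dG}$ ending on sheet $s+\bar1$. This uses $\weight(C_h^{\m(\vv)})=\bar1$ from \cref{lem:weights}. I would then verify that the resulting surjection $\Lambda(\Gamma_{\widetilde\U},\dG,\omega_{\widetilde\eta})\to A_{H^*}$ is an isomorphism by a dimension count, comparing $r\dim B$ with $\dim\Lambda$ via the standard bases of nonzero successor paths; equivalently, I would invoke the covering/orbit description $\Lambda/\langle\nu\rangle\cong\Lambda_{\red}$ from \cite{LL26,Xin26tt}. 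The delicate point is that the multiplication in $\F_{\mathfrak D}(\U)$ is the restricted $\mu^2$ with the sign $(-1)^{|a|}$, so the associated graded algebra must use the paper's sign convention $b\cdot a=(-1)^{|a|}\mu^2(b,a)$, and no extra signs should be introduced. This is where I would be most careful.
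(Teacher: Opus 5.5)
Your proposal is correct and follows essentially the same route as the paper: (1)--(2) by feeding \cref{lem:weights} into \cref{prop:lift,prop:lift-orbit}, and (3) by lifting sectors, using $\m(\vv)\chi([\gamma_{\vv}])=\bar1$ to obtain $\rho^{\dG(\widetilde\vv)}=\nu$ on half-edges, and lifting the signed cycle relations and the zero relations; your count $\dim\Lambda(\Gamma_{\widetilde\U},\dG,\omega_{\widetilde\eta})=\sum_{\vv}r\operatorname{val}_{\Gamma_\U}(\vv)\cdot\m(\vv)\operatorname{val}_{\Gamma_\U}(\vv)=r\dim B(\Gamma_\U,\m,\omega_\eta)$ makes explicit what the paper simply asserts, namely that ``the lifted presentation is precisely'' the modified AFBGA. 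Two minor loose ends remain: ``distinct, hence pairwise nonhomotopic'' is not a valid inference as stated (the paper takes the arc-system properties of the lifts from \cite[Section 5.1.2]{OZ22}), and you should note that $\Gamma_{\widetilde\U}$ is connected, which is immediate because each puncture has a unique preimage and $\Gamma_\U$ is connected.
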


\begin{proof}
(1)--(2) Apply \cref{prop:lift,prop:lift-orbit} to \cref{lem:weights}. The Stasheff identities are inherited from \cref{thm:oz-constructions}.

\smallskip\noindent
(3) By \cite[Section 5.1.2]{OZ22}, The lifted arcs are full, and a lift of each cutting path supplies a cutting path at the corresponding puncture. Their gradings and the line field pull back along the local diffeomorphism away from the punctures. The pullback of the local concentric-circle field has the same form near $\widetilde \vv$. A filling graph realizing the ribbon-type class of $\eta$ lifts to a filling graph realizing the class of $\widetilde\eta$.

There are $r\operatorname{val}_{\Gamma_\U}(\vv)$ half-edges at $\widetilde \vv$. Following $\operatorname{val}_{\Gamma_\U}(\vv)$ successors takes a lifted half-edge to its translate by $\nu^{\chi([\gamma_{\vv}])}$. Hence the defining condition of the Nakayama character gives $\rho^{\dG(\widetilde \vv)}h=\nu h$.
The deck transformation commutes with edge reversal. Moreover, no nontrivial deck transformation preserves a lifted arc. It follows that $\nu^j h\ne\iota h$, proving the AFBG conditions.

The objects $(X_\gamma,s)$ correspond to the arcs $\nu^s\widetilde\gamma$. A successor arrow of $Q_{\Gamma_\U}$ lifts uniquely from each such arc, and the lifted arrows are exactly those of $Q_{\Gamma_{\widetilde\U}}$. The two maximal paths starting at a lifted edge end at its translate by $\nu$; their signed relation is the lift of the corresponding cycle relation downstairs. The zero relations lift to the nonsuccessor pairs. Finally, winding numbers are preserved under lifting,
$\omega_{\widetilde\eta}(\nu^s\widetilde\gamma)
 =\omega_\eta(\gamma)$, and corner degrees are preserved by the definition using paths across sectors. Thus the lifted presentation is precisely $\Lambda(\Gamma_{\widetilde\U},\dG,\omega_{\widetilde\eta})$.
\end{proof}

The term Nakayama character reflects the identity $\weight(C_h^{\m(\vv)})=\bar1$ for a half-edge $h$ at $\vv$: each lifted maximal successor path ends at the image of its starting object under $\nu$, which is the Nakayama automorphism of the lifted AFBG. These paths need not be cycles. The higher operations on $\F_{\mathfrak D}(\U)$ are specified by lifting those of $\B(\U,\m)$.

Note that the choices of distinguished arc lifts only relabel the objects. Indeed, if $\widetilde\gamma$ is replaced by $\nu^{b_\gamma}\widetilde\gamma$, then an arrow $a:X_\gamma\to X_\delta$ has new weight
\[
 \weight'(a)=b_\gamma+\weight(a)-b_\delta.
\]
The map $(X_\gamma,s)\mapsto(X_\gamma,s+b_\gamma)$, with identity maps on the underlying paths, is a strict isomorphism from the newly labelled category to $\F_{\mathfrak D}(\U)$.

Finally, we realize the quotient $\Gamma\to\Gamma_{\red}$ as a cyclic covering of ribbon surfaces and show that the construction in \cref{thm:construction} recovers the original AFBGA.

\begin{prop}\label{prop:nakayama-surface-cover}
Let $(\Gamma,\dG)$ be an AFBG with reduced form $(\Gamma_{\red},\m)$, and let $r=\operatorname{ord}(\nu)$. The graph quotient extends to a map
\[
 \pi_\Sigma:
 (\Sigma_\Gamma,V(\Gamma))
 \longrightarrow(\Sigma_{\Gamma_{\red}},V(\Gamma_{\red}))
\]
which is an $r$-sheeted cyclic covering, fully ramified at every vertex, with deck group $\langle\nu\rangle$. Its monodromy $\chi$ is a Nakayama character of order $r$ on $(\Sigma_{\Gamma_{\red}},V(\Gamma_{\red}),\m)$. The canonical line fields can be chosen so that
\[
 \eta_\Gamma=\pi_\Sigma^*\eta_{\Gamma_{\red}}
 \quad\text{on }\Sigma_\Gamma\setminus V(\Gamma).
\]
Take $\mathfrak D=(\Sigma_{\Gamma_{\red}},V(\Gamma_{\red}),\eta_{\Gamma_{\red}},\m,r,\chi)$ and $\U=E(\Gamma_{\red})$ with the canonical gradings. Then \cref{thm:construction} recovers $\Lambda(\Gamma,\dG)$, that is, the category $\F_{\mathfrak D}(\U)$ is concentrated in degree zero, has $\mu^n=0$ for $n\ne2$, and has associated algebra $\Lambda(\Gamma,\dG)$.
\end{prop}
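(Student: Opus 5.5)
The plan is to build the covering combinatorially from the ribbon structure, then read off the monodromy, the line field, and the algebra.

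\emph{Constructing the covering.} By \cref{prop:reduction}, $\langle\nu\rangle\cong\ZZ/r\ZZ$ acts freely on half-edges and edges, fixes every vertex, and commutes with $\iota$ and $\rho$. The ribbon surface $\Sigma_\Gamma$ is assembled from vertex discs and edge bands, so $\nu$ extends to $\Sigma_\Gamma$ as follows.
\begin{itemize}
\item On each vertex disc it acts as a rotation, sending the attaching interval of $h$ to that of $\nu h=\rho^{\dG(\vv)}h$. This is possible because $\nu$ restricted to $H_\vv$ is a power of the cyclic order.
\item On edge bands it permutes the bands freely via $e(h)\mapsto e(\nu h)$.
\end{itemize}
The result is an orientation-preserving diffeomorphism of order $r$. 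It acts freely off $V(\Gamma)$, because each rotation of a vertex disc has order exactly $r$ there: the $\nu$-orbits in $H_\vv$ have size $r$. The quotient $\Sigma_\Gamma/\langle\nu\rangle$ is then glued from one disc per vertex and one band per edge orbit, with cyclic orders induced by $\rho$. Hence it is $\Sigma_{\Gamma_\red}$, and the quotient map is an $r$-sheeted cyclic covering. Near a vertex the rotation by one generator of $\ZZ/r\ZZ$ gives the local model $z\mapsto z^r$, so the covering is fully ramified. Identify the deck group with $\ZZ/r\ZZ$ via $\nu\leftrightarrow\bar1$.

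\emph{The monodromy is a Nakayama character.} A clockwise meridian $\gamma_\vv$ around $\vv$ in $\Sigma_{\Gamma_\red}$ passes $o(\vv)$ half-edge orbits. Its lift starting near a half-edge $h$ follows $o(\vv)$ successor steps, ending near $\rho^{o(\vv)}h$. We have $\nu h=\rho^{\dG(\vv)}h=\rho^{\m(\vv)o(\vv)}h$, and $\m(\vv)$ is invertible mod $r$. Therefore $\rho^{o(\vv)}h=\nu^{k}h$ with $k\m(\vv)\equiv1$, so $\chi([\gamma_\vv])=\bar k$ and $\m(\vv)\chi([\gamma_\vv])=\bar1$.

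\emph{Line fields.} Build $\eta_{\Gamma_\red}$ by the local recipe of \cite[Example 3.7]{OZ22}, making the choices on each vertex region rotationally symmetric. Its pullback is then again tangent to circles near vertices, tangent to cut intervals, and transverse to edges. It is therefore a valid instance of $\eta_\Gamma$, giving $\eta_\Gamma=\pi_\Sigma^*\eta_{\Gamma_\red}$. The main obstacle is the symmetry near vertices: the pullback of concentric circles under $z\mapsto z^r$ is again concentric circles, but the intermediate interpolation between the circular field and the cut intervals must be chosen compatibly. I would take it to be $\nu$-equivariant on each sector.

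\emph{Identifying $\F_{\mathfrak D}(\U)$.} Apply \cref{thm:construction}(3) with $\U=E(\Gamma_\red)$ and the canonical gradings. The lifted arcs are exactly $E(\Gamma)$, so $\Gamma_{\widetilde\U}=\Gamma$. At each vertex, $\dG(\widetilde\vv)=\m(\vv)o(\vv)=\dG(\vv)$. With the pulled-back (canonical) gradings, \cref{thm:oz-constructions}(2) shows that $\B(\U,\m)$ is concentrated in degree zero with $\mu^n=0$ for $n\neq2$. Since the operations of $\F_{\mathfrak D}(\U)$ are restrictions of those of $\B(\U,\m)$, the same holds for $\F_{\mathfrak D}(\U)$. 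Edges of the canonical field have winding number $0$ by the display after \cref{dfn:winding-number}, so $\omega_{\eta_\Gamma}=0$. The associated algebra is therefore $\Lambda(\Gamma,\dG,0)=\Lambda(\Gamma,\dG)$.
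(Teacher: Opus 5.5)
Your proposal is correct and follows essentially the same route as the paper. You extend $\nu$ by rotating the vertex discs (with equally spaced attachments) and permuting the bands, read off the meridian monodromy from $\rho^{o(\vv)}h=\nu^{k}h$ with $k\m(\vv)\equiv1 \pmod r$, lift the cuts and local fields, and conclude from \cref{thm:oz-constructions,thm:construction}. The symmetry concern in your line-field step is unnecessary: the pullback is automatically $\nu$-invariant, and the local diffeomorphism $\pi_\Sigma$ preserves the three defining properties of the canonical local field, so no special choice downstairs is needed.
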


\begin{proof}
Realize a vertex $\vv$ by a disc with equally spaced half-edge attachments. By \cref{prop:reduction}, the action of $\nu=\rho^{\dG(\vv)}$ is the clockwise rotation through $2\pi\m(\vv)/r$. It has order $r$ and acts freely away from the centre. The action extends over the edge bands because $\nu\iota=\iota\nu$, and no band is stabilized by a nontrivial power of $\nu$. The quotient discs and bands are exactly the ribbon surface of $\Gamma_{\red}$. Locally at a vertex the quotient has the form $z\mapsto z^r$, and elsewhere it is an unbranched covering.

A small clockwise loop around $\vv$ downstairs moves a lifted half-edge by $\rho^{o(\vv)}$. Since $\dG(\vv)=\m(\vv)o(\vv)$ and $\operatorname{val}_\Gamma(\vv)=r\,o(\vv)$, we have
\[
 \rho^{o(\vv)}|_{H_{\vv}}=\nu^a|_{H_{\vv}}
 \quad\Longleftrightarrow\quad
 a\m(\vv)\equiv1\pmod r.
\]
Thus $\chi$ is a Nakayama character. Lift the cuts used to construct $\eta_{\Gamma_{\red}}$ in \cref{sec:surface-definitions}; the local fields lift to the same construction on the vertex discs of $\Gamma$. This gives the stated representatives of the canonical line fields.
The lifts of the arcs in $E(\Gamma_{\red})$ form the arc system $E(\Gamma)$, with its canonical gradings. The final assertion follows from \cref{thm:oz-constructions,thm:construction}.
\end{proof}

In \cref{sec:derived}, we compare the lifted categories associated with different graded admissible arc systems for the same geometric datum.

\subsection{An example of a lifted admissible fractional Brauer graph \texorpdfstring{$A_\infty$}{A-infinity}-category}
\label{sec:triangle-double-cover}

We describe a double covering of a disc with three punctures and the associated $A_\infty$-categories.

\smallskip\noindent
\emph{The punctured disc and the algebra downstairs.}
Let $\Sigma$ be a disc with punctures $\mathcal P=\{\vv_0,\vv_1,\vv_2\}$, and let $\U=\{\delta_0,\delta_1,\delta_2\}$ be the sides of an embedded triangle, listed counterclockwise, with $\vv_i$ the common endpoint of $\delta_i$ and $\delta_{i+1}$. Throughout this subsection, $i$ is read modulo $3$. Choose cutting paths from the punctures to $\partial\Sigma$ outside the triangle, as in \cref{fig:triangle-double-cover}. Cutting along $\U$ gives a triangular disc and an annulus containing $\partial\Sigma$, so $\U$ is admissible.

The ribbon graph $\Gamma_\U$ has vertices $\vv_0,\vv_1,\vv_2$, edges $\delta_0,\delta_1,\delta_2$, and clockwise order $(\delta_i,\delta_{i+1})$ at $\vv_i$; see \cref{fig:triangle-ribbon-downstairs}. Its ribbon surface is an annulus. Filling its inner triangular boundary gives the disc $\Sigma$, with the graph vertices regarded as punctures.

\begin{figure}[htbp]
\centering
\begin{tikzpicture}[>=Stealth,line cap=round,line join=round,font=\small]
 \coordinate (v0) at (0,1.15);
 \coordinate (v1) at (-1.6,-1.05);
 \coordinate (v2) at (1.6,-1.05);
 \draw[very thick,darkblue] (v2)--(v0) node[midway,right=5pt] {$\delta_0$};
 \draw[very thick,darkblue] (v0)--(v1) node[midway,left=5pt] {$\delta_1$};
 \draw[very thick,darkblue] (v1)--(v2) node[midway,below=5pt] {$\delta_2$};
 \foreach \i in {0,1,2}{
  \begin{scope}[shift={(v\i)}]
   \draw[gray!75,->] (35:.24) arc[start angle=35,end angle=-245,radius=.24];
   \fill (0,0) circle (1.8pt);
  \end{scope}
 }
 \node[above=10pt] at (v0) {$\vv_0$};
 \node[left=10pt] at (v1) {$\vv_1$};
 \node[right=10pt] at (v2) {$\vv_2$};
% \node at (0,-1.98) {$\vv_i:\quad(\delta_i,\delta_{i+1})\quad\text{clockwise}$};
\end{tikzpicture}
\caption{The ribbon graph $\Gamma_\U$.}
\label{fig:triangle-ribbon-downstairs}
\end{figure}

The tree $\delta_1\cup\delta_2$ fills $\Sigma$. Let $\eta$ be its canonical line field and set $\m(\vv_i)=1$. The tree edges have winding number zero, while the parity relation for the triangular disc in \cite[Corollary 3.21]{OZ22} gives
\[
\omega_\eta(\delta_0)\equiv1\pmod2,\qquad
\omega_\eta(\delta_1)=\omega_\eta(\delta_2)=0.
\]
Write $X_i=X_{\delta_i}$. In \cref{fig:triangle-quiver-downstairs}, the arrows $a_i$ are the inner corners and $b_i=a_i^*$ are their complementary outer corners. Since the three inner corner degrees sum to $1$, choose the arc gradings so that
\[
|a_0|=1,\qquad |a_1|=|a_2|=0,\qquad |b_i|=-|a_i|.
\]
\begin{figure}[htbp]
\centering
\begin{tikzpicture}[>=Stealth,line cap=round,line join=round,font=\small,
 every node/.style={inner sep=3pt},every path/.style={thick}]
 \node (X0) at (0,1.55) {$X_0$};
 \node (X1) at (-1.8,-1.05) {$X_1$};
 \node (X2) at (1.8,-1.05) {$X_2$};
 \draw[->,darkblue] (X0) to[bend left=12] node[right] {$a_0$} (X1);
 \draw[->,darkblue] (X1) to[bend left=12] node[above] {$a_1$} (X2);
 \draw[->,darkblue] (X2) to[bend left=12] node[left] {$a_2$} (X0);
 \draw[->,red!65!black] (X1) to[bend left=12] node[left] {$b_0$} (X0);
 \draw[->,red!65!black] (X2) to[bend left=12] node[below] {$b_1$} (X1);
 \draw[->,red!65!black] (X0) to[bend left=12] node[right] {$b_2$} (X2);
\end{tikzpicture}
\caption{The quiver of $B(\Gamma_\U,1,\omega_\eta)$.}
\label{fig:triangle-quiver-downstairs}
\end{figure}

The associated algebra of $\B(\U,1)$ is $B(\Gamma_\U,1,\omega_\eta)$, with quiver shown in \cref{fig:triangle-quiver-downstairs} and relations
\[
\begin{gathered}
a_{i+1}a_i=0,\qquad b_i b_{i+1}=0
\qquad(i\in\ZZ/3\ZZ),\\
b_0a_0=-a_2b_2,\qquad
b_1a_1=a_0b_0,\qquad
b_2a_2=a_1b_1.
\end{gathered}
\]
The inner triangle gives the disc sequence $(a_2,a_1,a_0)$. In particular, \eqref{eq:brauer-polygon-operations} gives
\[
\mu^3(a_2,a_1,a_0)=1_{X_0},\qquad
\mu^3(a_0,a_2,a_1)=1_{X_1},\qquad
\mu^3(a_1,a_0,a_2)=1_{X_2}.
\]

\smallskip\noindent
\emph{The double covering.}
Take $r=2$ and let $\chi:H_1(\Sigma\setminus\mathcal P;\ZZ)\to\ZZ/2\ZZ$ be the Nakayama character sending each small clockwise loop $\gamma_{\vv_i}$ to $\bar1$. The covering
\[
\pi_\Sigma:(\widetilde\Sigma,\widetilde{\mathcal P})
\longrightarrow(\Sigma,\mathcal P)
\]
is obtained by taking two copies of the disc cut along the three cutting paths and gluing the left side of each cut in one copy to the right side in the other copy. The deck generator $\nu$ exchanges the copies. The two copies of each puncture become a single puncture $\widetilde\vv_i$, and the triangular disc has two disjoint lifts.

Choose $\widetilde\delta_0,\widetilde\delta_1,\widetilde\delta_2$ to bound one lifted triangle, and write $\delta_i^s=\nu^s\widetilde\delta_i$ for each $s\in\ZZ/2\ZZ$. All sheet labels are read modulo $2$; see \cref{fig:triangle-double-cover}.

\begin{figure}[htbp]
\centering
\begin{tikzpicture}[>=Stealth,font=\small]
% The upper discs are the cut sheets of the connected covering.
\foreach \s/\figx in {0/-3.25,1/3.25}{
 \begin{scope}[shift={(\figx,2.35)}]
  \ifnum\s=0
   \colorlet{trianglecolor}{darkblue}
  \else
   \colorlet{trianglecolor}{orange!85!black}
  \fi
  \filldraw[fill=gray!5,thick] (0,0) circle (1.82);
  \foreach \j/\ang in {0/90,1/210,2/330}{
   \coordinate (p\j) at (\ang:.92);
   \draw[densely dashed,thick,red!65!black]
     (\ang:.94) -- (\ang:1.82);
   \node[font=\scriptsize,red!65!black]
     at ({\ang+15}:1.43) {$L_{\j}^{\s}$};
   \node[font=\scriptsize,red!65!black]
     at ({\ang-15}:1.43) {$R_{\j}^{\s}$};
  }
  \fill[trianglecolor!8] (p0) -- (p1) -- (p2) -- cycle;
  \draw[very thick,trianglecolor] (p2) -- (p0)
     node[pos=.52,right=2pt] {$\delta_0^{\s}$};
  \draw[very thick,trianglecolor] (p0) -- (p1)
     node[pos=.52,left=2pt] {$\delta_1^{\s}$};
  \draw[very thick,trianglecolor] (p1) -- (p2)
     node[midway,below=3pt] {$\delta_2^{\s}$};
  \foreach \j in {0,1,2}
    \fill (p\j) circle (2pt);
  \draw[->,trianglecolor] (.28,-.12)
    arc[start angle=0,end angle=285,radius=.28];
  \node at (0,-2.16)
    {$\mu^3(a_2^{\s},a_1^{\s},a_0^{\s})=1_{X_0^{\s}}$};
  \node[above] at (0,1.92) {cut copy $\s$};
 \end{scope}
}
\node[align=center,text width=3.8cm] at (0,2.38)
  {$L_j^0\sim R_j^1$\\$R_j^0\sim L_j^1$\\[3pt]$j=0,1,2$};
\draw[->,gray!75,thick] (-2.65,-.17) -- (-1.36,-.81);
\draw[->,gray!75,thick] (2.65,-.17) -- (1.36,-.81)
  node[midway,right=3pt] {$\pi_\Sigma$};
\begin{scope}[shift={(0,-2.1)}]
 \filldraw[fill=gray!5,thick] (0,0) circle (1.82);
 \foreach \j/\ang in {0/90,1/210,2/330}{
  \coordinate (q\j) at (\ang:.92);
  \draw[densely dashed,thick,red!65!black]
    (\ang:.94) -- (\ang:1.82);
 }
 \fill[darkblue!8] (q0) -- (q1) -- (q2) -- cycle;
 \draw[very thick,darkblue] (q2) -- (q0)
   node[pos=.5,right=2pt] {$\delta_0$};
 \draw[very thick,darkblue] (q0) -- (q1)
   node[pos=.5,left=2pt] {$\delta_1$};
 \draw[very thick,darkblue] (q1) -- (q2)
   node[midway,below=3pt] {$\delta_2$};
 \fill (q0) circle (2pt) node[above right=1pt] {$\vv_0$};
 \fill (q1) circle (2pt) node[left=4pt] {$\vv_1$};
 \fill (q2) circle (2pt) node[right=4pt] {$\vv_2$};
 \draw[->,darkblue] (.28,-.12)
   arc[start angle=0,end angle=285,radius=.28];
 \node at (0,-2.16)
   {$\mu^3(a_2,a_1,a_0)=1_{X_0}$};
\end{scope}
\end{tikzpicture}
\caption{The disc $\Sigma$ and the construction of its double covering $\widetilde\Sigma$.}
\label{fig:triangle-double-cover}
\end{figure}

The inner corners stay on the same sheet, whereas each outer corner crosses one cut. Thus
\[
\weight(a_i)=\bar0,\qquad \weight(b_i)=\bar1.
\]
At $\widetilde\vv_i$, the clockwise order of the four incident branches is
$\delta_i^0$, $\delta_{i+1}^0$, $\delta_i^1$ and $\delta_{i+1}^1$.
Consequently, setting $\dG(\widetilde\vv_i)=2$ gives an AFBG $(\Gamma_{\widetilde\U},\dG)$ whose Nakayama automorphism exchanges the sheets. Its fractional multiplicities are $1/2$, and its reduced form is the triangle with multiplicities one. Put $\widetilde\eta=\pi_\Sigma^*\eta$ and $\mathfrak D=(\Sigma,\mathcal P,\eta,\m,2,\chi)$, and give the lifted arcs their pulled-back gradings.

\Cref{fig:triangle-ribbon-upstairs} shows the lifted ribbon graph. The clockwise orders give two boundary walks $(\delta_0^s,\delta_1^s,\delta_2^s)$, one for each $s=0,1$, and a third boundary walk
$(\delta_0^0,\delta_2^1,\delta_1^0, \delta_0^1,\delta_2^0,\delta_1^1)$.
Thus its ribbon surface has three boundary components. Since the graph has three vertices and six edges, the genus $g$ of this ribbon surface satisfies
$g=\frac{2-3-(3-6)}{2}=1$.

Filling the two triangular boundaries gives $\widetilde\Sigma$, of genus one with one boundary component and three marked punctures. The original surface $\Sigma$ has genus zero. In \cref{fig:triangle-double-cover}, the two upper discs are the cut copies used to construct $\widetilde\Sigma$; their crosswise identifications produce the connected surface just described.

\begin{figure}[htbp]
\centering
\begin{tikzpicture}[>=Stealth,line cap=round,line join=round,font=\small]
 \foreach \i/\x in {0/-3,1/0,2/3}{
  \coordinate (v\i) at (\x,0);
  \coordinate (u\i) at ({\x-.4243},.4243);
  \coordinate (w\i) at ({\x+.4243},.4243);
  \coordinate (p\i) at ({\x+.4243},-.4243);
  \coordinate (q\i) at ({\x-.4243},-.4243);
 }
 % Sheet-zero edges: the port order at every vertex is NW, NE, SE, SW.
 \draw[very thick,darkblue] (u0) .. controls (-5,2.2) and (5,2.2) .. (w2)
  node[pos=.5,above=3pt] {$\delta_0^0$};
 \draw[very thick,darkblue] (w0) .. controls (-1.9,1.1) and (-1.1,1.1) .. (u1)
  node[pos=.5,above=3pt] {$\delta_1^0$};
 \draw[very thick,darkblue] (w1) .. controls (1.1,1.1) and (1.9,1.1) .. (u2)
  node[pos=.5,above=3pt] {$\delta_2^0$};
 % White underlays distinguish edge crossings from vertices.
 \draw[very thick,orange!85!black] (q0) .. controls (-5,-2.1) and (1.8,-1.7) .. (p1)
  node[pos=.32,below=4pt] {$\delta_1^1$};
 \draw[very thick,orange!85!black,preaction={draw=white,line width=4.8pt}]
  (q1) .. controls (-1.8,-1.7) and (5,-2.1) .. (p2)
  node[pos=.68,below=4pt] {$\delta_2^1$};
 \draw[very thick,orange!85!black,preaction={draw=white,line width=4.8pt}]
  (p0) .. controls (0,-3) and (0,-3) .. (q2)
  node[pos=.5,below=4pt] {$\delta_0^1$};
 \foreach \i in {0,1,2}{
  \draw[very thick,darkblue] (u\i)--(v\i)--(w\i);
  \draw[very thick,orange!85!black] (p\i)--(v\i)--(q\i);
  \begin{scope}[shift={(v\i)}]
   \draw[gray!80,->] (25:.25) arc[start angle=25,end angle=-255,radius=.25];
   \fill (0,0) circle (1.8pt);
   \node[above=15pt] {$\widetilde\vv_{\i}$};
  \end{scope}
 }
 %\node at (0,-3.65) {$\widetilde\vv_i:\quad
   %(\delta_i^0,\delta_{i+1}^0,\delta_i^1,\delta_{i+1}^1)
   %\quad\text{clockwise}$};
\end{tikzpicture}
\caption{The ribbon graph $\Gamma_{\widetilde\U}$, with $\dG(\widetilde\vv_i)=2$.}
\label{fig:triangle-ribbon-upstairs}
\end{figure}

\smallskip\noindent
\emph{The algebra upstairs.}
By \cref{thm:construction}, the associated algebra of $\F_{\mathfrak D}(\U)$ is $\Lambda(\Gamma_{\widetilde\U},\dG,\omega_{\widetilde\eta})$, with quiver shown in \cref{fig:triangle-quiver-upstairs}. Here $X_i^s=(X_i,s)$, and the lifted arrows retain their degrees. Its relations are
\[
\begin{gathered}
a_{i+1}^s a_i^s=0,\qquad
b_i^{s+1}b_{i+1}^s=0,\\
b_i^s a_i^s
=(-1)^{\omega_\eta(\delta_i)}
 a_{i-1}^{s+1}b_{i-1}^s
\qquad(i\in\ZZ/3\ZZ,\ s\in\ZZ/2\ZZ).
\end{gathered}
\]

\begin{figure}
\centering
\begin{tikzpicture}[>=Stealth,line cap=round,line join=round,font=\small,
 vertex/.style={inner sep=3pt,fill=white},
 arrowlabel/.style={fill=white,inner sep=1.5pt},
 crossing/.style={preaction={draw=white,line width=4pt}},
 every path/.style={thick}]
 \node[vertex] (X00) at (-3,1.4) {$X_0^0$};
 \node[vertex] (X10) at (-4.3,-1) {$X_1^0$};
 \node[vertex] (X20) at (-1.7,-1) {$X_2^0$};
 \node[vertex] (X01) at (3,1.4) {$X_0^1$};
 \node[vertex] (X11) at (1.7,-1) {$X_1^1$};
 \node[vertex] (X21) at (4.3,-1) {$X_2^1$};
 % Inner corners: one directed triangle on each sheet.
 \draw[->,darkblue] (X00)--node[left=3pt] {$a_0^0$}(X10);
 \draw[->,darkblue] (X10)--node[below=3pt] {$a_1^0$}(X20);
 \draw[->,darkblue] (X20)--node[left=3pt] {$a_2^0$}(X00);
 \draw[->,orange!85!black] (X01)--node[right=3pt] {$a_0^1$}(X11);
 \draw[->,orange!85!black] (X11)--node[below=3pt] {$a_1^1$}(X21);
 \draw[->,orange!85!black] (X21)--node[right=3pt] {$a_2^1$}(X01);
 % Outer corners: all six arrows change sheets.
 \draw[->,red!65!black] (X10) .. controls (-5,6) and (0,3.5) .. (X01)
   node[pos=.35,above=3pt,arrowlabel] {$b_0^0$};
 \draw[->,red!65!black,crossing] (X00) .. controls (0,3.5) and (5,6) .. (X21)
   node[pos=.65,above=3pt,arrowlabel] {$b_2^0$};
 \draw[->,red!65!black] (X21) .. controls (4.3,-3) and (-4.3,-3) .. (X10)
   node[pos=.5,below=3pt,arrowlabel] {$b_1^1$};
 \draw[->,red!65!black] (X20)--node[below=3pt,arrowlabel] {$b_1^0$}(X11);
 \draw[->,red!65!black,crossing] (X11) .. controls (1,1.2) and (-1,1.8) .. (X00)
   node[pos=.72,above=3pt,arrowlabel] {$b_0^1$};
 \draw[->,red!65!black,crossing] (X01) .. controls (1,1.8) and (-1,1.2) .. (X20)
   node[pos=.28,above=3pt,arrowlabel] {$b_2^1$};
\end{tikzpicture}
\caption{The lifted quiver. The blue and orange triangles correspond to the two lifted triangles in \cref{fig:triangle-double-cover}; the dark red arrows change sheets.}
\label{fig:triangle-quiver-upstairs}
\end{figure}

The signs in these relations can be removed by rescaling the arrows. For $i=0,1,2$ and $s=0,1$, put $\widehat a_i^s=(-1)^s a_i^s$ and $\widehat b_i^s=(-1)^i b_i^s$. The relations between maximal paths become
$$\widehat b_i^s\widehat a_i^s=\widehat a_{i-1}^{s+1}\widehat b_{i-1}^s.$$
Thus the associated algebra is isomorphic to the ordinary AFBGA $\Lambda(\Gamma_{\widetilde\U},\dG)$. The top and socle of the indecomposable right projective at $X_i^s$ are supported at $X_i^s$ and $X_i^{s+1}$, respectively. Hence the algebra is not symmetric and is not a Brauer graph algebra.

\smallskip\noindent
\emph{The lifted higher operations.}
We use the original arrows $a_i^s,b_i^s$ for the covering formulas. The strict functor of \cref{thm:construction} is
\[
\pi:\F_{\mathfrak D}(\U)\longrightarrow\B(\U,1),
\qquad X_i^s\longmapsto X_i,\quad
a_i^s\longmapsto a_i,\quad b_i^s\longmapsto b_i.
\]
The two lifted triangles give
\[
\begin{aligned}
\mu^3(a_2^s,a_1^s,a_0^s)&=1_{X_0^s},\\
\mu^3(a_0^s,a_2^s,a_1^s)&=1_{X_1^s},\\
\mu^3(a_1^s,a_0^s,a_2^s)&=1_{X_2^s}
\qquad(s\in\ZZ/2\ZZ).
\end{aligned}
\]
Operations involving outer corners can change sheets. The following formulas illustrate the three rules in \cref{eq:brauer-polygon-operations,eq:brauer-complement-operation}:
\[
\begin{array}{c|c}
\B(\U,1)&\F_{\mathfrak D}(\U)\\[3pt]\hline
\mu^3(b_2a_2,a_1,a_0)=b_2
 &\mu^3(b_2^s a_2^s,a_1^s,a_0^s)=b_2^s\\[3pt]
\mu^3(a_2,a_1,a_0b_0)=-b_0
 &\mu^3(a_2^{s+1},a_1^{s+1},a_0^{s+1}b_0^s)=-b_0^s\\[3pt]
\mu^3(a_2,a_1b_1,a_1)=-b_0
 &\mu^3(a_2^{s+1},a_1^{s+1}b_1^s,a_1^s)=-b_0^s.
\end{array}
\]
The first row starts at $X_0^s$ and ends at $X_2^{s+1}$; the other two start at $X_1^s$ and end at $X_0^{s+1}$. Applying $\pi$ to each identity in the right column gives the corresponding identity in the left column. All higher operations are determined by lifting the formulas of \cref{dfn:brauer-category} in this way.
Finally, $\nu$ sends $X_i^s,a_i^s,b_i^s$ to $X_i^{s+1},a_i^{s+1},b_i^{s+1}$, respectively, and
\[
\bigl(\F_{\mathfrak D}(\U)/\langle\nu\rangle\bigr)_{\mathrm{rep}}
\cong\B(\U,1).
\]

%\raggedbottom
\section{Elementary moves and derived equivalences}
\label{sec:derived}

We compare the categories of \cref{thm:construction} by adding and deleting arcs on the same surface. The relation associated with a marked disc in \cref{prop:brauer-polygon} lifts to every sheet of the covering and supplies the required twisted complexes. We then allow a change of surface that preserves the line field, the multiplicities, and the Nakayama character.

\subsection{Lifting elementary moves}

Fix a geometric datum $\mathfrak D=(\Sigma,\mathcal P,\eta,\m,r,\chi)$. Following \cite[Definition 6.2]{OZ22}, an \defn{elementary move} adds or deletes one arc, with both arc systems admissible. The gradings on the common arcs are retained; any new arc is given a new grading. For example, a flip replaces the diagonal of a quadrilateral by deleting the old diagonal and then adding the new one, as in \cref{fig:elementary-move}.

\begin{figure}[htbp]
\centering
\begin{tikzpicture}[scale=.88,line cap=round,line join=round]
\foreach \shift in {0,4.6,9.2}{
 \begin{scope}[xshift=\shift cm]
  \draw (0,0)--(2.5,0)--(2.5,2.1)--(0,2.1)--cycle;
  \foreach \corner in {(0,0),(2.5,0),(2.5,2.1),(0,2.1)}
   \fill \corner circle (2pt);
 \end{scope}
}
\draw[blue!70!black,thick] (0,0)--node[above left]{$\gamma$}(2.5,2.1);
\draw[blue!70!black,thick] (9.2,2.1)--node[above right]{$\gamma'$}(11.7,0);
\draw[->] (2.9,1.05)--node[above]{$-\gamma$}(4.15,1.05);
\draw[->] (7.5,1.05)--node[above]{$+\gamma'$}(8.75,1.05);
\node[below] at (1.25,-.15) {$\U$};
\node[below] at (5.85,-.15) {$\U\setminus\{\gamma\}$};
\node[below] at (10.45,-.15) {$\U'$};
\node[below left] at (0,0) {$\vv_1$};
\node[below right] at (2.5,0) {$\vv_2$};
\node[above right] at (2.5,2.1) {$\vv_3$};
\node[above left] at (0,2.1) {$\vv_4$};
\end{tikzpicture}
\caption{Two elementary moves inside a quadrilateral whose interior contains no punctures. }
\label{fig:elementary-move}
\end{figure}

Note that any two admissible arc systems $\U$ and $\U'$ on $(\Sigma,\mathcal P)$ are connected by a sequence of elementary moves. This follows by adapting the argument of \cite[Proposition 6.4]{OZ22} to the arc convention in \cref{dfn:surface-arc}. We next lift the polygon relation used to compare the categories associated with each elementary move.

\begin{lem}\label{lem:polygon}
Let $(a_n,\ldots,a_1)$ be a disc sequence in $\B(\U,\m)$, with $a_i:X_{i-1}\to X_i$ and $X_n=X_0$. For $s\in\ZZ/r\ZZ$, put
\[
\begin{aligned}
 s_0&=s,\qquad s_i=s+\sum_{j=1}^i\weight(a_j)
 &&(1\leq i\leq n),\\
 t_i&=\sum_{j=1}^i|a_j|-(i-1)
 &&(1\leq i<n).
\end{aligned}
\]
Then $s_n=s$, and there is an isomorphism
\[
 (X_0,s)\cong
 \left(\bigoplus_{i=1}^{n-1}(X_i,s_i)[t_i],\delta\right)
 \quad\text{in }H^0\bigl(\Tw\F_{\mathfrak D}(\U)\bigr),
 \qquad \delta_{i,i+1}=a_{i+1},
\]
with all other twisting components zero. 
\end{lem}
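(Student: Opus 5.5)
The plan is to lift the proof of \cref{prop:brauer-polygon} sheet by sheet, using that the operations of $\F_{\mathfrak D}(\U)$ are restrictions of those of $\B(\U,\m)$ (\cref{thm:construction}).

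\emph{Step 1: the objects and twisting components are well placed.} The equality $s_n=s$ is immediate from $\sum_{i=1}^n\weight(a_i)=\bar0$, established in the proof of \cref{lem:weights}. Next, $a_i$ has weight $s_i-s_{i-1}$, so $a_i$ defines a morphism $(X_{i-1},s_{i-1})\to(X_i,s_i)$ in $\F_{\mathfrak D}(\U)$. Hence each component $\delta_{i,i+1}=a_{i+1}$ is a legitimate degree-one morphism $(X_i,s_i)[t_i]\to(X_{i+1},s_{i+1})[t_{i+1}]$; the degree count is exactly as in \cref{prop:brauer-polygon}.

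\emph{Step 2: the Maurer--Cartan equation.} Apply the strict functor $\Tw\pi$ of \cref{prop:twisted-standard} to $T=(\bigoplus(X_i,s_i)[t_i],\delta)$. Because $\pi$ acts by inclusions on morphism spaces and is strict, $\Tw\pi$ is injective on morphism spaces and commutes with all $\mu^n_{\Add}$. It sends $T$ to the twisted complex of \cref{prop:brauer-polygon}, which satisfies \eqref{eq:mc-prelim}. By injectivity, \eqref{eq:mc-prelim} holds for $T$ upstairs.

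\emph{Step 3: the inverse maps.} Define $f:(X_0,s)\to T$ with single component $a_1$ into $(X_1,s_1)[t_1]$. Define $g:T\to(X_0,s)$ with single component $(-1)^{t_1+\cdots+t_{n-1}}a_n$ from $(X_{n-1},s_{n-1})[t_{n-1}]$. The latter is well defined because $\weight(a_n)=s_n-s_{n-1}=s-s_{n-1}$.

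The identities to verify are
\[
\mu^1_{\Tw}(f)=0,\qquad
\mu^1_{\Tw}(g)=0,\qquad
\mu^2_{\Tw}(g,f)=1_{(X_0,s)},\qquad
\mu^2_{\Tw}(f,g)=1_T.
\]
Each is an equation between morphisms of $\Tw\F_{\mathfrak D}(\U)$. Applying $\Tw\pi$ turns it into the corresponding identity of \cref{prop:brauer-polygon}, since $\Tw\pi$ preserves the operations \eqref{eq:twisted-operations} and sends units to units. Injectivity then gives the identities upstairs. So $[f]$ and $[g]$ are mutually inverse in $H^0(\Tw\F_{\mathfrak D}(\U))$.

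The main point of care is the injectivity argument. The twisted complex downstairs may have repeated summands when a side recurs, and one must make sure that $\Tw\pi$ is componentwise the inclusion, with summands matched index by index. This holds because \cref{prop:brauer-polygon} already treats repeated sides as separate summands, and $\Tw\pi$ preserves the indexing. One also checks that the nonzero operations upstairs lie in the correct target sheets. This is automatic, since \eqref{eq:homogeneous} from \cref{lem:weights} places every output in the space $\Hom^{s_j-s_i}$ corresponding to its source and target.
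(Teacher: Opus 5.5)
Your proposal is correct and follows the paper's proof essentially step for step. You use $\sum_i\weight(a_i)=\bar0$ from \cref{lem:weights} to place the corners, the twisting components and the comparison maps $f,g$ of \cref{prop:brauer-polygon} on the sheets $s_i$. You then transfer the Maurer--Cartan equation, the closedness of $f,g$, and the two inverse identities from downstairs, using that $\Tw\pi$ is injective componentwise and commutes with the operations.

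One cosmetic point, which the paper also glosses over: before the Maurer--Cartan equation is known upstairs, you should apply the extension of $\pi$ to $\Add(\F_{\mathfrak D}(\U))$ rather than $\Tw\pi$ itself. This is harmless, since that extension commutes with $\mu_{\Add}$ and preserves the shifts.
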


\begin{proof}
By \cref{lem:weights}, the sum of the corner weights is zero, so $s_n=s$. Each corner $a_i$ therefore lifts from $(X_{i-1},s_{i-1})$ to $(X_i,s_i)$. The twisting arrows and the two comparison maps in \cref{prop:brauer-polygon} have these source and target objects; in particular, the last corner returns to $(X_0,s)$. Keep their shifts and scalar coefficients unchanged.

The functor $\Tw\pi$ applies an injective inclusion to every morphism component and commutes with the operations. The Maurer--Cartan equation, the equations saying that the comparison maps are closed, and their two inverse-composition equations map to the corresponding equations in \cref{prop:brauer-polygon}. Injectivity on each component proves the lifted equations. The comparison maps thus give the stated isomorphism.
\end{proof}

\begin{prop}
\label{prop:moves}
Let $\U$ be a graded admissible arc system and let $\gamma\in\U$ be such that $\U\setminus\{\gamma\}$ is full. Choose one lift of each arc in $\U$, and use the same chosen lifts for the arcs in $\U\setminus\{\gamma\}$.
\begin{enumerate}[label=\textup{(\arabic*)}]
\item The natural strict full inclusion
\[
 F:\F_{\mathfrak D}(\U\setminus\{\gamma\})\hookrightarrow\F_{\mathfrak D}(\U)
\]
induces an equivalence $H^0(\Tw F)$. In particular, $F$ is a Morita equivalence.
\item The Morita equivalence class of $\F_{\mathfrak D}(\U)$ is independent of the admissible arc system and of the individual arc gradings.
\end{enumerate}
\end{prop}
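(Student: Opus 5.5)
For (1) I will mirror the proof of \cref{prop:brauer-arc-independence}(1), with \cref{lem:polygon} in place of \cref{prop:brauer-polygon}. The first step is to check that $F$ is well defined as a strict full inclusion. The objects $(X_\delta,s)$ with $\delta\in\U\setminus\{\gamma\}$ are sent to the objects with the same labels. The chosen lifts are the same in both systems, so the weight of a path between two arcs of $\U\setminus\{\gamma\}$ can be read off from the lifted arcs. A successor arrow in $Q_{\Gamma_{\U\setminus\{\gamma\}}}$ maps to the corresponding path in $Q_{\Gamma_\U}$, and that path has the same lifted endpoints, hence the same weight. Combining this with the full inclusion $\B(\U\setminus\{\gamma\},\m)\hookrightarrow\B(\U,\m)$ and the fact that the lifted operations are restrictions (\cref{thm:construction}), we get that $F$ identifies each weight summand $\Hom^{t-s}$ and commutes with all operations. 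So $F$ is a full inclusion.

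Next I show that each missing object $(X_\gamma,s)$ lies in the image of $H^0(\Tw F)$ up to isomorphism. Since $\U\setminus\{\gamma\}$ is full and $\U$ is full, $\gamma$ is a side of some marked disc, as in the proof of \cref{prop:brauer-arc-independence}. I cyclically reorder its disc sequence so that $\gamma$ is the side $X_0$; the rules in \cref{dfn:brauer-category} allow every cyclic ordering. If all other sides avoid $\gamma$, then \cref{lem:polygon} with the starting sheet $s$ gives
\[
(X_\gamma,s)\cong\Bigl(\textstyle\bigoplus_{i=1}^{n-1}(X_i,s_i)[t_i],\delta\Bigr),
\]
and the right-hand side is a twisted complex over objects of $\F_{\mathfrak D}(\U\setminus\{\gamma\})$. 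Its twisting components are corner paths between arcs of the smaller system. I expect the main obstacle to be justifying that we can pick a marked disc in which $\gamma$ occurs only once; this is the same point already used for $\B$. My first attempt is to take a disc component of the cut along $\U\setminus\{\gamma\}$ that contains $\gamma$ as an interior diagonal. Such a component exists by fullness, and I would argue that $\gamma$ then cuts it into two discs, either of which works. If $\gamma$ lies in an annulus component, I would first apply a different elementary sequence, as in \cite[Proposition 6.4]{OZ22}.

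Once every object is in the essential image, the rest follows formally. $H^0(\Tw F)$ is fully faithful and exact by \cref{prop:twisted-standard}, so its essential image is a triangulated subcategory closed under isomorphism. That subcategory contains all objects $(X,s)$, and hence all twisted complexes built from them. Therefore $H^0(\Tw F)$ is an equivalence, so it is also an equivalence after idempotent completion. By \cref{dfn:morita}, or alternatively by \cref{prop:morita-criterion}(1)(c), $F$ is a Morita equivalence.

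For (2), first note that the choice of lifts only relabels objects by a strict isomorphism, as remarked after \cref{thm:construction}. I then connect any two admissible systems by elementary moves, citing the adaptation of \cite[Proposition 6.4]{OZ22} stated before \cref{lem:polygon}. Each move is an instance of (1), with the roles of the two systems possibly swapped, so consecutive categories are connected by a Morita equivalence. Finally, changing the grading of an arc from $\gamma$ to $\gamma[k]$ changes all degrees by the shift formula. This identifies $(X_{\gamma[k]},s)$ with $(X_\gamma,s)[k]$ in $\Tw$ through the full subcategory on the shifted objects. It therefore gives a quasi-equivalence between the category of twisted complexes over one system and that over the other, and hence a Morita equivalence by \cref{prop:morita-criterion}(2).
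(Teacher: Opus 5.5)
Your overall route is the paper's: weight preservation makes $F$ a strict full inclusion, and \cref{lem:polygon} on each sheet puts every $(X_\gamma,s)$ in the essential image of the fully faithful exact functor $H^0(\Tw F)$. Part (2) then follows from elementary moves and shifts. The paper simply takes the required polygon for granted, so making it explicit is worthwhile. However, there is a gap exactly at the step you flag as the main obstacle.

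\textbf{The gap.} Your claim that fullness of $\U\setminus\{\gamma\}$ provides a disc component of the cut surface containing $\gamma$ is false. Fullness only says that the components are discs or annuli, and $\gamma$ very often lies in an annulus. Whenever $\U\setminus\{\gamma\}$ is formal (for instance the edges of a filling graph) there are no disc components at all. The paper's own \cref{eg:triangle-leaf-tilting} is of this kind: $\U_1$ is a path whose complement is a single annulus, and $\delta_0$ lies in it.

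Your fallback of first applying a different elementary sequence does not help. It cannot establish (1), which concerns the given $\gamma$. It does not rescue (2) either. From a formal system no deletion keeps the system full, and every addition lands in an annulus, so annulus-type moves cannot be avoided.

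\textbf{The repair.} Suppose $\gamma$ lies in an annulus component $A$. One boundary circle of $A$ is a component of $\partial\Sigma$. The other is a cycle of copies of arcs of $\U\setminus\{\gamma\}$ with corners at punctures.
\begin{enumerate}
\item Both ends of $\gamma$ are punctures, so they are corners on the second circle, and $\gamma$ separates $A$.
\item The piece not containing the $\partial\Sigma$-circle has a single boundary cycle, so it is a disc. Fullness of $\U$ also forces this, since it forbids an annulus containing no boundary component of $\Sigma$.
\item Its boundary walk passes once along $\gamma$ and otherwise only along arcs of $\U\setminus\{\gamma\}$.
\item It has at least three sides, because $\gamma$ is not homotopic to an arc of $\U\setminus\{\gamma\}$ and does not bound an empty monogon.
\item If $\gamma$ is a loop around the boundary component, the same conclusions hold, with its base puncture appearing as two corners.
\end{enumerate}
Cyclically order this marked disc so that $X_0=X_\gamma$ and feed it into \cref{lem:polygon}; this completes (1). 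Your argument for (2) then goes through as in the paper.
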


\begin{proof}
\noindent
(1) The full inclusion downstairs is given by \cref{prop:brauer-arc-independence}. A corner after deleting $\gamma$ is sent to the path through the same sector, now written as successive corners of $\U$. Lifting that path gives the same endpoint as lifting the original corner. The inclusion therefore preserves weights and restricts to the displayed strict full inclusion.

The polygon used in \cref{prop:brauer-arc-independence} contains $\gamma$ and otherwise only arcs of $\U\setminus\{\gamma\}$. For each $s\in\ZZ/r\ZZ$, \cref{lem:polygon} expresses $(X_\gamma,s)$ as a twisted complex over $\F_{\mathfrak D}(\U\setminus\{\gamma\})$. The fully faithful exact functor $H^0(\Tw F)$ consequently contains every arc object in its essential image. That image is closed under shifts and cones, so it contains every twisted complex. Thus $H^0(\Tw F)$ is an equivalence.

\smallskip\noindent
(2) Apply (1) along the sequence of elementary moves in \cite[Proposition 6.4]{OZ22}. Changing the grading of an arc shifts each of its lifted objects by the same integer, which preserves the category of twisted complexes up to equivalence.
\end{proof}

\subsection{Geometric models and derived equivalences}

Homotopies of line fields preserve the transported intersection degrees, but can change the signs in the modified cycle relations. The following lemma shows that homotopic line fields give Morita equivalent lifted categories.

\begin{lem}
\label{lem:line}
Fix $(\Sigma,\mathcal P,\m,r,\chi)$, and let $\eta_0$ and $\eta_1$ be homotopic line fields of ribbon type. The lifted categories associated with these two line fields are Morita equivalent.
\end{lem}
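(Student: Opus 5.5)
The plan is to reduce to a single graded admissible arc system and compare the two lifted categories there by an explicit strict isomorphism given by rescaling morphisms by signs. By \cref{prop:moves}(2), the Morita class of $\F_{\mathfrak D}(\U)$ does not depend on $\U$ or on the gradings. So fix one admissible arc system $\U$ on $(\Sigma,\mathcal P)$. Choose a homotopy $\eta_u$, $u\in[0,1]$, from $\eta_0$ to $\eta_1$, and transport the gradings of the arcs of $\U$ along it. Under this transport, intersection degrees are unchanged, since degrees are computed from homotopy classes of paths in the projective fibres. Hence the quivers, the degrees of the arrows, the disc sequences and the weights coincide for the two data $\mathfrak D_0$ and $\mathfrak D_1$. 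The weights depend only on $\chi$ and on the chosen lifts, and we use the same lifts in both cases. What can change are the winding numbers $\omega_{\eta_0}(\gamma)$ versus $\omega_{\eta_1}(\gamma)$ of the arcs. These enter the cycle relations of \cref{dfn:modified-bga} and the sign $\varepsilon$ in \eqref{eq:brauer-complement-operation}.

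The next step is to control the difference $c(\gamma)=\omega_{\eta_1}(\gamma)-\omega_{\eta_0}(\gamma)$. For closed curves, winding numbers are homotopy invariants of the line field. For arcs, whose endpoints lie at punctures, both fields have the standard concentric form near the punctures by \cref{dfn:ribbon-type}. The difference is then governed by how the homotopy rotates the field near each puncture, which gives an integer $k_{\vv}$ at each $\vv\in\mathcal P$. I expect a formula of the form $c(\gamma)=\pm(k_{\vv}+k_{\ww})$ for an arc $\gamma$ with endpoints $\vv,\ww$, after possibly first modifying the homotopy so that it is constant near $\mathcal P$. This is the step I expect to be the main obstacle. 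The cleanest route is probably to cite the corresponding argument in \cite{OZ22} for $\B(\U,\m)$, where they prove that the Morita class is independent of the homotopy class of the line field. Their equivalence should be a strict isomorphism $\Phi$ that fixes the objects and rescales each basis path by a sign $\pm1$ depending only on the path and on the puncture data.

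With such a $\Phi\colon\B_{\eta_0}(\U,\m)\to\B_{\eta_1}(\U,\m)$ in hand, the lifting is formal. $\Phi$ is the identity on objects and multiplies each path by a scalar, so it preserves the weight decomposition of \cref{lem:weights}. Define $\widetilde\Phi$ on $\F_{\mathfrak D_0}(\U)$ by $(X_\gamma,s)\mapsto(X_\gamma,s)$, acting on each morphism space $\Hom^{t-s}$ by the restriction of $\Phi$. Since the operations upstairs are restrictions of those downstairs (\cref{thm:construction}(1)) and $\pi\circ\widetilde\Phi=\Phi\circ\pi$ with $\pi$ injective on morphisms, the argument of \cref{lem:faithful} shows that $\widetilde\Phi$ is a strict, unit-preserving isomorphism. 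It is therefore a quasi-equivalence, hence a Morita equivalence by \cref{prop:morita-criterion}.

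If \cite{OZ22} only provides a Morita equivalence and not a sign-rescaling isomorphism, the fallback is to write down the rescaling directly. Choose a function $\epsilon$ on half-edges with values in $\{\pm1\}$ so that rescaling each arrow $a_h$ by $\epsilon(h)$ converts the signs $(-1)^{\omega_{\eta_0}}$ into $(-1)^{\omega_{\eta_1}}$ in every cycle relation. One then checks that the same rescaling matches the signs $(-1)^{\varepsilon}$ in \eqref{eq:brauer-complement-operation}, using that $\sum_{i=2}^{j}\omega(\delta_i)$ changes by the corresponding telescoping boundary terms. The parity bookkeeping in this check is where the real work lies.
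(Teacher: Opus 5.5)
Your overall architecture matches the paper's: reduce via \cref{prop:moves} to one arc system, compare the two Brauer graph categories downstairs by an isomorphism that fixes objects and rescales arrows, then restrict to the lifted categories. That last step is correct as you wrote it, since scalar rescaling of basis paths preserves the weight decomposition of \cref{lem:weights}.

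\textbf{Signs are not enough.} You commit to rescaling by signs $\epsilon(h)\in\{\pm1\}$, and this fails in general. Rescaling the arrows at a puncture $\vv$ multiplies every special cycle $C_h$ at $\vv$ by the same scalar $\lambda_\vv$, namely the product of the arrow scalars at $\vv$. Hence $C_h^{\m(\vv)}$ is multiplied by $\lambda_\vv^{\m(\vv)}$. To turn $(-1)^{\omega_{\eta_0}(e)}$ into $(-1)^{\omega_{\eta_1}(e)}$ in the relation of \cref{dfn:modified-bga} for an edge $e$ with endpoints $\vv,\ww$, you need
\[
\lambda_\vv^{\m(\vv)}\lambda_\ww^{-\m(\ww)}=(-1)^{\omega_{\eta_1}(e)-\omega_{\eta_0}(e)}.
\]
Parity changes do occur. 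Take a homotopy that rotates the field by a half-turn inside a small annulus around $\vv$. This rotation is trivial on lines, so both ends are of ribbon type. It changes the winding number of every arc end at $\vv$ by $\pm1$. If $\m(\vv)$ is even, which is allowed whenever $r$ is odd (e.g.\ $r=1$), then every sign choice gives $\lambda_\vv^{\m(\vv)}=1$. So in characteristic $\neq2$ no sign rescaling matches the relations. The missing ingredient, taken from the proof of \cite[Lemma 7.5]{OZ22}, is to use scalars $\lambda_\vv\in\Bbbk^\times$ with $\lambda_\vv^{\m(\vv)}=\pm1$. These exist because $\Bbbk$ is algebraically closed, which is exactly where that standing hypothesis enters.

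\textbf{Choice of arc system.} Working with an arbitrary admissible $\U$ forces you to match the higher operations as well, and you leave this as ``the real work''. Concretely, \eqref{eq:brauer-polygon-operations} requires the product of the corner scalars around each marked disc to be $1$. Also, \eqref{eq:brauer-complement-operation} carries the $\omega$-dependent sign $\varepsilon$ together with complementary paths. The paper avoids all of this. It takes $\U=E(\Gamma)$ for the filling graph $\Gamma$ supplied by \cref{dfn:ribbon-type}, with gradings transported along the homotopy. Since $\Gamma$ fills $\Sigma$, there are no marked discs, so both categories have $\mu^n=0$ for $n\neq2$. A graded isomorphism of the modified Brauer graph algebras that fixes idempotents is then automatically a strict $A_\infty$-isomorphism, and your lifting argument finishes the proof.
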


\begin{proof}
Choose a filling ribbon graph $\Gamma$ as in \cref{dfn:ribbon-type}. By \cref{prop:moves}, it suffices to compare the lifted categories associated with $E(\Gamma)$, with gradings transported along a homotopy from $\eta_0$ to $\eta_1$. Since $\Gamma$ fills $\Sigma$, there are no disc sequences, so both Brauer graph $A_\infty$-categories have $\mu^n=0$ for $n\ne2$.
The proof of \cite[Lemma 7.5]{OZ22} gives a graded isomorphism between the two associated modified Brauer graph algebras by fixing the idempotents and rescaling selected arrows at each puncture $\vv$. The required scalars satisfy $\lambda_{\vv}^{\m(\vv)}=\pm1$ and exist because $\Bbbk$ is algebraically closed.

Choose the same lifts of the arcs in $E(\Gamma)$ for both constructions. Multiplying arrows by nonzero scalars preserves the group weight of every path, so the graded isomorphism restricts to the lifted morphism spaces and preserves $\mu^2$ and the units. Since all other operations vanish, it defines a strict isomorphism of the lifted $A_\infty$-categories. The assertion now follows from \cref{prop:moves}.
\end{proof}

We call two geometric data \defn{equivalent} if they have the same $r$ and admit a diffeomorphism satisfying the three conditions below. In particular, the chosen generator $\nu$ of the cyclic covering is retained.

\begin{thm}\label{thm:geometric}
For $i=1,2$, let $\mathfrak D_i=(\Sigma_i,\mathcal P_i,\eta_i,\m_i,r,\chi_i)$ be geometric data, and let $\U_i$ be graded admissible arc systems. Suppose there is an orientation-preserving diffeomorphism
\[
 f:(\Sigma_1,\mathcal P_1)\longrightarrow(\Sigma_2,\mathcal P_2)
\]
such that
\[
\begin{aligned}
 f^*\eta_2&\simeq\eta_1,\\
 \m_2(f(\vv))&=\m_1(\vv)\qquad(\vv\in\mathcal P_1),\\
 \chi_2\circ f_*&=\chi_1.
\end{aligned}
\]
Then $\F_{\mathfrak D_1}(\U_1)$ and $\F_{\mathfrak D_2}(\U_2)$ are Morita equivalent. In particular, there is an exact equivalence
\[
 \D\bigl(\F_{\mathfrak D_1}(\U_1)\bigr)
 \simeq
 \D\bigl(\F_{\mathfrak D_2}(\U_2)\bigr)
\]
restricting to an equivalence of their perfect categories.
\end{thm}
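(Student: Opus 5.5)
The plan is to reduce to a comparison on a single surface by transporting everything along $f$, and then to invoke \cref{prop:moves} and \cref{lem:line}.

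\emph{Step 1: transport along $f$.} Apply $f$ to $\U_1$ to obtain the arc system $f(\U_1)$ on $(\Sigma_2,\mathcal P_2)$. Give it the gradings transported by $df$, measured with respect to the line field $f_*\eta_1=(f^{-1})^*\eta_1$. Since $f$ is an orientation-preserving diffeomorphism of punctured surfaces, it carries admissible arc systems, cutting paths, marked discs and clockwise orders to the corresponding structures. It also preserves intersection degrees and winding numbers when the line fields are transported. The multiplicity condition $\m_2\circ f=\m_1$ shows that $\B(\U_1,\m_1)$ and $\B(f(\U_1),\m_2)$, formed with respect to $f_*\eta_1$, are strictly isomorphic via $X_\gamma\mapsto X_{f\gamma}$ and the identity on paths. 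The same holds for the algebras $B(\Gamma_\U,\m,\omega)$ and for the formulas in \cref{dfn:brauer-category}.

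\emph{Step 2: matching the group weights.} The condition $\chi_2\circ f_*=\chi_1$ says that the covering of $\Sigma_1$ is the pullback of the covering of $\Sigma_2$ along $f$, with the same deck generator. By the classification of connected coverings, $f$ therefore lifts to an equivariant diffeomorphism $\widetilde f:\widetilde\Sigma_1\to\widetilde\Sigma_2$ with $\widetilde f\nu_1=\nu_2\widetilde f$. This lift extends over the single preimages of the punctures. Choose the lift of $f\gamma$ to be $\widetilde f\widetilde\gamma$. The lift of each successor corner path is then carried to the lift of its image, so $\weight_2(f a)=\weight_1(a)$ for every arrow.

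An alternative that avoids lifting $f$: the weight of a corner is the monodromy of a closed loop formed from the corner path and chosen reference paths. Using fixed basepoints and the homomorphism $\chi$, one can compare these directly. Either way, the strict isomorphism of Step 1 preserves weights and units. By the uniqueness in \cref{prop:lift-orbit}, it restricts to a strict isomorphism
\[
\F_{\mathfrak D_1}(\U_1)\cong\F_{\mathfrak D_2'}(f(\U_1)),
\qquad
\mathfrak D_2'=(\Sigma_2,\mathcal P_2,f_*\eta_1,\m_2,r,\chi_2).
\]

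\emph{Step 3: changing the line field and the arc system.} The hypothesis $f^*\eta_2\simeq\eta_1$ gives $f_*\eta_1\simeq\eta_2$. Both are of ribbon type: $f_*\eta_1$ because $f$ transports a filling ribbon graph and the local concentric-circle model. \Cref{lem:line} then shows that $\F_{\mathfrak D_2'}(\U)$ and $\F_{\mathfrak D_2}(\U')$ are Morita equivalent, for suitable admissible arc systems carrying transported gradings. \Cref{prop:moves}(2) shows that, for fixed datum $\mathfrak D_2$, the Morita class is independent of the graded admissible arc system. Chaining the strict isomorphism, \cref{lem:line} and \cref{prop:moves} gives the Morita equivalence between $\F_{\mathfrak D_1}(\U_1)$ and $\F_{\mathfrak D_2}(\U_2)$. \Cref{prop:morita-criterion}(3), applied along the zigzag, gives the exact equivalence of derived categories restricting to perfect categories.

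\emph{Main obstacle.} I expect Step 2 to be the main difficulty. It requires checking that weights, which are defined through the chosen lifts of arcs and of small circle arcs near punctures, are genuinely intrinsic to $\chi$ rather than to the specific covering constructed. The argument needs the fact that $\chi_2\circ f_*=\chi_1$ produces an equivariant lift of $f$ that respects the distinguished generator $\nu$, and not merely some generator. This is exactly why the equivalence of data fixes $\nu$.
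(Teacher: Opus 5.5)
Your proposal is correct and follows essentially the same route as the paper. The paper likewise uses $\chi_2\circ f_*=\chi_1$ to lift $f$ to a $\nu$-equivariant isomorphism of the coverings, chooses the distinguished arc lifts through it, and transports arcs and gradings to get a strict isomorphism with the transported datum on $\Sigma_2$; it then applies \cref{lem:line} and \cref{prop:moves} and concludes with \cref{prop:morita-criterion}.
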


\begin{proof}
The identity $\chi_2\circ f_*=\chi_1$ lifts $f$ to an isomorphism of the cyclic coverings that commutes with $\nu$; see \cite[Theorem 1.38 and Proposition 1.39]{Hat02}. Choose distinguished arc lifts using that isomorphism. Transporting the arcs and their gradings along $f$ preserves sectors, marked discs, complementary paths, degrees, weights, and winding signs. Hence it gives a strict isomorphism between the first category and the category for the transported data on $\Sigma_2$.

Apply \cref{lem:line} to identify the transported line field with $\eta_2$, retaining $\m_2$ and $\chi_2$, and then apply \cref{prop:moves} to change the arc system to $\U_2$. The resulting zigzag consists of Morita equivalences. The derived and perfect equivalences follow from \cref{prop:morita-criterion}.
\end{proof}

\Needspace{12\baselineskip}
\begin{cor}\label{cor:ordinary}
Under the hypotheses of \cref{thm:geometric}, suppose that both $\F_{\mathfrak D_i}(\U_i)$ are concentrated in degree zero, and let $A_i$ be their associated modified AFBGAs. There is a tilting complex $T\in\K^b(\proj\text{-}A_1)$ such that
\[
 \End_{\K^b(\proj\text{-}A_1)}(T)\cong A_2.
\]
Consequently,
\[
 \D^b(\mod\text{-}A_1)\simeq\D^b(\mod\text{-}A_2).
\]
In particular, degree-zero AFBGAs with equivalent canonical geometric data are derived equivalent.
\end{cor}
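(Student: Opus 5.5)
The plan is to pass from the Morita equivalence of \cref{thm:geometric} to a classical tilting statement using the standard dictionary between perfect categories and homotopy categories of projectives. Since each $\F_{\mathfrak D_i}(\U_i)$ is concentrated in degree zero, all operations $\mu^n$ with $n\ne2$ vanish for degree reasons, because $\mu^n$ has degree $2-n$. So each $\F_{\mathfrak D_i}(\U_i)$ is an ordinary $\Bbbk$-linear category with finitely many objects and finite-dimensional morphism spaces. Its associated algebra $A_i$ is the modified AFBGA of \cref{thm:construction}(3). As in \cref{thm:oz-constructions}(2), twisted complexes over such a category are exactly bounded complexes of finitely generated projective $A_i$-modules, up to the usual sign conventions. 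This gives
\[
 H^0\bigl(\Tw\F_{\mathfrak D_i}(\U_i)\bigr)^\natural\simeq\per(A_i)\simeq\K^b(\proj\text{-}A_i).
\]
No idempotent completion issue arises, since $\K^b(\proj\text{-}A_i)$ is split-closed for a finite-dimensional algebra.

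Next we apply \cref{thm:geometric}. It gives a zigzag of Morita equivalences, so composing the induced equivalences yields an exact equivalence $\Phi:\K^b(\proj\text{-}A_2)\to\K^b(\proj\text{-}A_1)$. Set $T=\Phi(A_2)$, where $A_2$ is the image of $\bigoplus_{X}y_X$ over the objects $X=(X_\gamma,s)$. Since $\Phi$ is an exact equivalence, we obtain $\thick\{T\}=\K^b(\proj\text{-}A_1)$ and
\[
 \Hom(T,T[j])\cong\Hom_{\K^b(\proj\text{-}A_2)}(A_2,A_2[j])=0\qquad(j\ne0).
\]
We also have $\End(T)\cong\End(A_2)\cong A_2$; right versus left module conventions are matched so that this is $A_2$ rather than $A_2^{\mathrm{op}}$. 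Hence $T$ is a tilting complex. Rickard's theorem then gives $\D^b(\mod\text{-}A_1)\simeq\D^b(\mod\text{-}A_2)$. Alternatively, one can use \cref{prop:morita-criterion}(3) directly, together with the fact that for finite-dimensional algebras of finite global dimension or, more generally, self-injective algebras, the equivalence of unbounded derived categories restricting to compact objects induces one on $\D^b(\mod)$, again via Rickard.

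For the final sentence, let $(\Gamma_i,\dG_i)$ be AFBGs whose canonical data are equivalent. By \cref{prop:nakayama-surface-cover}, taking $\U_i=E(\Gamma_{i,\red})$ with canonical gradings gives degree-zero categories whose associated algebras are exactly $\Lambda(\Gamma_i,\dG_i)$. The first part then applies.

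The main obstacle is the first step: the identification of $H^0(\Tw)$ of a degree-zero category having only $\mu^2$ with $\K^b(\proj)$ of its associated algebra. This requires matching the sign convention $b\circ a=(-1)^{|a|}\mu^2(b,a)$, which is trivial in degree zero, and checking that the Maurer--Cartan equation reduces to $\delta^2=0$. For this I would cite the same argument used for \cref{thm:oz-constructions}(2), which is stated in \cite{OZ22} for Brauer graph categories but is purely formal.
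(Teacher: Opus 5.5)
Your proposal is correct and follows the paper's proof. The paper likewise combines the perfect-category equivalence from \cref{thm:geometric} with $\per(A_i)\simeq\K^b(\proj\text{-}A_i)$ and Rickard's theorem, and deduces the final assertion from \cref{prop:nakayama-surface-cover}; you simply make explicit that $T$ is the image of the regular module $A_2$. The only slip is in your optional aside: self-injective algebras do not generalize algebras of finite global dimension, and no such hypothesis is needed there. Your main argument does not depend on that remark.
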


\begin{proof}
The existence of the tilting complex and the bounded derived equivalence follow from \cref{thm:geometric} and Rickard's theorem \cite{Ric89}, using $\per(A_i)\simeq\K^b(\proj\text{-}A_i)$. The last assertion follows from \cref{prop:nakayama-surface-cover}.
\end{proof}

In fact, the tilting complex $T$ can be constructed explicitly by following the elementary moves in \cref{thm:geometric} in reverse order. Use the shifted polygon complexes of \cref{lem:polygon} on each sheet to express the target arc objects in terms of the source arc objects. Their direct sum over all target arcs and sheets gives $T\in\per(A_1)$, represented by a bounded complex of finitely generated projective right $A_1$-modules. Changes of arc gradings contribute shifts, while the line-field comparison contributes the scalar isomorphisms of \cref{lem:line}. The following example illustrates this construction.

The following example illustrates this construction.

\begin{eg}
\label{eg:triangle-leaf-tilting}
Consider the arc systems in \cref{fig:triangle-leaf-move}. Let $A_1$ and $A_2$ be the degree-zero AFBGAs associated with the double lifts of $\U_1$ and $\U_2$, respectively, with multiplicity one downstairs, as in \cref{sec:triangle-double-cover}. Write $X_i^s$ for the object corresponding to $\delta_i^s$, where $s\in\ZZ/2\ZZ$.

\begin{figure}[htbp]
\centering
\begin{tikzpicture}[>=Stealth,scale=.78,line cap=round,line join=round,font=\small]
 \foreach \panel/\figx in {0/0,1/6,2/12}{
  \begin{scope}[shift={(\figx,0)}]
   \filldraw[fill=gray!3,draw=gray!55] (0,0) circle (1.85);
   \coordinate (v0) at (0,.3);
   \coordinate (v1) at (-1,-.9);
   \coordinate (v2) at (1,-.9);
   \coordinate (v3) at (0,1.35);
   \ifnum\panel=1
    \fill[orange!9] (v0)--(v1)--(v2)--cycle;
   \fi
   \draw[very thick,darkblue] (v0)--node[left=4pt] {$\delta_1$}(v1);
   \draw[very thick,darkblue] (v0)--node[left=4pt] {$\delta_3$}(v3);
   \ifnum\panel<2
    \draw[very thick,darkblue] (v1)--node[below=4pt] {$\delta_2$}(v2);
   \fi
   \ifnum\panel>0
    \draw[very thick,orange!85!black] (v2)--node[right=4pt] {$\delta_0$}(v0);
   \fi
   \foreach \i in {0,1,2,3}{\fill (v\i) circle (2pt);}
   \node[right=4pt] at (v0) {$\vv_0$};
   \node[left=3pt] at (v1) {$\vv_1$};
   \node[right=3pt] at (v2) {$\vv_2$};
   \node[above=3pt] at (v3) {$\vv_3$};
  \end{scope}
 }
 \draw[->,thick] (2.15,0)--node[above=5pt] {$+\delta_0$}(3.85,0);
 \draw[->,thick] (8.15,0)--node[above=5pt] {$-\delta_2$}(9.85,0);
 \node at (0,-2.2) {$\U_1=\{\delta_1,\delta_2,\delta_3\}$};
 \node at (6,-2.2) {$\U=\{\delta_0,\delta_1,\delta_2,\delta_3\}$};
 \node at (12,-2.2) {$\U_2=\{\delta_0,\delta_1,\delta_3\}$};
\end{tikzpicture}
\caption{Adding $\delta_0$ and then deleting $\delta_2$ changes a path into a star. In the double covering, the corresponding moves add $\delta_0^0,\delta_0^1$ and delete $\delta_2^0,\delta_2^1$, respectively.}
\label{fig:triangle-leaf-move}
\end{figure}

The algebras are $A_1=\Bbbk Q_1/I_1$ and $A_2=\Bbbk Q_2/J^4$, where $J$ is the arrow ideal of $\Bbbk Q_2$. Their quivers are
\[
\begin{tikzpicture}[>=Stealth,every node/.style={inner sep=2pt},font=\small]
\begin{scope}[xshift=-3.5cm]
 \node (L) at (-2.3,0) {$X_1^0$};
 \node (R) at (2.3,0) {$X_1^1$};
 \node (A) at (0,.6) {$X_2^0$};
 \node (B) at (0,-.6) {$X_3^0$};
 \node (C) at (0,1.8) {$X_2^1$};
 \node (D) at (0,-1.8) {$X_3^1$};
 \draw[->] (L)--node[above] {$a^0$}(A);
 \draw[->] (A)--node[above] {$b^0$}(R);
 \draw[->] (L)--node[below] {$c^0$}(B);
 \draw[->] (B)--node[below] {$d^0$}(R);
 \draw[->] (R)--node[above right] {$a^1$}(C);
 \draw[->] (C)--node[above left] {$b^1$}(L);
 \draw[->] (R)--node[below right] {$c^1$}(D);
 \draw[->] (D)--node[below left] {$d^1$}(L);
 \node at (0,-2.35) {$Q_1$};
\end{scope}
\begin{scope}[xshift=3.25cm]
 \node (A0) at (-1.65,.8) {$X_0^0$};
 \node (A1) at (0,.8) {$X_1^0$};
 \node (A3) at (1.65,.8) {$X_3^0$};
 \node (B0) at (1.65,-.8) {$X_0^1$};
 \node (B1) at (0,-.8) {$X_1^1$};
 \node (B3) at (-1.65,-.8) {$X_3^1$};
 \draw[->] (A0)--(A1);
 \draw[->] (A1)--(A3);
 \draw[->] (A3)--(B0);
 \draw[->] (B0)--(B1);
 \draw[->] (B1)--(B3);
 \draw[->] (B3)--(A0);
 \node at (0,-2.35) {$Q_2$};
\end{scope}
\end{tikzpicture}
\]
The ideal $I_1$ is generated by $b^s a^s-d^s c^s$, $c^{s+1}b^s$, and $a^{s+1}d^s$ for $s\in\ZZ/2\ZZ$.

By \cref{prop:moves}, the inclusions $\U_1\subset\U\supset\U_2$ induce equivalences
\[
H^0\bigl(\Tw\F_{\mathfrak D}(\U_1)\bigr)
\xrightarrow{\sim}
H^0\bigl(\Tw\F_{\mathfrak D}(\U)\bigr)
\xleftarrow{\sim}
H^0\bigl(\Tw\F_{\mathfrak D}(\U_2)\bigr),
\]
where $\mathfrak D$ is the common geometric datum. Choose the gradings so that the three inner corners along $X_0^s\to X_1^s\to X_2^s\to X_0^s$ have degrees $0,0,1$, respectively. The shifts in \cref{lem:polygon} are then $0$ and $-1$, giving
\[
X_0^s\cong
\bigl(X_1^s\oplus X_2^s[-1],\delta\bigr),
\qquad \delta_{12}=a^s,
\]
in $H^0(\Tw\F_{\mathfrak D}(\U))$, with all other twisting components zero. The right-hand twisted complex belongs to $\Tw\F_{\mathfrak D}(\U_1)$, while the common objects $X_1^s$ and $X_3^s$ are unchanged. Thus the object $\bigoplus_s(X_0^s\oplus X_1^s\oplus X_3^s)$, representing the regular right $A_2$-module, corresponds to
\[
\bigoplus_{s\in\ZZ/2\ZZ}
\left(
\bigl(X_1^s\oplus X_2^s[-1],\delta\bigr)
\oplus X_1^s\oplus X_3^s
\right)
\]
in $H^0(\Tw\F_{\mathfrak D}(\U_1))$.

Under the identification $H^0(\Tw\F_{\mathfrak D}(\U_1))\simeq\K^b(\proj\text{-}A_1)$, the object $X_i^s$ corresponds to the stalk complex $P_i^s=e_i^sA_1$ in degree zero, where $e_i^s$ is the idempotent at $X_i^s$. The twisting component $a^s:X_1^s\to X_2^s[-1]$ therefore gives the complex
$C_s=\bigl(P_1^s\xrightarrow{a^s}P_2^s\bigr)$. Consequently, the inverse image of the regular module $A_2$ is the tilting complex
\[
T=\bigoplus_{s\in\ZZ/2\ZZ}
\bigl(C_s\oplus P_1^s\oplus P_3^s\bigr),
\qquad
\End_{\K^b(\proj\text{-}A_1)}(T)\cong A_2.
\]
This is the right tilting mutation of $A_1$ at $P_2^0\oplus P_2^1$, corresponding to the replacement of the Nakayama orbit $\{\delta_2^0,\delta_2^1\}$ by $\{\delta_0^0,\delta_0^1\}$; see \cite[Section 4.3]{Xin26tt}.
\end{eg}

\section{Combinatorial invariants and derived equivalence}
\label{sec:numerical}

An AFBGA concentrated in degree zero is self-injective, and every basic tilting complex over it is stable under the Nakayama functor; see \cite[Theorem A.4]{Aih13}. This compatibility motivates the comparison of AFBGAs through their reduced ribbon graphs and Nakayama coverings. Following \cite{OZ22}, we seek a classification of derived equivalence by data computed directly from $(\Gamma,\dG)$. We first establish four classes of derived invariants and formulate their completeness as a conjecture. We then prove completeness in reduced genus zero and in reduced genus at least two for non-bipartite graphs.

Throughout this section, all AFBGAs are finite-dimensional and concentrated in degree zero, and their defining ribbon graphs are connected. {\it We exclude the two reduced forms consisting of a single loop of multiplicity one and a single edge joining two vertices both of multiplicity two;} cf.~\cite[Lemma 3.1]{AZ22}.

\subsection{Derived invariants and the completeness conjecture}
\label{sec:derived-invariants}

For $\Lambda=\Lambda(\Gamma,\dG)$, retain the notation
\[
 r=\operatorname{ord}(\nu),\qquad
 \Gamma_{\red}=\Gamma/\langle\nu\rangle,\qquad
 \m(\vv)=\frac{r\dG(\vv)}{\operatorname{val}_\Gamma(\vv)}.
\]
The action of $\nu$ fixes vertices and is free on edges. Thus $V(\Gamma_{\red})=V(\Gamma)$ and $|E(\Gamma_{\red})|=|E(\Gamma)|/r$. Moreover, $\Gamma$ and $\Gamma_{\red}$ are simultaneously bipartite: each edge of the quotient has the same endpoint vertices as any of its lifts.

For a face $F$ of $\Gamma_{\red}$, let $\ell(F)$ be its perimeter. Choose a half-edge $h\in H(\Gamma)$ above its boundary and define $\weight(F)\in\ZZ/r\ZZ$ by
\begin{equation}\label{eq:face-rotation}
 (\rho\iota)^{\ell(F)}h=\nu^{\weight(F)}h.
\end{equation}
Since $\rho\iota$ commutes with $\nu$ and $\nu$ acts freely on half-edges, the residue is independent of the lift and the starting half-edge. We compare faces by the pairs $(\ell(F),\weight(F))$, requiring both entries to agree for corresponding faces.

\begin{lem}\label{lem:rotation-label}
In the canonical covering of \cref{prop:nakayama-surface-cover}, the boundary circle corresponding to $F$, traversed with the surface on its right, has monodromy $\weight(F)$.
\end{lem}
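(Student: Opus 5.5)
We argue directly from the construction of the covering in \cref{prop:nakayama-surface-cover}, comparing the boundary walk of $F$ in $\Gamma_{\red}$ with its lift through the ribbon surface $\Sigma_\Gamma$. Recall that $\Sigma_\Gamma\setminus\Gamma\cong\partial\Sigma_\Gamma\times[0,1)$. Each boundary component of $\Sigma_{\Gamma_{\red}}$ is therefore isotopic, inside $\Sigma_{\Gamma_{\red}}\setminus V(\Gamma_{\red})$, to a closed curve $\beta_F$ running just beside the face cycle of $\rho\iota$. This curve consists of $\ell(F)$ segments. Each segment crosses a vertex disc through the corner from the half-edge $\iota h$ to $\rho\iota h$ and then follows the band of the edge $e(\rho\iota h)$. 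Monodromy depends only on the homology class, so it suffices to compute the monodromy of $\beta_F$.

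First, we fix the orientation. With the surface on its right, the boundary traversal near a vertex goes clockwise from one half-edge to its $\rho$-successor, which matches the convention that $\rho$ is the clockwise order. Hence one step of $\beta_F$ corresponds to one application of $\rho\iota$. Choosing the other orientation would replace $\rho\iota$ by its inverse, and the orientation convention of \cref{dfn:winding-number} rules this out.

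Second, we lift. Choose a half-edge $h\in H(\Gamma)$ above the starting half-edge of $F$, and a point $\widetilde\xx$ of $\widetilde\Sigma=\Sigma_\Gamma$ near $h$ over the basepoint $\xx$ of $\beta_F$. By \cref{prop:nakayama-surface-cover}, $\pi_\Sigma$ restricts to homeomorphisms from the vertex discs and edge bands of $\Gamma$ onto those of $\Gamma_{\red}$, away from the centres; on half-edges this is the quotient map $H(\Gamma)\to H(\Gamma)/\langle\nu\rangle$. Consequently the unique lift of each segment starting beside a half-edge $k$ of $\Gamma$ ends beside $\rho\iota k$. This is the step that needs care. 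Crossing a band from $k$ to $\iota k$ lifts to the band of $e(k)$ because $\nu\iota=\iota\nu$. The clockwise corner at a vertex disc, from $\iota k$ to its successor, lifts to the corresponding corner upstairs, since the disc upstairs covers the disc downstairs by $z\mapsto z^r$ and clockwise successors are preserved. Inducting over the $\ell(F)$ segments, the lift of $\beta_F$ starting beside $h$ ends beside $(\rho\iota)^{\ell(F)}h=\nu^{\weight(F)}h$, using \eqref{eq:face-rotation}.

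Finally, since $\nu$ is a deck transformation preserving all these local pieces, the endpoint of the lift is $\nu^{\weight(F)}\widetilde\xx$. By \cref{dfn:cover-monodromy}, the monodromy of $\beta_F$, and hence of the boundary circle, is $\weight(F)$. The deck generator identified with $\bar1$ is the same $\nu$ that appears in \eqref{eq:face-rotation}, as fixed in \cref{prop:nakayama-surface-cover}, so no reindexing of $\ZZ/r\ZZ$ is needed. The main obstacle is the bookkeeping in the second step: checking that "surface on the right" matches the clockwise convention for $\rho$, and that the corner lift does not pick up an extra power of $\nu$. We expect to settle this by the local model at a vertex, where $\nu$ acts as rotation by $2\pi\m(\vv)/r$ and adjacent half-edges upstairs cover adjacent half-edges downstairs.
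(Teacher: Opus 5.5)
Your proposal is correct and follows the paper's argument: lift the boundary walk one $\rho\iota$-step at a time and read off the endpoint $(\rho\iota)^{\ell(F)}h=\nu^{\weight(F)}h$ from \eqref{eq:face-rotation}; you also spell out the orientation check and the local corner lift, which the paper leaves implicit. One small slip: $\pi_\Sigma$ is a homeomorphism only on the edge bands, while on the vertex discs it is the branched map $z\mapsto z^r$, as you correctly use later, and this does not affect the argument because the corner paths avoid the centres.
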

\begin{proof}
Each application of $\rho\iota$ lifts one step of the boundary walk. Equation \eqref{eq:face-rotation} therefore gives the endpoint of the lifted circuit, which defines its monodromy.
\end{proof}

\begin{cond}\label[cond]{cond:four-comparisons}
For $\Lambda_i=\Lambda(\Gamma_i,\dG_i)$, $i=1,2$, consider the following conditions.
\begin{enumerate}[label=\textup{(\arabic*)}]
\item\label{cond:counts}
The Nakayama automorphisms have the same order $r$, and the reduced graphs have the same numbers of vertices, edges and faces.
\item\label{cond:multiplicities}
The multisets of fractional multiplicities $\dG_i(\vv)/\operatorname{val}_{\Gamma_i}(\vv)$, with $\vv\in V(\Gamma_i)$, agree.
\item\label{cond:faces}
The multisets
\[
 \left\{\!\left\{(\ell(F),\weight(F)):F\in F(\Gamma_{i,\red})\right\}\!\right\},
 \qquad i=1,2,
\]
agree, with the cyclic groups identified by $\nu_1\mapsto\nu_2$.
\item\label{cond:bipartite}
Either both graphs are bipartite or neither is.
\end{enumerate}
\end{cond}

With $r$ fixed, condition \textup{(2)} is equivalent to equality of the multisets of reduced multiplicities $\m_i$. We call the genus of $\Sigma_{\Gamma_{\red}}$ the \defn{reduced genus} of $(\Gamma,\dG)$ and denote it by $g_{\red}$. Thus
\[
g_{\red}
=1-\frac12\bigl(
|V(\Gamma_{\red})|-|E(\Gamma_{\red})|+|F(\Gamma_{\red})|
\bigr).
\]
In particular, condition \textup{(1)} makes the reduced genera equal. All genus assumptions below refer to the reduced genus.

\begin{prop}\label{prop:four-derived-invariants}
If $\Lambda_1$ and $\Lambda_2$ are derived equivalent, then they satisfy \cref{cond:four-comparisons}.
\end{prop}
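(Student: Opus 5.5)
We verify the four conditions one by one, combining the invariants of \cite{Xin26} with the description of exceptional tubes in \cite[Section 5.1]{LL26}. Throughout we use that derived equivalence of self-injective algebras induces a stable equivalence of Morita type (Rickard). Such an equivalence preserves the number of simple modules up to projective-free summands, the stable Auslander--Reiten quiver (including tube ranks), the dimension of the centre, and the Cartan determinant up to sign. We also use that the Nakayama functor commutes with derived equivalence.

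\textbf{Condition (1).} First, $|E(\Gamma)|$ is the number of simples, so it is invariant. The order $r$ of $\nu$ is the order of the Nakayama permutation on isoclasses of indecomposable projectives. By \cite[Theorem A.4]{Aih13}, a basic tilting complex is $\nu$-stable, so $r$ is recovered from the derived Picard-type action of the Nakayama functor on $\K^b(\proj)$. Concretely, $r$ is the order of the induced permutation on indecomposable summands of any basic tilting complex, so I expect it to be invariant. This gives $|E(\Gamma_{\red})|=|E(\Gamma)|/r$. The number of vertices and the number of faces of $\Gamma_{\red}$ will follow from (2) and (3), which count them as the sizes of the corresponding multisets.

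\textbf{Conditions (2) and (3).} These are read off from the stable AR quiver. By \cite[Section 5.1]{LL26}, exceptional tubes arise from vertices with nontrivial fractional multiplicity and from faces. Their ranks are determined by $\m(\vv)$, $\operatorname{val}$, and $(\ell(F),\weight(F))$, where the lifted Green walk of $F$ closes after $\ell(F)\cdot\operatorname{ord}(\weight(F))$ steps. The plan is therefore:
\begin{enumerate}[label=(\roman*)]
\item cite \cite{Xin26} for the invariance of the multiset of fractional multiplicities (the Cartan/centre-type invariant used there), giving (2);
\item compare tube multisets to recover the multiset of Green walk lengths $\ell(F)\,\operatorname{ord}(\weight(F))$ together with their $\nu$-orbit structure.
\end{enumerate}
To pin down $\weight(F)$ itself, not only its order, I will use that the stable equivalence commutes with $\nu$. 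On a tube, $\nu$ acts as a rotation, and that rotation records $\weight(F)$ once $\nu_1$ is identified with $\nu_2$. Separating vertex tubes from face tubes is the main obstacle. I would handle it by parity and type, following \cite{Ant07,AZ22} in the symmetric case: in that setting, faces give pairs of tubes or rank-$\ell$ tubes, while vertices give tubes of rank related to $\operatorname{val}$. The excluded small reduced forms are exactly those where this separation fails, as in \cite[Lemma 3.1]{AZ22}.

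\textbf{Condition (4).} The strategy is to reduce to the symmetric case and invoke the known result there. Bipartiteness of $\Gamma$ equals that of $\Gamma_{\red}$. I would pass to the reduced BGA via the orbit construction. Since derived equivalences between $\nu$-stable tilting complexes descend to the orbit algebras (compare \cite{Asa97}), this gives derived equivalence of the reduced forms, up to the grading data. I would then apply the Antipov–Zvonareva invariant that bipartiteness is a derived invariant of BGAs \cite{AZ22}. Alternatively, I could use the invariant from \cite{Xin26} directly, if it already covers this case.
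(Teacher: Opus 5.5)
Your proposal has a genuine gap at condition (3), which is the part that cannot be obtained by citation, and a weaker gap at the invariance of $r$.

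For (3), the tube data you propose to compare does not determine $\weight(F)$. Vertices contribute no exceptional tubes. By \cite[Lemma 5.1]{LL26}, the mouth objects are among the uniserial modules $M_h$, $h\in H(\Gamma)$, attached to $p_{h,\dG(s(h))-1}$, and they satisfy $\tau M_h\cong M_{\nu^{-1}(\rho\iota)^2h}$ and $\mathcal N M_h\cong M_{\nu^{-1}h}$. So tube ranks are governed by a double step twisted by $\nu^{-1}$, not by $\ell(F)\operatorname{ord}(\weight(F))$, and $\mathcal N$ in general permutes tubes rather than rotating one.

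Here is a concrete failure for $r=2$. A reduced face with $(\ell,\weight)=(1,\bar0)$ and one with $(1,\bar1)$ both give a single rank-two tube with mouth $\{M_h,M_{\nu h}\}$, on which $\mathcal N$ acts as $\tau$. They differ only in whether $\Omega^{-1}M_h\cong M_h$ or $\Omega^{-1}M_h\cong\mathcal N M_h$.

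The missing idea is to use the shift. For left modules, the projective cover sequence $0\to M_{\rho\iota h}\to\Lambda e(h)\to M_h\to0$ gives $M_h[1]\cong M_{(\rho\iota)^{-1}h}$. Hence the joint $\langle[1],\mathcal N\rangle$-orbits of mouth objects are exactly the reduced faces. Moreover, $\ell(F)$ is the least $n>0$ with $M_h[n]$ in the $\mathcal N$-orbit of $M_h$, and $M_h[\ell(F)]\cong\mathcal N^{\weight(F)}M_h$. A stable equivalence commutes with $[1]$ and with $\mathcal N\cong\tau[2]$, so it preserves these pairs.

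Two further cases are not reached by your plan.
\begin{enumerate}
\item The representation-finite case has no tubes at all. There one uses that the reduced graph is a Brauer tree with a single face, with $\ell=2|E(\Gamma_{\red})|$ and $\weight=\sum_{\vv}\m(\vv)^{-1}$; both are fixed by (1) and (2).
\item Faces whose mouth objects lie in rank-one tubes look exactly like homogeneous band tubes in the AR quiver. For these, $\tau M_h\cong M_h$ forces either $\ell(F)=1$ with $2\weight(F)=\bar1$, or $\ell(F)=2$ with $\weight(F)=\bar1$. Their numbers are then recovered by counting against $|F(\Gamma_{\red})|$ and $2|E(\Gamma_{\red})|$.
\end{enumerate}

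Your face count in (1) was to come from (3), so it inherits this gap. The paper instead takes the vertex and face counts, together with (2) and (4), from \cite{AZ22} applied to the reduced forms. These are derived equivalent by \cite[Theorem 4.1]{Xin26}, which is essentially your route for (4).

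For $r$, Aihara's theorem only says that $\nu$ permutes the indecomposable summands of a basic tilting complex $T$. The order of that permutation is the Nakayama order of $\End(T)$. Nothing in your argument compares it with the order on the indecomposable projectives of $\Lambda_1$. A power of $\nu$ fixing every projective is a twist by an automorphism that need not be inner, so a priori it need not fix the summands of $T$. That comparison is exactly what must be proved. The paper gets it from \cite[Theorem 4.1]{Xin26}, or alternatively from \cite[Theorem 4.1]{XZ25}, using that $\nu$ acts freely on edges.

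Finally, the excluded reduced forms are not where a vertex/face tube separation fails. They are what makes the defining Brauer graph of a local algebra unique. The local case must be treated separately: derived equivalent local algebras are isomorphic, and then \cite[Lemma 3.1]{AZ22} applies.
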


\begin{proof}
If the algebras are local, they are isomorphic, since derived equivalence of local algebras is Morita equivalence; see \cite[Corollary 2.13]{RZ03}. Admissibility gives $r=1$, and the uniqueness of the defining Brauer graph under the stated exclusions proves the assertion; see \cite[Lemma 3.1]{AZ22}. We may therefore assume that the algebras are non-local.

The reduced Brauer graph algebras are derived equivalent by \cite[Theorem 4.1]{Xin26}. Conditions \textup{(1)}, \textup{(2)}, and \textup{(4)} follow from that result and the Brauer graph invariants in \cite[Propositions 4.3 and 4.5]{AZ22}; if the reduced algebras are local, use their isomorphism and \cite[Lemma 3.1]{AZ22}. The equality of Nakayama orders also follows from \cite[Theorem 4.1]{XZ25}, because $\nu$ acts freely on edges. It remains to prove \textup{(3)}. In the representation-infinite case, we recover the perimeter--weight pairs from the shift and Nakayama functors on the stable category. Faces not detected by tubes of rank greater than one are then recovered by counting.

For the remainder of this proof, we work with finite-dimensional left modules. The derived equivalence induces a triangle equivalence between their stable module categories; see \cite{Ric89st}. Write $\mathcal N_\Lambda=D\Lambda\otimes_\Lambda-$ for the Nakayama functor and $\tau_\Lambda$ for the Auslander--Reiten translation. Since $\Lambda$ is self-injective, $[1]\cong\Omega_\Lambda^{-1}$ and $\tau_\Lambda\cong\mathcal N_\Lambda\Omega_\Lambda^2$, so $\mathcal N_\Lambda\cong\tau_\Lambda[2]$. Hence the stable equivalence commutes with both $[1]$ and $\mathcal N_\Lambda$, up to natural isomorphism. It also preserves representation-finiteness.

\smallskip\noindent
\emph{(a) Representation-finite algebras.}
By \cite[Theorem 2.35]{LL26}, each reduced form is a Brauer tree. Its unique face $F$ satisfies
\[
\ell(F)=2|E(\Gamma_{\red})|,
\qquad
\weight(F)=\sum_{\vv\in V(\Gamma_{\red})}\m(\vv)^{-1}
\quad\text{in }\ZZ/r\ZZ.
\]
Indeed, the reduced ribbon surface is a disc, and its clockwise boundary class is the sum of the clockwise puncture meridians. Apply \cref{lem:rotation-label} and the puncture monodromy formula in \cref{prop:nakayama-surface-cover}. These quantities agree by \textup{(1)} and \textup{(2)}.

\smallskip\noindent
\emph{(b) Faces detected by tubes of rank greater than one.}
Suppose that the algebras are representation-infinite. For $h\in H(\Gamma)$, let $M_h$ be the uniserial left module associated with $p_{h,\dG(s(h))-1}$, taking the simple module at $e(h)$ when $\dG(s(h))=1$. These modules are nonprojective and pairwise non-isomorphic, and are precisely the string modules at the mouths of exceptional tubes in the stable Auslander--Reiten quiver; see \cite[Lemma 5.1]{LL26}.

The successor-path relations give a projective-cover sequence
\[
0\longrightarrow M_{\rho\iota h}
\longrightarrow \Lambda e(h)
\longrightarrow M_h
\longrightarrow0.
\]
Together with \cite[Lemma 5.2]{LL26} and $\mathcal N_\Lambda\cong\tau_\Lambda[2]$, this gives
\[
\begin{aligned}
M_h[1]&\cong M_{(\rho\iota)^{-1}h},\qquad
\tau_\Lambda M_h&\cong M_{\nu^{-1}(\rho\iota)^2h},\qquad
\mathcal N_\Lambda M_h&\cong M_{\nu^{-1}h}.
\end{aligned}
\]
Note that every nonprojective indecomposable module over $\Lambda$ is a string or band module, and band modules lie in stable tubes of rank one. Thus every mouth object of a stable tube of rank greater than one is among the $M_h$. The isomorphism classes of these mouth objects are preserved by triangle equivalences.

Let $\mathcal O_\Lambda$ be the set of their joint orbits under $[1]$ and $\mathcal N_\Lambda$. The displayed formulas identify these with the corresponding $\langle\rho\iota,\nu\rangle$-orbits of half-edges, each consisting of all half-edges above a reduced face $F$. Thus $\ell(F)$ is the least positive shift taking $M_h$ into its Nakayama orbit, and $\weight(F)$ records the corresponding power of the Nakayama functor. Explicitly,
\[
\begin{aligned}
\ell(F)
&=\min\bigl\{n>0:
M_h[n]\cong\mathcal N_\Lambda^jM_h
\text{ for some }j\in\ZZ/r\ZZ\bigr\},\\
M_h[\ell(F)]
&\cong\mathcal N_\Lambda^{\weight(F)}M_h.
\end{aligned}
\]
The second equality follows from \eqref{eq:face-rotation}, since
\[
(\rho\iota)^{-\ell(F)}h=\nu^{-\weight(F)}h.
\]
The exponent is unique modulo $r$, because $\nu$ acts freely on half-edges and the nonprojective modules $M_h$ are pairwise non-isomorphic. Consequently, the stable equivalence preserves the pairs $(\ell(F),\weight(F))$ associated with all orbits in $\mathcal O_\Lambda$.

\smallskip\noindent
\emph{(c) The remaining faces.}
It remains to recover the faces whose associated exceptional tubes have rank one. Both $[1]$ and $\mathcal N_\Lambda$ preserve tube ranks, so all mouth objects above a fixed reduced face belong to tubes of the same rank. For a face not recovered in \textup{(b)}, the formula for $\tau_\Lambda M_h$ gives
\[
\tau_\Lambda M_h\cong M_h
\quad\Longleftrightarrow\quad
(\rho\iota)^2h=\nu h.
\]
Modulo $\nu$, this implies that $\ell(F)$ divides $2$. Equation \eqref{eq:face-rotation} therefore gives exactly the possibilities
\[
\ell(F)=1,\quad 2\weight(F)=\bar1,
\qquad\text{or}\qquad
\ell(F)=2,\quad \weight(F)=\bar1.
\]
The first case occurs only when $r$ is odd, and its weight is then unique. In each case, the weight is determined by the perimeter and $r$, so only the numbers of these faces remain to be recovered.

Let $n_1,n_2$ count the unrecovered faces of perimeter one and two. For $O\in\mathcal O_\Lambda$, denote its recovered perimeter by $\ell(O)$. Since the sum of all face perimeters is twice the number of edges,
\[
\begin{aligned}
n_1+n_2
&=|F(\Gamma_{\red})|-|\mathcal O_\Lambda|,\\
n_1+2n_2
&=2|E(\Gamma_{\red})|
-\sum_{O\in\mathcal O_\Lambda}\ell(O).
\end{aligned}
\]
The right-hand sides are preserved by condition \textup{(1)} and step \textup{(b)}. These two equations determine $n_1$ and $n_2$, so all remaining perimeter--weight pairs are preserved. This proves \textup{(3)}.
\end{proof}

We conjecture that these necessary conditions are also sufficient. For $r=1$, the residues vanish and the statement reduces to the derived equivalence classification of Brauer graph algebras in \cite{OZ22}.

\begin{conj}\label[conj]{conj:four-condition-all-genus}
Under the standing assumptions of this section, $\Lambda_1$ and $\Lambda_2$ are derived equivalent if and only if they satisfy \cref{cond:four-comparisons}.
\end{conj}

The conjecture can also be stated on a fixed reduced surface. Fix a reduced Brauer graph $(\Gamma_{\red},\m)$ satisfying our exclusions and a positive integer $r$ coprime to every $\m(\vv)$. For a Nakayama character $\chi$ of order $r$ on $(\Sigma_{\Gamma_{\red}},V(\Gamma_{\red}),\m)$, let $\Lambda_\chi$ be the ordinary algebra obtained by lifting its canonical arcs as in \cref{thm:construction,prop:nakayama-surface-cover}.

\begin{prop}\label{prop:four-condition-reduction}
\Cref{conj:four-condition-all-genus} is equivalent to the following assertion: for every fixed $(\Gamma_{\red},\m,r)$ as above, two Nakayama characters with equal values on every original boundary circle give derived-equivalent algebras $\Lambda_{\chi_0}$ and $\Lambda_{\chi_1}$.
\end{prop}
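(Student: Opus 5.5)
The plan is to prove both implications by translating \cref{cond:four-comparisons} into statements about reduced Brauer graphs and Nakayama characters. For the direction ``Conjecture $\Rightarrow$ assertion'', fix $(\Gamma_{\red},\m,r)$ and characters $\chi_0,\chi_1$ that agree on every boundary circle. I would check that $\Lambda_{\chi_0}$ and $\Lambda_{\chi_1}$ satisfy \cref{cond:four-comparisons} and then apply the conjecture.

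By \cref{prop:nakayama-surface-cover} and \cref{thm:construction}, both algebras are AFBGAs with reduced form $(\Gamma_{\red},\m)$ and Nakayama order $r$. This gives \textup{(1)} and \textup{(2)}, since the fractional multiplicities are $\m(\vv)/r$. Condition \textup{(4)} holds because bipartiteness of $\Gamma$ agrees with that of $\Gamma_{\red}$. For \textup{(3)}, \cref{lem:rotation-label} identifies $\weight(F)$ with the monodromy $\chi$ of the boundary circle of $F$, and these values agree by hypothesis. The deck generators correspond to $\bar1$ in both cases, so the identification $\nu_1\mapsto\nu_2$ is the identity of $\ZZ/r\ZZ$. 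The ``only if'' half of the conjecture is already \cref{prop:four-derived-invariants}.

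For the converse, let $\Lambda_1,\Lambda_2$ satisfy \cref{cond:four-comparisons}. I would realize each $\Lambda_i$ as $\Lambda_{\chi_i}$ on its own reduced data via \cref{prop:nakayama-surface-cover}. The goal is then to transport everything to a single reduced graph. By the Brauer graph classification in \cite{OZ22} (the $r=1$ case), conditions \textup{(1)}, \textup{(2)} (equivalently equal multisets of $\m_i$ once $r$ is fixed) and \textup{(4)}, together with the face perimeters from \textup{(3)}, should yield an orientation-preserving diffeomorphism $f$ of the reduced punctured surfaces. It should match multiplicities and line-field homotopy classes, so that $f^*\eta_2\simeq\eta_1$.

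The key extra requirement is that $f$ send each boundary component to one with the same pair $(\ell(F),\weight(F))$, not merely the same perimeter. I expect this refinement of the OZ22 argument to be the main obstacle. I would handle it by permuting boundary components of equal perimeter: their invariants (perimeter, which fixes the winding number $-\ell$) coincide, so one may compose with a diffeomorphism exchanging them while preserving the line field up to homotopy. This uses the mapping-class-group transitivity results used in \cite{OZ22}.

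After this step, $\chi_1$ and $f^*\chi_2$ are Nakayama characters on the same reduced data with equal values on every original boundary circle. Their values on the puncture meridians are forced to be $\m(\vv)^{-1}$. The assertion then makes $\Lambda_{\chi_1}$ derived equivalent to $\Lambda_{f^*\chi_2}$. Finally, \cref{thm:geometric} and \cref{cor:ordinary}, applied to $f$ with the equality $f^*\chi_2\circ f_*^{-1}=\chi_2$, make $\Lambda_{f^*\chi_2}$ derived equivalent to $\Lambda_2$. Composing the two derived equivalences finishes the argument.
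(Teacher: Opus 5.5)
Your proposal is correct and follows essentially the paper's route. In one direction you check that $\Lambda_{\chi_0},\Lambda_{\chi_1}$ satisfy the four conditions, using \cref{lem:rotation-label} for condition (3). In the other you take a diffeomorphism of reduced surfaces matching punctures by multiplicity, the line fields, and boundary circles by $(\ell,\weight)$, pull $\chi_2$ back, apply the assertion on $\Gamma_{1,\red}$ and then \cref{cor:ordinary}. The only difference is how that diffeomorphism is obtained. The paper takes it directly from \cite[Lemmas 7.9--7.10]{OZ22} and \cite[Theorem 1.8]{LP20}, which let one prescribe any winding-preserving bijection of boundary components. You first take a perimeter-matching diffeomorphism and then compose with a boundary permutation, justified by the same transitivity result.
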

\begin{proof}
The conjecture implies the assertion because the four conditions agree for these algebras. Conversely, suppose the assertion holds and $\Lambda_1,\Lambda_2$ satisfy the four conditions. The canonical line fields can be identified by an orientation-preserving diffeomorphism matching punctures by multiplicity and boundary circles by $(\ell,\weight)$; this follows from \cite[Lemmas 7.9--7.10]{OZ22} and \cite[Theorem 1.8]{LP20}. Pull both Nakayama characters back to the first reduced surface. They agree on the puncture meridians and original boundary circles. Apply the assertion on $\Gamma_{1,\red}$, followed by \cref{cor:ordinary}, to obtain the required derived equivalence.
\end{proof}

Thus it suffices to fix the reduced Brauer graph and its canonical line field, and prove that changing the Nakayama character while retaining its peripheral values does not change the derived equivalence class.

The following example illustrates this phenomenon.

\begin{eg}\label{eg:character-rose}
Let $\Gamma_{\red}$ have one vertex $\vv$ and two loop edges $\delta_0,\delta_1$. Its half-edges have clockwise order $(h_0,h_1,h_2,h_3)$, with $\iota=(h_0\ h_2)(h_1\ h_3)$. The ribbon surface has genus one and one boundary component $B$. Fix $\m(\vv)=1$, $r=2$, and the canonical line field.

Write $\alpha_j$ for the successor arrow from the edge containing $h_j$ to the edge containing $h_{j+1}$, with subscripts modulo $4$. Choose the following two systems of weights $\weight_0,\weight_1$ in $\ZZ/2\ZZ$:
\[
\begin{array}{c|cccc}
 &\alpha_0&\alpha_1&\alpha_2&\alpha_3\\\hline
\weight_0&0&0&0&1\\
\weight_1&0&1&1&1
\end{array}
\]
Both systems have total weight $1$ around the puncture and induce Nakayama characters $\chi_0,\chi_1$ of order $2$. They also have the same boundary value $\chi_0([B])=\chi_1([B])=1$. Nevertheless, the characters are different, since the closed walk that follows the sector $\alpha_0$ and returns along $\alpha_2$ in the reverse direction has weight $0$ for $\chi_0$ and $1$ for $\chi_1$. Here the two sectors are joined along the corresponding arcs away from the puncture.

\begin{figure}[htbp]
\centering
\begin{tikzpicture}[>=Stealth,line cap=round,line join=round,font=\small,scale=.9]
% Half-edges run clockwise from the upward ray. Crossings are not vertices.
\begin{scope}[shift={(0,3.2)}]
\draw[very thick,darkblue,preaction={draw=white,line width=4pt}] (0,0)--(90:1.4) arc[start angle=90,end angle=-90,radius=1.4]--(0,0);
\node[darkblue,right] at (0:1.4) {$\delta_0$};
\draw[very thick,orange!80!black,preaction={draw=white,line width=4pt}] (0,0)--(0:1) arc[start angle=0,end angle=-180,radius=1]--(0,0);
\node[orange!80!black,fill=white,inner sep=1pt] at (-.73,-1.04) {$\delta_1$};
\foreach \j in {0,1,2,3}{\node[fill=white,inner sep=.4pt,font=\scriptsize] at ({90-90*\j}:.68) {$h_{\j}$};}
\draw[gray,->] (60:.29) arc[start angle=60,end angle=-220,radius=.29];
\fill (0,0) circle (1.6pt);
\node[above] at (0,1.85) {Fixed reduced graph};
\node at (0,-1.95) {$\rho=(h_0\ h_1\ h_2\ h_3),\quad\m(\vv)=1$};
\end{scope}
\begin{scope}[shift={(5.2,3.2)}]
\foreach \from/\to/\rr/\lab/\col in {0/2/1.28/{\delta_0^0}/darkblue,4/6/1.28/{\delta_0^1}/darkblue,1/3/1.68/{\delta_1^0}/{orange!80!black},5/7/1.68/{\delta_1^1}/{orange!80!black}}{
\draw[very thick,\col,preaction={draw=white,line width=4pt}] (0,0)--({90-45*\from}:\rr) arc[start angle={90-45*\from},end angle={90-45*\to},radius=\rr]--(0,0);
}
\node[darkblue,fill=white,inner sep=1pt] at (60:1.62) {$\delta_0^0$};
\node[darkblue,fill=white,inner sep=1pt] at (240:1.62) {$\delta_0^1$};
\node[orange!80!black,right] at (0:1.72) {$\delta_1^0$};
\node[orange!80!black,left] at (180:1.72) {$\delta_1^1$};
\foreach \j in {0,...,7}{\node[fill=white,inner sep=.5pt,font=\scriptsize] at ({90-45*\j}:.73) {$\j$};}
\draw[gray,->] (70:.28) arc[start angle=70,end angle=-210,radius=.28];
\fill (0,0) circle (1.6pt);
\node[above] at (0,1.85) {Cover for $\chi_0$};
\node at (0,-1.95) {$\dG(\widetilde\vv)=4,\quad\nu(j)=j+4$};
\end{scope}
\begin{scope}[shift={(10.5,3.2)}]
\foreach \from/\to/\rr/\lab/\col in {0/-2/1.68/{\delta_0^0}/darkblue,2/4/1.68/{\delta_0^1}/darkblue,1/3/1.28/{\delta_1^0}/{orange!80!black},5/7/1.28/{\delta_1^1}/{orange!80!black}}{
\draw[very thick,\col,preaction={draw=white,line width=4pt}] (0,0)--({90-45*\from}:\rr) arc[start angle={90-45*\from},end angle={90-45*\to},radius=\rr]--(0,0);
\node[\col,fill=white,inner sep=1pt] at ({90-22.5*(\from+\to)}:{\rr+.24}) {$\lab$};
}
\foreach \j in {0,...,7}{\node[fill=white,inner sep=.5pt,font=\scriptsize] at ({90-45*\j}:.73) {$\j$};}
\draw[gray,->] (70:.28) arc[start angle=70,end angle=-210,radius=.28];
\fill (0,0) circle (1.6pt);
\node[above] at (0,1.85) {Cover for $\chi_1$};
\node at (0,-1.95) {$\dG(\widetilde\vv)=4,\quad\nu(j)=j+4$};
\end{scope}
% Quivers: a_j and b_j go from the edge at j to the edge at j+1.
\begin{scope}[shift={(2.5,-2)}]
\node (A) at (-1.65,1) {$X_0^0$}; \node (B) at (1.65,1) {$X_1^0$};
\node (C) at (1.65,-1) {$X_0^1$}; \node (D) at (-1.65,-1) {$X_1^1$};
\draw[->] (A) to[bend left=26] node[above] {$a_0$} (B);
\draw[->] (B)--node[above] {$a_1$}(A);
\draw[->] (A) to[bend right=26] node[below] {$a_2$} (B);
\draw[->] (B)--node[right] {$a_3$}(C);
\draw[->] (C) to[bend right=26] node[above] {$a_4$} (D);
\draw[->] (D)--node[above] {$a_5$}(C);
\draw[->] (C) to[bend left=26] node[below] {$a_6$} (D);
\draw[->] (D)--node[left] {$a_7$}(A);
\node at (0,-2.1) {$Q_0$};
\end{scope}
\begin{scope}[shift={(8.7,-2)}]
\node (A) at (-1.65,1) {$X_0^0$}; \node (B) at (1.65,1) {$X_1^0$};
\node (C) at (1.65,-1) {$X_0^1$}; \node (D) at (-1.65,-1) {$X_1^1$};
\draw[->] (A)--node[above] {$b_0$}(B);
\draw[->] (B) to[bend left=50] node[right] {$b_1$}(C);
\draw[->] (C)--node[right] {$b_2$}(B);
\draw[->] (B) to[bend right=50] node[left] {$b_3$}(C);
\draw[->] (C)--node[below] {$b_4$}(D);
\draw[->] (D) to[bend right=50] node[right] {$b_5$}(A);
\draw[->] (A)--node[right] {$b_6$}(D);
\draw[->] (D) to[bend left=50] node[left] {$b_7$}(A);
\node at (0,-2.1) {$Q_1$};
\end{scope}
\end{tikzpicture}

\caption{The fixed reduced ribbon graph, the two double covers, and their quivers.}
\label{fig:character-rose}
\end{figure}

The covering graphs have degree function $\dG(\widetilde\vv)=4$. Their edge pairings, in the clockwise numbering of \cref{fig:character-rose}, are
\[
\iota_0=(0\ 2)(1\ 3)(4\ 6)(5\ 7),\qquad
\iota_1=(0\ 6)(1\ 3)(2\ 4)(5\ 7).
\]
Let $A_0=\Bbbk Q_0/I_0$ and $A_1=\Bbbk Q_1/I_1$ be the associated degree-zero AFBGAs. All arrow subscripts in the following relations lie in $\ZZ/8\ZZ$. The ideal $I_0$ is generated by
\[
\begin{gathered}
a_{\iota_0(j+1)}a_j,\\
a_{j+3}a_{j+2}a_{j+1}a_j-
a_{\iota_0(j)+3}a_{\iota_0(j)+2}a_{\iota_0(j)+1}a_{\iota_0(j)},\\
a_{j+4}a_{j+3}a_{j+2}a_{j+1}a_j
\qquad(j\in\ZZ/8\ZZ).
\end{gathered}
\]
The ideal $I_1$ is given by the same formulas with $a$ and $\iota_0$ replaced by $b$ and $\iota_1$. 

Clockwise rotation by one position intertwines the two edge pairings:
\[
\iota_1(j+1)=\iota_0(j)+1\pmod8.
\]
Consequently, the assignments
\[
X_0^s\longmapsto X_1^s,\qquad
X_1^s\longmapsto X_0^{s+1},\qquad
a_j\longmapsto b_{j+1}
\]
preserve all relations and define an algebra isomorphism $A_0\cong A_1$. In particular, the algebras are derived equivalent. This gives a simple instance in which changing the Nakayama character while retaining its peripheral values leaves the derived equivalence class unchanged.
\end{eg}

\subsection{Completeness in genus zero and the non-bipartite case}
\label{sec:completeness}

For the canonical datum $(\Sigma,\mathcal P,\eta,\m,r,\chi)$, the punctures correspond to vertices of $\Gamma_{\red}$. Their clockwise meridians satisfy
\[
 \omega_\eta(\gamma_{\vv})=0,
 \qquad \chi([\gamma_{\vv}])=\m(\vv)^{-1}.
\]
For the original boundary circle $B_F$ corresponding to a face $F$, traversed with the surface on its right, we have
\[
 \omega_\eta(B_F)=-\ell(F),
 \qquad \chi([B_F])=\weight(F).
\]
The first identity is the canonical line-field calculation of \cite[Example 3.16]{OZ22}; the second is \cref{lem:rotation-label}. Moreover, $\eta$ is orientable precisely when $\Gamma_{\red}$ is bipartite, by \cite[Lemma 7.9]{OZ22}. The four conditions therefore match the peripheral winding numbers, the peripheral monodromies, and orientability.

\begin{thm}\label{thm:four-condition-completeness}
Suppose that each $\Lambda_i$ has either reduced genus zero or reduced genus at least two with $\Gamma_i$ non-bipartite. Then \cref{cond:four-comparisons} holds if and only if
\[
 \D^b(\mod\text{-}\Lambda_1)\simeq\D^b(\mod\text{-}\Lambda_2).
\]
\end{thm}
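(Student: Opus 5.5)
The forward direction is \cref{prop:four-derived-invariants}, so it remains to prove sufficiency. By \cref{prop:four-condition-reduction}, it suffices to fix a reduced Brauer graph $(\Gamma_{\red},\m)$ and $r$, and to show the following: two Nakayama characters $\chi_0,\chi_1$ of order $r$ that agree on all puncture meridians and all original boundary circles give derived equivalent algebras $\Lambda_{\chi_0}$ and $\Lambda_{\chi_1}$. By \cref{thm:geometric} and \cref{cor:ordinary}, it is enough to produce an orientation-preserving diffeomorphism $f$ of $(\Sigma,\mathcal P)$ with $f^*\eta\simeq\eta$, $\m\circ f=\m$, and $\chi_1\circ f_*=\chi_0$. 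The problem is therefore about the action of the stabiliser of the line field in the mapping class group on characters with prescribed peripheral values.

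\emph{Genus zero.} Here $H_1(\Sigma\setminus\mathcal P;\ZZ)$ is generated by the puncture meridians and the boundary circles. The only relation is that the sum of these classes, suitably oriented, is zero. A character is therefore determined by its peripheral values, so $\chi_0=\chi_1$. We may take $f=\id$, and \cref{cor:ordinary} finishes this case. One caveat: the remaining puncture and boundary classes are not identified by any extra data, so I would verify this generation statement directly for a sphere with holes.

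\emph{Genus $g\geq2$, non-bipartite.} The difference $\psi=\chi_1-\chi_0$ vanishes on the peripheral subgroup. Hence $\psi$ factors through $H_1(\bar\Sigma;\ZZ)\cong\ZZ^{2g}$, where $\bar\Sigma$ is the closed surface obtained by capping all punctures and boundaries. The task is to find $f$ in the stabiliser of $\eta$ with $\chi_0\circ f_*^{-1}=\chi_0+\psi$. I would use Dehn twists $T_c$ along nonseparating simple closed curves $c$ with $\omega_\eta(c)=0$; these preserve the homotopy class of $\eta$ by the winding-number criterion of \cite{LP20}. Such a twist changes $\chi_0$ to
\[
\chi_0+\chi_0([c])\,\langle c,-\rangle ,
\]
where $\langle\,,\rangle$ is the intersection pairing. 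Pairs of such twists act as transvections. The plan is to show that these corrections generate every $\psi$ vanishing on the peripheral subgroup. When $\chi_0$ is nontrivial on $H_1(\bar\Sigma)$, this amounts to transitivity of the symplectic group $\mathrm{Sp}_{2g}(\ZZ/r)$ on classes with a fixed pairing. When $\chi_0$ is peripheral, I would first use a twist along a curve bounding a puncture together with a handle, so that the character acquires nonzero nonperipheral values.

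The non-bipartite hypothesis makes $\eta$ non-orientable. The work of Lekili--Polishchuk and Opper--Zvonareva then says that, for $g\geq2$, the image of the $\eta$-stabiliser in $\mathrm{Sp}_{2g}(\ZZ)$ is large, namely a finite-index subgroup surjecting onto $\mathrm{Sp}_{2g}(\ZZ/r)$, with no Arf-type invariant obstructing. This is exactly why genus one and the bipartite case are excluded.

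\emph{Main obstacle.} The hard step is making the last claim precise. One must exhibit enough nonseparating curves $c$ with $\omega_\eta(c)=0$, for instance by adjusting windings with curves passing around a puncture of odd winding, which exists by non-orientability. One must also check that the resulting transvections act transitively on characters with fixed peripheral restriction and fixed value at the meridians. I would first establish this using explicit standard chains of curves on $\bar\Sigma$ and the parity argument of \cite[Lemma 7.10]{OZ22}.
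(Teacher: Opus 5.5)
Your reduction via \cref{prop:four-condition-reduction} and your genus-zero argument are fine and match the paper. The genus $\geq2$ case has a genuine gap. Its key claim, that the image of the stabiliser of $\eta$ surjects onto $\mathrm{Sp}_{2g}(\ZZ/r\ZZ)$, is false.

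Any $f$ with $f^*\eta\simeq\eta$ preserves $q_\eta([C])=\omega_\eta(C)+2\bmod 4$. Suppose $r$ is even and all face perimeters are even, for example one vertex with four loops forming a single face of perimeter $8$, so $g=2$ and $\Gamma_{\red}$ is non-bipartite. Then $q_\eta\bmod 2$ vanishes on peripheral classes and descends to a linear form on $H_1(\bar\Sigma;\ZZ/2\ZZ)$. This form is nonzero because $\eta$ is non-orientable. Every element of the stabiliser fixes it, so the image in $\mathrm{Sp}_{2g}(\ZZ/2\ZZ)$ is a proper subgroup.

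Linear surjectivity would not suffice anyway. $\chi_0$ does not factor through $H_1(\bar\Sigma)$, because its meridian values are units. Characters with fixed peripheral values form an affine space, and its translations come from peripheral pushes $x\mapsto x+\langle v,x\rangle c$. For $c=[\gamma_{\vv}]$, such a push changes $\chi$ by $\chi([\gamma_{\vv}])\langle v,-\rangle$. It also changes $q_\eta$ by $2\langle v,-\rangle$, since $q_\eta([\gamma_{\vv}])=2$. So the push moves $\eta$, and $\chi$ cannot be adjusted independently of the line field.

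Your auxiliary devices also fail. For the canonical field every puncture has winding number $0$, so non-orientability produces no puncture of odd winding. A separating curve cutting off a handle and a puncture has class $\pm[\gamma_{\vv}]$. That class lies in the radical of the intersection pairing, so its twist acts trivially on $H_1(\Sigma\setminus\mathcal P;\ZZ)$ and leaves $\chi$ unchanged.

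The missing idea is to treat $(q_\eta,\chi)$ as one invariant, and not to stay inside the stabiliser of $\eta$ at every step. The paper proceeds as follows.
\begin{enumerate}
\item Split $H_1(S;\ZZ)=V\oplus R$ with $R$ peripheral. Let $J$ be the image of the homomorphism $(q_\eta,\chi)|_R$, and consider the induced functions $V\to K=(\ZZ/4\ZZ\oplus\ZZ/r\ZZ)/J$.
\item A puncture contributes $(2,a)$ with $a$ a unit, so $K$ is $0$, $\ZZ/2\ZZ$ or $\ZZ/4\ZZ$. The induced functions are then trivial, the linear form $q_\eta\bmod 2$, or the quadratic function $aq_\eta-2\chi$, respectively.
\item Non-orientability makes these functions nonzero, respectively non-even. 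Hence a symplectic automorphism of $V$ matches them, by the calculation in \cite[Theorem~1.8(iii)]{LP20}. This automorphism in general does \emph{not} preserve $\eta$.
\item The remaining discrepancy is a homomorphism $V\to J$. It is removed by $x\mapsto x+B(x)$ with $B:V\to R$, realised by pairs of opposite twists, which corrects $q_\eta$ and $\chi$ simultaneously.
\item The residual difference $4u$ between the line fields is removed by bounding-pair maps as in \cite[Lemma~1.7]{LP20}. These act trivially on homology, so they preserve $\chi$; this is where $g\geq2$ is used.
\end{enumerate}
Without this coupled bookkeeping, your plan of using only twists with $\omega_\eta(c)=0$ would need a transitivity or generation theorem for the line-field stabiliser acting on characters. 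You neither prove such a theorem nor have one available to cite.
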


\begin{proof}
Necessity is \cref{prop:four-derived-invariants}. For sufficiency, take the canonical geometric data of \cref{prop:nakayama-surface-cover}. The four conditions give an orientation-preserving diffeomorphism matching punctures by multiplicity and original boundary circles by $(\ell,\weight)$. Use it to identify the two punctured surfaces, and remove small open discs around the punctures. Let $S$ be the resulting compact surface. The line fields $\eta_1,\eta_2$ have equal winding numbers on each boundary circle of $S$, and the Nakayama characters $\chi_1,\chi_2$ agree on all boundary classes.

\smallskip\noindent
\textup{(a) Reduced genus zero.}
By \cite[Theorem 1.8(i)]{LP20}, the line fields are homotopic. Boundary classes generate $H_1(S;\ZZ)$, so $\chi_1=\chi_2$. The chosen diffeomorphism identifies the geometric data, and \cref{cor:ordinary} gives the derived equivalence.

\smallskip\noindent
\textup{(b) Reduced genus at least two and non-bipartite graphs.}
We construct a diffeomorphism identifying the line fields and the Nakayama characters simultaneously. All diffeomorphisms of $S$ used below fix a neighbourhood of its boundary, so they extend over the removed discs.

Write $H_1(S;\ZZ)=V\oplus R$, where $R$ is generated by the boundary classes and $V$ by a standard system of handle curves, so the intersection pairing on $V$ is unimodular and symplectic. By \cite[Proposition 1.6]{LP20}, a line field $\eta$ determines a function
\[
 q_\eta:H_1(S;\ZZ)\longrightarrow\ZZ/4\ZZ,
 \qquad q_\eta([C])=\omega_\eta(C)+2\pmod4
\]
for oriented simple closed curves $C$, satisfying
\[
 q_\eta(x+y)=q_\eta(x)+q_\eta(y)+2(x\mathbin{\cdot}y).
\]
The restrictions $(q_{\eta_i},\chi_i)|_R$ agree and are homomorphisms, since $R$ is the radical of the intersection pairing. Denote their common image by $J$ and set
\[
 K=(\ZZ/4\ZZ\oplus\ZZ/r\ZZ)/J.
\]

\smallskip\noindent
\emph{Match the values modulo peripheral classes.}
At a puncture, $(q_{\eta_i},\chi_i)$ takes the value $(2,a)$, where $a$ is a unit modulo $r$. Choose an odd integer representative of $a$. Then
\[
 (\ZZ/4\ZZ\oplus\ZZ/r\ZZ)/\langle(2,a)\rangle
 \cong\ZZ/\gcd(4,2r)\ZZ,
 \qquad (x,y)\longmapsto ax-2y.
\]
Consequently, $K$ is either $0$, $\ZZ/2\ZZ$, or $\ZZ/4\ZZ$.

If $K=0$, the two functions $V\to K$ already agree. If $K=\ZZ/2\ZZ$, they are the linear forms $q_{\eta_i}\bmod2$. If $K=\ZZ/4\ZZ$, they are the quadratic functions $aq_{\eta_i}-2\chi_i$, with polar form $2(x\mathbin{\cdot}y)$. In either nonzero case, every peripheral value of $q_{\eta_i}$ is even. Since the line fields are nonorientable, $q_{\eta_i}\bmod2$ is therefore nonzero on $V$. The symplectic group is transitive on nonzero linear forms modulo two and on these non-even quadratic functions modulo four; for the latter, use the algebraic calculation in the proof of \cite[Theorem 1.8(iii)]{LP20}. Hence some symplectic automorphism of $V$ identifies the two functions $V\to K$.

\smallskip\noindent
\emph{Match the full characters and the functions $q_{\eta_i}$.}
After this symplectic change, the difference of the pairs $(q_{\eta_i},\chi_i)$ on $V$ is a homomorphism with values in $J$. Since $V$ is free, choose a homomorphism $B:V\to R$ whose peripheral values cancel this difference. Every such change $x\mapsto x+B(x)$ is realized by a diffeomorphism fixing the boundary. Indeed, for a symplectic basis element $v$ of $V$ and a boundary class $c$, choose simple curves representing $v$ and $v+c$ that bound a pair of pants with the corresponding boundary circle. The product of their opposite Dehn twists acts by $x\mapsto x+(v\mathbin{\cdot}x)c$. These transformations generate all the required homomorphisms $B$. Symplectic automorphisms of $V$ are likewise realized by Dehn twists supported on the handles. We thus obtain a diffeomorphism $f$ with
\[
 q_{f^*\eta_2}=q_{\eta_1},
 \qquad \chi_2\circ f_*=\chi_1.
\]

\smallskip\noindent
\emph{Match the line fields.}
The equality $q_{f^*\eta_2}=q_{\eta_1}$ and the equality of the integral boundary winding numbers mean that $f^*\eta_2$ differs from $\eta_1$ by $4u$, where $u\in H^1(S;\ZZ)$ vanishes on $R$. The construction in the proof of \cite[Lemma 1.7]{LP20} removes this difference by products of opposite Dehn twists about homologous curves bounding a subsurface of genus one with two boundary components. Such products act trivially on $H_1(S;\ZZ)$, so they preserve $\chi_1$. After this adjustment, $f^*\eta_2\simeq\eta_1$ and $\chi_2\circ f_*=\chi_1$. Extending $f$ over the punctures and applying \cref{cor:ordinary} proves the assertion.
\end{proof}

Thus, in the cases of \cref{thm:four-condition-completeness}, derived equivalence can be decided from the reduced graph counts, fractional multiplicities, perimeter--weight pairs, and bipartiteness, without constructing a geometric equivalence explicitly. The general sufficiency assertion remains \cref{conj:four-condition-all-genus}.

We conclude this section with an example illustrating the necessity of condition \textup{(3)} in \cref{thm:four-condition-completeness}.

\begin{eg}\label{eg:necessity-face-weights}
Consider the algebras in \cite[Example 3.5 and Remark 4.2]{Xin26}. Let $A$ be the Kronecker algebra and let $A'$ be the radical-square-zero algebra of the oriented cycle with two vertices. Consider
\[
\Lambda_1=T_2(A),
\qquad
\Lambda_2=T_2(A').
\]
Their reduced Brauer graphs are both the graph with two vertices joined by two edges, with multiplicity one at both vertices. The numerical data in conditions \textup{(1)}, \textup{(2)}, and \textup{(4)} of \cref{cond:four-comparisons} are therefore
\[
\begin{array}{c|cc}
 & \Lambda_1 & \Lambda_2\\ \hline
r & 2 & 2\\
\bigl(|V(\Gamma_{\red})|,|E(\Gamma_{\red})|,|F(\Gamma_{\red})|\bigr)
 & (2,2,2) & (2,2,2)\\
g_{\red} & 0 & 0\\
\text{fractional multiplicities}
 & \{\frac12,\frac12\} & \{\frac12,\frac12\}\\
\text{bipartite} & \text{yes} & \text{yes}
\end{array}
\]

To compute the face weights, use the arrows $x_1,x_2,y_1,y_2$ of the common reduced algebra as in \cite[Example 3.5]{Xin26}. The two constructions assign weight $\bar1$ to the arrows in $\{x_2,y_2\}$ and $\{x_2,y_1\}$, respectively, and weight $\bar0$ to the remaining arrows. The two reduced faces have perimeter two, with successive corners $(x_1,y_2)$ and $(y_1,x_2)$. Adding the corner weights gives
\[
\begin{array}{c|cc}
 & \Lambda_1 & \Lambda_2\\ \hline
(\ell(F_1),\weight(F_1)) & (2,\bar1) & (2,\bar0)\\
(\ell(F_2),\weight(F_2)) & (2,\bar1) & (2,\bar0)
\end{array}
\]
where the weights lie in $\ZZ/2\ZZ$. Thus conditions \textup{(1)}, \textup{(2)}, and \textup{(4)} hold, whereas condition \textup{(3)} fails. By \cref{thm:four-condition-completeness}, $\Lambda_1$ and $\Lambda_2$ are not derived equivalent. Thus, even in reduced genus zero, the face perimeters alone do not suffice. Indeed, the values of the Nakayama characters on the corresponding boundary circles must also be compared.
\end{eg}

\bibliographystyle{alpha}
\bibliography{reference}
\Addresses
\end{document}